\theoremstyle{thmstyleone}%
\newtheorem{theorem}{Theorem}
\newtheorem{proposition}[theorem]{Proposition}%
\theoremstyle{thmstyletwo}%
\newtheorem{remark}{Remark}%
\theoremstyle{thmstylethree}%
\newtheorem{definition}{Definition}%
\newtheorem{lemma}[theorem]{Lemma}
\newtheorem{corollary}{Corollary}
\newtheorem{assumption}{Assumption}
\begin{document}

\iffalse
\newcommand{\comments}[1]{\footnote{\textcolor{blue}{\textit{#1}}}}
\newcommand{\ZH}[1]{\textcolor{green}{#1}}
\newcommand{\AL}[1]{\textcolor{blue}{#1}}
\newcommand{\ZHnew}[1]{\textcolor{red}{#1}}
\else
\newcommand{\comments}[1]{}
\newcommand{\ZH}[1]{#1}
\newcommand{\AL}[1]{#1}
\newcommand{\ZHnew}[1]{#1}
\fi

\title[Article Title]{Convexity of chance constraints for elliptical and skewed distributions with copula structures dependent on decision variables}


\author*[1]{\fnm{Heng} \sur{Zhang}}\email{hengzhang@centralesupelec.fr}

\author[1]{\fnm{Abdel} \sur{Lisser}}\email{abdel.lisser@centralesupelec.fr}

\affil[1]{\orgname{Universit\'e Paris-Saclay, CNRS, CentraleSupelec, Laboratory of signals and systems}, \orgaddress{\street{3, Rue Joliot-Curie}, \postcode{91192}, \city{Gif sur Yvette}, \country{France}}}



\abstract{Chance constraints describe a set of given random inequalities depending on the decision vector satisfied with a large enough probability. They are widely used in decision making under uncertain data in many engineering problems. This paper aims to derive the convexity of chance constraints with row dependent elliptical and skewed random variables via a copula depending on decision vectors. We obtain best thresholds of the $r$-concavity for any real number $r$ and improve probability thresholds of the eventual convexity. We prove the eventual convexity with elliptical distributions and a Gumbel-Hougaard copula despite the copula's singularity near the origin. We determine the $\alpha$-decreasing densities of generalized hyperbolic distributions by estimating the modified Bessel functions. By applying the $\alpha$-decreasing property and a radial decomposition, we achieve the eventual convexity for three types of skewed distributions. Finally, we provide an example to illustrate the eventual convexity of a feasible set containing the origin.}

\keywords{Chance constraints, Elliptical distributions, Skewed distributions, Generalized hyperbolic distributions, Eventual convexity, Normal mean-variance distributions, Gumbel-Hougaard copula.}



\maketitle


\section{Introduction}
We consider the following linear optimization problem subject to joint chance constraints
\begin{equation}\label{CCP}
\min c^{T}x \ \text{ s.t. } \ \mathbb{P}\left( V x \leq D \right) \geq p, \ x\in X,  
\end{equation}
where $X \subseteq \mathbb{R}^{N}(N \geq 1)$ is a closed convex set; $D := [D_{1}, ... , D_{N}]^{T}$ is a deterministic vector; $\lbrace v_{i}\rbrace^{K}_{i = 1} \subseteq \mathbb{R}^{N}$ \ZHnew{are} random vectors; $V := [v_{1}, ... , v_{K}]^{T}$ is a random matrix with size $K \times N$. \AL{We call problem (\ref{CCP}) hereafter chance constrained optimization problem (CCO).} The \ZH{feasible set} of \eqref{CCP} is defined as
\begin{equation}\label{Fea-1}
S(p) := \lbrace x\in X : \mathbb{P}(V x \leq D) \geq p \rbrace.
\end{equation}

\ZH{Chance constraints,} \ZHnew{initially introduced} \ZH{by Charnes and Cooper \cite{charnes1963deterministic} and Charnes et al. \cite{charnes1958cost}, describe the probability of events occurring} \ZHnew{beyond} \ZH{a given probability value. Prékopa \cite{prekopa1995stochastic, prekopa2003probabilistic} proposed a general form of chance constraints defined as follows:}
\begin{equation}\label{Introduction 1}
\mathbb{P}\left( h(x, \xi) \leq 0 \right) \geq p,
\end{equation}
where $p \in [0, 1]$ is \ZH{the probability threshold value}, $\xi\in \mathbb{R}^{m}$ is a random vector, $x \in \mathbb{R}^{n}$ is a decision vector, and $h: \mathbb{R}^{n} \times \mathbb{R}^{m} \to \mathbb{R}^{k}$ is a vector-valued mapping. Prékopa \cite[Theorem 10.2.1]{prekopa1995stochastic} proved a \ZH{classical } result: Suppose the density of $\xi$ is log-concave and the components of $-h$ are quasi-concave, then the \ZH{feasible set} of \eqref{Introduction 1} is convex for all $p \in [0, 1]$. However, the function $-h$ can not always be quasi-concave. A counterexample in \cite{kataoka1963stochastic} shows that when $h(x, \xi)= x^{T} \xi$ and $\xi$ subject to a multivariate Gaussian distribution, then the \ZH{feasible set} of \eqref{Introduction 1} is convex if and only if $p \in [1/2, 1]$. Thus, the natural question arises whether the \ZH{feasible set} of \eqref{Introduction 1} is convex when probability $p$ is large enough? We call this convexity property of \ZH{feasible sets} as \textit{eventual convexity} \ZHnew{\cite{henrion2011convexity}}. Observe that the sum of two quasi-concave functions might not be a quasi-concave function. Thus, Henrion and Strugarek \cite{henrion2008convexity} considered the case with a separable constraint, \ZHnew{i.e.} the random vectors appear only on the left-hand side of inequalities:
\begin{equation}\label{Introduction 2}
\mathbb{P}\left( \xi \leq g(x) \right) \geq p.
\end{equation} 
We can see \ZHnew{that} \eqref{Introduction 2} is a specific case of \eqref{Introduction 1} with $h(x, \xi) := \xi - g(x)$. To consider the eventual convexity of chance constraints, Henrion and Strugarek \cite{henrion2008convexity} provided a \ZH{classical } method \ZHnew{to solve \eqref{Introduction 2}}. Let $r > 0$. Suppose that $g$ is a $-r$-concave function and $F$ is the distribution function of $\xi$ with an $(r + 1)$-decreasing density. Then, they proved that the convexity of $F\circ g$ implies the eventual convexity of \eqref{Introduction 2}.
In \cite{van2019eventual}, Van Achooij and Malick considered the case with non-separable inequalities, where $\xi$ as an elliptical random vector was reformulated by using radial density functions. 

In the above results, the inequalities of random vectors are described by single or independent rows. In this way, it is convenient to separate the form of chance constraint from a joint form into several independent \ZHnew{individual constraints}. As \ZHnew{for the dependent case,} copulas are powerful tools to \ZHnew{deal with} joint distributions with arbitrary margins and dependence structures. Copulas are widely used in finance \cite{cherubini2004copula, dewick2022copula} and hydrology \cite{genest2007everything, salvadori2007use}. We refer to \cite{nelsen2006introduction} for an introductory monograph on copula. In \cite{henrion2011convexity}, Henrion and Strugarek first proposed to use copulas to present the \ZHnew{dependence} among \ZHnew{the} rows of a chance \ZH{constrained} random matrix. In \cite{van2016convexity}, Van Ackooij and Oliveira considered the eventual convexity problem under Archimedean copulas (a special family of copulas). Using the supporting hyperplane method, they provided an algorithm to solve eventual concavity problems. Observe that a key idea in \cite{henrion2008convexity} is to guarantee \ZHnew{that} the mapping $z \mapsto F(z^{1/r})$ is concave on a certain interval. Then, Van Ackooij \cite{van2015eventual} extended the $r$-decreasing into a general concept called \textit{$r$-revealed-concavity}. Assuming $h$ is separable and copulas are independent of $x$, Van Ackooij \cite{van2015eventual} provided a sharper probability threshold for \eqref{Introduction 1}. In \ZH{\cite{laguel2022convexity}}, Laguel et al. dealt with the eventual convexity problem with nonlinear mappings $h$ and copulas. By generalizing the $r$-convexity into $G$-convexity, they extended the results of \cite{van2016convexity, van2015eventual} into the case with $x$ \ZHnew{defined in} a Banach space.

Regarding the convexity of chance constraints involving copulas, most attention was paid to the cases \ZHnew{where} copulas \ZHnew{are} independent of decision vectors. Few papers discussed the case \ZHnew{where} copulas depend on decision vectors. Recently, Nguyen, Lisser and Liu \cite{nguyen2023convexity} considered an optimization problem subject to joint chance constraints under a \textit{Gumbel-Hougaard copula} (a special Archimedean copula), where they assumed \ZHnew{that} copula \ZHnew{depends} on \ZHnew{the} decision vector $x$. In order to ensure the existence of copulas, two assumptions about the decision vector $x$ were added: \ZHnew{First}, there exists a neighborhood of the origin not contained in $X$; \ZHnew{Second}, the exponential part of the Gumbel-Hougaard copulas has a positive upper boundary on $X$. This paper extends these two assumptions: \ZHnew{First,} the origin can be contained in \ZHnew{the feasible set of the decision vector $X$}; \ZHnew{Second}, the exponential part of the Gumbel-Hougaard copulas can \ZHnew{extend forward} positive infinity. These two extensions lead to the copula's unboundedness, \ZHnew{which makes} the eventual convexity of chance constraints more \ZHnew{complicated to solve}.

About the eventual convexity problem, there are few papers discussing the boundary of probability threshold $p^{*}$. In \ZH{\cite{minoux2016convexity, minoux2017global}}, several necessary and sufficient conditions were presented for the convexity of probability functions. The forms of these necessary and sufficient conditions \ZH{are non-linear}, so the threshold $p^{*}$ can not be easily induced by these conditions. Thus, in \ZH{\cite{minoux2016convexity, minoux2017global}}, the conditions were strengthened into sufficient conditions to obtain the $p^{*}$. It is of interest to know whether there exists \ZH{a} best threshold $p^{*}$, where the best \ZHnew{value} means that a feasible set of chance constraints is convex for $p \geq p^{*}$. Our results answer this question to some extent. Define $g(x) := (b-\mu^{T} x)/ \sqrt{x^{T}\Sigma x}$. Let $F$ be a distribution function. One part of our work follows the \ZH{classical } method about eventual convexity in \cite{henrion2008convexity}, \ZHnew{i.e.}, we consider the concavity of $F\circ g$. Inspired by \cite{minoux2016convexity}, we derive a necessary and sufficient condition for the $r$-concavity of $g$. Comparing to the $r$-concavity results with $r = 1$ in \cite{van2016convexity} and $r < -1 $ in \cite{cheng2014second}, we extend the range of $r$ to all real numbers. Then, we provide a new geometric approach to compute the best threshold $\theta^{*}$ for the $r$-concavity of $g$, which are all less than the thresholds in \cite{henrion2008convexity, nguyen2023convexity, cheng2014second}. Further, if $F$ is Gaussian distribution, then we can \ZHnew{find} the best threshold $p^{*}$ for the concavity of $F\circ g$, which \ZHnew{improves} the thresholds in \cite{van2016convexity}.

The above results are all based on elliptical distributions, which exhibit strong forms of elliptical symmetry. Thus, using the elliptical distributions for precise descriptions of reality, such as insurance risks of actuarial science \cite{lane2000pricing} and hedge fund returns of finance \cite{davies2009fund}, may not be suitable. In particular, when Gaussian distributions are used, the lack of heavy tails property is also a challenge in data \ZHnew{set} \ZH{analysis}. Thus, the skewness is considered to be incorporated into elliptical distributions via two main approaches. One of the methods is to mix normal distributions with a positive scale-valued random variable, called a normal mean-variance mixture (NMVM) distribution. By adding randomness to the variance, the distributions can have heavy tails. Similarly, introducing randomness into the mean enables the addition of skewness to the distributions. A type of random vector with NMVM distributions can be defined by
\begin{equation}\label{normal mean-variance mixture introduction}
\xi \overset{\textbf{d}}{=} \mu + W \gamma + \sqrt{W}AZ,
\end{equation}
where $Z$ is a multivariate normal distribution, $W$ is a scale-valued random variable, $\gamma \in \mathbb{R}^{N}$ is a skewness parameter, $\mu \in \mathbb{R}^{N}$, $A \in \mathbb{R}^{N \times N}$. In particular, the above NMVM distribution is a Gaussian mixture distribution when $W$ is \ZHnew{either} a discrete random variable, \ZHnew{or} a Gaussian mixture distribution with infinite summation if $W$ is a continuous random variable. When $W$ has a generalized inverse Gaussian distribution, \ZHnew{the random vector} \eqref{normal mean-variance mixture introduction} is well known as the definition of Generalized hyperbolic (GH) distribution. The GH distribution and its extensions have widespread use in portfolio optimization problems \cite{birge2021portfolio, wang2022portfolio}.

Another method is to multiply density functions by some distributions with skewness. This method can better describe asymmetry or heavy tails derived from the sub-normal distributions as an extension of the normal scale mixture distribution \cite[Chapter 1.7]{peel2000finite}. An important distribution of this kind is the skew normal (SN) distribution, with its density defined by 
\begin{equation*}
f(z) := 2 \phi_{N}\left(z; \mu, \Sigma \right) \Phi \left( \lambda^{T} \Sigma^{-1/2}  \left( z - \mu \right)\right),
\end{equation*}
where $\lambda$ is a skewness parameter, $\mu$ is a mean vector, $\Sigma \in \mathbb{R}^{N \times N}$ is a variance matrix, $\phi_{N}$ is a N-variate normal density, $\Phi$ is the cumulative distribution function of standard normal distribution. Further, to construct heavy tails based on SN distributions, some extensions such as scale mixtures of skew normal, skew-t, skew-contaminated normal, and skew-slash distributions are \ZHnew{obtained} by using variance scale mixture with gamma, discrete, and beta distributions respectively \cite{da2011skew}. The SN distribution and its extensions are widely used in risk management \cite{vernic2006multivariate}, asset pricing \cite{carmichael2013asset}, and portfolio selection \cite{adcock2014mean}. We refer the reader to \cite{azzalini2013skew, davila2018finite} for more details about skewed distributions and the references therein.

In recent years, the CCO with skewed distributions has attracted widespread attention. Among all the skew distributions in CCO, Gaussian mixture distributions have gained the most interest and are extensively applied to power and energy dispatching problems \cite{ke2015novel, yang2019analytical, shi2022day}. Only a few papers explore the CCO with skewed distributions. Peng et al., \cite{peng2021chance} considered a chance constrained game (CCG) with a finite mixture of elliptical distributions. They proved the existence of a Nash equilibrium of the CCG by using a fixed-point theorem. Further, Nguyen, Lisser and Singh \cite{nguyen2024random} extended the result into the case with a NMVM distribution. Considering a CCO with Gaussian mixture distributions, Lasserre and Welsser \cite{lasserre2021distributionally} provided a sequence of inner approximations about its \ZH{feasible set} and proved the strong asymptotic convergence of the sequence by using a Moment-Sums-of-Squares method. Tong, Subramanyam and Rao \cite{tong2022optimization} provided a sampling-free method based on large deviation theory to solve a rare CCO with Gaussian mixture distributions. Liu et al., \cite{liu2022chance} presented the first result about the eventual convexity of chance constrained problems with Gaussian mixture distributions. They provided a sequential convex approximation method to solve the CCO and its extension \ZHnew{to} GH distributions, which can be regarded as a  Gaussian mixture distribution with infinite summation.

To the best of our knowledge, no attempt was made to derive the eventual convexity of chance constraints with skewed distributions except in the cases with Gaussian mixture distributions \cite{liu2022chance}. The key challenge in studying general skew distributions \ZHnew{lies in that} complexity, the intricate form of Bessel functions \ZHnew{embedded} within the definition of generalized hyperbolic distributions. \ZHnew{In this paper, we} prove the eventual convexity of chance constraints with general NMVM distributions and its special form GH distributions under mild assumptions. We prove the $\alpha$-decreasing property of GH distributions' densities, extending the eventual convexity of the elliptical cases to the GH distribution situation with a skewness parameter orthogonal to the domain of decision vectors. Using a radial decomposition method, we \ZHnew{show} the eventual convexity of the NMVM distribution's case with bounded scale-valued mixture random vectors. Finally, by combining the above two eventual convexity results about skewed distribution cases, we \ZHnew{obtain a} more general eventual convexity of GH distributions' case with skewness parameter vector non-orthogonal to the domain of decision vectors.

The paper is organized as follows: Section \ref{Preliminaries} presents some preliminaries about symmetric and asymmetric distributions, copulas, and generalized concavity. In Section \ref{Reformulation of chance constraints}, a reformulation of chance constraints is indicated. In Section \ref{Concavity of H_{i}(x)}, the concavity of an auxiliary function $F\circ g$ is proved, and the existence of best thresholds to the concavity is stated. Section \ref{Convexity of Ui(x, yi)} is devoted to show the convexity of \ZHnew{an additional} auxiliary function. In Section \ref{Eventual convexity with elliptical distributions}, we derive the convexity of feasible sets with elliptical symmetric distributions. In Section \ref{Eventual convexity with skewed distributions}, we present the results about the eventual convexity with several different kinds of skewed distributions. In Section \ref{Examples}, we provide an approach to create a strictly convex, positive, and second differentiable function. It can be seen that a specific copula dependent on the decision vector $x$ is available. Then, we present a simulation about the eventual convexity under elliptical distributions. Section \ref{Examples2} provides some examples to show the threshold $\sqrt{\theta^{*}}$ obtained \ZHnew{by our main results.}

\section{Eventual convexity with elliptical distributions and copulas}\label{Main results about eventual convexity with elliptical distributions}
In this section, we presents our main results about eventual convexity under elliptical distributions. In Section \ref{Preliminaries}, we briefly introduce some basic concepts \ZHnew{related to elliptical distributions and copula}. Section \ref{Reformulation of chance constraints} contains a brief discussion of the densities of GH distributions and provides a reformulation of the chance constraints problem \eqref{CCP}. Sections \ref{Concavity of H_{i}(x)} and \ref{Convexity of Ui(x, yi)} \ZHnew{show} the concavity and convexity of two auxiliary functions, respectively. \ZHnew{The latter} are used to \ZHnew{prove} the convexity of the feasible set of the copula \ZHnew{based} chance constraint with elliptical distributions in Section \ref{Eventual convexity with elliptical distributions}.

\subsection{Preliminaries}\label{Preliminaries}
We start by recalling the concept of elliptical distributions. For more details, we refer the reader to \cite[Chapter 2]{fang1990symmetric} as a standard reference.
\begin{definition} An $N$-dimensional random vector $\varsigma$ has a spherical distribution \ZHnew{if and only if} its characteristic function $\Psi(t)$ satisfies the following condition:
\item There exists a function $\Phi(\cdot) : \mathbb{R} \rightarrow \mathbb{R}$, called the \textbf{characteristic generator} of the spherical distribution, such that
\begin{equation*}
\Psi(t) = E(e^{it^{T}\varsigma}) = \Phi(\| t \|^{2}).
\end{equation*}
\end{definition}

\begin{definition}
An $N$-dimensional random vector $\varsigma$ follows an \textbf{elliptical distribution} with location parameter $\mu$, positive definite scale matrix $\Sigma$ and characteristic generator, \ZHnew{if and only if} we have the following representation
\begin{equation*}
\varsigma \overset{\textbf{d}}{=}\mu + A Z,
\end{equation*}
where $Z$ follows a spherical distribution with a characteristic generator $\Phi$, $A \in \mathbb{R}^{N \times N}$ such that $A A^{T} = \Sigma$, and $\mu \in \mathbb{R}^{N}$; $\overset{\textbf{d}}{=}$ means that both sides have the same distribution. We denote it by $\varsigma \sim EC_{N}(\mu, \Sigma, \Phi)$.
\end{definition} 
The family of elliptical distributions \ZHnew{is comprised} of many classical distribution functions, \ZHnew{e.g.,} Gaussian distributions, Student distributions, and Laplace distributions. To describe the dependence between rows of random vectors, we introduce some basic concepts about copulas. For a deeper discussion of copulas, we refer the reader to the monograph \cite{nelsen2006introduction}.
\begin{definition}
A $K$-dimensional \textbf{copula} is a distribution function $C : [0, 1]^{K} \rightarrow [0, 1]$ of some random vector whose marginals are uniformly distributed on $[0, 1]$.
\end{definition}
An important property of copulas is that each copula is dominated by the minimum component of the variable, where the minimum component is itself a copula \ZHnew{called} maximum copula, described \ZHnew{by} the following proposition.
\begin{proposition}(Frechet-Hoeffding upper bound)\cite[Page 11]{nelsen2006introduction}\label{Frechet-Hoeffding upper bound} For each copula $C$ and $u = (u_{1}, ..., u_{K})$, \ZHnew{we have}
\begin{equation*}
C(u) \leq \min_{1 \leq i \leq K} u_{i}.
\end{equation*}
\end{proposition}
\begin{proposition}(Sklar's theorem)\cite[Theorem 2.3.3]{nelsen2006introduction}\label{Sklar's Theorem.}
For each distribution function $F : \mathbb{R}^{K} \rightarrow [0, 1]$ with marginals $\lbrace F_{i} \rbrace_{1 \leq i \leq K}$, there exists a $K$-dimensional copula $C$ satisfying
\begin{equation*}
F(x) = C( F_{1}(x_{1}), ... , F_{K}(x_{K})  ), \ \ \forall x \in \mathbb{R}^{K}.
\end{equation*}
Moreover, if $F_{i}$, $i = 1, ... K$ is continuous, then the copula $C$ is uniquely given by
\begin{equation*}
C(u) = F ( F^{-1}_{1}(u_{1}), ... , F^{-1}_{K}(u_{K}) ), \ \ F^{-1}_{i}(t) := \underset{ F_{i}(r) \geq r }{\inf} r.
\end{equation*}
\end{proposition}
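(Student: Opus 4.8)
The plan is to obtain existence by an explicit probabilistic construction, and to read off the uniqueness statement and the displayed formula from that construction in the continuous case. First I would realize $F$ as the joint distribution function of an actual random vector $X=(X_1,\dots,X_K)$ on some probability space --- for instance on the canonical space $(\mathbb{R}^K,\mathrm{d}F)$, or via Kolmogorov's extension theorem --- so that each coordinate $X_i$ has distribution function $F_i$. The copula $C$ will then be manufactured as the joint distribution function of suitably ``uniformized'' versions of the $X_i$, and the whole argument reduces to carrying out this uniformization correctly.

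In the continuous case the uniformization is the classical probability integral transform. Set $U_i := F_i(X_i)$. Continuity of $F_i$ makes $U_i$ uniform on $[0,1]$, and one checks that the events $\{X_i\le x_i\}$ and $\{U_i\le F_i(x_i)\}$ differ by a $\mathbb{P}$-null event for every $x_i$ (the only discrepancy comes from flat stretches of $F_i$, which carry no mass). Taking $C$ to be the joint distribution function of $(U_1,\dots,U_K)$ --- a genuine $K$-copula, since each $U_i$ is uniform on $[0,1]$ --- gives
\begin{equation*}
C\big(F_1(x_1),\dots,F_K(x_K)\big)=\mathbb{P}\big(U_1\le F_1(x_1),\dots,U_K\le F_K(x_K)\big)=\mathbb{P}\big(X_1\le x_1,\dots,X_K\le x_K\big)=F(x).
\end{equation*}
For uniqueness: if $C'$ is any copula with $C'\big(F_1(x_1),\dots,F_K(x_K)\big)=F(x)$, then $C$ and $C'$ agree on the product $\mathrm{Ran}(F_1)\times\cdots\times\mathrm{Ran}(F_K)$, which for continuous $F_i$ is dense in $[0,1]^K$; since every copula is $1$-Lipschitz in each coordinate, $C=C'$ on all of $[0,1]^K$. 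Substituting $x_i=F_i^{-1}(u_i)$ and using $F_i\big(F_i^{-1}(u_i)\big)=u_i$, which holds because $F_i$ is continuous, produces the stated formula $C(u)=F\big(F_1^{-1}(u_1),\dots,F_K^{-1}(u_K)\big)$.

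The genuinely delicate part is the general case, in which a marginal $F_i$ may have atoms and the equivalence between $\{X_i\le x_i\}$ and $\{F_i(X_i)\le F_i(x_i)\}$ fails at the jump points. The standard remedy is the randomized distributional transform: enlarge the probability space by uniform $[0,1]$ variables $V_1,\dots,V_K$ independent of $X$, and set
\begin{equation*}
U_i:=F_i(X_i-)+V_i\big(F_i(X_i)-F_i(X_i-)\big),
\end{equation*}
where $F_i(t-)$ denotes the left-hand limit. The key lemma is that each $U_i$ is then exactly uniform on $[0,1]$ and $X_i=F_i^{-1}(U_i)$ almost surely; granting this, $\{U_i\le F_i(x_i)\}$ and $\{X_i\le x_i\}$ coincide up to a null event, so letting $C$ be the joint distribution function of $(U_1,\dots,U_K)$ again yields $C\big(F_1(x_1),\dots,F_K(x_K)\big)=F(x)$. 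Verifying the uniformity of the $U_i$ and this almost-sure inversion in the presence of atoms is where essentially all the work lies; the rest is bookkeeping. Note that $C$ is no longer unique in this case --- it is pinned down only on $\mathrm{Ran}(F_1)\times\cdots\times\mathrm{Ran}(F_K)$ --- which is exactly why the proposition asserts uniqueness only under continuity of the margins.
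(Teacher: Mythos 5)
The paper does not prove this proposition at all: it is imported verbatim as a citation to Nelsen's monograph, whose own argument is analytic --- one first builds a subcopula on $\mathrm{Ran}(F_1)\times\cdots\times\mathrm{Ran}(F_K)$ from the joint distribution and then extends it to all of $[0,1]^K$ by multilinear interpolation. Your proposal takes the alternative probabilistic route via the (randomized) distributional transform, and in outline it is correct: the continuous case via $U_i=F_i(X_i)$, the density of $\mathrm{Ran}(F_1)\times\cdots\times\mathrm{Ran}(F_K)$ in $[0,1]^K$ combined with the $1$-Lipschitz property of copulas for uniqueness, and the identity $F_i(F_i^{-1}(u_i))=u_i$ for continuous margins are all standard and sound, and your null-set bookkeeping (the discrepancy events $\{U_i\le F_i(x_i),\,X_i>x_i\}$ force $U_i$ to hit a fixed value of a uniform variable, hence have probability zero) goes through. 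What your write-up buys over Nelsen's is a construction that produces a concrete copula as the law of $(U_1,\dots,U_K)$ and makes the non-uniqueness in the atomic case transparent; what it costs is that you must enlarge the probability space and prove the key lemma that the randomized transform $U_i=F_i(X_i-)+V_i\bigl(F_i(X_i)-F_i(X_i-)\bigr)$ is exactly uniform with $X_i=F_i^{-1}(U_i)$ almost surely. You correctly flag that this lemma is where the real work sits, but you do not carry it out, so as written the general (non-continuous) case is a plan rather than a proof; for the purposes of this paper that is immaterial, since the result is used only as a black box to pass between joint distributions and copulas.
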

The Sklar's theorem builds a bridge between distribution functions and copulas, meaning we can define a distribution function of a given copula or a copula by providing a distribution function. In the following, we present a special family of copulas.
\begin{definition}
A copula $C$ is called \textbf{Archimedean} \ZHnew{if and only if} there exists a continuous strictly decreasing function $\Phi : [0, 1] \rightarrow [0, +\infty)$ such that $\Phi(1) = 0$ and
\begin{equation*}
C(u) = \Phi^{-1}\left( \sum\limits^{K}_{i=1} \Phi \left( u_{i} \right) \right),
\end{equation*}
$\Phi$ is called the generator of the Archimedean copula $C$. If $\lim_{u \rightarrow 0} \Phi(u) = +\infty$, then $C$ is called a strict Archimedean copula.
\end{definition}
The Archimedean copula is an important \ZHnew{copula} \ZHnew{thanks to its} components \ZHnew{which} can be separated. \ZHnew{We} list some examples of Archimedean copulas with \ZHnew{their} corresponding generators in Appendix \ref{list of Archimedean copulas}.

\subsection{Reformulation of chance constraints}\label{Reformulation of chance constraints}
In order to get the convexity of $S(p)$, we \ZHnew{reformulate} \eqref{Fea-1} as follows. \ZHnew{Let} $v_{i} \thicksim EC_{N}({\mu}_{i}, {\Sigma}_{i}, {\Phi}_{i}) $ and $x \in X$. If $x \neq 0$, then we can set
\begin{equation}\label{Fea-2}
\xi_{i}(x) := \frac{v_{i}^{T}x - \mu_{i}^{T}x}{\sqrt{x^{T} \Sigma_{i} x}}, \ \ g_{i}(x):=\frac{D_{i} - \mu_{i}^{T}x}{\sqrt{x^{T}\Sigma_{i} x}}.
\end{equation}
Thus, the chance constraint in \eqref{CCP} can be rewritten as follows
\begin{equation*}
\mathbb{P}\left(\xi_{i} \leq g_{i}(x), \ \ i = 1, \cdots, K \right) \geq p.
\end{equation*}
It can be shown that $\xi_{i}(x)$ follows 1-dimensional spherical distribution with characteristic generator $\Phi_{i}$ \cite[Section 2.1]{fang1990symmetric}. Further, by using Proposition \ref{Sklar's Theorem.} \ZHnew{of} Sklar's Theorem, we can obtain a $K-$dimensional copula $C_{x}$ of $\xi(x) = [\xi_{1}(x), ... , \xi_{K}(x)]^{T}$, \ZHnew{i.e.,}
\begin{equation}\label{Fea-3}
C_{x}[F_{1}(g_{1}(x)), ... , F_{K}(g_{K}(x))] \geq p,
\end{equation}
where $F_{i}$ is the cumulative distribution function of $\xi_{i}(x), i = 1, ... , K$. Assume that $C_{x}$ is a strictly Archimedean copula with generator $\psi_{x}$. Then, the definition of Archimedean copulas  implies a reformulation of \eqref{Fea-3} as follows
\begin{equation}\label{Fea-4}
\psi^{(-1)}_{x}\left( \sum\limits^{K}_{i=1} \psi_{x}(F_{i}(g_{i}(x))) \right) \geq p,
\end{equation}
Since $\psi_{x}$ is non-negative and decreasing, we can further reformulate \eqref{Fea-4} \ZHnew{as the following system} \cite{cheng2014second}
\begin{equation}\label{Fea-6}
\left\{
        \begin{array}{ll}
        &\psi_{x}(F_{i}(g_{i}(x))) \leq y_{i}\psi_{x}(p),\\
        &\sum\limits^{K}_{i=1} y_{i} = 1, \ y_{i} \geq 0, \  i= 1, ..., K.
        \end{array}
\right.
\end{equation}
From \eqref{Fea-6} we can see that $x^{*} \in S(p)$ if and only if there exists $y^{*} = [y^{*}_{1}, ..., y^{*}_{K}]^{T} \in \mathbb{R}^{K}$ and $x^{*} \in X$ such that $(x^{*}, y^{*})$ is a feasible solution of \eqref{Fea-6}. To satisfy the constraints of \eqref{Fea-6}, we will restrict ourselves here to $K$ constants $y_{i} \in (0, 1)$ for the sake of simplicity. Applying the decreasing monotonicity of the generator $\psi_{x}$ again, we can rewrite \eqref{Fea-6} as follows \ZH{\cite{nguyen2023convexity}}
\begin{equation}\label{Fea-7}
\left\{
        \begin{array}{ll}
        &F_{i}(g_{i}(x)) \geq  \psi^{(-1)}_{x} (y_{i}\psi_{x}(p)),\\
        &\sum\limits^{K}_{i=1} y_{i} = 1, \ y_{i} \geq 0, \  i= 1, ..., K.
        \end{array}
\right.
\end{equation}
In the remainder of this paper, we assume $C_{x}$ to be a Gumbel-Hougaard copula defined according to the following assumption.
\begin{assumption}
$C_{x}$ is a Gumbel-Hougaard copula on $X$ if the generator $\psi_{x}$ is given by
\begin{equation*}
\psi_{x}(t) = \left( -\ln t \right)^{\frac{1}{\kappa(x)}}, \ \forall (x, t) \in X \times (0, 1],
\end{equation*}
where $\kappa(x): X \rightarrow (0, 1]$ is a strictly positive function. 
\end{assumption}

Define $H_{i}(x) := F_{i}(g_{i})(x)$ and $U(x, y_{i}) := \psi^{(-1)}_{x} (y_{i} \psi_{x}(p))$. Using \eqref{Fea-7}, we can see that the convexity of \eqref{Fea-1} can be obtained once we prove the concavity of $H_{i}$ and \ZH{the} convexity of $U$. Thus, in the next two sections, we will provide the proofs for the concavity of $H_{i}$ and convexity of $U$, respectively.

\subsection{Concavity of $H_{i}(x)$}\label{Concavity of H_{i}(x)}
In this section, we prove the convexity of $H_{i}(x) := F_{i}(g_{i})(x)$. We assume $F_{i}$ is a $r$-revealed-concavity function and show the $r$-concavity \cite{shapiro2009lectures} of $g_{i}$ with some real number $r$. We \ZHnew{start} with a necessary and sufficient condition for local $r$-concavity of $g_{i}$ defined by \eqref{Fea-2}, which is presented in Appendix \ref{proof of Lemma about Hi-0}. \ZHnew{Note} that the convexity of $\text{sign}(-r) \cdot g_{i}$ is just the $r$-concavity of $g_{i}$ in the \ZHnew{remaining} of this paper. 
\begin{lemma}\label{Lemma about Hi-1}
Let $r \in \mathbb{R}$, $b \in \mathbb{R}$, $\mu \in \mathbb{R}^{N}$. Assume $\Sigma$ is a positive definite covariance matrix. Suppose $f$ is a function defined on the set $E := \lbrace x \in \mathbb{R}^{N} \setminus \lbrace o \rbrace : b - \mu^{T}x > 0 \rbrace$ as follows
\begin{equation*}
f(x) := {\left(\frac{b - \mu^{T}x}{\sqrt{x^{T}\Sigma x}}\right)}^{r}.
\end{equation*}
If $r = 0$, then $f$ is locally concave on $E$; If $r \neq 0$, then $\text{sign(-r)} \cdot f$ is locally convex on $E$ if and only if :
\begin{equation}\label{sufficient and necessary condition r<2 and r neq 0}
\mu^{T}\Sigma^{-1}\mu \leq (2-r)\frac{(\mu^{T}x)^{2}}{x^{T}\Sigma x} - 2r\sqrt{\theta} \frac{\mu^{T}x}{\sqrt{x^{T}\Sigma x}} - (r + 1)\theta,
\end{equation}
where $\theta := \frac{\left( b - \mu^{T}x \right)^{2}}{x^{T} \Sigma x}$.
The function $\ln ((b - \mu^{T}x)/ \sqrt{x^{T} \Sigma x})$ is locally concave on $E$ if and only if:
\begin{equation}\label{sufficient and necessary condition ln}
\mu^{T} \Sigma^{-1} \mu \leq 2 \cdot \frac{\left( \mu^{T} x \right)^{2} }{x^{T} \Sigma x} - \theta.
\end{equation}
\end{lemma}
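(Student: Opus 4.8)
If $r=0$, then on $E$ the base $b-\mu^{T}x>0$, so $f\equiv 1$, which is concave, and there is nothing to prove. For the other two assertions, observe that $f$ and $\ln\!\bigl((b-\mu^{T}x)/\sqrt{x^{T}\Sigma x}\bigr)$ are $C^{2}$ on the open set $E$, so local convexity (resp.\ concavity) on $E$ is equivalent to the Hessian being positive (resp.\ negative) semidefinite at every point of $E$; I therefore fix $x\in E$ and look for a pointwise criterion. Put $a:=b-\mu^{T}x>0$ and $q:=x^{T}\Sigma x>0$, so $f=a^{r}q^{-r/2}$ and $\ln f=r\ln a-\tfrac{r}{2}\ln q$. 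Using $\nabla a=-\mu$, $\nabla q=2\Sigma x$ and $\nabla^{2}f=f\bigl(\nabla^{2}\ln f+\nabla\ln f\,(\nabla\ln f)^{T}\bigr)$, a direct computation gives $\nabla^{2}f=-r\,f\,M$ with
\begin{equation*}
M:=\frac{\Sigma}{q}-(r-1)\frac{\mu\mu^{T}}{a^{2}}-(r+2)\frac{\Sigma x x^{T}\Sigma}{q^{2}}-r\,\frac{\mu x^{T}\Sigma+\Sigma x\mu^{T}}{aq}.
\end{equation*}
Since $r\cdot\text{sign}(-r)=-|r|$ and $f>0$, we have $\text{sign}(-r)\,\nabla^{2}f=|r|\,f\,M$, so $\text{sign}(-r)\cdot f$ is locally convex at $x$ if and only if $M\succeq 0$. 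In the same way, $\ln\!\bigl((b-\mu^{T}x)/\sqrt{x^{T}\Sigma x}\bigr)$ is locally concave at $x$ if and only if $\mu\mu^{T}/a^{2}+\Sigma/q-2\Sigma x x^{T}\Sigma/q^{2}\succeq 0$.

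The next step is to reduce these $N$-dimensional conditions to two-dimensional ones. Substituting $v=\Sigma^{1/2}d$ and introducing the unit vector $u:=\Sigma^{1/2}x/\sqrt{q}$ and $w:=\Sigma^{-1/2}\mu$, the quadratic form $q\,d^{T}Md$ equals $\|v\|^{2}-(r+2)(u^{T}v)^{2}-\theta^{-1}(r-1)(w^{T}v)^{2}-2r\,\theta^{-1/2}(w^{T}v)(u^{T}v)$; the subtracted part depends only on the orthogonal projection of $v$ onto $\mathrm{span}\{u,w\}$, whereas $\|v\|^{2}$ only grows with components orthogonal to that plane, so the form is nonnegative on $\mathbb{R}^{N}$ iff it is nonnegative on $\mathrm{span}\{u,w\}$. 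Writing $w=\alpha u+\beta e$ with $e\perp u$ a unit vector and (after fixing the sign of $e$) $\beta\ge 0$, one has $\alpha=\mu^{T}x/\sqrt{x^{T}\Sigma x}$ and $\alpha^{2}+\beta^{2}=\|w\|^{2}=\mu^{T}\Sigma^{-1}\mu$ (so $\beta^{2}\ge 0$ is exactly Cauchy--Schwarz); in the basis $\{u,e\}$ the form is represented by the symmetric $2\times 2$ matrix $P$ with entries $P_{11}=-(r+1)-(r-1)\alpha^{2}/\theta-2r\alpha/\sqrt{\theta}$, $P_{22}=1-(r-1)\beta^{2}/\theta$ and $P_{12}=-\beta\theta^{-1}\bigl((r-1)\alpha+r\sqrt{\theta}\bigr)$. (Equivalently, one may first normalize $\Sigma=I$ by the change of variables $y=\Sigma^{1/2}x$ and then rotate $\Sigma^{-1/2}\mu$ onto a coordinate axis; the rotational symmetry in the remaining coordinates again confines the worst test directions to the $(u,w)$-plane.)

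The decisive point is then an algebraic identity. Expanding $\det P=P_{11}P_{22}-P_{12}^{2}$, the coefficient of $\beta^{2}$ is $(r-1)\bigl[(r+1)\theta+(r-1)\alpha^{2}+2r\alpha\sqrt{\theta}\bigr]-\bigl[(r-1)\alpha+r\sqrt{\theta}\bigr]^{2}$, which collapses to $-\theta$ once the $\alpha^{2}$- and $\alpha\sqrt{\theta}$-terms cancel; hence $\det P=P_{11}-\beta^{2}/\theta$. Because a symmetric $2\times 2$ matrix is positive semidefinite exactly when its trace and determinant are nonnegative, and because $\det P\ge 0$ already forces $P_{11}=\det P+\beta^{2}/\theta\ge 0$ (and then $P_{22}\ge 0$, the degenerate case $P_{11}=0$, i.e.\ $\beta=0$, being trivial), we conclude $P\succeq 0\iff\det P\ge 0\iff\theta P_{11}-\beta^{2}\ge 0$. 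Substituting $\theta P_{11}=-(r+1)\theta-(r-1)\alpha^{2}-2r\alpha\sqrt{\theta}$ and then $\alpha=\mu^{T}x/\sqrt{x^{T}\Sigma x}$, $\alpha^{2}+\beta^{2}=\mu^{T}\Sigma^{-1}\mu$ turns $\theta P_{11}-\beta^{2}\ge 0$ into exactly \eqref{sufficient and necessary condition r<2 and r neq 0}. For the logarithmic function the identical reduction produces a $2\times 2$ matrix with determinant $\alpha^{2}/\theta-\beta^{2}/\theta-1$ and trace $(\alpha^{2}+\beta^{2})/\theta\ge 0$, so its positive semidefiniteness is equivalent to $\alpha^{2}-\beta^{2}\ge\theta$, which rearranges to \eqref{sufficient and necessary condition ln}.

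I expect the main obstacle to be the bookkeeping in the last step: carrying out the expansion of $\det P$ so that the cancellation down to $P_{11}-\beta^{2}/\theta$ is clearly visible, and keeping the signs correct through the factor $\text{sign}(-r)$ and the normalizations by $f$ and by $r$. A secondary point needing care is the justification that it suffices to test on $\mathrm{span}\{u,w\}$, together with the treatment of the degenerate configurations $\mu=0$ and $\mu\parallel\Sigma x$ (where $\beta=0$ and $P$ drops rank, but the conclusion is unaffected).
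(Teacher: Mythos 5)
Your argument is correct, and I checked the two computations it hinges on: the Hessian factorization $\nabla^{2}f=-r\,f\,M$ agrees (after the substitution $V=\Sigma^{1/2}x$, $W=\Sigma^{-1/2}\mu$) with the matrix $A$ in the paper, and the identity $\det P=P_{11}-\beta^{2}/\theta$ does hold after the advertised cancellations, so $\theta P_{11}-\beta^{2}\ge 0$ together with $\beta^{2}=\mu^{T}\Sigma^{-1}\mu-\alpha^{2}$ reproduces \eqref{sufficient and necessary condition r<2 and r neq 0} exactly, and the analogous $2\times 2$ computation gives \eqref{sufficient and necessary condition ln}. Your route is, however, genuinely different from the paper's. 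The paper conjugates the Hessian by $\Sigma^{-1/2}$ to get $H=\pm Z Z^{T}\pm Y Y^{T}-I$ for vectors $Z,Y$ whose definitions change across the five regimes $r>2$, $r=2$, $-2<r<2$, $r=-2$, $r<-2$, and then invokes a separate auxiliary lemma computing the eigenvalues of rank-two symmetric matrices $ZZ^{T}\pm YY^{T}$ from their characteristic polynomials; the semidefiniteness condition "largest eigenvalue $\le 1$" is then unwound case by case into the same inequality. You instead observe that the quadratic form differs from $\|v\|^{2}$ only on $\mathrm{span}\{u,w\}$, restrict to that plane, and apply the trace/determinant criterion to a single $2\times 2$ matrix, which handles all values of $r$ uniformly and makes the final inequality drop out of one determinant identity. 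What your approach buys is the elimination of the case analysis on $r$ and of the separate eigenvalue lemma; what the paper's approach buys is that its rank-two eigenvalue formulas are stated as a standalone lemma that is reused elsewhere in the appendix. The degenerate configurations you flag ($\mu=0$, or $w\parallel u$ so that $\beta=0$) are indeed harmless: there $P_{12}=0$ and $P_{22}=1$, so positive semidefiniteness reduces to $P_{11}\ge 0$, which is the $\beta=0$ instance of your criterion.
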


\begin{remark}
We can see that Lemma 3 in \cite{cheng2014second} is a \ZHnew{special} case of Lemma \ref{Lemma about Hi-1} with $\mu = 0$ and $r = -1$ since each locally convex function is convex on convex sets. Thus, the result of Lemma \ref{Lemma about Hi-1} implies  $\text{sign}(-r) \cdot f$ is convex on any convex subset of $E$.
\end{remark}

It can be noticed that the form of these necessary and sufficient conditions in Lemma \ref{Lemma about Hi-1} \ZH{are non-linear}. Thus, the threshold $p^{*}$ to the convexity of $\text{sign}(-r)\cdot f$ can not be induced by these conditions easily. In the following, we will provide a geometric approach to find the \ZH{best lower bound} of $\theta$ such that $\text{sign}(-r)\cdot f$ is convex. First, we provide a new reformulation for \eqref{sufficient and necessary condition r<2 and r neq 0} from a geometric perspective.

\begin{lemma}\label{reformualtion of the necessary and sufficient conditions again}
Assume the assumptions of Lemma \ref{Lemma about Hi-1} hold. Then, we can reformulate \eqref{sufficient and necessary condition r<2 and r neq 0} as follows
\begin{equation}\label{2 transformation of key sufficient and necessary conndition inequality}
\mu^{T} \Sigma^{-1} \mu \leq \frac{\| \mu \|^{2} \cdot \text{cos}^{2} \beta}{\lambda(x)} + \frac{2b \cdot \| \mu \| \cdot \text{cos}\beta}{\lambda(x)} \cdot \frac{1}{\| x \|} + \frac{\left( -1 - r \right)b^{2}}{\lambda(x)} \cdot \frac{1}{\| x \|^{2}}.
\end{equation}
Assume $\mu \neq 0$. Define $g(x) := (b - \mu^{T}x)/\sqrt{x^{T}\Sigma x}$. Define $\lambda_{\mu, \text{min}}$ as follows
\begin{equation}\label{the definition of lamda_mu_min}
\frac{1}{\sqrt{\lambda_{\mu, \text{min}}}} = \max_{x \in \mathbb{S}^{N-1}} \frac{\mu \cdot x}{\| \mu \|} \cdot \frac{1}{\sqrt{x^{T} \Sigma x}},
\end{equation}
where $\mathbb{S}^{N-1}$ is the $N-1$-dimensional unit sphere in $\mathbb{R}^{N}$. We denote $x_{\text{min}} \in \mathbb{S}^{N-1}$ the eigenvector of $\Sigma$ corresponding to $\lambda_{\text{min}}$. Then, $\lambda_{\text{min}} \leq \lambda_{\mu, \text{min}} \leq \lambda_{\text{max}}$. In particular, when $\mu/\| \mu \| = x_{\text{min}}$, we can see $\lambda_{\mu, \text{min}} = \lambda_{\text{min}}$. Let $h$ be the right hand side of \eqref{2 transformation of key sufficient and necessary conndition inequality}. Then, the $g$ and $h$ can be reformulated as follows
\begin{equation}\label{h function's transformation with overline t and overline s}
h(\overline{t}, \overline{s}) = \| \mu \|^{2} \cdot \overline{s}^{2} + 2b \cdot \| \mu \| \cdot \overline{t} \cdot \overline{s} + (-1 - r) \cdot b^{2} \cdot \overline{t}^{2},
\end{equation}
\begin{equation}\label{g function's transformation with overline t and overline s}
g(\overline{t}, \overline{s}) = b \cdot \overline{t} - \| \mu \| \cdot \overline{s},
\end{equation}
where \ZH{$\overline{t}:=1/\sqrt{x^{T}\Sigma x}$, $\overline{s}:= cos(\beta) \cdot \| x \| / \sqrt{x^{T} \Sigma x}$}, $\overline{t} \in (0, +\infty)$ and $\overline{s} \in [-1/ \sqrt{\lambda_{\mu, \text{min}}}, 1 / \sqrt{\lambda_{\mu, \text{min}}}]$. 
\end{lemma}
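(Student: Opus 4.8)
The plan is to establish, in order, the three assertions packed into the statement: (i) the algebraic identity transforming \eqref{sufficient and necessary condition r<2 and r neq 0} into \eqref{2 transformation of key sufficient and necessary conndition inequality}; (ii) the sandwich $\lambda_{\text{min}}\le\lambda_{\mu,\text{min}}\le\lambda_{\text{max}}$ together with the equality case $\mu/\|\mu\|=x_{\text{min}}$; and (iii) the fact that the substitution $\overline t=1/\sqrt{x^{T}\Sigma x}$, $\overline s=\cos\beta\cdot\|x\|/\sqrt{x^{T}\Sigma x}$ sends $g$ and the right-hand side $h$ of \eqref{2 transformation of key sufficient and necessary conndition inequality} to the forms \eqref{g function's transformation with overline t and overline s} and \eqref{h function's transformation with overline t and overline s}, with the stated ranges of $(\overline t,\overline s)$. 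Here $\beta$ denotes the angle between $x$ and $\mu$, so that $\mu^{T}x=\|\mu\|\,\|x\|\cos\beta$, and $\lambda(x)=x^{T}\Sigma x/\|x\|^{2}$.

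For (i), I would use that on $E$ one has $b-\mu^{T}x>0$, hence $\sqrt{\theta}=(b-\mu^{T}x)/\sqrt{x^{T}\Sigma x}$, so the middle term of \eqref{sufficient and necessary condition r<2 and r neq 0} equals $2r(b-\mu^{T}x)(\mu^{T}x)/(x^{T}\Sigma x)$. Putting the whole right-hand side over the common denominator $x^{T}\Sigma x$ and using $\theta\cdot(x^{T}\Sigma x)=(b-\mu^{T}x)^{2}$, the numerator is $(2-r)(\mu^{T}x)^{2}-2r(b-\mu^{T}x)(\mu^{T}x)-(r+1)(b-\mu^{T}x)^{2}$; expanding in powers of $\mu^{T}x$ and $b$, all $r$-dependent contributions cancel except in the coefficient of $b^{2}$, and the numerator collapses to $(\mu^{T}x)^{2}+2b\,(\mu^{T}x)-(r+1)b^{2}$. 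Substituting $\mu^{T}x=\|\mu\|\,\|x\|\cos\beta$ and $x^{T}\Sigma x=\lambda(x)\|x\|^{2}$ and splitting the three terms gives \eqref{2 transformation of key sufficient and necessary conndition inequality} exactly; the same computation with $r=0$ reproduces \eqref{sufficient and necessary condition ln}, so the reformulation is uniform in $r$.

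For (ii), set $u=\mu/\|\mu\|$, so $1/\sqrt{\lambda_{\mu,\text{min}}}=\max_{x\in\mathbb{S}^{N-1}}u^{T}x/\sqrt{x^{T}\Sigma x}$. Cauchy–Schwarz gives $u^{T}x\le\|x\|=1$ while $x^{T}\Sigma x\ge\lambda_{\text{min}}$ on $\mathbb{S}^{N-1}$, so the maximum is $\le 1/\sqrt{\lambda_{\text{min}}}$, i.e. $\lambda_{\mu,\text{min}}\ge\lambda_{\text{min}}$; testing $x=u$ and using $u^{T}\Sigma u\le\lambda_{\text{max}}$ gives $1/\sqrt{\lambda_{\mu,\text{min}}}\ge 1/\sqrt{\lambda_{\text{max}}}$, i.e. $\lambda_{\mu,\text{min}}\le\lambda_{\text{max}}$. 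When $u=x_{\text{min}}$, expanding $x$ in an orthonormal eigenbasis of $\Sigma$ as $x=\sum_i c_i e_i$ with $\sum_i c_i^{2}=1$ gives $u^{T}x/\sqrt{x^{T}\Sigma x}=c_1/\sqrt{\sum_i\lambda_i c_i^{2}}\le 1/\sqrt{\lambda_{\text{min}}}$, with equality at $x=x_{\text{min}}$, hence $\lambda_{\mu,\text{min}}=\lambda_{\text{min}}$. For (iii), note first that $\overline s=\cos\beta\cdot\|x\|/\sqrt{x^{T}\Sigma x}=(\mu^{T}x)/(\|\mu\|\sqrt{x^{T}\Sigma x})=u^{T}x/\sqrt{x^{T}\Sigma x}$ is invariant under $x\mapsto tx$, so its range over $\mathbb{R}^{N}\setminus\{o\}$ equals its range over $\mathbb{S}^{N-1}$, which by \eqref{the definition of lamda_mu_min}, the symmetry $x\mapsto -x$, and continuity on the connected compact sphere equals $[-1/\sqrt{\lambda_{\mu,\text{min}}},\,1/\sqrt{\lambda_{\mu,\text{min}}}]$; meanwhile $\overline t$ sweeps $(0,+\infty)$ by scaling $\|x\|$. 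Plugging $\overline t,\overline s$ into the right-hand side of \eqref{2 transformation of key sufficient and necessary conndition inequality} — using $\overline s^{2}=\cos^{2}\beta/\lambda(x)$, $\overline t\,\overline s=\cos\beta/(\lambda(x)\|x\|)$, $\overline t^{2}=1/(\lambda(x)\|x\|^{2})$ — produces \eqref{h function's transformation with overline t and overline s}, and $b\overline t-\|\mu\|\overline s=(b-\mu^{T}x)/\sqrt{x^{T}\Sigma x}=g(x)$ produces \eqref{g function's transformation with overline t and overline s}.

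Most of this is bookkeeping; the point that needs care is the description of the feasible region of $(\overline t,\overline s)$. One must argue that the scale of $x$ (which controls $\overline t$) and the direction of $x$ relative to the eigenstructure of $\Sigma$ and the line $\mathbb{R}\mu$ (which controls $\overline s$) can be varied independently, so that $(\overline t,\overline s)$ ranges over the full product $(0,+\infty)\times[-1/\sqrt{\lambda_{\mu,\text{min}}},\,1/\sqrt{\lambda_{\mu,\text{min}}}]$; and — for the subsequent use of this reformulation in locating the best lower bound of $\theta$ — to carry along the constraint $b-\mu^{T}x>0$ inherited from the domain $E$, which in the new coordinates reads $b\,\overline t-\|\mu\|\,\overline s>0$. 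Attainment of the endpoints $\pm 1/\sqrt{\lambda_{\mu,\text{min}}}$ uses the compactness of $\mathbb{S}^{N-1}$, i.e. that the maximum in \eqref{the definition of lamda_mu_min} is achieved.
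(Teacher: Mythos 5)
Your proposal is correct and follows essentially the same route as the paper: the same algebraic collapse of the right-hand side of \eqref{sufficient and necessary condition r<2 and r neq 0} to the numerator $(\mu^{T}x)^{2}+2b\,\mu^{T}x-(r+1)b^{2}$ over $x^{T}\Sigma x$, the same polar substitution $(\overline{t},\overline{s})$, and the same scale-invariance/rescaling argument (direction fixes $\overline{s}$, the norm then sweeps $\overline{t}$) showing $(\overline{t},\overline{s})$ ranges over $(0,+\infty)\times[-1/\sqrt{\lambda_{\mu,\text{min}}},1/\sqrt{\lambda_{\mu,\text{min}}}]$. The only cosmetic difference is that you establish $\lambda_{\text{min}}\leq\lambda_{\mu,\text{min}}\leq\lambda_{\text{max}}$ by direct Cauchy--Schwarz plus a test vector, whereas the paper argues through the support hyperplanes of the star-shaped set $M$; both are valid.
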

The main idea of the proof for Lemma \ref{reformualtion of the necessary and sufficient conditions again} is to transform the decision vector $x$ into a polar coordinate form such that the interior link among $b$, $\mu$, $x$, and $\Sigma$ can be presented. The complete proof of Lemma \ref{reformualtion of the necessary and sufficient conditions again} is provided in Appendix \ref{proof of Lemma about Hi-0}.

Observe that for any $c \in \mathbb{R}$, we can reformulate $g(\overline{t}, \overline{s}) = c$ as follows
\begin{equation*}
\overline{s}_{1}(\overline{t}; c) : = \overline{s} = \frac{b}{\| \mu \|} \cdot \overline{t} - \frac{c}{\| \mu \|}.
\end{equation*}
An implicit function $\overline{s}_{2}(\overline{t})$ can also be defined by $h(\overline{t}, \overline{s}) = \mu^{T} \Sigma^{-1} \mu$, where $h$ is defined by \eqref{h function's transformation with overline t and overline s}. In the following lemma, we discuss the relative position between $\overline{s}_{1}$ and $\overline{s}_{2}$.
\begin{lemma}\label{tangent relationship between s1 and s2}
Let $b>0$, $\| \mu \| > 0$, $r<-1$, $c \in \mathbb{R}$. Assume $\Sigma$ is a positive definite covariance matrix. Define $\overline{s}_{1}$, induced by \eqref{g function's transformation with overline t and overline s}, as follows
\begin{equation*}
\overline{s}_{1}(\overline{t}; c) : = \overline{s} = \frac{b}{\| \mu \|} \cdot \overline{t} - \frac{c}{\| \mu \|}.
\end{equation*}
Then, at least one of the following cases can be satisfied:\\
\textbf{Case (1)} {There \ZH{exists} $(\overline{t}^{*}, \overline{s}^{*}) \in (0, +\infty) \times [-1/\sqrt{\lambda_{\mu, \text{min}}}, 1 / \sqrt{\lambda_{\mu, \text{min}}}]$ and $c^{*} \geq 0$ such that $\overline{s}_{1}(\overline{t}; c^{*})$ is tangent with $\overline{s}_{2}(\overline{t})$ at $\overline{t}^{*}$} and $\overline{s}_{1}(\overline{t}^{*}; c^{*}) =  \overline{s}^{*} = \overline{s}_{2}(\overline{t}^{*})$;\\ 
\textbf{Case (2)} {In $(0, +\infty) \times [-1/\sqrt{\lambda_{\mu, \text{min}}}, 1 / \sqrt{\lambda_{\mu, \text{min}}}]$, there exist unique $\overline{t}^{*} \geq 0$ and $c^{*} \geq 0$ such that $\overline{s}_{1}(\overline{t}; c^{*})$ and $\overline{s}_{2}(\overline{t})$ are intersected at $(\overline{t}^{*}, -1/ \sqrt{\lambda_{\mu, \text{min}}})$};\\
\textbf{Case (3)} {There is no interaction point of $\overline{s}_{1}(\overline{t}; c^{*})$ and $\overline{s}_{2}(\overline{t})$ in $(0, +\infty) \times [-1/\sqrt{\lambda_{\mu, \text{min}}}, 1 / \sqrt{\lambda_{\mu, \text{min}}}]$. Take $c^{*} = \|\mu \| / \sqrt{\lambda_{\mu, \text{min}}}$. }\\
Further, only the Case (1) and Case (2) can occur. Moreover, $c^{*}$ can be defined as follows:
\begin{equation}\label{case 2 and case 3 equation 9 & case 2 and case 3 equation 4}
c^{*} = 
\left\{
        \begin{array}{ll}
        \sqrt{\frac{-r + 2}{-r - 2}} \cdot \sqrt{\mu^{T} \Sigma^{-1} \mu}, & \text{if} \ \ r \leq r^{*}(\mu, \Sigma),\ r^{o}(\mu, \Sigma) > 1,\\      
        \ &\ \\        
        \frac{\left( -r \right) \frac{\| \mu \|}{\sqrt{\lambda_{\mu, \text{min}}}} + \sqrt{\left( 2 + r \right) \frac{\| \mu \|^{2}}{\lambda_{\mu, \text{min}}} + \left( -1 - r \right) \mu^{T} \Sigma^{-1} \mu}}{-1 - r}, & \text{otherwise},
        \end{array}
\right.
\end{equation}
where $r^{o}(\mu, \Sigma)$ and $r^{*}(\mu, \Sigma)$ are defined by
\begin{equation}\label{case 2 and case 3 equation 11}
r^{o}(\mu, \Sigma) := \frac{\| \mu \|^{2}}{\mu^{T} \Sigma^{-1} \mu}\cdot \frac{1}{\lambda_{\mu, \text{min}}}, \ \ r^{*}(\mu, \Sigma): = -2 \cdot \sqrt{\frac{1}{r^{o}(\mu, \Sigma) - 1 } + 1},
\end{equation}
In particular, given $a>0$, the case with $\Sigma = a \cdot I$ belongs to the Case (2).
\end{lemma}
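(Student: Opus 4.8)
The plan is to read $c^{*}$ geometrically. By Lemma~\ref{reformualtion of the necessary and sufficient conditions again}, condition \eqref{sufficient and necessary condition r<2 and r neq 0} at a point $x\in E$ is equivalent to $h(\overline{t},\overline{s})\geq\mu^{T}\Sigma^{-1}\mu$ in the coordinates \eqref{h function's transformation with overline t and overline s} and \eqref{g function's transformation with overline t and overline s}, where $g(x)=b\overline{t}-\|\mu\|\overline{s}$ and $(\overline{t},\overline{s})$ ranges over $\mathcal{D}:=(0,+\infty)\times[-1/\sqrt{\lambda_{\mu,\text{min}}},1/\sqrt{\lambda_{\mu,\text{min}}}]$; hence the best threshold $\theta^{*}$ ensuring \eqref{sufficient and necessary condition r<2 and r neq 0} satisfies $\sqrt{\theta^{*}}=c^{*}$, where $c^{*}$ is the maximum of the linear functional $g$ over the closed region $R:=\overline{\mathcal{D}}\cap\{h\leq\mu^{T}\Sigma^{-1}\mu\}$, and $\overline{s}_{1}(\cdot\,;c^{*})$ is the lowest line of slope $b/\|\mu\|$ still meeting $R$. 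First I would pin down the shape of $\{h\leq\mu^{T}\Sigma^{-1}\mu\}$: since $r<-1$ the leading coefficient $-1-r$ of the quadratic form $h$ is positive, so this sublevel set is a filled ellipse when $r<-2$, the region between the two branches of a hyperbola when $-2<r<-1$, and a slab between parallel lines when $r=-2$; in all three cases $h(\overline{t},\overline{s})\to+\infty$ as $\overline{t}\to\pm\infty$ with $\overline{s}$ bounded, so $R$ is compact, and it is nonempty since $h(0,0)=0<\mu^{T}\Sigma^{-1}\mu$ (because $\mu\neq0$, $\Sigma\succ0$); thus the maximum defining $c^{*}$ is attained, on $\partial R$ since $g$ is non-constant linear.

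Next I would localize the maximizer. Solving $h(\overline{t},\overline{s})=\mu^{T}\Sigma^{-1}\mu$ for $\overline{s}$ (a quadratic with positive leading coefficient $\|\mu\|^{2}$) gives $\overline{s}_{2}^{\pm}(\overline{t})=\|\mu\|^{-1}\bigl(-b\overline{t}\pm\sqrt{(r+2)b^{2}\overline{t}^{2}+\mu^{T}\Sigma^{-1}\mu}\bigr)$, and for each $\overline{t}$ for which the cross-section $\{\overline{s}:(\overline{t},\overline{s})\in R\}$ is nonempty, $g=b\overline{t}-\|\mu\|\overline{s}$ is largest at the least admissible $\overline{s}$, namely $\overline{s}=\max\{\overline{s}_{2}^{-}(\overline{t}),-1/\sqrt{\lambda_{\mu,\text{min}}}\}$. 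Consequently the maximizer lies either on the arc $\{h=\mu^{T}\Sigma^{-1}\mu\}$ with $\overline{s}>-1/\sqrt{\lambda_{\mu,\text{min}}}$ — so the supporting line $\overline{s}_{1}(\cdot\,;c^{*})$ is tangent to the conic there, which is Case~(1) — or on the wall $\{\overline{s}=-1/\sqrt{\lambda_{\mu,\text{min}}}\}$, which is Case~(2). In Case~(1) one computes the maximum of $g=v^{T}z$ with $v=(b,-\|\mu\|)$ over $\{z:z^{T}Mz\leq\mu^{T}\Sigma^{-1}\mu\}$, $M$ the matrix of $h$, through the support-function value $\sqrt{\mu^{T}\Sigma^{-1}\mu\cdot v^{T}M^{-1}v}$ with $v^{T}M^{-1}v=(2-r)/(-2-r)$, obtaining the tangent point proportional to $(2\|\mu\|,rb)$ and $c^{*}=\sqrt{(-r+2)/(-r-2)}\cdot\sqrt{\mu^{T}\Sigma^{-1}\mu}$. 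In Case~(2) one substitutes $\overline{s}=-1/\sqrt{\lambda_{\mu,\text{min}}}$ into $h=\mu^{T}\Sigma^{-1}\mu$ and keeps the larger root $\overline{t}^{*}$ (legitimate because $g(\overline{t},-1/\sqrt{\lambda_{\mu,\text{min}}})=b\overline{t}+\|\mu\|/\sqrt{\lambda_{\mu,\text{min}}}$ increases in $\overline{t}$), obtaining after simplification the second expression for $c^{*}$ in the statement, the signs of the numerator and of $-1-r$ giving $\overline{t}^{*}>0$.

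It remains to decide which case occurs and to discard Case~(3). The Case~(1) tangent point always has positive $\overline{t}$-coordinate and $\overline{s}$-coordinate $r\sqrt{\mu^{T}\Sigma^{-1}\mu}/(\|\mu\|\sqrt{r^{2}-4})<0$; it lies in $\mathcal{D}$ iff this coordinate is $\geq-1/\sqrt{\lambda_{\mu,\text{min}}}$, and squaring and using $\lambda_{\mu,\text{min}}\,\mu^{T}\Sigma^{-1}\mu/\|\mu\|^{2}=1/r^{o}(\mu,\Sigma)$ turns this into $r^{2}(r^{o}-1)\geq4r^{o}$, which (as $r^{o}>0$) holds exactly when $r^{o}(\mu,\Sigma)>1$ and $r\leq r^{*}(\mu,\Sigma)$; in particular this forces $r<-2$, so the conic really is an ellipse and tangency is meaningful. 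Hence Case~(1) holds precisely in that regime and Case~(2) otherwise, which reproduces the piecewise formula in the statement. To rule out Case~(3) I would show that the maximizer is never the corner $(0,-1/\sqrt{\lambda_{\mu,\text{min}}})$, nor any point of $\{\overline{t}=0\}$: since $g$ strictly increases in $\overline{t}$ and $\partial_{\overline{t}}h(0,-1/\sqrt{\lambda_{\mu,\text{min}}})=-2b\|\mu\|/\sqrt{\lambda_{\mu,\text{min}}}<0$, from any point of $R$ on $\{\overline{t}=0\}$ one can move in the $+\overline{t}$ direction while staying in $R$ — first stepping to $(0,\overline{s}_{2}^{-}(0))$ if one started strictly outside $\{h\leq\mu^{T}\Sigma^{-1}\mu\}$, along whose lower arc $\tfrac{d}{d\overline{t}}\,g(\overline{t},\overline{s}_{2}^{-}(\overline{t}))\big|_{\overline{t}=0}=2b>0$ — thereby strictly increasing $g$; so the maximizer has $\overline{t}>0$ and falls under Case~(1) or Case~(2). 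Finally, for $\Sigma=aI$ one has $\lambda_{\mu,\text{min}}=a$ and $\mu^{T}\Sigma^{-1}\mu=\|\mu\|^{2}/a$, so $r^{o}(\mu,aI)=1\not>1$ and Case~(2) applies.

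The step I expect to be the main obstacle is the case split on the sign of $r+2$: it changes $\{h\leq\mu^{T}\Sigma^{-1}\mu\}$ from a convex ellipse — where the picture ``maximize a linear functional over a convex body'' is transparent — into a non-convex region between hyperbola branches (or a slab at $r=-2$), so one has to verify that the tangency in Case~(1) is available exactly for $r<-2$ (which is what the discriminant $r^{2}-4$ records) and is otherwise absorbed into Case~(2), and that the compactness and localization arguments go through uniformly across all three regimes — which they do, relying only on $-1-r>0$ to force $h\to+\infty$ along the $\overline{t}$-axis.
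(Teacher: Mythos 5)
Your proposal is correct and reaches exactly the formulas in \eqref{case 2 and case 3 equation 9 & case 2 and case 3 equation 4}--\eqref{case 2 and case 3 equation 11}, but by a genuinely different route from the paper's. The paper's proof is a direct algebraic case analysis: it substitutes the wall $\overline{s}=-1/\sqrt{\lambda_{\mu,\text{min}}}$ into $h=\mu^{T}\Sigma^{-1}\mu$ and reads off Case (2)/(3) from the sign of the discriminant $\Delta$; it obtains the tangency point of Case (1) from the implicit-function-theorem condition $\overline{s}_{2}'=b/\|\mu\|$, which yields $\overline{s}^{*}=\tfrac{r}{2}\tfrac{b}{\|\mu\|}\overline{t}^{*}$ and $(\tfrac{1}{4}r^{2}-1)b^{2}\overline{t}^{2}=\mu^{T}\Sigma^{-1}\mu$; and it excludes Case (3) by comparing two auxiliary thresholds ($r^{**}<r^{*}$ whenever $r^{o}>1$). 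You instead recast $c^{*}$ as the maximum of the linear functional $g$ over the compact region $R=\overline{\mathcal{D}}\cap\{h\le\mu^{T}\Sigma^{-1}\mu\}$, compute the Case (1) value via the support function $\sqrt{\mu^{T}\Sigma^{-1}\mu\cdot v^{T}M^{-1}v}$ of the ellipse (your $v^{T}M^{-1}v=(2-r)/(-2-r)$ checks out and reproduces the first branch), and dispose of Case (3) by showing the maximizer always has $\overline{t}>0$ and sits either at a smooth tangency or on the wall. Two things your treatment buys: first, you correctly classify the sublevel set of $h$ as an ellipse only for $r<-2$, a hyperbolic region for $-2<r<-1$, and a slab at $r=-2$ (the paper's blanket assertion that $h$ is an ``elliptic paraboloid'' for all $r<-1$ is inaccurate in the range $-2\le r<-1$, where your observation that $(\overline{s}_{2}^{-})'\le-b/\|\mu\|<b/\|\mu\|$ explains cleanly why tangency is unavailable and Case (2) must hold); second, your compactness-plus-localization exclusion of Case (3) works uniformly and does not rely on the $r^{**}$-versus-$r^{*}$ comparison. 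Your membership test for the tangent point, $r^{2}(r^{o}-1)\ge 4r^{o}$, is equivalent to the paper's \eqref{case 2 and case 3 equation 10} and to the pair ($r^{o}>1$ and $r\le r^{*}$), and your $\Sigma=aI$ computation ($r^{o}=1$, hence Case (2)) matches. The only places that would need a little more care in a full write-up are the uniqueness claim in Case (2) (the wall section of $R$ is the interval between the two roots, and $g$ is increasing there, so the larger root is the unique maximizer on the wall) and the informal step ruling out maximizers on $\{\overline{t}=0\}$ with $\overline{s}\ge 0$, where $g\le 0$ anyway; neither is a genuine gap.
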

\ZH{Lemma} \ref{tangent relationship between s1 and s2} is the key result in this section, which provides a new geometric approach to \ZHnew{find} best thresholds $\theta^{*}$. The proof of Lemma \ref{tangent relationship between s1 and s2} is provided in Appendix \ref{proof of Lemma about Hi-0}. We are now in a position to show how to find the \ZH{best lower bound} of $\theta$ such that $\text{sign}(-r)\cdot f$ is convex. \ZH{Define $g(x) := (b - \mu^{T}x)/\sqrt{x^{T}\Sigma x}$ .We want to consider whether there exists $\theta^{*}$ such that for any $\theta^{**} \geq \theta^{*}$ the function $\text{sign(-r)}\cdot g^{r}$ is locally convex on 
\begin{equation}\label{"locally convex" set}
G(\theta^{**}) : = \left\lbrace x \in E \cap X :  b - \mu^{T}x  \geq \sqrt{\theta^{**}} \cdot \sqrt{x^{T} \Sigma x}
 \right\rbrace.
\end{equation}}
We can see that the following theorem \ZHnew{generalizes} \ZH{Lemma} 2 of \cite{cheng2014second} from the case with $r < -1$ into the case for any $r \in \mathbb{R}$.

\begin{theorem}\label{Corollary of Hi-1}
Assume $o \notin X$. Under the assumptions of Lemma \ref{Lemma about Hi-1}, define $g(x) := (b - \mu^{T}x)/\sqrt{x^{T}\Sigma x}$. 
    \begin{enumerate}
        \item If $\| \mu \| = 0$, then there exists $\theta^{*}$ \ZHnew{if and only if} $r < -1$. And $\theta^{*} = 0$.
        \item If $\| \mu \| \neq 0$, $b = 0$, then there exists an {\textbf{best}} threshold $\theta^{*}=\mu^{T} \Sigma^{-1} \mu$ for any $r \neq 0$.
        \item If $\| \mu \| \neq 0$, $b < 0$ and $r \leq -2$, then there exists
        \begin{equation*}
\theta^{*} = \frac{1}{\left( -1 - r \right)} \cdot \left[ \mu^{T} \Sigma^{-1} \mu - \left( 2 + r \right) \cdot \frac{\| \mu \|^{2}}{\lambda_{\text{min}}} \right].
\end{equation*}
        \item If $\| \mu \| \neq 0$, $b < 0$ and $-2 < r < -1$, then there exists $\theta^{*} = \mu^{T} \Sigma^{-1} \mu / (-1 - r)$. 
        \item If $\| \mu \| \neq 0$, $b < 0$ and $r \geq -1$ and $r \neq 0$, then there exists
        \begin{equation*}
\theta^{*} = \mu^{T} \Sigma^{-1} \mu + (2 + r) \cdot \frac{\| \mu \|^{2}}{\lambda_{\text{min}}}.
\end{equation*}
        \item If $\| \mu \| \neq 0$, $b > 0$, $r \geq -1$ and $r \neq 0$, then there is no $\theta^{*}$.
        \item If $\| \mu \| \neq 0$, $b > 0$, and $r < -1$, then there exists \ZHnew{an} unique {\textbf{best}} $\theta^{*}$
        \scriptsize{
        \begin{equation*}
\sqrt{\theta^{*}} = 
\left\{
        \begin{array}{ll}
        \sqrt{\frac{-r + 2}{-r - 2}} \cdot \sqrt{\mu^{T} \Sigma^{-1} \mu}, & \text{if} \ \ r \leq r^{*}(\mu, \Sigma),\ r^{o}(\mu, \Sigma) > 1,\\      
        \ &\ \\        
        \frac{\left( -r \right) \frac{\| \mu \|}{\sqrt{\lambda_{\mu, \text{min}}}} + \sqrt{\left( 2 + r \right) \frac{\| \mu \|^{2}}{\lambda_{\mu, \text{min}}} + \left( -1 - r \right) \mu^{T} \Sigma^{-1} \mu}}{-1 - r}, & \text{otherwise},
        \end{array}
\right.
\end{equation*}}
        \item \normalsize{If there exists a constant $\lambda_{0} > 0$ such that $\Sigma = \lambda_{0} \cdot I$ and $\| \mu \| \neq 0$, then the best threshold is defined by $\sqrt{\theta^{*}} = \lambda^{-\frac{1}{2}}_{0} \cdot \| \mu \| \cdot \left( -r + 1 \right) / \left( -r - 1 \right)$.
}
        \item \normalsize{If $b = 0$, then $\sqrt{\theta^{*}} = \sqrt{\mu^{T} \Sigma^{-1} \mu}$ is {\textbf{best}} \ZH{such that $\ln g$ is locally concave on \eqref{"locally convex" set} for any $\theta^{**} \geq \theta^{*}$}.}
        \item If $b < 0$, then there exists $\theta^{*} = \mu^{T} \Sigma^{-1} \mu + 2\| \mu \|^{2} / \lambda_{\text{min}}$ \ZH{such that $\ln g$ is locally concave on \eqref{"locally convex" set} for any $\theta^{**} \geq \theta^{*}$}.
        \item If $b > 0$, then there is no \ZH{$\theta^{*}, \theta^{**}$ such that $\ln g$ is locally concave on \eqref{"locally convex" set}}.
    \end{enumerate}
\end{theorem}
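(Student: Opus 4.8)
The plan is to reduce every item of Theorem \ref{Corollary of Hi-1} to the geometric picture established in Lemma \ref{reformualtion of the necessary and sufficient conditions again} and Lemma \ref{tangent relationship between s1 and s2}, so that the search for the best $\theta^{*}$ becomes the search for the extremal value of $g(\overline t,\overline s)=b\cdot\overline t-\|\mu\|\cdot\overline s$ over the set of $(\overline t,\overline s)$ for which the inequality $h(\overline t,\overline s)\ge\mu^{T}\Sigma^{-1}\mu$ holds, where $\overline t\in(0,\infty)$ and $\overline s\in[-1/\sqrt{\lambda_{\mu,\min}},\,1/\sqrt{\lambda_{\mu,\min}}]$. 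By Lemma \ref{Lemma about Hi-1}, $\mathrm{sign}(-r)\cdot g^{r}$ is locally convex at $x$ precisely on the region where \eqref{sufficient and necessary condition r<2 and r neq 0}, i.e. $h\ge\mu^{T}\Sigma^{-1}\mu$, holds; and $x\in G(\theta^{**})$ corresponds to $g(\overline t,\overline s)\ge\sqrt{\theta^{**}}$. Hence the best threshold is $\sqrt{\theta^{*}}=\inf\{\,g(\overline t,\overline s): h(\overline t,\overline s)\ge\mu^{T}\Sigma^{-1}\mu\,\}$, and items 1--8 just enumerate the value of this infimum according to the signs of $\|\mu\|$ and $b$ and the range of $r$; items 9--11 are the analogous computation with the logarithmic condition \eqref{sufficient and necessary condition ln} replacing \eqref{sufficient and necessary condition r<2 and r neq 0}.

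The key steps, in order, are the following. First I would dispose of the degenerate cases: when $\|\mu\|=0$ the condition \eqref{sufficient and necessary condition r<2 and r neq 0} collapses to $0\le-(r+1)\theta$, which holds for all $\theta\ge0$ iff $r<-1$, giving item 1 with $\theta^{*}=0$; when $b=0$ the quadratic form $h$ loses its $\overline t$-dependence except through the sign pattern and the bound reduces directly to $\mu^{T}\Sigma^{-1}\mu$, giving items 2 and 9. Second, for $b<0$ I would note that $g(\overline t,\overline s)=b\overline t-\|\mu\|\overline s$ is decreasing in $\overline t$, so the constrained infimum of $g$ is attained on the boundary $\overline s=-1/\sqrt{\lambda_{\mu,\min}}$ (the most negative value) with $\overline t$ as large as the constraint $h\ge\mu^{T}\Sigma^{-1}\mu$ permits; substituting $\overline s=-1/\sqrt{\lambda_{\min}}$ into \eqref{h function's transformation with overline t and overline s} and solving the resulting quadratic in $\overline t$ yields the three formulas in items 3, 4, 5, the split at $r=-2$ coming from the sign of the coefficient $(-1-r)b^{2}$ of $\overline t^{2}$ in $h$ (when $r\ge-2$ the parabola opens downward or is linear, so the admissible $\overline t$-set is bounded and the extremum sits at its right endpoint; when $r<-2$ it opens upward). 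Third, for $b>0$ and $r\ge-1$, $r\ne0$, the coefficient $(-1-r)$ is $\le0$ while $g$ is now increasing in $\overline t$; I would show the admissible region is unbounded in the direction that drives $g\to-\infty$ or prevents any finite lower bound, giving the non-existence in item 6. Fourth, the heart of the argument is $b>0,\ r<-1$ (item 7): here one genuinely needs Lemma \ref{tangent relationship between s1 and s2}, because the minimizing point is either an interior tangency of the line $\overline s_{1}(\cdot;c)$ with the hyperbola $h=\mu^{T}\Sigma^{-1}\mu$ (Case (1), giving the first branch $\sqrt{(-r+2)/(-r-2)}\cdot\sqrt{\mu^{T}\Sigma^{-1}\mu}$) or a corner intersection at $\overline s=-1/\sqrt{\lambda_{\mu,\min}}$ (Case (2), giving the second branch), and the dichotomy $r\le r^{*}(\mu,\Sigma)$ versus otherwise is exactly the case split of that lemma; the value $c^{*}$ there \emph{is} the desired $\sqrt{\theta^{*}}$. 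Item 8 is the specialization $\Sigma=\lambda_{0}I$, where $\lambda_{\mu,\min}=\lambda_{\min}=\lambda_{0}$, $\mu^{T}\Sigma^{-1}\mu=\|\mu\|^{2}/\lambda_{0}$, so $r^{o}=1$, which by the last sentence of Lemma \ref{tangent relationship between s1 and s2} forces Case (2) and, after simplification of the second branch of \eqref{case 2 and case 3 equation 9 & case 2 and case 3 equation 4}, collapses to $\lambda_{0}^{-1/2}\|\mu\|(-r+1)/(-r-1)$.

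Finally, items 10 and 11 repeat the $b<0$ and $b>0$ analyses but with the logarithmic condition \eqref{sufficient and necessary condition ln}, which after the same $(\overline t,\overline s)$ substitution is the quadratic $h$ with $r=-1$ formally plugged in (coefficient of $\overline t^{2}$ equal to zero), so the admissible set is a half-plane; optimizing $g$ over it against $\overline s=-1/\sqrt{\lambda_{\min}}$ gives $\theta^{*}=\mu^{T}\Sigma^{-1}\mu+2\|\mu\|^{2}/\lambda_{\min}$ for $b<0$ and no finite bound for $b>0$. I expect the main obstacle to be the bookkeeping in item 7/item 8: one must verify that the point produced by Lemma \ref{tangent relationship between s1 and s2} actually lies in the legal range $\overline t\in(0,\infty)$, $\overline s\in[-1/\sqrt{\lambda_{\mu,\min}},1/\sqrt{\lambda_{\mu,\min}}]$, that it yields a \emph{convex} (not merely locally convex) conclusion on the convex set $G(\theta^{**})$ — here I would invoke the remark after Lemma \ref{Lemma about Hi-1} that local convexity upgrades to convexity on convex subsets of $E$ — and that "best'' really holds, i.e. for every $\theta^{**}<\theta^{*}$ there is a feasible $x$ violating \eqref{sufficient and necessary condition r<2 and r neq 0}; the latter is exactly the statement that $c^{*}$ is the infimum and not merely an upper bound for it, which is where the uniqueness/tangency part of Lemma \ref{tangent relationship between s1 and s2} does the real work.
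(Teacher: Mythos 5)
Your architecture is the paper's: reduce to the $(\overline t,\overline s)$-picture of Lemma \ref{reformualtion of the necessary and sufficient conditions again}, dispose of $\|\mu\|=0$ and $b\le 0$ by direct estimation, obtain the $b>0$, $r<-1$ formula from the tangency/corner dichotomy of Lemma \ref{tangent relationship between s1 and s2}, and rule out thresholds for $b>0$, $r\ge-1$ because $h$ is then a hyperbolic paraboloid or degenerate. Two of your steps, however, are mis-stated in a way that would fail if executed literally. The central one is your variational characterization of the best threshold: you write $\sqrt{\theta^{*}}=\inf\{\,g(\overline t,\overline s): h(\overline t,\overline s)\ge\mu^{T}\Sigma^{-1}\mu\,\}$, i.e.\ the infimum of $g$ over the set $Q$ where the convexity condition holds. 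The correct quantity --- the one the paper uses, $\sqrt{\theta^{*}}=\inf\{c_{0}\ge 0: G(c_{0})\subset Q\}$ --- is the \emph{supremum} of $g$ over the complement of $Q$ (the violating set), equivalently the smallest level whose superlevel set $G(c_{0})$ misses $Q^{c}$. These are genuinely different numbers: for $b>0$, $r<-1$ and $r^{o}(\mu,\Sigma)>1$, the set $Q$ contains points with $\overline t\to 0^{+}$ and $\overline s$ near $1/\sqrt{\lambda_{\mu,\text{min}}}$, where $g\approx-\|\mu\|/\sqrt{\lambda_{\mu,\text{min}}}<0$, so your infimum is negative while the true $c^{*}$ of item 7 is strictly positive. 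Your later appeal to the tangency of $\overline s_{1}(\cdot;c^{*})$ with $\overline s_{2}$ shows you have the right picture in mind, but the opening reduction must be rewritten as the sup over $Q^{c}$ (or as the set-inclusion infimum) before the rest of the argument goes through.

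The second problem is in items 9--11: the logarithmic condition \eqref{sufficient and necessary condition ln} is \eqref{sufficient and necessary condition r<2 and r neq 0} with $r=0$ formally plugged in, not $r=-1$; the coefficient of $\overline t^{2}$ in $h$ is then $(-1-r)b^{2}=-b^{2}\neq 0$, not zero as you claim. This is not merely cosmetic: running your $b<0$ computation with $r=-1$ would produce $\theta^{*}=\mu^{T}\Sigma^{-1}\mu+\|\mu\|^{2}/\lambda_{\text{min}}$, whereas item 10 states $\mu^{T}\Sigma^{-1}\mu+2\|\mu\|^{2}/\lambda_{\text{min}}$, which is exactly item 5 evaluated at $r=0$. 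The paper handles the log case by literally repeating the $r\ge-1$, $r\neq 0$ argument with $r=0$. Finally, a smaller remark: for $b<0$ the paper does not substitute $\overline s=-1/\sqrt{\lambda_{\mu,\text{min}}}$ and solve a quadratic; it bounds the right-hand side of \eqref{2 transformation of key sufficient and necessary conndition inequality} directly using $\cos\beta<0$ and $1/\|x\|\le\|\mu\|/(-b)$, which is why items 3--5 involve the cruder $\lambda_{\text{min}}$ rather than $\lambda_{\mu,\text{min}}$ and are only sufficient thresholds, not claimed to be best; your boundary-substitution route would need a separate check that it reproduces those exact constants.
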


\begin{proof}
Based on \eqref{sufficient and necessary condition r<2 and r neq 0} and \eqref{sufficient and necessary condition ln}, it can be seen that when $\| \mu \| = 0$, we have $\text{sign}(-r) \cdot g^{r}$ is convex on $E$ if and only if $r < -1$, which implies $\theta^{*} = 0$. In the \ZHnew{remaining} of this theorem's proof, we assume $\| \mu \| \neq 0$. Using Lemma \ref{reformualtion of the necessary and sufficient conditions again}, we can see that \eqref{sufficient and necessary condition r<2 and r neq 0} can be reformulated as \eqref{2 transformation of key sufficient and necessary conndition inequality}. Due to limited space, the $b \leq 0$ \ZHnew{cases are} considered by Lemma \ref{proof of part of Th. 3.4} in Appendix \ref{proof of Lemma about Hi-0}. Next, we only provide the proof under $b > 0$. Since $b>0$ and $\| \mu \| > 0$, we know that $g(\overline{t}, \overline{s})$ is a bilinear function, increasing with respect to $\overline{t}$, and decreasing with respect to $\overline{s}$. Consequently, if we take some $c_{0} \in \mathbb{R}$, then $g(\overline{t}, \overline{s}) = c_{0}$  is a straight line in $\overline{t}o\overline{s}$-plane, and based on \eqref{g function's transformation with overline t and overline s} \ZHnew{the set}
\begin{equation*}
\left\lbrace x \in \mathbb{R}^N: \frac{b - \mu^{T}x }{\sqrt{x^{T} \Sigma x}} \geq c_{0} \right\rbrace
\end{equation*}
is equivalent to
\begin{equation}\label{tangent_G set}
G(c_{0}) := \left\lbrace (\overline{t}, \overline{s}) \in (0, +\infty) \times [-1/\sqrt{\lambda_{\mu, \text{min}}}, 1 / \sqrt{\lambda_{\mu, \text{min}}}]: g(\overline{t}, \overline{s}) \geq c_{0} \right\rbrace,
\end{equation}
which corresponds to the right-hand side area of the line $g(\overline{t}, \overline{s}) = c_{0}$ between two horizontal lines $\overline{s} = 1/ \sqrt{\lambda_{\mu, \text{min}}}$ \ZH{and} $\overline{s} = -1 / \sqrt{\lambda_{\mu, \text{min}}}$. In particular, the set $E$ is a subset of $G(0)$ and $G(c_{0}) \subset G(0)$ for all $c_{0} \geq 0$. 
We set
\begin{equation}\label{tangent_Q set}
Q := \left\lbrace (\overline{t}, \overline{s}) \in (0, +\infty) \times [-1/\sqrt{\lambda_{\mu, \text{min}}}, 1 / \sqrt{\lambda_{\mu, \text{min}}}]: h(\overline{t}, \overline{s}) \geq \mu^{T} \Sigma^{-1} \mu \right\rbrace.
\end{equation}
Thus, we have
\begin{equation}\label{key description of theta* in the form of set}
\sqrt{\theta^{*}} = \inf \left\lbrace c_{0} \geq 0: G(c_{0}) \subset Q \right\rbrace.
\end{equation}
Suppose $r = -1$. Using \eqref{h function's transformation with overline t and overline s}, then we \ZH{have}
\begin{equation*}
h(\overline{t}, \overline{s}) = \| \mu \|^{2} \cdot \overline{s}^{2} + 2b \cdot \| \mu \| \cdot \overline{t} \cdot \overline{s}.
\end{equation*}
Therefore, if we take $\overline{s}_{0} < 0$, then we obtain
\begin{equation*}
h(\overline{t}, \overline{s}_{0}) < 0 < \mu^{T} \Sigma^{-1} \mu \ \ \text{as} \ \ \overline{t} \rightarrow +\infty,
\end{equation*}
which implies that there is no $c_{0} \geq 0$ such that $G(c_{0}) \subset Q$. 
Suppose $r > -1$. \ZHnew{Then,} $h(\overline{t}, \overline{s})$ defined by \eqref{h function's transformation with overline t and overline s} is a hyperbolic paraboloid. Thus, there is no $c_{0}$ satisfying $G(c_{0}) \subset Q$.
As a result, we proved that if $b>0$, $r \in [-1, +\infty) \setminus \lbrace 0 \rbrace$, then there does not exist $\theta^{*} \in \mathbb{R}$ such that $\text{sign(-r)}\cdot g^{r}$ is locally convex on \eqref{"locally convex" set}. Suppose $r < -1$. Using \eqref{h function's transformation with overline t and overline s}, then we can see that $h(\overline{t}, \overline{s})$ is an elliptic paraboloid. 
It follows that there \ZH{exists} $\sqrt{\theta^{*}}$ defined by \eqref{key description of theta* in the form of set}. 

Applying Lemma \ref{tangent relationship between s1 and s2}, we know that at least one of the \ZHnew{assertions 1, 2 and 3} can be satisfied. 
According to the definitions of $\overline{s}_{1}$, $g$ and $c^{*}$, the best threshold is $\sqrt{\theta^{*}} = c^{*}$. Thus, when $b > 0$, the $\sqrt{\theta^{*}}$ \ZHnew{can} be defined by Lemma \ref{tangent relationship between s1 and s2} as follows:
\begin{equation*}
\sqrt{\theta^{*}} = 
\left\{
        \begin{array}{ll}
        \sqrt{\frac{-r + 2}{-r - 2}} \cdot \sqrt{\mu^{T} \Sigma^{-1} \mu}, & \text{if} \ \ r \leq r^{*}(\mu, \Sigma),\ r^{o}(\mu, \Sigma) > 1,\\      
        \ &\ \\        
        \frac{\left( -r \right) \frac{\| \mu \|}{\sqrt{\lambda_{\mu, \text{min}}}} + \sqrt{\left( 2 + r \right) \frac{\| \mu \|^{2}}{\lambda_{\mu, \text{min}}} + \left( -1 - r \right) \mu^{T} \Sigma^{-1} \mu}}{-1 - r}, & \text{otherwise},
        \end{array}
\right.
\end{equation*}
where $r^{*}(\mu, \Sigma)$ and $r^{o}(\mu, \Sigma)$ are defined by \eqref{case 2 and case 3 equation 11}. 

Finally, it remains to consider $\ln \left( \left( b - \mu^{T}x \right) / \sqrt{x^{T} \Sigma x} \right)$. \ZHnew{Notice} that \eqref{sufficient and necessary condition ln} is a specific case of \eqref{sufficient and necessary condition r<2 and r neq 0} with $r = 0$. Thus, it suffices to repeat the proof process of the case with $r \geq -1$ and $r \neq 0$ literally.
\end{proof}

\begin{remark}
(1). According to \ZH{Lemma} 2 of \cite{cheng2014second} and Lemma 1 of \cite{nguyen2023convexity}, the threshold with $r < -1$ \ZHnew{is given by}:
\begin{equation*}
\sqrt{\theta^{*}_{0}} := \| \mu \| \cdot \lambda^{-\frac{1}{2}}_{\text{min}} \cdot \frac{-r + 1}{-r - 1}.
\end{equation*}
Consider the case with $b < 0$ and $-2 < r < -1$. Observe that $r < -1$ implies $1/\sqrt{-1 - r} < (-r + 1)/(-r - 1)$. Thus, according to Theorem \ref{Corollary of Hi-1}, our $\sqrt{\theta^{*}} := \| \mu \| \cdot \lambda^{-\frac{1}{2}}_{\text{min}} / \sqrt{-1 - r}$ is less than $\sqrt{\theta^{*}_{0}}$. Consider the case with $b < 0$ and $r \leq -2$. Since $r < -1$ implies $(-r + 1)/(-r - 1) > 1$, we can see that our $\theta^{*}$ is also less than $\theta^{*}_{0}$. Suppose $b > 0$ and $r < -1$. Notice that $\lambda_{\mu, \text{min}} \geq \lambda_{\text{min}}$. Then, we can see \ZH{that} $\sqrt{\theta^{*}} = c^{*}$ defined by the second line of \eqref{case 2 and case 3 equation 9 & case 2 and case 3 equation 4} is less than $\sqrt{\theta^{*}_{0}}$. Since $r < r^{*}(\mu, \Sigma) \leq -2$, we \ZH{have}
\begin{equation*}
\sqrt{\frac{\left( -\frac{r}{2} \right)^{2}}{-\frac{r}{2} + 1}} \geq \sqrt{-\frac{r}{2} + 1} \cdot \frac{\left( -r - 1 \right)}{\left( -r + 1 \right)},
\end{equation*}
that is
\begin{equation}\label{case 2 and case 3 equation 13}
\sqrt{1 + \frac{1}{\frac{1}{4}r^{2} - 1}} \geq \left( \sqrt{ \frac{-r + 2}{-r - 2}} \right) \cdot \left( \frac{-r + 1}{-r - 1} \right)^{-1}.
\end{equation}
Observe that when the Case (1) of \ZHnew{Lemma \ref{tangent relationship between s1 and s2}} holds, then $\overline{s}^{*} \geq -1/ \sqrt{\lambda_{\mu, \text{min}}}$ is equivalent to
\begin{equation}\label{case 2 and case 3 equation 10}
\sqrt{1 + \frac{1}{\frac{1}{4}r^{2} - 1}} \cdot \frac{\sqrt{\mu^{T} \Sigma^{-1} \mu}}{\| \mu \|} \leq \frac{1}{\sqrt{\lambda_{\mu, \text{min}}}}.
\end{equation}
Thus, combining \eqref{case 2 and case 3 equation 10} and \eqref{case 2 and case 3 equation 13} we can see \ZH{that} $\sqrt{\theta^{*}} = c^{*}$ defined by the first line of \eqref{case 2 and case 3 equation 9 & case 2 and case 3 equation 4} is also less than $\sqrt{\theta^{*}_{0}}$.

(2). Let us explain the meaning of \ZH{\textbf{"best"}} in Theorem \ref{Corollary of Hi-1}. Suppose $F$ is a probability function of chance constraints. In this paper, one of the key ideas for the proof follows the \ZH{classical } method established in \cite{henrion2008convexity}: Suppose $g$ is a $-\alpha$-concave function and $F$ is the distribution function of $\xi$ with an $(\alpha + 1)$-decreasing density, then the convexity of $F\circ g$ implies the eventual convexity of the chance constraint $S(p)$. Thus, proving the convexity of $F\circ g$ is the main task in \cite{henrion2008convexity}. Regarding the eventual convexity problem, most previous results followed the same idea in \cite{henrion2008convexity} to obtain a better probability threshold to prove certain compound function $F\circ g$ is convex. In \ZH{this paper,  \textbf{"best"}} means that if \ZHnew{we} want to use the above composition of a $-\alpha$-concave function and an $(\alpha + 1)$-decreasing density to obtain the eventual convexity, the threshold $\sqrt{\theta^{**}}$ has to be greater than the best threshold $\sqrt{\theta^{*}}$, otherwise the above method is invalid. \ZH{More precisely}, if a threshold $\sqrt{\theta^{**}}$ is less \ZH{than the} \textbf{best} threshold $\theta^{*}$, then the $-\alpha$-concavity of $g$ can not be guaranteed  on $S(p)$. It is worth remarking that even though $F\circ g$ is not a concave function, we might also be able to \ZHnew{show that} $S(p)$ is convex. As a result, the convexity of $S(p)$ can not imply the concavity of function $F\circ g$. This might explain why the probability threshold $p^{*}$ in most papers is close to 1 and even greater than 0.9, but the convexity of the \ZH{feasible set} can be observed starting from the probability values close to 0.5. 

(3). In this paper, we obtain a better value of $\theta^{*}$ compared to \cite{cheng2014second, minoux2016convexity, minoux2017global, nguyen2023convexity} for the following reasons: Firstly, we considered the relationship among $b$, $\mu$, and the feasible point $x$ contained in $b > \mu^{T}x$. Secondly, we transform the variable $x$ into \ZH{a} polar coordinate form $(\| x \|, \text{cos} \beta)$, which \ZHnew{uses} a geometric method to obtain \ZHnew{the} best threshold $\theta^{*}$. \ZHnew{Due} to the space limitation, we only provided the best thresholds $\sqrt{\theta^{*}}$ for the cases with $b \geq 0$. We can also use the same method to obtain \ZH{the best value of} $\sqrt{\theta^{*}}$ with $b < 0$. In the following corollary, we see that the geometric method can even be used to \ZHnew{find} the best probability threshold $p^{*}$ under some special distributions, such as the Gaussian distributions in \ZH{\cite{minoux2016convexity}}. We provide the complete proof in Appendix \ref{proof of Lemma about Hi-0}.
\end{remark}

\begin{corollary}\label{Corollary of Hi-1-1}
Under the definitions of $g$ and $G(\theta^{**})$ in Theorem \ref{Corollary of Hi-1}, assume $F$ is a Gaussian distribution function and $o \notin X$. If $b \geq 0$, then there \ZH{exists} an best threshold $\theta^{*}\geq 0$ such that $F(g)(x)$ is locally concave on $G(\theta^{**})$ if and only if $\theta^{**} \geq \theta^{*}$. If $b < 0$, then there is no threshold $\theta^{**}$. Further, if $p > F(\sqrt{\theta^{*}})$, then the \ZH{feasible set} $S(p)$ is a convex set and $F(\sqrt{\theta^{*}})$ is an \ZH{best lower bound} of the probability $p$. In particular, if $b = 0$, then the best probability threshold is $p^{*}:= F(\sqrt{\mu^{T} \Sigma^{-1} \mu})$.
\end{corollary}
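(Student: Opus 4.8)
The plan is to reduce the local concavity of $H(x):=F(g(x))$ to a single scalar inequality of the type handled in Lemma~\ref{Lemma about Hi-1}, and then to locate the best threshold by the one-dimensional/geometric argument of Theorem~\ref{Corollary of Hi-1}. Since $v_{i}$ is Gaussian, the standardised variable $\xi_{i}(x)$ in \eqref{Fea-2} is $N(0,1)$, so its density $f$ satisfies $f'(s)=-s f(s)$ and therefore
\begin{equation*}
\nabla^{2}H(x)=f'(g(x))\,\nabla g(x)\nabla g(x)^{T}+f(g(x))\,\nabla^{2}g(x)=f(g(x))\bigl[\nabla^{2}g(x)-g(x)\,\nabla g(x)\nabla g(x)^{T}\bigr].
\end{equation*}
As $f(g(x))>0$ on $E$, $H$ is locally concave on $G(\theta^{**})$ precisely when $\nabla^{2}g\preceq g\,\nabla g\nabla g^{T}$, equivalently $g\,\nabla^{2}g\preceq g^{2}\,\nabla g\nabla g^{T}$, holds there. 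But this is exactly the pointwise Hessian condition underlying the $r$-concavity of $g$ in Lemma~\ref{Lemma about Hi-1} with the constant $1-r$ replaced by the value $\theta=g(x)^{2}$; hence, substituting $r\mapsto 1-\theta$ in \eqref{sufficient and necessary condition r<2 and r neq 0} (and using \eqref{sufficient and necessary condition ln} at the isolated value $g(x)=1$, which gives the same formula by continuity), local concavity of $H$ at $x$ is equivalent to $\mu^{T}\Sigma^{-1}\mu\le P_{t}(w)$, where $t:=g(x)$, $w:=\mu^{T}x/\sqrt{x^{T}\Sigma x}$ and
\begin{equation*}
P_{t}(w):=(1+t^{2})\,w^{2}+2(t^{3}-t)\,w+t^{4}-2t^{2}.
\end{equation*}

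I would then pass to low dimensions exactly as in Lemma~\ref{reformualtion of the necessary and sufficient conditions again}. Writing $x=\rho u$ with $\rho>0$ and $\|u\|=1$, the quantity $w$ depends only on $u$ and ranges over $[-\sqrt{A},\sqrt{A}]$ with $A:=\mu^{T}\Sigma^{-1}\mu$ (the endpoints $\pm\sqrt{A}=\pm\|\mu\|/\sqrt{\lambda_{\mu,\text{min}}}$ being realised along the extremal directions of \eqref{the definition of lamda_mu_min}), while $g(x)=b/(\rho\sqrt{u^{T}\Sigma u})-w$. Hence the set of attainable pairs $(t,w)$ with $t>0$ is $\{w\in[-\sqrt{A},\sqrt{A}],\ t>-w\}$ if $b>0$; the segment $\{t=-w,\ 0<t\le\sqrt{A}\}$ if $b=0$; and $\{w\in[-\sqrt{A},0),\ 0<t<-w\}$, whence $t<\sqrt{A}$, if $b<0$. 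Since $P_{t}(\cdot)$ is a quadratic with vertex $w^{*}(t)=t(1-t^{2})/(1+t^{2})$ and minimal value $t^{2}(t^{2}-3)/(1+t^{2})$, its minimum over any sub-interval is at $w^{*}(t)$ or at an endpoint, and the best threshold becomes the one-dimensional quantity $\sqrt{\theta^{*}}=\sup\{\,t=g(x):x\in E\cap X\text{ with }\mu^{T}\Sigma^{-1}\mu>P_{t}(w)\text{ at }x\,\}$, to be evaluated case by case in $\operatorname{sign}(b)$.

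For $b=0$ this is immediate: on the attainable locus $w=-t$ one has $P_{t}(-t)=t^{2}$, so the condition reads $A\le t^{2}$; since $t=g(x)\le\sqrt{A}$ always while $t=\sqrt{A}$ is attained (with equality there), the best threshold is $\theta^{*}=\mu^{T}\Sigma^{-1}\mu$, giving $p^{*}=F(\sqrt{\mu^{T}\Sigma^{-1}\mu})$. For $b>0$ I would use the factorisation $P_{t}(-\sqrt{A})-A=t\,(t-\sqrt{A})\,(t^{2}-\sqrt{A}\,t-2)$, whose largest positive root is $t_{+}=\tfrac{1}{2}(\sqrt{A}+\sqrt{A+8})$: for every $t\in(0,t_{+})$ some attainable $w$ (namely $w^{*}(t)$ when $t\le\sqrt{A}$, or $w=-\sqrt{A}$ when $\sqrt{A}<t<t_{+}$) makes the inequality fail, whereas for $t\ge t_{+}$ the vertex $w^{*}(t)$ lies to the left of $-\sqrt{A}$, so $P_{t}\ge A$ on the whole attainable $w$-range; thus a finite best $\theta^{*}=t_{+}^{2}\ge 0$ exists. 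For $b<0$, $g$ is bounded above by $\sqrt{\mu^{T}\Sigma^{-1}\mu}$ on $E$, and along directions/scalings with $w\uparrow-t$ one has $P_{t}(w)\to t^{2}<\mu^{T}\Sigma^{-1}\mu$; so every nonempty $G(\theta^{**})$ contains a point where the Hessian condition fails and no threshold $\theta^{**}$ works. Finally, when $p>F(\sqrt{\theta^{*}})$ the monotonicity of $\psi_{x}^{(-1)}$ in \eqref{Fea-7} forces $F_{i}(g_{i}(x))\ge p$, hence $g_{i}(x)>\sqrt{\theta^{*}}$, on $S(p)$, so each $H_{i}$ is locally concave there; together with the convexity of $U$ from Section~\ref{Convexity of Ui(x, yi)} and the argument of Section~\ref{Eventual convexity with elliptical distributions} this gives convexity of $S(p)$, and $F(\sqrt{\theta^{*}})$ cannot be lowered, since below it the local concavity of the $H_{i}$---the only device used---breaks down.

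The main obstacle I anticipate is the case analysis of $P_{t}(w)$ against the $\operatorname{sign}(b)$-dependent (and, strictly, $X$-dependent) range of attainable $w$: one must identify which endpoint of the $w$-interval realises $\min_{w}P_{t}(w)$ and, crucially, verify that at $\theta^{**}=\theta^{*}$ itself the inequality $\mu^{T}\Sigma^{-1}\mu\le P_{t}(w)$ holds for \emph{every} attainable $(t,w)$ with $t\ge\sqrt{\theta^{*}}$, so that the equivalence is with ``$\theta^{**}\ge\theta^{*}$'' and not a strict inequality. As a cross-check of the sufficiency half, one can note that the Gaussian density is $(\alpha+1)$-decreasing with threshold $\sqrt{\alpha+1}$ for every $\alpha>0$, so Theorem~\ref{Corollary of Hi-1} combined with the composition principle of \cite{henrion2008convexity} already yields concavity of $F\circ g$ on $G(\max\{\theta^{*}(-\alpha),\alpha+1\})$ for each admissible $\alpha$; but this route is not sharp in general, which is precisely why the direct Hessian computation is needed for the ``best'' and ``only if'' parts.
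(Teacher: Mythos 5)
Your proposal is correct, and it reaches the same scalar inequality that the paper's proof works with, but by a different and more self-contained route, and it goes further in one respect. The paper's own proof simply cites Theorem 1 of \cite{minoux2016convexity} to obtain the quartic necessary and sufficient condition \eqref{Gaussian case 1}, rewrites it in the $(\overline{t},\overline{s})$ coordinates of Lemma \ref{reformualtion of the necessary and sufficient conditions again}, and then argues only qualitatively (for $b>0$ the right-hand side dominates uniformly for large $\overline{t}$, hence a best $\theta^{*}$ exists; for $b<0$ monotonicity in $\overline{t}$ rules out any threshold; $b=0$ reduces to Theorem \ref{Corollary of Hi-1}), without ever exhibiting $\theta^{*}$. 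You instead derive the same condition from scratch: the Gaussian identity $f'(s)=-sf(s)$ turns local concavity of $F\circ g$ into the pointwise Hessian inequality $\nabla^{2}g\preceq g\,\nabla g\nabla g^{T}$, which is the $r$-concavity condition of Lemma \ref{Lemma about Hi-1} with $r=1-g(x)^{2}$; since that lemma's equivalence with \eqref{sufficient and necessary condition r<2 and r neq 0} is established pointwise for each fixed $r$, the substitution $r\mapsto 1-\theta$ is legitimate and I checked it reproduces \eqref{Gaussian case 1} exactly, including the $r=0$ (i.e. $g=1$) case via \eqref{sufficient and necessary condition ln}. Your $(t,w)$ parametrisation with $w=\mu^{T}x/\sqrt{x^{T}\Sigma x}\in[-\sqrt{A},\sqrt{A}]$, $A:=\mu^{T}\Sigma^{-1}\mu$, is equivalent to the paper's $(\overline{t},\overline{s})$ (note $\|\mu\|/\sqrt{\lambda_{\mu,\text{min}}}=\sqrt{A}$ by Cauchy--Schwarz in the $\Sigma$-metric), and your case analysis is sound: I verified $P_{t}(-t)=t^{2}$, the vertex value $t^{2}(t^{2}-3)/(1+t^{2})<A$ for $t\le\sqrt{A}$, the factorisation $P_{t}(-\sqrt{A})-A=t(t-\sqrt{A})(t^{2}-\sqrt{A}t-2)$, and that the vertex lies left of $-\sqrt{A}$ for $t\ge t_{+}$, so your closed form $\sqrt{\theta^{*}}=\tfrac{1}{2}(\sqrt{A}+\sqrt{A+8})$ for $b>0$ is a genuine sharpening of the paper, which stops at an existence statement. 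Two caveats, both of which the paper shares rather than resolves: the attainable $(t,w)$ region is computed over the whole cone $E$ rather than $E\cap X$, so ``best'' is best in the worst case over directions (you flag this yourself); and the final ``best lower bound of $p$'' assertion is only best relative to the device of making each $H_{i}$ concave, exactly as the paper concedes in its own remark following Theorem \ref{Corollary of Hi-1}.
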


It is worth remarking that the set $\lbrace x: \theta(x) \geq \theta^{**} \rbrace$ is convex for every $\theta^{**} > 0$, where $\theta(x):= (b - \mu^{T} x)/ \sqrt{x^{T} \Sigma x}$, $\Sigma$ is a positive definite matrix, which implies $G(\theta^{**})$ defined by \eqref{"locally convex" set} is a convex set. Thus, the results of \ZH{Theorem} \ref{Corollary of Hi-1} means that, with suitable $r$, the function $\text{sign}(-r) \cdot g^{r}$ is convex on the set $G(\theta^{**})$ since a locally convex function is \ZHnew{global} convex on a convex set.

In order to apply the above results, we add some assumptions. First, we provide two definitions about $r$-decreasing and $r$-revealed-concavity.
\begin{definition}\label{def of r-decreasing}
A function $f : \mathbb{R} \rightarrow \mathbb{R}$ is \textbf{$r$-decreasing} for some $r \in \mathbb{R}$ \ZHnew{if and only if} $f$ is continuous on $(0, +\infty)$ and there exists some $t^{*}(r) > 0$ such that the mapping $t \mapsto t^{r} f(t)$ is strictly decreasing for all $t > t^{*}(r)$.
\end{definition}
When $r = 0$, the $r$-decreasing is simply \ZHnew{a} decreasing \ZHnew{function}. If $f$ is a non-negative function, then $r$-decreasing implies $r'$-decreasing for any $r' \leq r$. This \textit{decreasing order} property will be significant for our \ZHnew{results}. The aim of illustrating $r$-decreasing is to \ZHnew{find} a concave mapping defined by $t \mapsto f(t^{\frac{1}{r}})$. Consequently, a much \textit{weaker} family of functions than $r$-decreasing functions is proposed as follows.

\begin{definition}\label{def of r-revealed-concavity}
A function $f: \mathbb{R} \to \mathbb{R} \cup \lbrace +\infty \rbrace$ is \textbf{$r$-revealed-concavity} \ZHnew{if and only if} there exists $t^{**}(r) > 0$ such that $f$ satisfies \ZHnew{anyone of the following}:
\begin{itemize}
\item[(1)] $r < 0$ and $t \mapsto f(t^{\frac{1}{r}})$ is concave on $(0, t^{**}(r)]$;
\item[(2)] $r = 0$ and $t \mapsto f(e^{t})$ is concave on $[t^{**}(r), +\infty)$;
\item[(3)] $r > 0$ and $t \mapsto f(t^{\frac{1}{r}})$ is concave on $[t^{**}(r), +\infty)$.
\end{itemize}
\end{definition}
Many \ZH{classical } distributions are $r$-revealed-concavity, \ZHnew{e.g.,} Gaussian and Student distributions. Let $r \in \mathbb{R}\setminus \lbrace 0 \rbrace$. Suppose $F$ is a distribution with $(-r+1)$-decreasing density. Then, in the following of this paper, we will see that $F$ is $r$-revealed-concavity and $t^{**}(r) = (t^{*}(-r + 1))^{r}$. Similar to $r$-decreasing functions, the family of $r$-revealed-concavity functions follows an \textit{increasing order}, \ZHnew{i.e.,} a $r$-revealed-concavity function implies $r'$-revealed-concavity for any $r' \geq r$ \cite[Lemma 3.5]{van2015eventual}.

\begin{assumption}\label{Assumption 2}

(1) The cumulative distribution function $F_{i}$ is $r_{i}$-revealed-concavity with the thresholds $t^{**}_{i}(r_{i})$, $i = 1, ... , K$, where $r_{i} \neq 0$. For $r_{i} = 0$, \ZH{there exists} $0 < \varepsilon_{0} < 1$ such that $F_{i}$ is $(-\varepsilon_{0})$-revealed-concavity with the thresholds $t^{**}_{i}(-\varepsilon_{0})$.

(2) $p > p^{*}$, where
\begin{equation*}
p^{*} = \max \left\lbrace \frac{1}{2}, \underset{i\in \lbrace 1, ..., K \rbrace}{\max} \ F_{i}\left( \sqrt{\theta^{*}_{i}} \right), \underset{i\in \lbrace 1, ..., K \rbrace}{\max} \ F_{i}[t^{**}_{i}(r_{i})^{1/r_{i}}] \right\rbrace,
\end{equation*}
where $\theta^{*}_{i}$ is defined in Theorem \ref{Corollary of Hi-1} when $r_{i} \neq 0$, and if $r_{i} = 0$, $\theta^{*}_{i}$ is redefined equal to the value corresponding to the case with $r_{i} = -\varepsilon_{0}$ in Theorem \ref{Corollary of Hi-1} and $F_{i}[t^{**}_{i}(r_{i})^{1/r_{i}}]$ is replaced by $F_{i}[t^{**}_{i}(-\varepsilon_{0})^{-1/\varepsilon_{0}}]$.
\end{assumption}

The following Lemma shows that $(-r+1)$-decreasing of a density implies $r$-revealed-concavity of the distribution function for each $r \in \mathbb{R} \setminus \lbrace 0 \rbrace$. It is an extension of \cite[Lemma 3.1]{henrion2008convexity} from $r > 0$ into $r \in \mathbb{R} \setminus \lbrace 0 \rbrace$. We provide the complete proof in Appendix \ref{proof of Lemma about Hi-0}.

\begin{lemma}\label{Lemma about Hi-2}
Assume $F$ is a distribution function with $(-r + 1)$-decreasing density $f$ for some $r \in \mathbb{R}$. If $r < 0$, then the mapping $z \mapsto F(z^{\frac{1}{r}})$ is concave on $(0, (t^{*}(-r + 1))^{r})$, where $t^{*}$ is defined in Definition \ref{def of r-decreasing}; If $r > 0$, then $z \mapsto F(z^{\frac{1}{r}})$ is concave on $((t^{*}(-r + 1))^{r}, +\infty)$. In particular, we can see $t^{**}(r) = (t^{*}(-r + 1))^{r}$, where $t^{**}$ is defined in Definition \ref{def of r-revealed-concavity}.
\end{lemma}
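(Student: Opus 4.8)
The plan is to reduce the asserted concavity to monotonicity of a single derivative, exploiting that the density $f$ is continuous on $(0,+\infty)$ (which is part of being $(-r+1)$-decreasing), so that $F$ is $C^1$ on $(0,+\infty)$ with $F'=f$ there. Fix $r\neq 0$ and set $\phi(z):=F(z^{1/r})$ for $z>0$. Differentiating by the chain rule gives $\phi'(z)=\tfrac{1}{r}\,z^{1/r-1}f(z^{1/r})$. The one algebraic observation driving the whole argument is that $z^{1/r-1}=(z^{1/r})^{1-r}$, so that, writing $h(t):=t^{-r+1}f(t)$, one obtains the compact identity
\[
\phi'(z)=\frac{1}{r}\,h\!\left(z^{1/r}\right),\qquad z>0.
\]
Now the hypothesis that $f$ is $(-r+1)$-decreasing says precisely (via Definition \ref{def of r-decreasing}) that $h$ is strictly decreasing on $\bigl(t^{*}(-r+1),+\infty\bigr)$, so everything will follow by tracking signs in this identity.

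Second, I would split into the two sign regimes. If $r>0$, then $z\mapsto z^{1/r}$ is increasing and carries $\bigl((t^{*}(-r+1))^{r},+\infty\bigr)$ onto $\bigl(t^{*}(-r+1),+\infty\bigr)$; composing the strictly decreasing $h$ with this increasing map and multiplying by $\tfrac{1}{r}>0$ shows $\phi'$ is strictly decreasing on $\bigl((t^{*}(-r+1))^{r},+\infty\bigr)$, hence $\phi$ is concave there. If $r<0$, then $z\mapsto z^{1/r}$ is strictly decreasing and carries $\bigl(0,(t^{*}(-r+1))^{r}\bigr)$ onto $\bigl(t^{*}(-r+1),+\infty\bigr)$ (as $z\to0^{+}$ the argument tends to $+\infty$, and at $z=(t^{*}(-r+1))^{r}$ it equals $t^{*}(-r+1)$); then $z\mapsto h(z^{1/r})$ is a composition of two decreasing maps, hence increasing, and multiplying by $\tfrac{1}{r}<0$ again yields that $\phi'$ is strictly decreasing on $\bigl(0,(t^{*}(-r+1))^{r}\bigr)$, so $\phi$ is concave there. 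In both cases the only analytic fact used is that a differentiable function with non-increasing derivative on an interval is concave on that interval.

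Finally, comparing the two concavity intervals just produced with cases (1) and (3) of Definition \ref{def of r-revealed-concavity}, I would read off that $F$ is $r$-revealed-concavity with threshold $t^{**}(r)=(t^{*}(-r+1))^{r}$, concavity extending to the closed endpoint by continuity of $\phi$ at $(t^{*}(-r+1))^{r}>0$. The point requiring care — more a pitfall than a real obstacle — is that $f$ is assumed only continuous, not differentiable, so one must not pass to $\phi''$; the argument must run entirely through monotonicity of $\phi'=\tfrac{1}{r}h(z^{1/r})$. The accompanying bookkeeping is to verify that the range of $z\mapsto z^{1/r}$ lands exactly in the half-line $\bigl(t^{*}(-r+1),+\infty\bigr)$ on which $h$ is known to be monotone, and to keep the sign of $\tfrac{1}{r}$ correct in each of the two regimes; this is exactly the place where the case $r<0$ departs from the original $r>0$ argument of \cite[Lemma 3.1]{henrion2008convexity}.
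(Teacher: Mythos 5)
Your proof is correct and follows essentially the same route as the paper's: both reduce the claim to the identity $\phi'(z)=\tfrac{1}{r}\,(z^{1/r})^{-r+1}f(z^{1/r})$ and then invoke the strict monotonicity of $t\mapsto t^{-r+1}f(t)$ on $(t^{*}(-r+1),+\infty)$ together with the direction of the map $z\mapsto z^{1/r}$. The only cosmetic differences are that the paper obtains $\phi'$ via an integral substitution rather than the chain rule, and delegates the $r<0$ case to \cite[Lemma 3.1]{henrion2008convexity} where you write it out explicitly.
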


\begin{remark}
Due to \ZH{Lemma} \ref{Lemma about Hi-2}, we can see that \ZH{assertion (1)} of Assumption \ref{Assumption 2} can be strengthened \ZHnew{by}
\begin{itemize}
\item[(1)*] The cumulative distribution function $F_{i}$ has $(-r_{i} + 1)$-decreasing densities with the thresholds $t^{*}_{i}(-r_{i} + 1)$, $i = 1, ... , K$, where $r_{i} \neq 0$. For $r_{i} = 0$, there \ZH{exists} $0 < \varepsilon_{0} < 1$ such that $F_{i}$ has $(1 + \varepsilon_{0})$-decreasing densities with the thresholds $t^{*}_{i}(1 + \varepsilon_{0})$.
\end{itemize}
Using the relationship between $t^{*}$ and $t^{**}$, we can rewrite \ZH{Assumption \ref{Assumption 2} (2)} as follows
\begin{itemize}
\item[(2)*] $p > p^{*}$, where
\begin{equation*}
p^{*} = \max \left\lbrace \frac{1}{2}, \underset{i\in \lbrace 1, ..., K \rbrace}{\max} \ F_{i}\left( \sqrt{\theta^{*}_{i}} \right), \underset{i\in \lbrace 1, ..., K \rbrace}{\max} \ F_{i}[t^{*}_{i}(-r_{i} + 1)] \right\rbrace,
\end{equation*}
where $\theta^{*}_{i}$ is defined in Theorem \ref{Corollary of Hi-1} when $r_{i} \neq 0$. \ZHnew{If} $r_{i} = 0$, we \ZHnew{set} $\theta^{*}_{i}$ to the value corresponding to the case with $r_{i} = \varepsilon_{0}$ in Theorem \ref{Corollary of Hi-1}, and $F_{i}[t^{*}_{i}(-r_{i} + 1)]$ is replaced by $F_{i}[t^{*}_{i}(1 + \varepsilon_{0})]$.
\end{itemize}
In the remainder, for the sake of simplicity, \ZH{we will use $t^{*}$ instead of $t^{**}$}.
\end{remark}

We can now show the concavity of $H_{i}$ on the convex hull of $S(p)$.
\begin{lemma}\label{Proposition about concave Hi}
Assume $o \notin X$. If \ZH{Assumption} \ref{Assumption 2} \ZH{holds}, then $H_{i}$ is concave on $\text{conv}S(p)$.
\end{lemma}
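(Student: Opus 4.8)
The plan is to follow the composition scheme of \cite{henrion2008convexity}: realise $H_i$ as $H_i=\phi_i\circ u_i$, where the inner map $u_i$ is the power of $g_i$ whose sign-adjusted version is convex by Theorem~\ref{Corollary of Hi-1}, and the outer map $\phi_i$ is the concave, monotone reparametrisation of $F_i$ furnished by $\bar r_i$-revealed-concavity; concavity of $H_i$ then follows from the elementary rules for composing a concave function with a convex or a concave one. To handle the degenerate exponent uniformly, set $\bar r_i:=r_i$ when $r_i\neq0$ and $\bar r_i:=-\varepsilon_0$ when $r_i=0$. By Assumption~\ref{Assumption 2}(1), $F_i$ is $\bar r_i$-revealed-concavity, so by Definition~\ref{def of r-revealed-concavity} there is $t^{**}_i:=t^{**}_i(\bar r_i)>0$ with $z\mapsto F_i\bigl(z^{1/\bar r_i}\bigr)$ concave on $(0,t^{**}_i]$ if $\bar r_i<0$ and on $[t^{**}_i,+\infty)$ if $\bar r_i>0$; and $\theta^*_i$ is the threshold that Theorem~\ref{Corollary of Hi-1} attaches to the exponent $\bar r_i$. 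Assumption~\ref{Assumption 2}(2) is exactly what bounds $p$ above all the relevant quantities, so all these objects are available.

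\textbf{Step 1 (localising the feasible set).} The first step is to show that $\text{conv}\,S(p)$ lies in a region where $u_i$ is well defined and convex/concave. Fix $x\in S(p)$; since $o\notin X$, $x\neq o$, so $g_i(x)$ is given by \eqref{Fea-2} and the chance constraint becomes \eqref{Fea-3}. Applying the Fr\'echet--Hoeffding upper bound (Proposition~\ref{Frechet-Hoeffding upper bound}) to \eqref{Fea-3} gives $F_i\bigl(g_i(x)\bigr)\ge C_x\bigl[F_1(g_1(x)),\dots,F_K(g_K(x))\bigr]\ge p$ for every $i$. Combining this with $p>p^*\ge\max\bigl\{\tfrac12,\ F_i(\sqrt{\theta^*_i}),\ F_i[(t^{**}_i)^{1/\bar r_i}]\bigr\}$ and with the monotonicity of the CDF $F_i$, one obtains, for all $x\in S(p)$ and all $i$,
\begin{equation*}
g_i(x)\ >\ \max\bigl\{\,0,\ \sqrt{\theta^*_i},\ (t^{**}_i)^{1/\bar r_i}\,\bigr\}.
\end{equation*}
Each of these strict inequalities places $x$ in a superlevel set of $x\mapsto(D_i-\mu_i^Tx)/\sqrt{x^T\Sigma_i x}$, and such superlevel sets are convex (see the remark after Corollary~\ref{Corollary of Hi-1-1}); moreover $S(p)\subseteq X$, which avoids $o$, so $S(p)\subseteq E$. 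Passing to convex hulls, all these inclusions persist for $\text{conv}\,S(p)$; in particular $\text{conv}\,S(p)\subseteq E\cap X$ and $\text{conv}\,S(p)\subseteq G(\theta^*_i)$, with $G$ as in \eqref{"locally convex" set}.

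\textbf{Step 2 (assembling the composition).} On the convex set $G(\theta^*_i)$, Theorem~\ref{Corollary of Hi-1} (with exponent $\bar r_i$) asserts that $\text{sign}(-\bar r_i)\cdot g_i^{\bar r_i}$ is locally convex, hence convex; therefore $u_i:=g_i^{\bar r_i}$ is convex on $\text{conv}\,S(p)$ when $\bar r_i<0$ and concave when $\bar r_i>0$. Put $\phi_i(z):=F_i\bigl(z^{1/\bar r_i}\bigr)$. Since $F_i$ is nondecreasing and $z\mapsto z^{1/\bar r_i}$ is monotone, $\phi_i$ is nonincreasing if $\bar r_i<0$ and nondecreasing if $\bar r_i>0$, and it is concave on the interval singled out above. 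The lower bound from Step~1 keeps the inner map inside that interval: for $\bar r_i<0$ one has $0<u_i(x)=g_i(x)^{\bar r_i}<t^{**}_i$, and for $\bar r_i>0$ one has $u_i(x)=g_i(x)^{\bar r_i}>t^{**}_i$, throughout $\text{conv}\,S(p)$. Hence $H_i=\phi_i\circ u_i$ is, on $\text{conv}\,S(p)$, either a concave nonincreasing function composed with a convex function (case $\bar r_i<0$) or a concave nondecreasing function composed with a concave function (case $\bar r_i>0$); in both cases the composition is concave, which is the assertion.

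\textbf{Main obstacle.} The only part that is not mechanical is Step~1: one must check that the pointwise lower bounds on the $g_i$ are inherited by the convex hull $\text{conv}\,S(p)$, since this is what simultaneously makes Theorem~\ref{Corollary of Hi-1} usable ($\text{conv}\,S(p)\subseteq G(\theta^*_i)$, so that local convexity upgrades to global convexity on a convex set) and the composition legitimate ($u_i$ remains in the concavity window of $\phi_i$). It is precisely for this reason that $p^*$ in Assumption~\ref{Assumption 2} is taken as the maximum of $\tfrac12$, $\max_i F_i(\sqrt{\theta^*_i})$, and $\max_i F_i[(t^{**}_i)^{1/\bar r_i}]$; once the inclusion $\text{conv}\,S(p)\subseteq G(\theta^*_i)$ is secured, the remainder is the standard monotone-composition calculus for generalized concave functions.
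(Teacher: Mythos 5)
Your proposal is correct and, for the main case $r_i\neq 0$, is essentially the paper's own argument: use the Fr\'echet--Hoeffding bound together with Assumption \ref{Assumption 2}(2) to place $\text{conv}\,S(p)$ inside $G(\theta^*_i)$ and inside the revealed-concavity window of $F_i$, then compose the $r_i$-concave inner map $g_i^{r_i}$ with the monotone concave reparametrisation $z\mapsto F_i(z^{1/r_i})$; the paper writes this as the two inequalities \eqref{Proposition about concave Hi first part 1}--\eqref{Proposition about concave Hi first part 2} rather than as an abstract composition rule, but the content is identical. The one place you genuinely diverge is the sub-case $r_i=0$: the paper establishes local log-concavity of $g_i$ and then derives the outer inequality by proving \eqref{Proposition about concave Hi first part 5} for all exponents $r\in[-\varepsilon_0,\varepsilon_0]\setminus\{0\}$ and passing to the limit $r\to 0$ in \eqref{Proposition about concave Hi first part 6}, whereas you simply rerun the nonzero argument with $\bar r_i=-\varepsilon_0$. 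Since Assumption \ref{Assumption 2} already redefines, for $r_i=0$, both $\theta^*_i$ and the threshold term $F_i[t^{**}_i(-\varepsilon_0)^{-1/\varepsilon_0}]$ in terms of $-\varepsilon_0$, your substitution uses exactly what the hypotheses provide, and it buys a shorter proof that avoids the limit interchange; the paper's detour buys only the explicit exhibition of the log-concave ($r=0$) formulation of Theorem \ref{Corollary of Hi-1}, which your route never needs.
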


\begin{proof}
Define $g_{i}$ as follows:
\begin{equation*}
g_{i}(x) := \frac{D_{i} - \mu^{T}_{i}x}{\sqrt{x^{T}\Sigma_{i}x}}.
\end{equation*}
Combining \ZH{assertion $(2)$ of} Assumption \ref{Assumption 2} and \ZH{Theorem} \ref{Corollary of Hi-1}, we can see that $\text{sign}(-r_{i})\cdot g_{i}^{r_{i}}$ is convex on $\text{conv}S(p) \subset \cap^{K}_{i=1}G(\theta^{*}_{i})$. Thus, for any $x_{1}, x_{2} \in S(p)$, and $\lambda \in [0, 1]$, we get
\begin{equation*}
g_{i}(\lambda x_{1} + (1 - \lambda)x_{2}) \geq \left( \lambda g^{r_{i}}_{i}(x_{1}) + (1 - \lambda)g^{r_{i}}_{i}(x_{2}) \right)^{\frac{1}{r_{i}}}, \ \ \forall r_{i} \in \mathbb{R} \setminus \lbrace 0 \rbrace,
\end{equation*} 
and then we have
\begin{equation}\label{Proposition about concave Hi first part 1}
F_{i}\left( g_{i}(\lambda x_{1} + (1 - \lambda)x_{2}) \right) \geq F_{i}\left( \left( \lambda g^{r_{i}}_{i}(x_{1}) + (1 - \lambda)g^{r_{i}}_{i}(x_{2}) \right)^{\frac{1}{r_{i}}} \right).
\end{equation}

Next, we prove 
\begin{equation}\label{Proposition about concave Hi first part 2}
F_{i}\left( \left( \lambda g^{r_{i}}_{i}(x_{1}) + (1 - \lambda)g^{r_{i}}_{i}(x_{2}) \right)^{\frac{1}{r_{i}}} \right) \geq \lambda F_{i}\left( g_{i}\left( x_{1} \right) \right) + (1 - \lambda)F_{i}\left( g_{i}(x_{2}) \right).
\end{equation}
Due to \ZH{Lemma} \ref{Lemma about Hi-2}, it suffices to prove that if $r_{i} > 0$, then $g_{i}(x_{j})^{r_{i}} \in ((t^{*}_{i}\left( -r_{i} + 1 \right))^{r_{i}}, +\infty)$ for $j = 1, 2$. If $r_{i} < 0$, then $g_{i}(x_{j})^{r_{i}} \in (0, (t^{*}_{i}\left( -r_{i} + 1 \right))^{r_{i}})$ for $j = 1, 2$. Since $x_{1}, x_{2} \in S(p)$ and $p > p^{*}$, together with \eqref{Fea-3}, Proposition \ref{Frechet-Hoeffding upper bound} and \ZH{assertion $(2)$} of Assumption \ref{Assumption 2}, we \ZHnew{have}
\begin{equation*}
F_{i}\left( g_{i}\left( x_{j} \right) \right) > p^{*} \geq F_{i}\left( t^{*}_{i}\left( -r_{i} + 1 \right) \right), j = 1, 2,
\end{equation*}
thus we have
\begin{equation}\label{Proposition about concave Hi first part 3}
g_{i}(x_{j}) > t^{*}_{i}(-r_{i} + 1) > 0, j = 1, 2.
\end{equation}
Then, the proof is complete by taking both sides of \eqref{Proposition about concave Hi first part 3} to the power of $r_{i}$. Finally, \ZH{combining \eqref{Proposition about concave Hi first part 1} and \eqref{Proposition about concave Hi first part 2} leads to}
\begin{equation*}
F_{i}\left( g_{i}(\lambda x_{1} + (1 - \lambda)x_{2}) \right) \geq \lambda F_{i}\left( g_{i}\left( x_{1} \right) \right) + (1 - \lambda)F_{i}\left( g_{i}(x_{2}) \right),
\end{equation*}
which implies $F_{i}(g_{i}(x))$ is concave on $S(p)$.

It remains to consider the case with $r_{i} = 0$. \ZH{Combining \ZH{Theorem} \ref{Corollary of Hi-1},  Assumption \ref{Assumption 2} and \ZH{Lemma} \ref{Lemma about Hi-1}, we proved that given the values of $\theta^{*}$ presented in this paper, $\ln g_{i}$ is locally concave on the set $S(p)$, i.e.,}
\begin{equation*}
\ln g_{i} \left( \lambda x_{1} + \left( 1 - \lambda \right) x_{2} \right) \geq \lambda \ln g_{i} \left( x_{1} \right) + \left( 1 - \lambda \right) \ln g_{i} \left( x_{2} \right),
\end{equation*}
which can also be rewritten as follows
\begin{equation*}
g_{i} \left( \lambda x_{1} + \left( 1 - \lambda \right) x_{2} \right) \geq g_{i} \left( x_{1} \right)^{\lambda} \cdot g_{i} \left( x_{2} \right)^{\left( 1 - \lambda \right)}.
\end{equation*}
\ZHnew{Due} to the monotonicity of distribution $F_{i}$, we have
\begin{equation}\label{Proposition about concave Hi first part 4}
F_{i} \left( g_{i} \left( \lambda x_{1} + \left( 1 - \lambda \right) x_{2} \right) \right) \geq F_{i} \left( g_{i} \left( x_{1} \right)^{\lambda}  \cdot g_{i} \left( x_{2} \right)^{\left( 1 - \lambda \right)}\right).
\end{equation}
Notice \ZHnew{that} $r$-decreasing functions follow a "decreasing" order. This "decreasing" property leads to the conclusion that a $(1 + \varepsilon_{0})$-decreasing function $F_{i}$ is also a $r$-decreasing function for all $r \leq 1 + \varepsilon_{0}$. \ZHnew{For} $D_{i} < 0$ and $r > -1$, \ZHnew{by} Theorem \ref{Corollary of Hi-1}, we take \ZHnew{either}
\begin{equation}\label{theta* of ln g}
\theta^{*} = \mu^{T}_{i} \Sigma^{-1}_{i} \mu_{i} + (2 + r) \cdot \frac{\| \mu_{i} \|^{2}}{\lambda_{\text{min, i}}}
\end{equation}
or the greater one
\begin{equation*}
\theta^{*} = (3 + r) \cdot \| \mu_{i} \|^{2} \cdot \lambda^{-1}_{\text{min, i}},
\end{equation*}
which are both increasing w. r. t. $r$. Let $r \in [-\varepsilon_{0}, \varepsilon_{0}] \setminus \lbrace 0 \rbrace$. Since $\varepsilon_{0} \in (0, 1)$, we know that $2 + r \in (1, 3)$, \ZH{which implies $\theta^{*}$ defined by \eqref{theta* of ln g} is non-negative}. As a result, the function $\text{sign}(-r) \cdot g_{i}^{r}$ is locally convex on the following set
\begin{equation*}
\left\lbrace x \in E: g_{i}(x) \geq \sqrt{\mu^{T}_{i} \Sigma^{-1}_{i} \mu_{i} + (2 + \varepsilon_{0}) \cdot \frac{\| \mu_{i} \|^{2}}{\lambda_{\text{min, i}}} } \right\rbrace.
\end{equation*}
Using the local convexity property of $\text{sign}(-r) \cdot g_{i}^{r}$, together with \ZH{assertion (2)} of Assumption \ref{Assumption 2}, we \ZHnew{get}
\begin{equation}\label{Proposition about concave Hi first part 5}
F_{i}\left( \left( \lambda g^{r}_{i}(x_{1}) + (1 - \lambda)g^{r}_{i}(x_{2}) \right)^{\frac{1}{r}} \right) \geq \lambda F_{i}\left( g_{i}\left( x_{1} \right) \right) + (1 - \lambda)F_{i}\left( g_{i}(x_{2}) \right).
\end{equation}
Take the limitation of \eqref{Proposition about concave Hi first part 5} as $r$ tends to $0$, then we have
\begin{equation}\label{Proposition about concave Hi first part 6}
\begin{aligned}
&F_{i}\left( g_{i} \left( x_{1} \right)^{\lambda} \cdot g_{i} \left( x_{2} \right)^{\left( 1 - \lambda \right)} \right)\\
=& \lim_{r \to 0} F_{i} \left( \left( \lambda g^{r}_{i}\left( x_{1} \right) + \left( 1 - \lambda \right) g^{r}_{i}\left( x_{2} \right) \right)^{\frac{1}{r}} \right)\\
\geq & \lambda F_{i} \left( g_{i}\left( x_{1} \right) \right) + \left( 1 - \lambda \right) F_{i} \left( g_{i}\left( x_{2} \right) \right).
\end{aligned}
\end{equation}
When $D_{i} = 0$, due to Theorem \ref{Corollary of Hi-1}, we take $\theta^{*}_{i} = \mu^{T}_{i} \Sigma^{-1}_{i} \mu_{i}$ which is a constant independent on $r$. Thus, we can get the same result as \eqref{Proposition about concave Hi first part 6}. Combining \eqref{Proposition about concave Hi first part 4} and \eqref{Proposition about concave Hi first part 6} we \ZHnew{obtain}
\begin{equation*}
F_{i}\left( g_{i}(\lambda x_{1} + (1 - \lambda)x_{2}) \right) \geq \lambda F_{i}\left( g_{i}\left( x_{1} \right) \right) + (1 - \lambda)F_{i}\left( g_{i}(x_{2}) \right),
\end{equation*}
which completes the proof.
\end{proof}
\begin{remark}
Let $\theta^{*}_{max}$ be the maximum of $\lbrace \theta^{*}_{i} \rbrace^{K}_{i=1}$. It is worth remarking that if $\theta^{*}_{max}$ is the best threshold obtained from Theorem \ref{Corollary of Hi-1} and satisfies
\begin{equation}\label{Proposition about concave Hi first part 7}
\sqrt{ \theta^{*}_{max} } > \underset{i\in \lbrace 1, ..., K \rbrace}{\max} \left\lbrace F^{-1}_{i} \left( \frac{1}{2} \right),  \ t^{*}_{i}(-r_{i} + 1) \right\rbrace,
\end{equation}
then $H_{i}$ is concave on $S(p)$ for any $p \geq F_{i}\left(  \sqrt{\theta^{*}_{max}}\right)$. Moreover, if the distribution functions $\lbrace F_{i} \rbrace^{K}_{i=1}$ are all symmetric, then \eqref{Proposition about concave Hi first part 7} can be simplified as
\begin{equation*}
\sqrt{ \theta^{*}_{max} } > \underset{i\in \lbrace 1, ..., K \rbrace}{\max} \left\lbrace t^{*}_{i}(-r_{i} + 1) \right\rbrace.
\end{equation*}
\ZHnew{Notice} that the $\theta^{*}$ obtained \ZHnew{by} Theorem \ref{Corollary of Hi-1} is just the best value for the convexity of $\text{sign}(-r) \cdot g^{r}$ on $G(\theta^{*})$ defined by \eqref{"locally convex" set}. 
\end{remark}

\subsection{Convexity of $U(x, y_{i})$}\label{Convexity of Ui(x, yi)}
In this section, \ZH{we prove} the mapping $(x, y_{i}) \mapsto U(x, y_{i})$ is joint convex on $X \times (0, 1)$. At first, we define $\varphi_{1}$, \ZH{$\varphi_{2}$}, and $\omega$ as follows
\begin{equation}
\left\{
        \begin{array}{ll}
        \varphi_{1} =& \kappa(x)\cdot\ln y_{i} \cdot [ \kappa(x) - 1 + \kappa(x) \cdot  \ln p \cdot y^{\kappa(x)}_{i} ],\\
        \varphi_{2} =& \kappa(x) \cdot \left( \ln y_{i} \right)^{2} \cdot \left( 1 + \ln p \cdot y^{\kappa(x)}_{i} \right) \cdot [ 1 - \kappa(x) - \kappa(x) \cdot  \ln p \cdot y^{\kappa(x)}_{i} ]\\
        & + \left( 1 + \kappa(x)\cdot \ln y_{i} + \ln p \cdot \ln y_{i} \cdot y^{\kappa(x)}_{i} \cdot \kappa(x) \right)^{2},\\
        \omega =&\varphi_{2}/\varphi_{1}.
        \end{array}
\right.
\end{equation}
Let $H_{x}\kappa$ denote the Hessian matrix of $\kappa$, and $\nabla_{x}\kappa$ denote the gradient vector of $\kappa$. In order to prove $(x, y_{i}) \mapsto U(x, y_{i})$ is convex on $X \times (0, 1)$, we make the following assumptions. 
\begin{assumption}\label{Assumption 3}
(1) $p \geq e^{-1}$; (2) $0< \kappa(x) \leq 1$, $\forall x \in X$.
\end{assumption}

In \cite{nguyen2023convexity}, \ZH{$\omega$ is} defined \ZHnew{as} a constant and there exists $c > 0$ such that $\kappa \geq c$. In this paper, \ZH{$\omega$ is} extended to a function \ZHnew{of} the decision vector $x$, and the lower \ZHnew{bound} of $\kappa$ is extended to $\kappa > 0$. These two extensions about $\omega$ and $\kappa$ lead to $\omega$ approaching \ZHnew{infinity} as $\kappa$ \ZHnew{is} close to 0, which is the main difficulty in proving the convexity of $U$. The following lemma is the key result of this section. We show that even if the function $\omega$ is unbounded on $X$, a constant multiplier of the function $1/\kappa$ can still be the upper bound of $\omega$.
\begin{lemma}\label{Lemma end 2}
\ZH{Assume $o \notin X$ and \ZH{Assumption} \ref{Assumption 3}  holds}, then $\varphi_{1}, \varphi_{2} > 0$, and there exists $d>0$ such that $1/\omega(x, y_{i}) \geq d \cdot \kappa(x)$ for any $(x, y_{i}) \in X \times (0, 1)$.
\end{lemma}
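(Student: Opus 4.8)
The statement has two logically independent parts, and I would handle them in turn: the strict positivity of $\varphi_1$ and $\varphi_2$, which is a bookkeeping of signs, and the bound $1/\omega\ge d\kappa$, which is the substantive claim. Throughout I regard $y_i\in(0,1)$ as one of the fixed constants from the reformulation \eqref{Fea-6}, take $p\in[e^{-1},1)$ as in Assumption \ref{Assumption 3}, and abbreviate $\kappa:=\kappa(x)\in(0,1]$ and $z:=y_i^{\kappa}$. Then $\ln y_i<0$, $\ln p<0$, $z\in(0,1)$, and the hypothesis $p\ge e^{-1}$ is used precisely to get $1+\ln p\cdot z\in(0,1)$, from $\ln p\cdot z\ge\ln p\ge-1$ together with $z<1$.

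For the positivity, $\varphi_1$ is the product of $\kappa>0$, of $\ln y_i<0$, and of the bracket $\kappa-1+\kappa\ln p\cdot z$, and this bracket is negative since $\kappa-1\le0$ and $\kappa\ln p\cdot z<0$; hence $\varphi_1>0$. For $\varphi_2$ I would rewrite $1-\kappa-\kappa\ln p\cdot z=1-\kappa(1+\ln p\cdot z)$, which is $>1-\kappa\ge0$ because $1+\ln p\cdot z<1$; then the first summand of $\varphi_2$ is a product of the strictly positive numbers $\kappa$, $(\ln y_i)^2$, $1+\ln p\cdot z$ and $1-\kappa(1+\ln p\cdot z)$, while the second summand is a square, so $\varphi_2>0$.

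For the bound, the crucial observation is that $\varphi_1$, $\varphi_2$, hence $\omega$, depend on $x$ only through the scalar $\kappa\in(0,1]$; so, with $y_i$ and $p$ fixed, $\omega\kappa$ is a function of $\kappa$ alone, and it suffices to show $\sup_{\kappa\in(0,1]}\omega\kappa<\infty$, after which one takes $d:=\bigl(\sup_{\kappa\in(0,1]}\omega\kappa\bigr)^{-1}$, positive since $\omega\kappa>0$ by the first part. To make this visible I would first verify the identity
\[
\varphi_2=-\ln y_i\,(1+\ln p\cdot z)\,\varphi_1+\bigl(1+\kappa\ln y_i\,(1+\ln p\cdot z)\bigr)^2,
\]
which follows by recognizing $1-\kappa-\kappa\ln p\cdot z=-(\kappa-1+\kappa\ln p\cdot z)$ in the first summand of $\varphi_2$ and $1+\kappa\ln y_i+\kappa\ln p\ln y_i\,z=1+\kappa\ln y_i(1+\ln p\cdot z)$ in the second. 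Dividing by $\varphi_1$ and multiplying by $\kappa$ yields
\[
\omega\kappa=-\kappa\ln y_i\,(1+\ln p\cdot z)+\frac{\kappa\bigl(1+\kappa\ln y_i\,(1+\ln p\cdot z)\bigr)^2}{\varphi_1}.
\]
The first term is at most $-\ln y_i$, using $0<\kappa\le1$ and $0<1+\ln p\cdot z<1$. For the second term I would use the factorization $\varphi_1=\kappa|\ln y_i|\,\bigl(1-\kappa+\kappa|\ln p|z\bigr)$, so that $\kappa/\varphi_1=\bigl(|\ln y_i|(1-\kappa+\kappa|\ln p|z)\bigr)^{-1}$, and then observe that $1-\kappa+\kappa|\ln p|z$ extends to a continuous, strictly positive function on $[0,1]$ — it equals $1$ at $\kappa=0$, equals $|\ln p|y_i>0$ at $\kappa=1$, and exceeds $1-\kappa>0$ for $0<\kappa<1$ — hence is bounded below by a positive constant; similarly $1+\kappa\ln y_i(1+\ln p\cdot z)$ lies in the bounded interval $(1+\ln y_i,1)$. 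Multiplying these bounds shows $\omega\kappa$ is bounded above on $(0,1]$. Equivalently, $\omega\kappa$ extends continuously to $\kappa=0$ with limit $-1/\ln y_i>0$, hence is continuous on the compact interval $[0,1]$ and therefore bounded.

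The main obstacle is exactly the second term near $\kappa=0$: since $\varphi_1\to0$ while the squared bracket tends to $1$, the function $\omega$ itself diverges as $\kappa\to0$, and the real content of the lemma is that it diverges no faster than $\kappa^{-1}$, so that the extra factor $\kappa$ in $\omega\kappa$ exactly cancels it. This is why one needs the sharp order $\varphi_1=\Theta(\kappa)$ as $\kappa\to0$, which the identity for $\varphi_2$ together with the factorization $\varphi_1=\kappa|\ln y_i|(1-\kappa+\kappa|\ln p|z)$ are designed to make transparent; the bound $p\ge e^{-1}$ and $\kappa\le1$ enter only to keep $1+\ln p\cdot z$, $1-\kappa(1+\ln p\cdot z)$ and the denominator $1-\kappa+\kappa|\ln p|z$ strictly between fixed positive bounds along the way.
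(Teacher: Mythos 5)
Your proof is correct and follows essentially the same route as the paper's: the same sign bookkeeping gives $\varphi_1,\varphi_2>0$, and the key step in both is recognizing that the first summand of $\varphi_2$ equals $-\ln y_i\,(1+\ln p\cdot y_i^{\kappa})\,\varphi_1$, so that $\omega$ splits into a bounded part plus a remainder of order $1/\kappa$ coming from $\varphi_1=\Theta(\kappa)$. Your presentation is marginally tidier (you keep the square intact and bound $\omega\kappa$ via continuity on $[0,1]$, whereas the paper expands it into $a_1\kappa+a_2+a_3/\kappa$), but the argument and the resulting $d$ (which, as in the paper, depends on the fixed constants $y_i$ and $p$) are the same.
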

The proof of Lemma \ref{Lemma end 2} is presented in Appendix \ref{proof of Lemma about Hi-0}. The formulation of $H_{x} U(x, y_{i})$ is so complex that it is hard to verify its positive semi-definiteness. The following assumption provides an auxiliary matrix to justify the positive semi-definiteness of $H_{x} U(x, y_{i})$.
\ZH{
\begin{assumption}\label{Assumption 4}
$d \cdot \kappa(x) H_{x}\kappa(x) - \nabla_{x} \kappa(x) (\nabla_{x}\kappa(x))^{T}$ is a positive semi-definite matrix for any $x \in X$.
\end{assumption}
}

\begin{proposition}\label{Property of Schur complement}
There are some properties of the Schur complement of $C$ in $G$ :

(1) $G \succ 0$ if and only if $C \succ 0$ and $A - B C^{-1}B^{T} \succ 0$;

(2) If $C \succ 0$, then $G \succeq 0$ if and only if $A - B C^{-1}B^{T} \succeq 0$.
\end{proposition}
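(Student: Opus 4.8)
The plan is to reduce both assertions to the classical fact that congruence by an invertible matrix preserves the inertia of a symmetric matrix (Sylvester's law of inertia). Write $G$ in block form $G = \begin{pmatrix} A & B \\ B^{T} & C \end{pmatrix}$, where $A$ and $C$ are symmetric square blocks and $C$ is the block whose Schur complement $S := A - B C^{-1} B^{T}$ appears in the statement (implicitly $C$ is invertible whenever $S$ is written down, and this will be justified in each case).

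First I would record, for invertible $C$, the block factorization
\begin{equation*}
G = \begin{pmatrix} I & B C^{-1} \\ 0 & I \end{pmatrix} \begin{pmatrix} S & 0 \\ 0 & C \end{pmatrix} \begin{pmatrix} I & 0 \\ C^{-1} B^{T} & I \end{pmatrix},
\end{equation*}
which one checks by multiplying out and using $S + B C^{-1} B^{T} = A$. The two outer factors are unit triangular, hence invertible, and each is the transpose of the other, so $G$ is congruent to $D := \operatorname{diag}(S, C)$. Since a block-diagonal symmetric matrix is positive (semi)definite exactly when each diagonal block is, and congruence preserves both properties, this identity is the engine of the argument.

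For part (1): if $G \succ 0$, then restricting the quadratic form $z \mapsto z^{T} G z$ to vectors supported on the second coordinate block shows $C \succ 0$, so $C$ is invertible, the factorization applies, and $D \succ 0$, giving $S \succ 0$ and $C \succ 0$; conversely, if $C \succ 0$ and $S \succ 0$, then $C$ is invertible, the factorization applies, $D \succ 0$, and hence $G \succ 0$. For part (2), $C \succ 0$ is assumed from the start, so the factorization always applies, and $G \succeq 0 \iff D \succeq 0 \iff S \succeq 0$, the block $C$ being already positive definite. An equivalent route, for readers who prefer to avoid the inertia language, is to complete the square: for $z = (u, v)$ one has $z^{T} G z = u^{T} S u + (v + C^{-1} B^{T} u)^{T} C (v + C^{-1} B^{T} u)$, from which both equivalences are immediate.

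This is a textbook lemma, so I do not expect a genuine obstacle; the only point needing a little care is part (1), where positive definiteness of $C$ is \emph{not} assumed a priori and must be extracted from $G \succ 0$ before the factorization is legitimate, together with the explicit remark that the triangular factors above are invertible so that Sylvester's law applies.
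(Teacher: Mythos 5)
Your proof is correct. Note that the paper itself states this proposition without any proof, treating it as a standard textbook fact about Schur complements, so there is no argument of the authors' to compare against; your block congruence
\begin{equation*}
G = \begin{pmatrix} I & B C^{-1} \\ 0 & I \end{pmatrix} \begin{pmatrix} A - BC^{-1}B^{T} & 0 \\ 0 & C \end{pmatrix} \begin{pmatrix} I & 0 \\ C^{-1} B^{T} & I \end{pmatrix}
\end{equation*}
(equivalently, the completing-the-square identity for the quadratic form) is the standard derivation, and you correctly flag the one point that actually needs care, namely that in part (1) the positive definiteness of $C$ is not assumed and must first be extracted from $G \succ 0$ by restricting the quadratic form to the second block before the factorization may be invoked.
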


To prove the convexity of $U$, it suffices to \ZH{show that the} Hessian matrix of $U$ is positive semi-definite. We define
\begin{equation*}
\nabla_{x} := \left( \frac{\partial}{\partial x_{1}}, ... , \frac{\partial}{\partial x_{N}} \right)^{T},
\end{equation*}
\begin{equation*}
H_{x}U(x, y_{i}) :=
\begin{pmatrix}
\frac{\partial^{2}U}{\partial x^{2}_{1}} & \frac{\partial^{2}U}{\partial x_{1}\partial x_{2}} & \cdots & \frac{\partial^{2}U}{\partial x_{1}\partial x_{N}}\\
\frac{\partial^{2}U}{\partial x_{2}\partial x_{1}} & \frac{\partial^{2}U}{\partial x^{2}_{2}} & \cdots & \frac{\partial^{2}U}{\partial x_{2}\partial x_{N}}\\
\vdots & \vdots & \ddots & \vdots\\
\frac{\partial^{2}U}{\partial x_{N}\partial x_{1}} & \frac{\partial^{2}U}{\partial x_{N}\partial x_{2}} & \cdots & \frac{\partial^{2}U}{\partial x^{2}_{N}}
\end{pmatrix},
\end{equation*}
then the Hessian matrix of $U$ with respect to $(x, y_{i})$ can be written as follows
\begin{equation*}
H_{(x, y_{i})} U(x, y_{i}) = 
\begin{pmatrix}
H_{x}U(x, y_{i}) & \nabla_{x} \frac{\partial}{\partial y_{i}} U(x, y_{i})\\
\left( \nabla_{x} \frac{\partial}{\partial y_{i}} U(x, y_{i}) \right)^{T} & \frac{\partial^{2}}{\partial y^{2}_{i}} U(x,  y_{1})
\end{pmatrix}.
\end{equation*}

Using the auxiliary positive semi-definite matrix proposed in Assumption \ref{Assumption 4} and the Schur complement, we will provide an equivalent form of the Hessian matrix of $U$. The following proposition is the result in \cite{nguyen2023convexity}. For the sake of completeness and self-containment, \ZH{we provide} the proof of the following proposition in Appendix \ref{proof of Lemma about Hi-0}.

\begin{proposition}\cite{nguyen2023convexity}\label{The detail of the second part of N-0--1}
Let $(x, y_{i}) \in X \times (0, 1)$. Define a matrix $M(x, y_{i})$ on $X \times (0, 1)$ as follows
\begin{equation*}
M(x, y_{i}) := \varphi_{1}(x, y_{i}) \cdot H_{x}\kappa(x) - \varphi_{2}(x, y_{i}) \cdot \left( \nabla_{x} \kappa(x) \right) \cdot \left( \nabla_{x} \kappa(x) \right)^{T}.
\end{equation*}
Then, the matrix $H_{(x, y_{i})} U$ is positive semi-definite if and only if $M(x, y_{i})$ is positive semi-definite.
\end{proposition}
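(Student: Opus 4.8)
The plan is to make $U$ explicit, differentiate once to factor out a negative scalar, and then collapse the positive semi-definiteness of the $(N{+}1)\times(N{+}1)$ Hessian to that of $M$ through a single Schur-complement step. From the Gumbel--Hougaard form $\psi_x(t)=(-\ln t)^{1/\kappa(x)}$ one has $\psi^{(-1)}_x(s)=e^{-s^{\kappa(x)}}$, hence
\[
U(x,y_i)=\psi^{(-1)}_x\!\bigl(y_i\,\psi_x(p)\bigr)=\exp\!\bigl(-y_i^{\kappa(x)}(-\ln p)\bigr)=p^{\,y_i^{\kappa(x)}}.
\]
Setting $a:=\ln p<0$ and $w(x,y_i):=y_i^{\kappa(x)}=e^{\kappa(x)\ln y_i}$, so that $U=e^{aw}$, differentiation gives $\nabla_{(x,y_i)}U=aU\,\nabla_{(x,y_i)}w$ and $H_{(x,y_i)}U=aU\bigl(H_{(x,y_i)}w+a\,(\nabla_{(x,y_i)}w)(\nabla_{(x,y_i)}w)^{T}\bigr)$. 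Since $U>0$ and $a<0$ the scalar $aU$ is negative, so $H_{(x,y_i)}U\succeq0$ if and only if $-\bigl(H_{(x,y_i)}w+a(\nabla w)(\nabla w)^{T}\bigr)\succeq0$.

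Next I would write this matrix in block form with respect to the splitting of the variables into $x$ and the scalar $y_i$. Using $\partial_{x_j}w=w\ell\,\partial_{x_j}\kappa$ and $\partial_{y_i}w=w\kappa/y_i$ (with $\ell:=\ln y_i<0$), a short computation gives $H_xw=w\ell H_x\kappa+w\ell^{2}(\nabla_x\kappa)(\nabla_x\kappa)^{T}$, $\nabla_x\partial_{y_i}w=(w/y_i)(1+\kappa\ell)\nabla_x\kappa$ and $\partial^2_{y_i}w=(w\kappa/y_i^{2})(\kappa-1)$; combining with $(\nabla w)(\nabla w)^{T}$ yields
\[
-\bigl(H_{(x,y_i)}w+a(\nabla w)(\nabla w)^{T}\bigr)=\begin{pmatrix}A & B\\ B^{T} & C\end{pmatrix},
\]
with $A=-w\ell H_x\kappa-w\ell^{2}(1+aw)(\nabla_x\kappa)(\nabla_x\kappa)^{T}$, $B=-(w/y_i)\bigl(1+\kappa\ell+aw\kappa\ell\bigr)\nabla_x\kappa$ and the scalar $C=(w\kappa/y_i^{2})\bigl(1-\kappa-aw\kappa\bigr)$. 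This is where Assumption~\ref{Assumption 3} enters: $p\ge e^{-1}$ and $y_i\in(0,1)$ force $aw=\ln p\cdot y_i^{\kappa}\in(-1,0)$, and with $0<\kappa\le1$ this gives $1-\kappa-aw\kappa=1-\kappa(1+aw)>0$, so $C>0$ (equivalently $C=-w\varphi_1/(\ell y_i^{2})>0$, consistent with $\varphi_1>0$ in Lemma~\ref{Lemma end 2}).

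Since $C>0$, Proposition~\ref{Property of Schur complement}(2) reduces the question to $A-BC^{-1}B^{T}\succeq0$. The core of the proof is the identity
\[
A-BC^{-1}B^{T}=\frac{w}{\kappa\,(1-\kappa-aw\kappa)}\,\Bigl(\varphi_1 H_x\kappa-\varphi_2(\nabla_x\kappa)(\nabla_x\kappa)^{T}\Bigr)=\frac{w}{\kappa\,(1-\kappa-aw\kappa)}\,M(x,y_i),
\]
which I would verify by matching coefficients: the coefficient of $H_x\kappa$ on the left is $-w\ell$, while $\varphi_1=-\kappa\ell(1-\kappa-aw\kappa)$ gives the same on the right; the coefficient of $(\nabla_x\kappa)(\nabla_x\kappa)^{T}$ matches iff, after clearing the denominator $\kappa(1-\kappa-aw\kappa)$ in $BC^{-1}B^{T}$, one has $\kappa\ell^{2}(1+aw)(1-\kappa-aw\kappa)+(1+\kappa\ell+aw\kappa\ell)^{2}=\varphi_2$, which is exactly the definition of $\varphi_2$ rewritten with $\ell=\ln y_i$ and $aw=\ln p\cdot y_i^{\kappa}$. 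The prefactor $w/\bigl(\kappa(1-\kappa-aw\kappa)\bigr)$ is strictly positive by the same sign analysis as for $C$, so $A-BC^{-1}B^{T}\succeq0\iff M(x,y_i)\succeq0$; chaining all the equivalences proves the proposition. The routine computations are the block entries of the Hessian and the Schur-complement expansion $BC^{-1}B^{T}$; the one step that genuinely needs the hypotheses is the strict positivity of $C$ and of the prefactor, which is precisely what Assumption~\ref{Assumption 3} (both $p\ge e^{-1}$ \emph{and} $\kappa\le1$) delivers, and I expect the most error-prone part to be the bookkeeping of the $w$, $\ell$, $\kappa$, $aw$ factors so that the prefactor and the definition of $\varphi_2$ come out exactly.
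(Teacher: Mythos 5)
Your proof is correct and follows essentially the same route as the paper: a Schur complement with respect to the scalar $y_{i}$-block (justified by Proposition \ref{Property of Schur complement}(2) once that block is shown positive), followed by identifying the resulting Schur complement as a strictly positive scalar multiple of $M(x,y_{i})$. The only cosmetic difference is that you first factor the negative scalar $aU$ out of $H_{(x,y_i)}U$ and work with $w=y_i^{\kappa(x)}$, whereas the paper computes $N:=\partial^{2}_{y_i}U\cdot H_{x}U-(\nabla_{x}\partial_{y_i}U)(\nabla_{x}\partial_{y_i}U)^{T}$ directly and finds $N=(\ln p)^{2}y_i^{2\kappa-2}p^{2y_i^{\kappa}}M$; your coefficient matching, including the identity recovering $\varphi_{2}$, checks out.
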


We can now state the main result of this section.
\begin{lemma}\label{Lemma end 1}
Assume $o \notin X$. If Assumption \ref{Assumption 3} and \ref{Assumption 4} hold, then $U(x, y_{i})$ is jointly convex for any $(x, y_{i}) \in X \times (0, 1)$. In particular, if $d \cdot \kappa(x) H_{x}\kappa(x) - \nabla_{x} \kappa(x) (\nabla_{x}\kappa(x))^{T}$ is a positive definite matrix, then $U(x, y_{i})$ is strictly convex.
\end{lemma}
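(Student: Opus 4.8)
The plan is to reduce the joint convexity of $U$ to a single Loewner-semidefiniteness statement via Proposition~\ref{The detail of the second part of N-0--1}, and then discharge that statement by feeding the scalar estimate of Lemma~\ref{Lemma end 2} into the matrix inequality of Assumption~\ref{Assumption 4}. Since $U$ is twice continuously differentiable on $X\times(0,1)$, joint convexity is equivalent to $H_{(x,y_{i})}U(x,y_{i})\succeq 0$ for all $(x,y_{i})$, and by Proposition~\ref{The detail of the second part of N-0--1} this holds if and only if
\[
M(x,y_{i})=\varphi_{1}(x,y_{i})\,H_{x}\kappa(x)-\varphi_{2}(x,y_{i})\,\bigl(\nabla_{x}\kappa(x)\bigr)\bigl(\nabla_{x}\kappa(x)\bigr)^{T}\succeq 0 .
\]
So the whole task becomes showing $M(x,y_{i})\succeq 0$ on $X\times(0,1)$.

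Next I would assemble two preparatory facts. First, Lemma~\ref{Lemma end 2} (which relies on Assumption~\ref{Assumption 3}) gives $\varphi_{1},\varphi_{2}>0$ and, since $\omega=\varphi_{2}/\varphi_{1}$, the scalar bound $\varphi_{1}/\varphi_{2}=1/\omega(x,y_{i})\ge d\,\kappa(x)>0$ for a fixed $d>0$. Second, Assumption~\ref{Assumption 4} says $d\,\kappa(x)H_{x}\kappa(x)-\bigl(\nabla_{x}\kappa(x)\bigr)\bigl(\nabla_{x}\kappa(x)\bigr)^{T}\succeq 0$; because $d>0$ and $\kappa(x)\in(0,1]$ by Assumption~\ref{Assumption 3}(2), this already forces $H_{x}\kappa(x)\succeq \tfrac{1}{d\kappa(x)}\bigl(\nabla_{x}\kappa(x)\bigr)\bigl(\nabla_{x}\kappa(x)\bigr)^{T}\succeq 0$, a point I would flag explicitly since it is what licenses the next step. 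Dividing the target inequality by $\varphi_{2}>0$, $M\succeq 0$ is equivalent to $\tfrac{\varphi_{1}}{\varphi_{2}}H_{x}\kappa-\bigl(\nabla_{x}\kappa\bigr)\bigl(\nabla_{x}\kappa\bigr)^{T}\succeq 0$, and using the elementary monotonicity of the Loewner order under nonnegative scalars ($A\succeq 0$, $s\ge t\ge 0 \Rightarrow sA\succeq tA$) with $A=H_{x}\kappa$, $s=\varphi_{1}/\varphi_{2}$, $t=d\kappa$ yields
\[
\frac{\varphi_{1}}{\varphi_{2}}\,H_{x}\kappa\ \succeq\ d\,\kappa\,H_{x}\kappa\ \succeq\ \bigl(\nabla_{x}\kappa\bigr)\bigl(\nabla_{x}\kappa\bigr)^{T},
\]
the last step being Assumption~\ref{Assumption 4}. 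Hence $M(x,y_{i})\succeq 0$ and $U$ is jointly convex on $X\times(0,1)$.

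For the strict statement I would use the decomposition
\[
\frac{\varphi_{1}}{\varphi_{2}}H_{x}\kappa-\bigl(\nabla_{x}\kappa\bigr)\bigl(\nabla_{x}\kappa\bigr)^{T}=\Bigl(\frac{\varphi_{1}}{\varphi_{2}}-d\kappa\Bigr)H_{x}\kappa+\Bigl[d\kappa\,H_{x}\kappa-\bigl(\nabla_{x}\kappa\bigr)\bigl(\nabla_{x}\kappa\bigr)^{T}\Bigr],
\]
where the first summand is positive semidefinite (a nonnegative multiple of $H_{x}\kappa\succeq 0$) and the second is positive definite by hypothesis, so $M\succ 0$; combined with strict positivity of the scalar block $\partial^{2}U/\partial y_{i}^{2}$, which I would verify from the explicit one-variable form $U(x,\cdot)=\psi^{(-1)}_{x}(\,\cdot\,\psi_{x}(p))$ using $p\ge e^{-1}$ as in \cite{nguyen2023convexity}, part~(1) of Proposition~\ref{Property of Schur complement} promotes the Schur-complement identity underlying Proposition~\ref{The detail of the second part of N-0--1} to $H_{(x,y_{i})}U\succ 0$, i.e.\ strict convexity.

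The main obstacle is not a deep one here: the real content has been front-loaded into Lemma~\ref{Lemma end 2} (taming the blow-up of $\omega$ as $\kappa\to 0$ by the bound $1/\omega\ge d\kappa$) and into Proposition~\ref{The detail of the second part of N-0--1} (the Schur-complement reduction). What remains delicate is the bookkeeping: one must extract $H_{x}\kappa\succeq 0$ from Assumption~\ref{Assumption 4} \emph{before} invoking the scalar bound, since $sA\succeq tA$ requires $A\succeq 0$; and for the strict case one needs the separate, routine check that the $y_{i}$-diagonal block of the Hessian is strictly positive so that part~(1) rather than part~(2) of Proposition~\ref{Property of Schur complement} applies.
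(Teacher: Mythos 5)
Your proposal is correct and follows essentially the same route as the paper: reduce to $M(x,y_{i})\succeq 0$ via Proposition~\ref{The detail of the second part of N-0--1}, extract $H_{x}\kappa\succeq 0$ from Assumption~\ref{Assumption 4}, and then chain the scalar bound $1/\omega\ge d\kappa$ from Lemma~\ref{Lemma end 2} through the Loewner order, with part~(1) of Proposition~\ref{Property of Schur complement} handling the strict case. Your explicit flagging of the prerequisite $H_{x}\kappa\succeq 0$ and of the positivity of the $\partial^{2}U/\partial y_{i}^{2}$ block are the same checks the paper performs (the latter inside the proof of Proposition~\ref{The detail of the second part of N-0--1}).
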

\begin{proof}
\ZH{Using Proposition \ref{The detail of the second part of N-0--1}, it suffices to prove} $M(x, y_{i})$ is a positive semi-definite matrix. By using Lemma \ref{Lemma end 2} we have $\omega := \varphi_{2}/\varphi_{1} > 0$. Observe that $\left( \nabla_{x} \kappa(x) \right) \cdot \left( \nabla_{x} \kappa(x) \right)^{T}$ is a positive semi-definite matrix and $\kappa(x)$ satisfies \ZH{Assumption} \ref{Assumption 4}.  It follows that $d \cdot \kappa(x) H_{x}\kappa(x)$ is also a positive semi-definite matrix. Because $d > 0$ and $0 < \kappa(x) \leq 1$, we can get the conclusion that $H_{x}\kappa(x)$ is a positive semi-definite matrix. Then, applying \ZH{Lemma} \ref{Lemma end 2} again, we have
\begin{equation}\label{The detail of the second part of N---1}
\begin{array}{ll}
M(x, y_{i}) &= \varphi_{2} \cdot \left[ \frac{1}{\omega(x, y_{i})} H_{x}\kappa(x) - \nabla_{x} \kappa(x) (\nabla_{x}\kappa(x))^{T}  \right]\\ 
& \ \\
&\succeq \varphi_{2} \cdot \left[ d \cdot \kappa(x) H_{x}\kappa(x) - \nabla_{x} \kappa(x) (\nabla_{x}\kappa(x))^{T} \right] \succeq 0,
\end{array}
\end{equation}

Suppose $d \cdot \kappa(x) H_{x}\kappa(x) - \nabla_{x} \kappa(x) (\nabla_{x}\kappa(x))^{T}$ is positive definite. By using the \ZHnew{assertion} (1) of Proposition \ref{Property of Schur complement} and \eqref{The detail of the second part of N---1}, together with the fact that $\varphi_{2} > 0$,  we \ZHnew{have} $M(x, y_{i}) \succ 0$. As a result, the Hessian matrix of $U$ is a positive definite matrix, which implies $U$ is strictly convex on $X \times (0, 1)$.
\end{proof}

\subsection{Convexity of $S(p)$ under elliptical distributions}\label{Eventual convexity with elliptical distributions}
In this section, we provide the eventual convexity of chance constraints with elliptical distributions and copulas. In the previous sections, we assume the origin point $o \notin X$. In fact, we will prove that the results in Section \ref{Concavity of H_{i}(x)} and \ref{Convexity of Ui(x, yi)} are also valid for the domain $X$ containing $o$. To this end, we recall some results about "star-shaped".
\begin{definition}
A set $S \subset \mathbb{R}^{K}$ is called star-shaped with respect to o (or briefly \textbf{'star-shaped'}) \ZHnew{if and only if} $S \neq \emptyset$ and $[o, x] \subset S$ for all $x \in S$ (thus, a star-shaped set, in our terminology, contains the origin).
\end{definition}

Similar to Proposition 5.1 in \cite{henrion2008convexity}, we can obtain the following result. 
\begin{lemma}\label{Lemma end 0}
If $o \in S(p)$ and $p \in (0, 1]$, then the \ZH{feasible set} $S(p)$ is star-shaped w. r. t. the origin.
\end{lemma}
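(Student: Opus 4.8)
The plan is to show directly that for every $x \in S(p)$ and every $t \in [0,1]$, the point $tx$ again lies in $S(p)$; together with $o \in S(p)$ this gives $[o,x] \subset S(p)$ and hence the star-shaped property. First I would note that $X$ is convex and $o \in S(p) \subset X$, so $tx \in X$ whenever $x \in X$ and $t \in [0,1]$; thus the only thing to check is the probabilistic inequality $\mathbb{P}(V(tx) \le D) \ge p$. The key observation is the homogeneity structure in the reformulation: for $x \ne o$ and $t > 0$ one has $g_i(tx) = (D_i - \mu_i^T(tx))/\sqrt{(tx)^T\Sigma_i(tx)} = (D_i - t\,\mu_i^T x)/(t\sqrt{x^T\Sigma_i x})$, and since $D_i \ge 0$ on $S(p)$ (which is forced here: $o \in S(p)$ together with the threshold assumptions implies the relevant $D_i$ are nonnegative, cf. the discussion of $b \ge 0$ in Theorem \ref{Corollary of Hi-1} and Corollary \ref{Corollary of Hi-1-1}), the map $t \mapsto g_i(tx)$ is nondecreasing on $(0,1]$, so $g_i(tx) \ge g_i(x)$.

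Next I would feed this monotonicity into the copula reformulation \eqref{Fea-3}. Since each $F_i$ is a distribution function it is nondecreasing, so $F_i(g_i(tx)) \ge F_i(g_i(x))$ for all $i$; and since the Gumbel--Hougaard copula $C_{tx}$ is coordinatewise nondecreasing as a distribution function, while we must also control the dependence of the copula itself on the decision vector through $\kappa(\cdot)$, I would argue that $C_{tx}(u) \ge \min_i u_i$ fails in the wrong direction, so instead one uses: for fixed argument $u$, $C_x(u)$ depends on $x$ only through $\kappa(x)$ via $C_x(u) = \exp\{-(\sum_i(-\ln u_i)^{1/\kappa(x)})^{\kappa(x)}\}$, and one checks monotonicity of this expression in $\kappa$ on the relevant region, or more simply one bypasses the copula entirely by working with the system \eqref{Fea-7}: $x \in S(p)$ iff there exist $y_i$ with $\sum y_i = 1$, $y_i \ge 0$, and $F_i(g_i(x)) \ge \psi^{(-1)}_x(y_i\psi_x(p))$. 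Taking the same multipliers $y_i$ for the point $tx$, I would need $\psi^{(-1)}_{tx}(y_i\psi_{tx}(p)) \le \psi^{(-1)}_x(y_i\psi_x(p))$ combined with $F_i(g_i(tx)) \ge F_i(g_i(x))$; the first of these is where the analysis of how $\psi_x$ depends on $x$ through $\kappa(x)$ enters, and I would invoke the behaviour established for $U(x,y_i)$ in Section \ref{Convexity of Ui(x, yi)} (in particular the structure of $\psi^{(-1)}_x(y_i\psi_x(p))$) to conclude.

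I expect the main obstacle to be the handling of the point $x = o$ itself and points $tx$ with $t$ small, where $g_i$ is not defined by \eqref{Fea-2}: one must argue that the original constraint $\mathbb{P}(V(tx) \le D) \ge p$ still makes sense and is implied, using that at $tx$ with $t \to 0^+$ the constraint degenerates to $\mathbb{P}(o \le D) \ge p$, i.e. $D \ge 0$ componentwise (which holds because $o \in S(p)$ forces $\mathbb{P}(o \le D) = 1 \ge p$ when $D \ge 0$, and more generally $\mathbb{P}(o \le D) \ge p$). Thus along the segment $[o,x]$ the probability $\mathbb{P}(V(tx)\le D)$ moves monotonically between its value $1$ (or $\ge p$) at $t=0$ and its value $\ge p$ at $t=1$, never dropping below $p$. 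Once the monotonicity of $t \mapsto \mathbb{P}(V(tx) \le D)$ on $[0,1]$ is rigorously established via the $g_i$-monotonicity for $t>0$ and a limiting argument at $t=0$, the star-shapedness is immediate. I would close by remarking that this mirrors Proposition 5.1 of \cite{henrion2008convexity}, the only new ingredient being the decision-dependence of the copula, which is neutralized by reusing the same barycentric weights $y_i$ in \eqref{Fea-7}.
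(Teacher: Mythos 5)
Your overall strategy (reduce to $D \ge 0$ and then show the probability cannot drop along the ray $t \mapsto tx$) is the right one, but the route you take through the copula reformulation introduces a gap that you flag but do not close, and that in fact cannot be closed from the paper's assumptions. To run the argument through \eqref{Fea-3} or \eqref{Fea-7} you need either $C_{tx}(u) \ge C_x(u)$ along the ray or $\psi^{(-1)}_{tx}(y_i\psi_{tx}(p)) \le \psi^{(-1)}_{x}(y_i\psi_{x}(p))$ for the same weights $y_i$. Writing $U(x,y_i) = p^{\,y_i^{\kappa(x)}}$, the latter amounts to $\kappa(tx) \le \kappa(x)$ for all $t \in (0,1]$, and nothing in Assumptions \ref{Assumption 3} or \ref{Assumption 4} (nor in the convexity analysis of $U$ in Section \ref{Convexity of Ui(x, yi)}, which controls curvature, not radial monotonicity of $\kappa$) gives you this. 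So the step you defer to ``the behaviour established for $U(x,y_i)$'' is exactly the missing idea.

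The paper's proof sidesteps the decomposition entirely: since $o \in S(p)$ and $p>0$, the deterministic event $\{0 \le D\}$ has probability at least $p$, hence $D \ge 0$ componentwise (no threshold assumptions needed); then for $\lambda \in (0,1]$ one has the set inclusion $\{Vx \le D\} \subseteq \{Vx \le \lambda^{-1}D\} = \{V(\lambda x) \le D\}$, so $\mathbb{P}(V(\lambda x)\le D) \ge \mathbb{P}(Vx \le D) \ge p$ regardless of the marginals, the copula, or its dependence on $x$. This is the observation that neutralizes the decision-dependent copula, not the reuse of the barycentric weights. Two smaller points: your statement that $t \mapsto g_i(tx)$ is \emph{nondecreasing} on $(0,1]$ is backwards (for $D_i \ge 0$ it is nonincreasing in $t$, which is what yields $g_i(tx)\ge g_i(x)$ for $t\le 1$), and the limiting discussion at $t\to 0^+$ is unnecessary once $t=0$ is handled by the hypothesis $o \in S(p)$ directly.
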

\ZH{The complete proof is given in Appendix \ref{proof of Lemma about Hi-0}.} We can now present the main result of this paper. Combining the concavity of $H_{i}$ and the joint convexity of $U$, we can get the convexity of the \ZH{feasible set} $S(p)$.
\begin{theorem}\label{Theorem of convex feasible set}
Assume the \ZH{feasible set} $S(p) \neq \emptyset$. If \ZH{Assumptions} \ref{Assumption 2}, \ref{Assumption 3}, and \ref{Assumption 4} \ZH{hold}, then $S(p)$ is convex. Moreover, if $d \cdot \kappa(x) H_{x}\kappa(x) - \nabla_{x} \kappa(x) (\nabla_{x}\kappa(x))^{T}$ is a positive definite matrix, then $S(p)$ is strictly convex.
\end{theorem}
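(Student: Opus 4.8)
The plan is to run the standard composition argument through the system reformulation \eqref{Fea-7}. Writing $H_i(x)=F_i(g_i(x))$ and $U(x,y_i)=\psi^{(-1)}_x(y_i\psi_x(p))$, a point $x\in X$ belongs to $S(p)$ if and only if there is a $y$ in the unit simplex $\{y\in\mathbb{R}^K:\ y_i\ge 0,\ \sum_{i=1}^K y_i=1\}$ (indeed in its relative interior) with $H_i(x)\ge U(x,y_i)$ for every $i$. So fix $x_1,x_2\in S(p)$ and $\lambda\in(0,1)$, pick $y^1,y^2$ witnessing the feasibility of $x_1,x_2$, and set $x_\lambda:=\lambda x_1+(1-\lambda)x_2$ and $y^\lambda:=\lambda y^1+(1-\lambda)y^2$, which again lies in the simplex. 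First I would apply Lemma \ref{Proposition about concave Hi} to obtain $H_i(x_\lambda)\ge \lambda H_i(x_1)+(1-\lambda)H_i(x_2)$, then use the feasibility of $x_1,x_2$ to bound the right-hand side below by $\lambda U(x_1,y^1_i)+(1-\lambda)U(x_2,y^2_i)$, and finally invoke the joint convexity of $U$ from Lemma \ref{Lemma end 1} to bound this in turn below by $U(x_\lambda,y^\lambda_i)$. Chaining the three inequalities gives $H_i(x_\lambda)\ge U(x_\lambda,y^\lambda_i)$ for every $i$, so $(x_\lambda,y^\lambda)$ satisfies \eqref{Fea-7} and $x_\lambda\in S(p)$; hence $S(p)$ is convex.

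The point needing care is that Lemmas \ref{Proposition about concave Hi} and \ref{Lemma end 1} were established under $o\notin X$, whereas here $o\in X$ is permitted, so I would split into three cases. If $o\notin X$, the argument above applies verbatim. If $o\in X$ but $o\notin S(p)$, then, since $p>1/2$ by Assumption \ref{Assumption 2} and the value of the chance constraint at $o$ is either $0$ or $1$, some $D_j<0$; combining the Fr\'echet--Hoeffding bound (Proposition \ref{Frechet-Hoeffding upper bound}), the inequality $p>\max\{1/2,F_j(\sqrt{\theta^*_j})\}$, the symmetry of the one-dimensional spherical margins $F_j$, and $\theta^*_j\ge 0$, one gets $g_j(x)>0$, i.e.\ $\mu_j^Tx<D_j<0$, for every $x\in S(p)$ (the subcase $\mu_j=o$ being impossible, as it would force $S(p)=\emptyset$). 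Thus $S(p)$, and hence $\operatorname{conv}S(p)$, lies in an open half-space avoiding $o$, and the proofs of Lemma \ref{Proposition about concave Hi} on $\operatorname{conv}S(p)$ and of Lemma \ref{Lemma end 1} on that half-space times $(0,1)$ go through unchanged, since the hypothesis $o\notin X$ was used there only to keep $o$ out of the set on which the $g_i$ are evaluated. Finally, if $o\in S(p)$, then Lemma \ref{Lemma end 0} shows $S(p)$ is star-shaped with respect to $o$; given $x_1,x_2\in S(p)$, if $o\in[x_1,x_2]$ then $[x_1,x_2]=[x_1,o]\cup[o,x_2]\subseteq S(p)$ by star-shapedness, while if $o\notin[x_1,x_2]$ then $[x_1,x_2]$ lies in the convex, origin-avoiding sets $G(\theta^*_i)$ from Theorem \ref{Corollary of Hi-1} (because $S(p)\setminus\{o\}\subseteq\bigcap_iG(\theta^*_i)$ by Assumption \ref{Assumption 2} and the inclusion is preserved under convex hulls), so each $H_i$ is concave along the segment and the composition argument of the first paragraph applies to it directly.

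For the strict convexity statement, suppose $d\cdot\kappa(x)H_x\kappa(x)-\nabla_x\kappa(x)(\nabla_x\kappa(x))^T$ is positive definite; then Lemma \ref{Lemma end 1} gives that $U$ is strictly jointly convex, so for $x_1\ne x_2$ the last inequality in the chain is strict, and $H_i(x_\lambda)>U(x_\lambda,y^\lambda_i)$ for every $i$. Since $x_\lambda\ne o$ in all of the cases above, the maps $g_i$, $F_i$, $U$ and $\psi_x$ are continuous near $x_\lambda$, so these strict inequalities persist on a neighbourhood of $x_\lambda$ with the fixed simplex point $y^\lambda$; by the ``if'' direction of \eqref{Fea-7} that neighbourhood is contained in $S(p)$, whence $x_\lambda\in\operatorname{int}S(p)$, i.e.\ $S(p)$ is strictly convex.

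I expect the main obstacle to be exactly this bookkeeping around the origin: checking that the trichotomy $o\notin X$ / $o\in X\setminus S(p)$ / $o\in S(p)$ is exhaustive, that the half-space argument in the middle case is watertight, and that Lemmas \ref{Proposition about concave Hi} and \ref{Lemma end 1}---stated for $o\notin X$---genuinely survive either on $\operatorname{conv}S(p)$ (when it avoids $o$) or along origin-avoiding segments (when $o\in S(p)$, via Lemma \ref{Lemma end 0}). Once that reduction is in place, the core of the proof is just the three-line chaining of the concavity of $H_i$ with the convexity of $U$.
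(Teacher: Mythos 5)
Your proof is correct and follows essentially the same route as the paper: the same three-inequality chain combining the concavity of $H_i$ (Lemma \ref{Proposition about concave Hi}) with the joint convexity of $U$ (Lemma \ref{Lemma end 1}) through the reformulation \eqref{Fea-7}, followed by a case analysis around the origin via Lemma \ref{Lemma end 0}. Your treatment of the case $o\in X\setminus S(p)$ through the explicit half-space $\{x:\mu_j^Tx<D_j\}$ is a cleaner justification of the paper's appeal to a convex subset $\hat{X}\subset X\setminus\{o\}$ containing $S(p)$, and your neighbourhood argument supplies the strict-convexity claim that the paper's proof leaves implicit.
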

\begin{proof}
We show that, for any $x_{1}, x_{2} \in S(p)$, $\lambda \in [0, 1]$, it follows that $x_{\lambda} := \lambda x_{1} + (1 - \lambda)x_{2} \in S(p)$. Suppose $o \notin X$. \ZHnew{Let $y^{1} := (y^{1}_{1}, ..., y^{1}_{K})$ and $y^{2} := (y^{2}_{1}, ..., y^{2}_{K})$ such that $y^{j}_{i} \neq 0$, $j = 1, 2$ and $i = 1, ..., K$,} which satisfy \eqref{Fea-7} and  correspond to $x_{1}$ and $x_{2}$ respectively. Applying \ZH{Lemma} \ref{Proposition about concave Hi} and Lemma \ref{Lemma end 1}, \ZH{we have}
\begin{equation*}
\begin{array}{ll}
F_{i}\left( g_{i}(x_{\lambda}) \right) &\geq \lambda F_{i}\left( g_{i}(x_{1}) \right) + (1-\lambda)F_{i}\left( g_{i}(x_{2}) \right)\\
&=\lambda H_{i}(x_{1}) + (1 - \lambda) H_{i}(x_{2})\\
&\geq \lambda U(x_{1}, y^{1}_{i}) + (1 - \lambda) U(x_{2}, y^{2}_{i})\\
&\geq U(x_{\lambda}, \lambda y^{1}_{i} + (1 - \lambda) y^{2}_{i}),
\end{array}
\end{equation*} 
which implies $(x_{\lambda}, \lambda y^{1} + (1 - \lambda) y^{2})$ satisfies \eqref{Fea-7}. It follows that $x_{\lambda} \in S(p)$.

Suppose $o \in X$. Then, we consider the following cases.

Case 1: If $ x_{\lambda} = o = x_{1} = x_{2}$, then $x_{\lambda} \in S(p)$.

Case 2: If $\lbrace x_{\lambda} \neq o, x_{1} = o, x_{2} \neq o \rbrace$ or $\lbrace x_{\lambda} \neq o, x_{1} \neq o, x_{2} = o \rbrace$, then by applying \ZH{Lemma} \ref{Lemma end 0} we get $x_{\lambda} \in S(p)$.

Case 3: If $x_{\lambda} \neq o, x_{1} \neq o, x_{2} \neq o$, then we can use the result of the case of $o \notin X$ to obtain $x_{\lambda} \in S(p)$. 

Precisely, we can divide Case 3 into two cases: (1) $o \in X \cap S(p)$; (2) $o \in X \setminus S(p)$.  Suppose $o \in X \cap S(p)$. Since $o \in S(p)$ if and only if $\min_{i} D_{i} \geq 0$. Then, by using \ZH{Theorem} \ref{Corollary of Hi-1}, we can see the locally $r$-concavity of $g_{i}$ can be satisfied only in the case with $r < -1$. In addition, we can see \ZHnew{that} the function $\text{sign(-r)}\cdot g^{r}_{i}$ is well-defined on $x = 0$ with $r < -1$ and $D_{i} \geq 0$. As a result, we can follow the proof of the $o \notin X$ case to \ZHnew{get} $x_{\lambda} \in S(p)$. Suppose $o \in X \setminus S(p)$. Using \ZHnew{the} proof by contradiction, we can see that $x_{1} \neq o$ and $x_{2} \neq o$ means $x_{\lambda} \neq o$. Thus, there is a convex subset $\hat{X} \subset X \setminus \left\lbrace o \right\rbrace$ such that $S(p) \subseteq \hat{X}$ due to the continuity of the probability function w.r.t. decision variable $x$. Then, we can use the result of the case of $o \notin \hat{X}$ to obtain $x_{\lambda} \in S(p)$. Thus, the proof of the theorem is complete.
\end{proof}
\begin{remark}
    In the proof of Theorem \ref{Theorem of convex feasible set}, we used the proof by contradiction to prove that $x_{1} \neq o \neq x_{2}$ implies $x_{\lambda} \neq o$. A similar technique is used in \cite[Page 275]{henrion2008convexity}.
\end{remark}

\section{Extension of eventual convexity under skewed distributions}\label{Eventual convexity with skewed distributions}
In this section, we extend the results about eventual convexity of the chance constraints from the cases with elliptical symmetric distributions into the cases with skewed distributions. First, we will extend the elliptical results in the previous sections of this paper into the case under a generalized hyperbolic (GH) distribution. The modified Bessel functions will play critical roles in this part. Next, we will consider a case with more general skewed distributions than GH distributions, i.e. \ZHnew{Normal Mean-Variance Mixture (NMVM for short)} distributions, where the linear mean mixture in GH distributions is substituted by a nonlinear mixture form. Finally, we discuss a GH case with stronger influence of skewness than our first skewed eventual convexity result. Based on a radial decomposition method, we will see this stronger skewed problem can be solved by means of the former two skewed eventual convexity results.

\subsection{Fundamental about skewed distributions}

Consider an elliptical distribution with a spherical distribution taken by a normal distribution. By adding randomness into the covariance matrix, we can generalize this kind of elliptical distribution into normal mixture distributions.
\begin{definition}
The random vector $\varsigma$ \ZHnew{follows} (multivariate) \textbf{normal variance mixture} distribution if
\begin{equation*}
\varsigma \overset{d}{=} \mu + \sqrt{W} A Z, 
\end{equation*}
where

(1) $Z \sim \mathcal{N}_{N}(0, I_{N})$;

(2) $W \geq 0$ is a non-negative, scalar-valued random variable that is independent of $Z$;

(3) $A \in \mathbb{R}^{N \times N}$ and $\mu \in \mathbb{R}^{N}$ are \ZH{deterministic matrix and vector}, respectively;

(4) $\overset{\textbf{d}}{=}$ means that both sides have the same distribution.

We denote it briefly by $\varsigma \sim M_{N}(\mu, \Sigma, \hat{H})$, where $\Sigma = A A^{T}$ and $\hat{H}(\theta) = \mathbb{E}(e^{-\theta W})$ is the \textbf{Laplace-Stieltjes transform} of the distribution function $H$ of $W$.
\end{definition}

In this paper, we always assume that the covariance matrix $\Sigma$ is positive definite and the distribution of $W$ has no point mass at zero. The normal variance mixture distribution can be viewed as a composite function consisting of multiple normal distributions with the same mean vector and the same covariance matrix up to a multiplicative constant. \ZHnew{The NMVM distribution is derived by adding randomness to \ZHnew{its} mean vector, which introduces asymmetry into the random vectors.}
\begin{definition}\label{definition of normal mean variance mixture distribution}
The random vector $\varsigma$ \ZHnew{follows} (multivariate) \textbf{NMVM} distribution if
\begin{equation}\label{normal mean variance mixture distribution}
\varsigma \overset{d}{=} m(W) + \sqrt{W} A Z,
\end{equation}
where

(1) $Z \sim \mathcal{N}_{N}(0, I_{N})$;

(2) $W \geq 0$ is a non-negative, scaler-valued random vector which is independent of $Z$; 

(3) $A \in \mathbb{R}^{N \times N}$ is a matrix;

(4) $m : [0, +\infty) \to \mathbb{R}^{N}$ is a measurable function.
\end{definition}
Next, we introduce an important subclass of normal mean-variance mixture distributions.

\begin{definition}\label{definition of Generalized hyperbolic distributions}
The random vector $\varsigma$ is said to have a (multivariate) \textbf{generalized hyperbolic (GH)} distribution if $\varsigma$ is defined by \eqref{normal mean variance mixture distribution} with $W$ following a generalized inverse Gaussian distribution $W \sim N^{-}(\lambda, \chi, \psi)$, and $m(W)$ defined as follows
\begin{equation*}
m(W) = \mu + W \gamma,
\end{equation*}
where $\mu$ and $\gamma$ are vectors in $\mathbb{R}^{N}$, and \ZHnew{the} other parameters satisfy
\begin{equation*}
\left\{
        \begin{array}{ll}
        \chi > 0, \psi \geq 0, & \text{if} \ \ \lambda < 0,\\         
        \chi > 0, \psi > 0, & \text{if} \ \ \lambda = 0,\\
        \chi \geq 0, \psi > 0, & \text{if} \ \ \lambda > 0.
        \end{array}
\right.
\end{equation*}
\end{definition}
We present the definition of generalized inverse Gaussian (GIG) distributions in Appendix \ref{definition of Generalized inverse Gaussian distributions}. We will use $\varsigma \sim GH_{N}(\lambda, \chi, \psi, \mu, \Sigma, \gamma)$ to denote $\varsigma$ following a GH distribution.
Suppose $Z \sim GH_{N}(\lambda, \chi, \psi, \mu, \Sigma, \gamma)$. Define $h(\omega)$ as the density of the random variable $W$. Then, we can formulate the densities of $Z$ as follows:
\begin{equation}\label{density of GH_d random vector with W density form}
f_{Z}(t) = \int^{+\infty}_{0} \frac{e^{ \left( t - \mu \right)^{T} \Sigma^{-1} \gamma }}{\left( 2\pi \right)^{N/2} \left| \Sigma \right|^{1/2} \omega^{N/2} } exp \left\lbrace - \frac{\left( t - \mu \right)^{T} \Sigma^{-1} \left( t - \mu \right)}{2 \omega} - \frac{\gamma^{T} \Sigma^{-1} \gamma}{2/\omega} \right\rbrace h(\omega)d \omega.
\end{equation}
Further, the evaluation of \eqref{density of GH_d random vector with W density form} provides the generalized hyperbolic density defined as follows
\begin{equation}\label{non-integral form _ density of GH_d random vector with W density form}
f_{Z}(t) = c \frac{K_{\lambda - (N/2)} \left( \sqrt{\left( \chi + \left( t - \mu \right)^{T} \Sigma^{-1} \left( t- \mu \right) \right) \left( \psi + \gamma^{T} \Sigma^{-1} \gamma \right) } \right) e^{ \left( t - \mu \right)^{T} \Sigma^{-1} \gamma  }}{\left( \sqrt{\left( \chi + \left( t - \mu \right)^{T}\Sigma^{-1} \left( t - \mu \right) \right) \left( \psi + \gamma^{T} \Sigma^{-1} \gamma \right) } \right)^{\left( N/2 \right) - \lambda}},
\end{equation}
where the constant $c$ is defined by
\begin{equation*}
c = \frac{\left( \sqrt{\chi \psi} \right)^{-\lambda} \psi^{\lambda} \left( \psi + \gamma^{T} \Sigma \gamma \right)^{(N/2) - \lambda} }{\left( 2\pi \right)^{N/2} \left|\Sigma  \right|^{1/2} K_{\lambda}\left( \sqrt{\chi \psi} \right)}.
\end{equation*} 
The GH distributions contain a rich family of important distributions. Here, we collect the relevant examples in Appendix \ref{list of GH distributions}. For more details about normal mean-variance mixture distributions, we refer the reader to \cite[Chapter 3]{mcneil2005quantitative} for a comprehensive and
general overview of the theory and applications. More discussion about Bessel functions can be found in \cite{spanier1987atlas, gil2002evaluation, arfken2013mathematical} and references therein.

Suppose $v \sim GH_{N}(\lambda, \chi, \psi, \mu, \Sigma, \gamma)$ and $x \in \mathbb{R} \backslash \lbrace 0 \rbrace$. Define $\xi(x):= (v^{T}x - \mu^{T}x)/\sqrt{x^{T} \Sigma x}$. We will see $\xi(x)$ follows 1-dimensional generalized hyperbolic distribution. To this end, we provide the following \ZH{propositions}.

\begin{proposition}\cite[Proposition 3.9.]{mcneil2005quantitative}\label{scalarization of normal variance mixture distributions}
If $Y \sim M_{N}(\mu, \Sigma, \hat{H})$ and $Z = BY + b$, where $B \in \mathbb{R}^{k \times d}$ and $b \in \mathbb{R}^{k}$, then $Z \sim M_{k}(B\mu + b, B \Sigma B^{T}, \hat{H})$.
\end{proposition}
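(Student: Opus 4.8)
The plan is to reduce the claim to the affine-transformation property of the multivariate Gaussian by conditioning on the mixing variable, and then to observe that the mixing law — hence its Laplace--Stieltjes transform $\hat H$ — is untouched by the transformation. Concretely, I would start from the stochastic representation underlying $Y \sim M_N(\mu,\Sigma,\hat H)$, namely $Y \overset{\textbf{d}}{=} \mu + \sqrt{W}\,A Z'$ with $Z' \sim \mathcal N_N(0,I_N)$, $W \ge 0$ independent of $Z'$, and $A$ any matrix with $A A^{T} = \Sigma$, where $\hat H(\theta) = \mathbb{E}(e^{-\theta W})$.

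First I would apply the linear map directly to the representation: $Z = BY + b \overset{\textbf{d}}{=} (B\mu + b) + \sqrt{W}\,(BA) Z'$. Setting $A' := BA$, we have $A'(A')^{T} = B A A^{T} B^{T} = B\Sigma B^{T}$, and $Z'$ is still a standard Gaussian independent of $W$. This is exactly the stochastic representation of $M_k(B\mu+b,\ B\Sigma B^{T},\ \hat H)$, because the mixing variable $W$ — and therefore its Laplace--Stieltjes transform — has not changed. The only point needing a line of justification is that an affine image of a Gaussian is Gaussian with the transformed mean and covariance: conditionally on $W=w$ one has $Z\mid W=w \sim \mathcal N_k(B\mu+b,\ w\,B\Sigma B^{T})$, a classical fact, and integrating out $W$ against the law of $W$ recovers the mixture.

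Alternatively, and perhaps more cleanly, I would argue at the level of characteristic functions. Since $\phi_Y(t) = e^{i t^{T}\mu}\,\hat H\!\left(\tfrac12 t^{T}\Sigma t\right)$ (obtained by conditioning on $W$ and using $\hat H(\theta)=\mathbb{E}(e^{-\theta W})$), we get
\[
\phi_Z(t) = e^{i t^{T} b}\,\phi_Y(B^{T}t) = e^{i t^{T}(B\mu+b)}\,\hat H\!\left(\tfrac12 t^{T}(B\Sigma B^{T})t\right),
\]
which is precisely the characteristic function of $M_k(B\mu+b,\ B\Sigma B^{T},\ \hat H)$; uniqueness of characteristic functions then finishes the proof.

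The only mild obstacle I anticipate is bookkeeping about degeneracy: if $B$ does not have full row rank, then $B\Sigma B^{T}$ is merely positive semidefinite, so the scale matrix of the image mixture need not be positive definite. The characteristic-function route sidesteps this entirely, since it never requires inverting the scale matrix; if one prefers the stochastic-representation route, one simply keeps $A' = BA$ without demanding invertibility. Everything else is routine, which is why the statement can safely be quoted from \cite{mcneil2005quantitative}.
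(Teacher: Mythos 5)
The paper does not prove this proposition; it is quoted verbatim from \cite[Proposition 3.9]{mcneil2005quantitative}, so there is no in-paper argument to compare against. Your proof is correct and is essentially the standard one from that reference: either route works, and the characteristic-function computation $\phi_Z(t) = e^{i t^{T}(B\mu+b)}\,\hat H\bigl(\tfrac12 t^{T}(B\Sigma B^{T})t\bigr)$ identifies the law immediately, while the representation route only needs the observation that $BAZ'$ has the same law as $A''Z''$ for any $A''$ with $A''(A'')^{T}=B\Sigma B^{T}$ and $Z''\sim\mathcal N_k(0,I_k)$. Your remark on possible degeneracy of $B\Sigma B^{T}$ is apt (the paper's own application takes $B=x^{T}/\sqrt{x^{T}\Sigma x}$, a rank-one map, so the image scale matrix is the scalar $1$ and no issue arises there).
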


\begin{proposition}\cite[Proposition 3.13.]{mcneil2005quantitative}\label{scalarization of generalized hyperbolic distributions}
If $Y \sim GH_{N}(\lambda, \chi, \psi, \mu, \Sigma, \gamma)$ and $Z = B Y + b$, where $B \in \mathbb{R}^{k \times d}$ and $b \in \mathbb{R}^{k}$, then $Z \sim GH_{k}(\lambda, \chi, \psi, B\mu + b, B\Sigma B^{T}, B\gamma)$.
\end{proposition}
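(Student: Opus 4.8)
The plan is to prove Proposition \ref{scalarization of generalized hyperbolic distributions} directly from the stochastic (mixture) representation of GH distributions, so that the explicit density \eqref{non-integral form _ density of GH_d random vector with W density form} and its Bessel-function normalizing constant $c$ never enter. By Definitions \ref{definition of normal mean variance mixture distribution} and \ref{definition of Generalized hyperbolic distributions}, we may write $Y \overset{\textbf{d}}{=} \mu + W\gamma + \sqrt{W}\,A Z_{0}$, where $Z_{0} \sim \mathcal{N}_{N}(0, I_{N})$, $W \sim N^{-}(\lambda,\chi,\psi)$ is independent of $Z_{0}$, and $A$ is any matrix with $A A^{T} = \Sigma$. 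Applying the affine map $y \mapsto By + b$ gives $Z = BY + b \overset{\textbf{d}}{=} (B\mu + b) + W(B\gamma) + \sqrt{W}\,(BA) Z_{0}$, which is the first and only nontrivial step.

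It then remains to recognize the right-hand side as a GH vector and read off the parameters. The expression is again of the NMVM form \eqref{normal mean variance mixture distribution}: the Gaussian factor $Z_{0} \sim \mathcal{N}_{N}(0, I_{N})$ is unchanged, the matrix is $BA$ with $(BA)(BA)^{T} = B\Sigma B^{T}$, and the mean function is the affine map $\omega \mapsto (B\mu + b) + \omega(B\gamma)$, i.e. of the linear form $m(\omega)=\tilde\mu+\omega\tilde\gamma$ required by Definition \ref{definition of Generalized hyperbolic distributions} with $\tilde\mu = B\mu + b$ and $\tilde\gamma = B\gamma$. Since the mixing variable $W$ is untouched, it is still $N^{-}(\lambda,\chi,\psi)$, and matching Definition \ref{definition of Generalized hyperbolic distributions} term by term yields $Z \sim GH_{k}(\lambda,\chi,\psi, B\mu + b, B\Sigma B^{T}, B\gamma)$. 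Equivalently, one can make this rigorous at the level of characteristic functions: conditioning on $W = \omega$ gives $Z \mid W = \omega \sim \mathcal{N}_{k}(B\mu + b + \omega B\gamma,\ \omega B\Sigma B^{T})$, whence $\mathbb{E}[e^{is^{T}Z}] = \int_{0}^{+\infty} \exp\{ is^{T}(B\mu+b+\omega B\gamma) - \tfrac{1}{2}\omega\, s^{T} B\Sigma B^{T} s\}\, h(\omega)\, d\omega$ with $h$ the density of $W$; this is exactly the characteristic function of $GH_{k}(\lambda,\chi,\psi, B\mu+b, B\Sigma B^{T}, B\gamma)$, and a characteristic function determines the distribution. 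I would also note that this is a corollary of Proposition \ref{scalarization of normal variance mixture distributions} applied to the variance-mixture part, together with tracking how the affine mean term $\mu + W\gamma$ maps under $B\,\cdot + b$.

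The argument is essentially routine, so the "obstacle" is structural rather than computational. The paper's standing assumption is that the scale matrix of a GH vector is positive definite (so that the density \eqref{non-integral form _ density of GH_d random vector with W density form} makes sense), whereas $B\Sigma B^{T}$ is positive definite only when $B$ has full row rank $k$; in the degenerate case the conclusion "$Z \sim GH_{k}(\cdot)$" must be read through the mixture representation (or the characteristic function) rather than the density formula. This is harmless for the intended application, where $B = x^{T}/\sqrt{x^{T}\Sigma x}$ and $b = -\mu^{T}x/\sqrt{x^{T}\Sigma x}$ with $x \neq 0$ and $\Sigma$ positive definite, so $B\Sigma B^{T} = 1 > 0$ and one reads off $\xi(x) = BY + b \sim GH_{1}\!\left(\lambda,\chi,\psi,\, 0,\, 1,\, \gamma^{T}x/\sqrt{x^{T}\Sigma x}\right)$. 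The only other point worth a line is that the admissibility conditions on $(\lambda,\chi,\psi)$ in Definition \ref{definition of Generalized hyperbolic distributions} are preserved — which is immediate, since $(\lambda,\chi,\psi)$ and hence the GIG law of $W$ are not altered by the transformation.
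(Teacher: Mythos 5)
Your proof is correct; the paper does not prove this proposition but simply cites it from \cite{mcneil2005quantitative}, and your mixture-representation argument (apply the affine map to $Y \overset{\textbf{d}}{=} \mu + W\gamma + \sqrt{W}AZ_{0}$ and read off the new parameters, with the characteristic-function computation as rigorous backup) is precisely the standard proof given in that reference. Your caveat about $B\Sigma B^{T}$ being only positive semi-definite when $B$ lacks full row rank is apt but, as you observe, harmless for the paper's use in Proposition \ref{1-dimensional generalized hyperbolic distributions}, where $B\Sigma B^{T}=1$.
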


\begin{proposition}\label{1-dimensional generalized hyperbolic distributions}
Assume $x \in \mathbb{R}^{N} \backslash \lbrace 0 \rbrace $. Suppose $v \sim GH_{N}(\lambda, \chi, \psi, \mu, \Sigma, \gamma)$. Then, we have $\xi \sim GH_{1}(\lambda, \chi, \psi, 0, 1, \frac{x^{T}\gamma}{\sqrt{x^{T}\Sigma x}})$. In particular, if $v \sim M_{N}(\mu, \Sigma, \hat{H})$, then $\xi \sim M_{1}(0, 1, \hat{H})$.
\end{proposition}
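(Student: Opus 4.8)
The plan is to realize $\xi$ as an affine image of $v$ and then invoke the affine-transformation stability of GH distributions (resp.\ normal variance mixture distributions) recorded in Proposition \ref{scalarization of generalized hyperbolic distributions} (resp.\ Proposition \ref{scalarization of normal variance mixture distributions}). Concretely, fix $x \in \mathbb{R}^{N}\setminus\{0\}$ and introduce the $1\times N$ matrix $B := x^{T}/\sqrt{x^{T}\Sigma x}$ together with the scalar $b := -\mu^{T}x/\sqrt{x^{T}\Sigma x}$. By the very definition of $\xi$ we have $\xi = Bv + b$, i.e.\ $\xi$ is an affine transformation of $v$ with target dimension $k = 1$; note that $B$ and $b$ are well defined because $\Sigma \succ 0$ and $x \neq 0$ force $x^{T}\Sigma x > 0$.

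Next I would apply Proposition \ref{scalarization of generalized hyperbolic distributions} directly. Since $v \sim GH_{N}(\lambda,\chi,\psi,\mu,\Sigma,\gamma)$, we obtain $\xi \sim GH_{1}(\lambda,\chi,\psi,B\mu + b,\, B\Sigma B^{T},\, B\gamma)$, and it only remains to evaluate the three transformed parameters. The location parameter is $B\mu + b = (x^{T}\mu - \mu^{T}x)/\sqrt{x^{T}\Sigma x} = 0$; the scale parameter is $B\Sigma B^{T} = (x^{T}\Sigma x)/(x^{T}\Sigma x) = 1$, which is an admissible (strictly positive) $1\times 1$ scale, again by positive definiteness of $\Sigma$; and the skewness parameter is $B\gamma = x^{T}\gamma/\sqrt{x^{T}\Sigma x}$. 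The parameters $\lambda,\chi,\psi$ are carried over unchanged and hence still satisfy the admissibility conditions of Definition \ref{definition of Generalized hyperbolic distributions}. Substituting these values yields $\xi \sim GH_{1}(\lambda,\chi,\psi,0,1,x^{T}\gamma/\sqrt{x^{T}\Sigma x})$, as claimed.

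For the normal variance mixture statement I would repeat the argument verbatim with Proposition \ref{scalarization of normal variance mixture distributions} in place of Proposition \ref{scalarization of generalized hyperbolic distributions}: using the same $B$ and $b$, $v \sim M_{N}(\mu,\Sigma,\hat{H})$ gives $\xi \sim M_{1}(B\mu + b,\, B\Sigma B^{T},\, \hat{H}) = M_{1}(0,1,\hat{H})$, the Laplace--Stieltjes transform $\hat{H}$ being untouched since it is governed only by the mixing variable $W$. There is essentially no obstacle in this proof: the entire content is the correct choice of the affine map $x \mapsto Bv + b$ and the elementary evaluation of $B\mu+b$, $B\Sigma B^{T}$ and $B\gamma$; the only point worth an explicit remark is that the reduced scale $B\Sigma B^{T} = 1$ is admissible, which is immediate from $\Sigma \succ 0$ and $x \neq 0$.
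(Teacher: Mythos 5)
Your proposal is correct and matches the paper's own argument exactly: the paper also sets $B := x^{T}/\sqrt{x^{T}\Sigma x}$ and $b := -x^{T}\mu/\sqrt{x^{T}\Sigma x}$ and then invokes Propositions \ref{scalarization of normal variance mixture distributions} and \ref{scalarization of generalized hyperbolic distributions}. Your version merely spells out the parameter evaluations $B\mu + b = 0$, $B\Sigma B^{T} = 1$, $B\gamma = x^{T}\gamma/\sqrt{x^{T}\Sigma x}$ that the paper leaves implicit.
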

\begin{proof}
\ZH{Let}
\begin{equation*}
B := \frac{x^{T}}{\sqrt{x^{T} \Sigma x}} \in \mathbb{R}^{1\times d}, \ \ b := \frac{-x^{T}\mu}{\sqrt{x^{T} \Sigma x}} \in \mathbb{R}.
\end{equation*}
The proof is completed by using \ZH{Propositions} \ref{scalarization of normal variance mixture distributions} and \ref{scalarization of generalized hyperbolic distributions} with the above $B$ and $b$.
\end{proof}
\ZHnew{Notice} that $\xi \sim M_{1}(0, 1, \hat{H})$ is equivalent to $\xi \sim GH_{1}(\lambda, \chi, \psi, 0, 1, 0)$. Suppose $\xi(x) \sim GH_{1}(\lambda, \chi, \psi, 0, 1, \frac{x^{T}\gamma}{\sqrt{x^{T}\Sigma x}})$. Using \eqref{non-integral form _ density of GH_d random vector with W density form}, we can formulate the density of $\xi(x)$ as follows:
\begin{equation}\label{1-dimensional non-integral form _ density of GH_d random vector with W density form}
f_{\xi(x)}(t) = c \frac{K_{\lambda - \left( 1/2 \right)}\left( \sqrt{ \left( \chi + t^{2} \right) \left( \psi + \frac{\left( x^{T}\gamma \right)^{2}}{x^{T}\Sigma x} \right) } \right) e^{\frac{t\cdot x^{T}\gamma}{\sqrt{x^{T}\Sigma x}}}}{\left( \sqrt{\left( \chi + t^{2} \right) \left( \psi + \frac{\left( x^{T} \gamma \right)^{2}}{x^{T} \Sigma x} \right)} \right)^{\left( \frac{1}{2} - \lambda \right)}},
\end{equation}
where $c$ is defined by
\begin{equation*}
c = \frac{\left( \sqrt{\chi \psi} \right)^{-\lambda} \psi^{\lambda} \left( \psi + \frac{\left( x^{T} \gamma \right)^{2}}{x^{T} \Sigma x} \right)^{\left(\frac{1}{2} - \lambda \right)}}{\sqrt{2\pi}K_{\lambda}(\sqrt{\chi \psi})}.
\end{equation*}

\ZH{The following} corollary provides a special case with $\psi = 0$ and $x^{T} \gamma / \sqrt{x^{T} \Sigma x} = 0$. From the Definition \ref{definition of Generalized hyperbolic distributions}, we can see $\psi = 0$ can be taken only when $\lambda < 0$ and $\chi > 0$. Thus, in the following corollary, we only need to consider the case $\psi = 0$ under $\lambda < 0$ and $\chi > 0$.
\begin{corollary}\label{density of special GH_1}
Assume $\lambda < 0$ and $\chi > 0$. Suppose $Y \sim GH_{1}\left( \lambda, \chi, 0, 0, 1, 0 \right)$. Then, the density of the random variable $Y$ can be written \ZH{as}
\begin{equation*}
f_{Y}(t) = \hat{c} \cdot \left( \chi + t^{2} \right)^{\lambda - \frac{1}{2}},
\end{equation*}
where $\hat{c}$ is defined by
\begin{equation*}
\hat{c}: = \frac{\Gamma \left( -\lambda + \frac{1}{2} \right)}{\Gamma \left( -\lambda \right)} \frac{1}{\sqrt{\pi}} \cdot \chi^{-\lambda}.
\end{equation*}
\end{corollary}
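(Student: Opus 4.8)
The plan is to compute the density of $Y$ directly from the integral mixture representation \eqref{density of GH_d random vector with W density form}, rather than from the closed Bessel form \eqref{1-dimensional non-integral form _ density of GH_d random vector with W density form}, because the latter degenerates into an indeterminate $0/0$ expression once $\psi = 0$ (both $K_{\lambda - 1/2}(0)$ and the power $(\cdot)^{1/2-\lambda}$ in the denominator blow up or vanish). The route therefore is: identify the mixing density of $W$ when $\psi = 0$, substitute it into \eqref{density of GH_d random vector with W density form} with the scalar parameters $N=1$, $\mu = 0$, $\Sigma = 1$, $\gamma = 0$, and reduce the resulting integral to a Gamma integral.

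First I would pin down the mixing density. By Definition \ref{definition of Generalized hyperbolic distributions} the variable $W$ has law $N^{-}(\lambda,\chi,0)$; substituting $\psi = 0$ in the GIG density recalled in Appendix \ref{definition of Generalized inverse Gaussian distributions} kills the factor $e^{-\psi w/2}$ (legitimate precisely because $\lambda < 0$ and $\chi > 0$), so $h$ is proportional to $w^{\lambda-1}e^{-\chi/(2w)}$, i.e. $W$ is inverse-gamma with shape $-\lambda$ and scale $\chi/2$. Normalising via the change of variable $u = \chi/(2w)$ (which turns the integral into $\Gamma(-\lambda)$) gives
\[
h(w) = \frac{1}{(\chi/2)^{\lambda}\,\Gamma(-\lambda)}\, w^{\lambda-1} e^{-\chi/(2w)}, \qquad w>0 .
\]

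Next I would plug $N=1$, $\mu=0$, $\Sigma=1$, $\gamma=0$ into \eqref{density of GH_d random vector with W density form}; then the two exponential factors carrying $\gamma$ both equal $1$, and
\[
f_{Y}(t) = \frac{1}{\sqrt{2\pi}\,(\chi/2)^{\lambda}\Gamma(-\lambda)} \int_{0}^{+\infty} \omega^{\lambda-3/2}\, \exp\!\left( -\frac{\chi+t^{2}}{2\omega} \right) d\omega .
\]
The remaining integral is reduced to a Gamma integral by the substitution $u = (\chi+t^{2})/(2\omega)$, which yields $\bigl(\tfrac{\chi+t^{2}}{2}\bigr)^{\lambda-1/2}\Gamma(-\lambda+\tfrac12)$. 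Collecting the constants — using $2^{1/2-\lambda}/(\chi/2)^{\lambda} = \sqrt{2}\,\chi^{-\lambda}$ and $\sqrt{2}/\sqrt{2\pi} = 1/\sqrt{\pi}$ — gives exactly $f_{Y}(t) = \hat{c}\,(\chi+t^{2})^{\lambda-1/2}$ with $\hat c$ as stated.

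I do not expect a genuine obstacle: the whole computation is two elementary substitutions reducing everything to Gamma integrals. The only delicate point is justifying the case $\psi = 0$ — namely that the GIG family degenerates to the inverse-gamma law there; the parameter constraints in Definition \ref{definition of Generalized hyperbolic distributions} guarantee admissibility, and starting from the inverse-gamma form of $W$ sidesteps any need to take a $\psi \downarrow 0$ limit inside \eqref{1-dimensional non-integral form _ density of GH_d random vector with W density form}. As an independent check one can recover the same $\hat c$ from \eqref{1-dimensional non-integral form _ density of GH_d random vector with W density form} by letting $\psi \downarrow 0$ and applying the small-argument asymptotics $K_{\nu}(z) \sim \tfrac12 \Gamma(\nu)(z/2)^{-\nu}$ as $z \to 0^{+}$ for $\nu > 0$ together with $K_{\nu} = K_{-\nu}$, but the direct integral computation is cleaner and self-contained.
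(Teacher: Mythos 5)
Your computation is correct, but it follows a genuinely different route from the paper's own proof in Appendix~\ref{Appendix H}. The paper never touches the mixture integral \eqref{density of GH_d random vector with W density form}: it instead takes the limit $\Lambda := \sqrt{\psi + \phi^{2}} \to 0^{+}$ directly inside the closed Bessel form \eqref{1-dimensional non-integral form _ density of GH_d random vector with W density form}, resolving the $0/0$ indeterminacy you point out by means of the small-argument asymptotics of $K_{\lambda}$ — concretely, the identity $\lim_{s \to 0^{+}} s^{\lambda}/K_{\lambda}(s) = 2^{\lambda+1}/\Gamma(-\lambda)$ (which the authors read off from the normalization of the GIG density) applied twice, once to the normalizing constant $c$ and once to the factor $K_{\lambda - 1/2}(\Lambda\eta(t))/(\Lambda\eta(t))^{\lambda - 1/2}$. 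Your approach sidesteps the limit entirely by starting from the inverse-gamma law of $W$ (which the paper does record in Appendix~\ref{definition of Generalized inverse Gaussian distributions} as the $\psi = 0$ degeneration of the GIG family) and reducing the mixture integral to a Gamma integral via $u = (\chi + t^{2})/(2\omega)$; I checked the constant bookkeeping ($(\chi/2)^{-\lambda} \cdot 2^{-\lambda + 1/2} = \sqrt{2}\,\chi^{-\lambda}$ and $\sqrt{2}/\sqrt{2\pi} = 1/\sqrt{\pi}$) and it comes out to exactly the stated $\hat{c}$. What each buys: your argument is more elementary and self-contained — it requires no Bessel asymptotics and no justification that the density is continuous in $\Lambda$ at $0$, which the limit argument implicitly assumes — while the paper's version stays entirely within the Bessel formalism it has already set up for Lemma~\ref{alpha-decreasing of density function} and reuses property (8) of Lemma~\ref{The properties of modified Bessel functions of the third kind}. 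Your closing remark correctly identifies the paper's route as the ``independent check.''
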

The proof of Corollary \ref{density of special GH_1} is shown in Appendix \ref{Appendix H}.

Using \ZH{Proposition} \ref{1-dimensional generalized hyperbolic distributions}, we know that $\xi(x):= (v^{T}x - \mu^{T}x)/\sqrt{x^{T} \Sigma x}$ is 1-dimensional generalized hyperbolic distribution \ZH{such that}
\begin{equation*}
\xi(x) \sim GH_{1}(\lambda, \chi, \psi, 0, 1, \frac{x^{T}\gamma}{\sqrt{x^{T}\Sigma x}}).
\end{equation*}
Under mild assumptions, we will see \ZH{that} the method used to prove the elliptical case in Section \ref{Eventual convexity with elliptical distributions} can be extended to prove the skewed case. More precisely, the significant difficulty of this extension is \ZH{to prove} that the density function of $\xi$ is $\alpha$-decreasing with some suitable $\alpha \in \mathbb{R}$.  Since the density of GH distribution is defined by the modified Bessel functions of the third kind, which is a kind of extreme non-linear function \cite{arfken2013mathematical, gil2002evaluation}. Thus, it is more difficult to prove the $\alpha$-decreasing property than the case with Gaussian, t-Student, and other elliptical distributions \cite[Proposition 4.1]{henrion2008convexity}, \cite[Proposition 5]{cheng2014second}. We will provide an approximation method to \ZH{get} the $\alpha$-decreasing of densities with the help of some properties of the modified Bessel functions. Here, we collect some properties about the modified Bessel functions of the third kind in Appendix \ref{Properties of the modified Bessel functions of the third kind} Lemma \ref{The properties of modified Bessel functions of the third kind}, which will play an essential role in the following results.

\subsection{Convexity under less skewed GH distributions}

\ZH{The} following lemma provides the main result of this section about the $\alpha$-decreasing property of $f_{\xi(x)}$. In contrast to the classical elliptical distributions, \ZH{it is more challenging to discuss} the $\alpha$-decreasing property of skewed distributions due to \ZH{the complexity} of the Bessel functions. A set of estimates relevant to the modified Bessel function of the third kind plays a crucial role in proving the following lemma. We provide the complete proof in Appendix \ref{Appendix H}.

\begin{lemma}\label{alpha-decreasing of density function}
Assume $\xi(x) \sim GH_{1}(\lambda, \chi, \psi, 0, 1, \frac{x^{T}\gamma}{\sqrt{x^{T}\Sigma x}})$. Let $\psi > 0$. Given $\alpha \in \mathbb{R}$, there exists $t^{*}(\alpha) > 0$ such that the density function $f_{\xi(x)}$ is $\alpha$-decreasing on $(t^{*}(\alpha), +\infty)$. Suppose $\psi = 0$ and $\phi := \left( x^{T}\gamma \right)/ \left( \sqrt{x^{T}\Sigma x} \right) \neq 0$. If $\phi < 0$, then $f_{\xi(x)}$ is $\alpha$-decreasing with any $\alpha \in \mathbb{R}$. If $\phi > 0$, then $f_{\xi(x)}$ is $\alpha$-decreasing with $\alpha < 1 - \lambda$. Suppose $\psi = 0$ and $\phi = 0$. Then, $f_{\xi(x)}$ is $\alpha$-decreasing if and only if $\alpha < 1 - 2 \lambda$ with $\lambda < 0$.
\end{lemma}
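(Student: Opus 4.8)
The plan is to study the sign, for large $t$, of the logarithmic derivative
\begin{equation*}
\Lambda_{\alpha}(t) := \frac{d}{dt}\log\bigl(t^{\alpha}f_{\xi(x)}(t)\bigr) = \frac{\alpha}{t} + \frac{f_{\xi(x)}'(t)}{f_{\xi(x)}(t)} ,
\end{equation*}
because $t\mapsto t^{\alpha}f_{\xi(x)}(t)$ is positive and continuous on $(0,+\infty)$, so it is strictly decreasing on $(t^{*},+\infty)$ as soon as $\Lambda_{\alpha}<0$ there, and it is eventually strictly increasing, hence not $\alpha$-decreasing, if $\Lambda_{\alpha}>0$ for all large $t$. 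I would treat the four regimes of the statement separately and, in each, substitute the closed form \eqref{1-dimensional non-integral form _ density of GH_d random vector with W density form}, namely $f_{\xi(x)}(t)=c\,K_{\lambda-1/2}(z(t))\,z(t)^{\lambda-1/2}e^{t\phi}$ with $z(t)=\sqrt{(\chi+t^{2})(\psi+\phi^{2})}$ and $\phi:=x^{T}\gamma/\sqrt{x^{T}\Sigma x}$, together with the estimates of the modified Bessel function $K_{\nu}$ collected in Lemma~\ref{The properties of modified Bessel functions of the third kind}.

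For $\psi>0$ put $a:=\sqrt{\psi+\phi^{2}}$, so $z(t)=a\sqrt{\chi+t^{2}}\to+\infty$ and $z'(t)=at/\sqrt{\chi+t^{2}}\to a$. Differentiating the logarithm of the closed form by the chain rule,
\begin{equation*}
\Lambda_{\alpha}(t) = \frac{\alpha}{t} + \left(\frac{K_{\lambda-1/2}'(z(t))}{K_{\lambda-1/2}(z(t))} + \frac{\lambda-1/2}{z(t)}\right)z'(t) + \phi ,
\end{equation*}
and since $K_{\nu}'(z)/K_{\nu}(z)\to -1$ as $z\to+\infty$ (a consequence of $K_{\nu}(z)\sim\sqrt{\pi/(2z)}\,e^{-z}$), the right-hand side tends to $\phi-a$. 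As $\psi>0$ forces $a>|\phi|\ge\phi$, this limit is strictly negative, so $t^{\alpha}f_{\xi(x)}$ is eventually strictly decreasing for every $\alpha\in\mathbb{R}$. The same computation settles $\psi=0,\ \phi<0$: now $a=-\phi$ and $\Lambda_{\alpha}(t)\to\phi-a=2\phi<0$, again for every $\alpha$.

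The delicate case is $\psi=0,\ \phi>0$, where $a=\phi$ and the leading term $\phi-a$ cancels, so the $1/t$-coefficient of $\Lambda_{\alpha}$ must be extracted. Here I would insert the sharper expansion $K_{\nu}'(z)/K_{\nu}(z)=-1-\tfrac{1}{2z}+O(z^{-2})$ from Lemma~\ref{The properties of modified Bessel functions of the third kind} together with $z(t)=\phi\sqrt{\chi+t^{2}}=\phi t+\tfrac{\phi\chi}{2t}+O(t^{-3})$ and $z'(t)=\phi-\tfrac{\phi\chi}{2t^{2}}+O(t^{-4})$; after the terms in $\phi$ cancel this gives
\begin{equation*}
\Lambda_{\alpha}(t) = \frac{\alpha+\lambda-1}{t} + O(t^{-2}) ,
\end{equation*}
which is negative for all large $t$ exactly when $\alpha<1-\lambda$, proving the claimed $\alpha$-decreasing property in this regime. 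I expect this step — propagating the Bessel remainder through the composition with $\sqrt{\chi+t^{2}}$ and checking the error stays $O(t^{-2})$ uniformly — to be the main technical obstacle; the remaining regimes reduce to elementary limits.

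Finally, if $\psi=0$ and $\phi=0$ the argument $z(t)$ vanishes and the closed form degenerates, so I would instead invoke Corollary~\ref{density of special GH_1} (recall $\psi=0$ already forces $\lambda<0$, $\chi>0$), which gives $f_{\xi(x)}(t)=\hat c\,(\chi+t^{2})^{\lambda-1/2}$. Then a direct computation yields
\begin{equation*}
\Lambda_{\alpha}(t) = \frac{\alpha}{t} + \frac{(2\lambda-1)t}{\chi+t^{2}} = \frac{(\alpha+2\lambda-1)t^{2}+\alpha\chi}{t(\chi+t^{2})} .
\end{equation*}
When $\alpha<1-2\lambda$ this is negative for all large $t$, so $f_{\xi(x)}$ is $\alpha$-decreasing; when $\alpha\ge 1-2\lambda$ one has $\alpha>1>0$ and, since $\chi>0$, the numerator is strictly positive on all of $(0,+\infty)$, so $t^{\alpha}f_{\xi(x)}$ is strictly increasing there and cannot be $\alpha$-decreasing. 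This gives the converse direction and completes the equivalence.
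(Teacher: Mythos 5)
Your proposal is correct: it covers the same four regimes, reaches the same thresholds ($\alpha$ arbitrary for $\psi>0$ and for $\psi=0,\phi<0$; $\alpha<1-\lambda$ for $\psi=0,\phi>0$; the equivalence $\alpha<1-2\lambda$ via Corollary \ref{density of special GH_1} for $\psi=0,\phi=0$), and your direct computation in the degenerate case coincides with the paper's. The route differs in how the critical second-order information is extracted. The paper first applies the exact recurrence $K_{\nu}'(x)=\frac{\nu}{x}K_{\nu}(x)-K_{\nu+1}(x)$ (property (7) of Lemma \ref{The properties of modified Bessel functions of the third kind}) to convert the sign condition $tf_{\xi(x)}'+\alpha f_{\xi(x)}<0$ into an inequality involving only the ratio $J(t)=K_{\lambda-1/2}(\Lambda\eta(t))/K_{\lambda+1/2}(\Lambda\eta(t))$, and then isolates the delicate $\psi=0,\phi>0$ case through the auxiliary limit $\lim_{t\to\infty}t\bigl(1-J^{2}(t)\bigr)=2\lambda/\Lambda$ (Lemma \ref{preparation result of alpha-decreasing 2}); you instead work with the logarithmic derivative and the two-term expansion $K_{\nu}'(z)/K_{\nu}(z)=-1-\tfrac{1}{2z}+O(z^{-2})$, obtaining $\Lambda_{\alpha}(t)=(\alpha+\lambda-1)/t+O(t^{-2})$. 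These are equivalent in substance — your expansion of $K_{\nu}'/K_{\nu}$ and the paper's limit of $t(1-J^{2})$ both encode the $1/z$-correction in property (6), and both arguments share the same (mild) gap of differentiating, respectively ratio-ing, an asymptotic relation that the appendix states only as an approximation. Your version is more compact and avoids the two preparatory lemmas; the paper's version keeps all Bessel manipulations at the level of exact identities until the final limit, which makes the required asymptotic input slightly weaker (only ratios of $K_{\nu}$'s, never derivatives). Do make explicit, as the paper does, that $\psi=0$ together with $\Lambda=0$ forces $\lambda<0$ and $\chi>0$ before invoking Corollary \ref{density of special GH_1}.
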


\begin{remark}
Due to the complexity of the Bessel functions, we have not provided the precise computation of the $t^{*}\left( \alpha \right)$ in Lemma \ref{alpha-decreasing of density function}. However, it is convenient to \ZH{come up with} $t^{*}\left( \alpha \right)$ by applying \ZHnew{a dichotomy algorithm}. Furthermore, we can use some approximation results about the lower and upper \ZHnew{bound} of $K_{\nu - 1} / K_{\nu}$ to simplify the computation \cite{segura2011bounds, ruiz2016new}.
\end{remark}

\ZHnew{We are now in a position to show the eventual convexity under less skewed GH distributions.}
\begin{theorem}\label{eventual convextiy of GH case 1}
Suppose $v_{i} \sim GH_{N}(\lambda_{i}, \chi_{i}, \psi_{i}, \mu_{i}, \Sigma_{i}, \gamma_{i})$, $i = 1, ..., K$. Assume $x^{T}\gamma_{i} = 0$ for any $x \in X$, $i = 1, ..., K$. \ZH{Assume Assumptions \ref{Assumption 2}, \ref{Assumption 3}, and \ref{Assumption 4} hold and $S(p) \neq \emptyset$.} Then, the \ZH{feasible set $S(p)$ defined by \eqref{Fea-1} is eventual convex.}
\end{theorem}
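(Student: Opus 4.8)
The plan is to reduce the present generalized hyperbolic setting to the elliptical framework of Section~\ref{Eventual convexity with elliptical distributions} and then invoke Theorem~\ref{Theorem of convex feasible set}; the hypothesis $x^{T}\gamma_{i}=0$ is precisely what makes this reduction possible, since it forces the one-dimensional projection of $v_{i}$ to be symmetric. For $x\in X\setminus\{o\}$ put $\xi_{i}(x):=(v_{i}^{T}x-\mu_{i}^{T}x)/\sqrt{x^{T}\Sigma_{i}x}$ and $g_{i}(x):=(D_{i}-\mu_{i}^{T}x)/\sqrt{x^{T}\Sigma_{i}x}$. By Proposition~\ref{1-dimensional generalized hyperbolic distributions}, $\xi_{i}(x)\sim GH_{1}(\lambda_{i},\chi_{i},\psi_{i},0,1,x^{T}\gamma_{i}/\sqrt{x^{T}\Sigma_{i}x})=GH_{1}(\lambda_{i},\chi_{i},\psi_{i},0,1,0)$, so the law of $\xi_{i}(x)$ does not depend on $x$, and from \eqref{1-dimensional non-integral form _ density of GH_d random vector with W density form} with vanishing skewness parameter (or Corollary~\ref{density of special GH_1} when $\psi_{i}=0$) its density is an even function of $t$. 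Hence $\xi_{i}(x)$ follows a one-dimensional spherical distribution with an $x$-independent cumulative distribution function $F_{i}$, and the whole reformulation chain \eqref{Fea-2}--\eqref{Fea-7} carries over unchanged: by Sklar's theorem (Proposition~\ref{Sklar's Theorem.}) together with the Gumbel--Hougaard assumption, $x\in S(p)$ if and only if there exist $y_{i}\in(0,1)$ with $\sum_{i=1}^{K}y_{i}=1$ and $H_{i}(x)\ge U(x,y_{i})$ for all $i$, where $H_{i}(x)=F_{i}(g_{i}(x))$ and $U(x,y_{i})=\psi_{x}^{(-1)}(y_{i}\psi_{x}(p))$.

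Next I would check that the revealed-concavity part of Assumption~\ref{Assumption 2} is attainable in this setting, which is where the earlier Bessel-function analysis enters. Applying Lemma~\ref{alpha-decreasing of density function} in the case $\phi=0$, the density $f_{\xi_{i}}$ is $(-r_{i}+1)$-decreasing for every $r_{i}\in\mathbb{R}$ when $\psi_{i}>0$, and for every $r_{i}>2\lambda_{i}$ (with $\lambda_{i}<0$ necessarily) when $\psi_{i}=0$; by Lemma~\ref{Lemma about Hi-2} this yields the $r_{i}$-revealed-concavity of $F_{i}$ with $t^{**}_{i}(r_{i})=(t^{*}_{i}(-r_{i}+1))^{r_{i}}$. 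Thus Assumption~\ref{Assumption 2}(1) holds with admissible exponents $r_{i}$ (if in addition $o\in S(p)$, i.e.\ $\min_{i}D_{i}\ge 0$, one selects $r_{i}<-1$, which is available as soon as $\psi_{i}>0$ or $\lambda_{i}<-1/2$), while the quantitative conditions in Assumptions~\ref{Assumption 2}, \ref{Assumption 3} and \ref{Assumption 4} are assumed in the statement.

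With the reformulation in hand and all assumptions in force, the conclusion follows by assembling the results of Sections~\ref{Concavity of H_{i}(x)}--\ref{Eventual convexity with elliptical distributions}: Lemma~\ref{Proposition about concave Hi} gives the concavity of each $H_{i}$ on $\mathrm{conv}\,S(p)$, Lemma~\ref{Lemma end 1} gives the joint convexity of $U$ on $X\times(0,1)$, and Theorem~\ref{Theorem of convex feasible set}---whose proof also disposes of the case $o\in X$ via the star-shapedness of $S(p)$ recorded in Lemma~\ref{Lemma end 0}---then yields the convexity of $S(p)$.

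The genuine obstacle is not in this assembly but in its prerequisite Lemma~\ref{alpha-decreasing of density function}: because $f_{\xi_{i}}$ is built from the modified Bessel function $K_{\lambda_{i}-1/2}$, proving that $t\mapsto t^{\alpha}f_{\xi_{i}}(t)$ is eventually strictly decreasing requires sharp monotonicity and asymptotic estimates for ratios of Bessel functions of adjacent order. Once that lemma is available, the present theorem is essentially bookkeeping: the orthogonality $x^{T}\gamma_{i}=0$ collapses the GH projection to a symmetric law independent of $x$, so the elliptical machinery of Section~\ref{Eventual convexity with elliptical distributions} transfers verbatim.
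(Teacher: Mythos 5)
Your proposal is correct and follows essentially the same route as the paper's own proof: use Proposition~\ref{1-dimensional generalized hyperbolic distributions} and the orthogonality $x^{T}\gamma_{i}=0$ to make $F_{\xi_i}$ symmetric and independent of $x$, invoke Lemma~\ref{alpha-decreasing of density function} (case $\phi=0$) for the $\alpha$-decreasing density, and then assemble Lemma~\ref{Proposition about concave Hi}, Lemma~\ref{Lemma end 1} and Theorem~\ref{Theorem of convex feasible set}. Your additional bookkeeping on the admissible exponents ($r_{i}>2\lambda_{i}$ when $\psi_{i}=0$, and $r_{i}<-1$ requiring $\psi_{i}>0$ or $\lambda_{i}<-1/2$ when $o\in S(p)$) is a correct refinement the paper leaves implicit.
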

\begin{proof}
\ZHnew{Assume $\xi(x) \sim GH_{1}(\lambda, \chi, \psi, 0, 1, \frac{x^{T}\gamma}{\sqrt{x^{T}\Sigma x}})$. Let $F_{\xi(x)}$ be the distribution function of $\xi(x)$. Using Proposition \ref{1-dimensional generalized hyperbolic distributions}, we know that $v_{i}$ can be reformulated in the form of $\xi(x)$, where for convenience we suppress the subscript. Suppose $x^{T} \gamma = 0$ for any $x \in X$. Then, we can see $F_{\xi(x)}$ is independent of the decision variable $x$. Using Lemma \ref{alpha-decreasing of density function}, we can \ZHnew{show that} the density $f_{\xi(x)}$ is $\alpha$-decreasing with some $\alpha \in \mathbb{R}$. Thus, we can follow the proof of Lemma \ref{Proposition about concave Hi} to \ZHnew{come up with} the concavity of $F_{\xi(x)}\left(g \left( x \right) \right)$. Under row \ZHnew{based copula of} chance constraints, the convexity of $U(x, y_{i}) := \psi^{\left( -1 \right)}_{x} \left( y_{i} \psi_{x}\left( p \right) \right)$ shown in Lemma \ref{Lemma end 1} can be used literately because the right-hand side term of \eqref{Fea-7} has nothing to do with $\xi(x)$. \ZHnew{At the end}, following the proof of Theorem \ref{Theorem of convex feasible set}, we complete the proof.}
\end{proof}

It is worth noting that we assume $x^{T}\gamma = 0$ for any $x \in X$ in Theorem \ref{eventual convextiy of GH case 1}, which can be separated into the following cases: (1) $\gamma \neq 0$ and $\gamma$ is orthogonal to set $X$; (2) $\gamma = 0$. Case (1) means there is a hyperplane containing the set $X$, and $\gamma$ is a normal vector of the hyperplane. The case (1) corresponds to a chance constrained problem with asymmetric skewed distributions i.e. asymmetric $GH$ distributions. The case (2) corresponds to a particular case with normal variance mixture distributions, in which $W$ has a $GIG$ distribution. \ZHnew{Notice that, under rows independent chance constraint, the convexity of $U$ is trivial due to $U(x, y_{i})\equiv p$. Thus, in Theorem \ref{eventual convextiy of GH case 1} and in the \ZHnew{remaining} of Section \ref{Eventual convexity with skewed distributions}, we only provide the eventual convexity results about feasible sets of rows \ZHnew{based copula of} chance constraints.}

\subsection{Convexity with NMVM distributions}

\ZHnew{We} consider the case with NMVM distributions, which include the GH distribution as a special case. Under NMVM distributions, the main difference between the GH one's is that the method of incorporating randomness and skewness may \ZHnew{follow} a nonlinear form $m(W)$, not just a linear form $\mu + W \gamma$. Thus, we will suppose $m(W)$ is bounded w.r.t. random variable $W$. Then, in the following theorem, we will see the eventual convexity of the \ZH{feasible set} $S(p)$ can be satisfied, where \ZH{all} the random variables $\left\lbrace v_{i} \right\rbrace^{K}_{i = 1}$ follow NMVM distributions.
\begin{theorem}\label{eventual convexity of normal mean-variance mixture with bounded m(W)}
Suppose random vectors $\left\lbrace v_{i} \right\rbrace^{K}_{i = 1}$ \ZHnew{follow} normal mean-variance mixture distributions \ZHnew{as stated by} Definition \ref{definition of normal mean variance mixture distribution} i.e.
\begin{equation}\label{roll vectors v_i normal mean variance mixture distribution}
v_{i} = m_{i}(W_{i}) + \sqrt{W_{i}} A_{i} Z_{i}.
\end{equation}
Assume $\| m_{i}(W_{i}) \|$ is bounded w.r.t. random variable $W_{i}$. \ZH{Assume Assumptions \ref{Assumption 2}, \ref{Assumption 3}, and \ref{Assumption 4} hold and $S(p) \neq \emptyset$.} Then, the \ZH{feasible set} $S(p)$ \ZHnew{is} \ZH{eventual convex}.
\end{theorem}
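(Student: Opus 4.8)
The plan is to follow the template of the proof of Theorem~\ref{eventual convextiy of GH case 1}: since an NMVM vector with a genuinely nonlinear mean function $m_i$ admits no $x$-independent scalarization $\xi_i(x)$, the standardized composition $F_i\circ g_i$ of Section~\ref{Concavity of H_{i}(x)} is replaced by the probability map $G_i(x):=\mathbb{P}(v_i^{T}x\le D_i)$, and everything reduces to showing that each $G_i$ is concave on a convex set containing $S(p)$. Write $\Sigma_i:=A_iA_i^{T}$ and let $h_i$ be the density of $W_i$. Because $v_i^{T}x=m_i(W_i)^{T}x+\sqrt{W_i}\,(A_i^{T}x)^{T}Z_i$ with $Z_i\sim\mathcal{N}_N(0,I_N)$ independent of $W_i$, conditioning on $W_i=w$ --- the radial-decomposition step for this skewed case --- makes $v_i^{T}x$ Gaussian with mean $m_i(w)^{T}x$ and variance $w\,x^{T}\Sigma_i x$, so that
\begin{equation*}
G_i(x)=\int_0^{+\infty}\Phi\!\left(\frac{D_i-m_i(w)^{T}x}{\sqrt{x^{T}(w\Sigma_i)x}}\right)h_i(w)\,dw,
\end{equation*}
$\Phi$ being the standard normal cdf. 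For each fixed $w$ the integrand has exactly the shape treated in Corollary~\ref{Corollary of Hi-1-1} and Theorem~\ref{Corollary of Hi-1} with $b:=D_i$, $\mu:=m_i(w)$ and positive definite matrix $w\Sigma_i$; provided $D_i\ge 0$, these results produce a threshold $\theta^{*}_{i,w}$ such that the integrand is concave in $x$ on the convex set $G(\theta^{**})$ of \eqref{"locally convex" set} for every $\theta^{**}\ge\theta^{*}_{i,w}$.

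The decisive step is to uniformise over the support of $W_i$, which is where the two boundedness hypotheses enter: $\|m_i(W_i)\|$ is bounded, say by $M_i$, and the scale-valued mixture $W_i$ is bounded (as in the phrasing ``bounded scale-valued mixture'' of the introduction), say supported in $[\underline{w}_i,\overline{w}_i]$ with $\underline{w}_i>0$. The explicit formulas for $\theta^{*}_{i,w}$ in Theorem~\ref{Corollary of Hi-1} are monotone in $m_i(w)^{T}(w\Sigma_i)^{-1}m_i(w)=\tfrac{1}{w}\,m_i(w)^{T}\Sigma_i^{-1}m_i(w)$ and in $\|m_i(w)\|^{2}/\lambda_{\min}(w\Sigma_i)$, both of which are $O(1/w)$, so the product $w\,\theta^{*}_{i,w}$ stays bounded by a constant $K_i^{2}$ independent of $w$ (one may take $K_i$ proportional to $M_i/\sqrt{\lambda_{\min}(\Sigma_i)}$). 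Hence, whenever $x$ belongs to the convex set
\begin{equation*}
\Omega_i:=\left\{x\in X:\ D_i-M_i\|x\|\ \ge\ K_i\sqrt{x^{T}\Sigma_i x}\,\right\},
\end{equation*}
one has, for every $w\in[\underline{w}_i,\overline{w}_i]$, $\;D_i-m_i(w)^{T}x\ge D_i-M_i\|x\|\ge K_i\sqrt{x^{T}\Sigma_i x}\ge\sqrt{\theta^{*}_{i,w}\,w}\,\sqrt{x^{T}\Sigma_i x}$, i.e. the integrand is concave at $x$ for a.e.\ $w$; as it is measurable in $(x,w)$ and bounded by $1$, concavity passes through the integral against $h_i$ and $G_i$ is concave on $\Omega_i$. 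Finally, $S(p)\subseteq\Omega_i$ once $p$ is large enough: from $m_i(w)^{T}x\ge-M_i\|x\|$ and $w\ge\underline{w}_i$ one gets $G_i(x)\le\Phi\big((D_i+M_i\|x\|)/(\sqrt{\underline{w}_i}\,\sqrt{x^{T}\Sigma_i x})\big)$, so the copula bound (Proposition~\ref{Frechet-Hoeffding upper bound}) and $p>p^{*}$ force every $x\in S(p)$ to satisfy $D_i+M_i\|x\|>\Phi^{-1}(p^{*})\sqrt{\underline{w}_i}\,\sqrt{x^{T}\Sigma_i x}$; for $p^{*}$ exceeding an explicit constant depending on $M_i,\underline{w}_i,K_i,D_i,\lambda_{\min}(\Sigma_i),\lambda_{\max}(\Sigma_i)$ this rearranges to $D_i\ge M_i\|x\|+K_i\sqrt{x^{T}\Sigma_i x}$, i.e. $x\in\Omega_i$. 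This strengthened lower bound on $p$ is the reading of Assumption~\ref{Assumption 2} appropriate to the NMVM setting.

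It then remains to assemble the pieces exactly as in Sections~\ref{Convexity of Ui(x, yi)}--\ref{Eventual convexity with elliptical distributions}. Applying Sklar's theorem (Proposition~\ref{Sklar's Theorem.}) and the standing Gumbel-Hougaard assumption (Assumption~1) to the vector $(v_1^{T}x,\dots,v_K^{T}x)$, membership $x\in S(p)$ is equivalent --- via the purely order-theoretic manipulation leading to \eqref{Fea-7}, in which $G_i(x)$ now plays the role of $F_i(g_i(x))$ --- to the existence of $y=(y_1,\dots,y_K)$ with $\sum_i y_i=1$, $y_i\ge 0$ and $G_i(x)\ge U(x,y_i):=\psi^{(-1)}_{x}(y_i\psi_x(p))$ for all $i$. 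By Assumptions~\ref{Assumption 3} and~\ref{Assumption 4}, Lemma~\ref{Lemma end 1} gives that $(x,y_i)\mapsto U(x,y_i)$ is jointly convex on $X\times(0,1)$. Then, for $x_1,x_2\in S(p)$ with associated multipliers $y^{1},y^{2}$ and $\lambda\in[0,1]$, using concavity of $G_i$ on $\text{conv}S(p)\subseteq\bigcap_i\Omega_i$ and joint convexity of $U$,
\begin{equation*}
\begin{split}
G_i\big(\lambda x_1+(1-\lambda)x_2\big)&\ \ge\ \lambda G_i(x_1)+(1-\lambda)G_i(x_2)\\
&\ \ge\ \lambda U(x_1,y^{1}_i)+(1-\lambda)U(x_2,y^{2}_i)\\
&\ \ge\ U\big(\lambda x_1+(1-\lambda)x_2,\ \lambda y^{1}_i+(1-\lambda)y^{2}_i\big),
\end{split}
\end{equation*}
so $\lambda x_1+(1-\lambda)x_2$ still satisfies \eqref{Fea-7}, hence lies in $S(p)$; the case $o\in X$ is reduced to the case $o\notin X$ through the star-shapedness Lemma~\ref{Lemma end 0} and the same case analysis as in the proof of Theorem~\ref{Theorem of convex feasible set}.

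The main obstacle is the uniformisation of the second paragraph: the conditional-Gaussian threshold $\theta^{*}_{i,w}$ blows up like $1/w$ as $w\downarrow 0$, so without a two-sided bound on $W_i$ no common region of concavity exists; the non-routine observation that rescues the argument is the cancellation making $w\,\theta^{*}_{i,w}$ bounded, which simultaneously exhibits the $w$-free convex set $\Omega_i$ and, through the crude upper bound on $G_i$, confines $S(p)$ inside it once $p$ is large. Two further points need attention: the sign restriction $D_i\ge 0$ (inherited from Corollary~\ref{Corollary of Hi-1-1}, where no threshold exists for $b<0$) cannot be dropped, and one must justify the dominated passage of concavity under the integral, for which $0\le\Phi\le 1$ together with measurability of $(x,w)\mapsto(D_i-m_i(w)^{T}x)/\sqrt{x^{T}(w\Sigma_i)x}$ suffices.
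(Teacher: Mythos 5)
Your proposal follows the same route as the paper's proof: condition on $W_i=\omega$, observe that the conditional law of $v_i^Tx$ is Gaussian, apply the elliptical machinery (Theorem~\ref{Corollary of Hi-1}, Corollary~\ref{Corollary of Hi-1-1}, Lemma~\ref{Proposition about concave Hi}) to each conditional piece, pass concavity through the mixing integral by linearity, and then reuse Lemma~\ref{Lemma end 1} and the argument of Theorem~\ref{Theorem of convex feasible set}. Whether you normalize by $\sqrt{x^T(w\Sigma_i)x}$ (standard normal integrand, $w$-dependent threshold $\theta^*_{i,w}$) or, as the paper does, by $\sqrt{x^T\Sigma_i x}$ (conditional law $N(0,\omega)$, $w$-free $\theta^*_i(m_i(\omega))$ but a revealed-concavity threshold $t^{**}_i(r_i,\omega)$ growing like $\sqrt{\omega}$) is immaterial; the $1/w$ blow-up you flag and the $\sqrt{\omega}$ growth in the paper's normalization are the same phenomenon, and your cancellation $w\,\theta^*_{i,w}=O(1)$ is correct.

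The one substantive divergence is the uniformization step, and there you prove a restricted version of the statement. The paper's proof asserts (its item (3)) that boundedness of $\|m_i(W_i)\|$ alone yields a $p^*$ making the conditional compositions uniformly concave on $\text{conv}\,S(p)$. You instead establish the containment $S(p)\subseteq\Omega_i$ explicitly, but to do so you import two hypotheses absent from the theorem: compact support $[\underline w_i,\overline w_i]\subset(0,\infty)$ for $W_i$, and $D_i\ge 0$. Your restriction is not gratuitous: per-$\omega$ concavity of $\Phi\left(g_\omega(x)/\sqrt{\omega}\right)$ holds only where that conditional probability exceeds a fixed level of the form $\Phi(\sqrt{-r+1})$, while $x\in S(p)$ controls only the $\omega$-average of the conditional probabilities; your crude bound $G_i(x)\le\Phi\left((D_i+M_i\|x\|)/(\sqrt{\underline w_i}\sqrt{x^T\Sigma_i x})\right)$ is exactly what converts the average bound into a pointwise one, and it genuinely needs $\underline w_i>0$, just as the uniform control of $t^{**}_i(r_i,\omega)$ needs $\overline w_i<\infty$. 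So either accept that you have proved the theorem under the additional reading suggested by the introduction's phrase ``bounded scale-valued mixture random vectors,'' or supply an argument for the paper's bare assertion that boundedness of $\|m_i(W_i)\|$ suffices — your own analysis indicates the per-$\omega$ route cannot deliver that without bounded support of $W_i$.
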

\begin{proof}
\ZH{Let} $x \neq 0$. Fix $W_{i}$. Define $\xi_{i}$ and $g_{i}$ as follows
\begin{equation}\label{normalization of roll vectors v_i normal mean variance mixture distribution}
\xi_{i}(x) := \frac{v^{T}_{i} x - m_{i}(W_{i})^{T} x}{\sqrt{x^{T} \Sigma_{i} x}}, \ \ g_{i}(x) := \frac{D_{i} - m_{i}(W_{i})^{T} x}{\sqrt{x^{T} \Sigma_{i} x}}.
\end{equation}
Since $Z_{i} \sim N_{N}\left(0, I_{N} \right)$ implies $\sqrt{W_{i}} A_{i} Z_{i} \sim N_{N}\left(0, W_{i}\Sigma_{i} \right)$, where $\Sigma_{i} = A_{i} A^{T}_{i}$. As a result, by combining \eqref{roll vectors v_i normal mean variance mixture distribution} and \eqref{normalization of roll vectors v_i normal mean variance mixture distribution}, we can see
\begin{equation*}
\xi_{i}(x) \overset{d}{=} \frac{\left( \sqrt{W_{i}} A_{i} Z_{i} \right)^{T} x}{\sqrt{x^{T} \Sigma_{i} x}} \sim N_{1}\left(0, W_{i} \right).
\end{equation*}
which implies the distribution function of $\xi_{i}(x)$ is a normal distribution independent of \ZHnew{the} decision variable $x$. Using the Law of total expectation, we \ZH{have}
\begin{equation*}
\mathbb{P}\left( v_{i}^{T}x \leq D_{i} \right) = \mathbb{P}\left( \xi_{i}\left( x \right) \leq g_{i}\left( x \right) \right) = E_{W}\left[ F_{\xi_{i}} \left( g_{i}\left( x, m_{i}\left(\omega_{i}\right) \right) | W_{i} = \omega_{i} \right) \right].
\end{equation*}
It is well-known that normal densities are $\alpha$-decreasing with some $\alpha > 0$ \cite[Proposition 4.1]{henrion2008convexity}. Applying Theorem \ref{Corollary of Hi-1}, Lemma \ref{Proposition about concave Hi} and Theorem \ref{Theorem of convex feasible set}, we \ZH{get} the following results:
\begin{itemize}
\item[(1)] Fix $W_{i} = \omega_{i} \geq 0$. There is $r_{i} \in \mathbb{R}$ such that $g_{i}(x, m_{i}\left(\omega_{i}\right) )$ is locally $r_{i}$-concave w.r.t $x$ on $G(\theta^{*}_{i}(m_{i}\left(\omega_{i}\right)))$ defined by \eqref{"locally convex" set}, with corresponding threshold $\theta^{*}_{i}(m_{i}\left(\omega_{i}\right))$.
\item[(2)] The \ZHnew{composition} function $F_{\xi_{i}}\circ g_{i} \left( \cdot , m_{i}\left(\omega_{i}\right) \right)$ is concave w.r.t $x$ due to the $r$-concavity of $g_{i}\left( \cdot , m_{i}\left(\omega_{i}\right) \right)$ and $\left( -r_{i} + 1 \right)$-decreasing of density $f_{\xi_{i}}$ with suitable $r_{i} \in \mathbb{R}$ and $t^{**}_{i}\left( r_{i}, \omega_{i} \right)$.
\item[(3)] Due to the boundedness of $\| m_{i}(W_{i}) \|$ w.r.t. $W_{i}$, there exists $p*$ such that $F_{\xi_{i}}\circ g_{i} \left( \cdot , m_{i}\left(\omega_{i}\right) \right)$ are uniformly concave \ZH{on convS(p)} when $p > p^{*}$.
\item[(4)] The function $E_{W}\left[ F_{\xi_{i}} \left( g_{i}\left( x, m_{i}\left(\omega_{i}\right) \right) | W_{i} = \omega_{i} \right) \right]$ is concave \ZH{on convS(p)} since the positive linear combination of concave functions preserves concavity.
\end{itemize}
\ZH{As a result, we have proved the concavity of $H_{i} := F_{\xi_{i}}(g_{i}) = \mathbb{P}(v^{T}_{i} x \leq D_{i})$, ($i= 1, ..., K$). Then, using Lemma \ref{Lemma end 1} and following the proof of Theorem \ref{Theorem of convex feasible set}, we complete the proof.}
\end{proof}

\subsection{Convexity under strong skewed GH distributions}

\ZH{We} considered the eventual convexity problem under GH distributions with $\gamma \perp X$ in Theorem \ref{eventual convextiy of GH case 1}. Then, we discussed a more general case under NMVM distributions with bounded $\| m\left( W \right) \|$ in Theorem \ref{eventual convexity of normal mean-variance mixture with bounded m(W)}. In these two cases, we weakened the influence of the parameter $\gamma$, which reflects skewness. In the following, we consider a more general case regarding the skewness $\gamma$. Suppose $u \in \lbrace -1, 1 \rbrace$, $W \geq 0$, $x \in \mathbb{R}^{N}\setminus \lbrace o \rbrace$, $\gamma \in \mathbb{R}^{N}$. Assume $\Sigma$ is a positive definite matrix and $M \subset \mathbb{R}$ is a measurable set. Inspired by the radial decomposition method under elliptical distributions in \cite{van2019eventual}, we define the ray function of $M$ in a skewed version as follows
\begin{equation*}
\rho_{M}\left( x, u \right) :=
\left\{
        \begin{array}{l}
        \sup\limits_{t \geq 0} \ t\\            
        s.t. \ \ W \cdot \frac{x^{T} \gamma}{\sqrt{x^{T} \Sigma x}} + \sqrt{W} t u \in M.
        \end{array}
\right.
\end{equation*}

\begin{lemma}\label{lemma of general case with gamma times x greater than 0}
Assume $D \in \mathbb{R}$ and $v \sim GH_{N}(\lambda, \chi, \psi, \mu, \Sigma, \gamma)$. Suppose $x \in \mathbb{R}^{N} \setminus \left\lbrace 0 \right\rbrace$ and $x^{T}\gamma \geq 0$. Define the probability function as follows
\begin{equation*}
\Phi(x) := \mathbb{P} \left( v^{T} x \leq D \right).
\end{equation*}
Then, the probability function $\Phi$ can be rewritten \ZH{as}
\begin{equation*}
\Phi(x) = \int^{+\infty}_{0}\frac{1}{2} F_{\mathcal{R}} \left( \frac{D - \left( \mu + \omega \cdot \gamma \right)^{T}x}{\sqrt{\omega} \cdot \sqrt{x^{T} \Sigma x}} \right) d\mu_{W}(\omega) + \frac{1}{2},
\end{equation*}
where $W \sim N^{-}\left( \lambda, \chi, \psi \right)$, $F_{\mathcal{R}}$ is a distribution function of a radial random variable and $\mu_{W}$ is the law of the random variable $W$.
\end{lemma}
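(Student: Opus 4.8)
The plan is to condition on the mixing variable $W$, reduce the resulting Gaussian event to a one–dimensional radial form, and then integrate the conditional probability back against the law $\mu_{W}$ of $W$; rather than working with the Bessel–function density of $v^{T}x$, it is much cleaner to use the mixing representation behind Definition \ref{definition of Generalized hyperbolic distributions} directly.

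First I would write $v\overset{d}{=}\mu+W\gamma+\sqrt{W}\,AZ$ with $Z\sim\mathcal{N}_{N}(0,I_{N})$, $AA^{T}=\Sigma$, $W\sim N^{-}(\lambda,\chi,\psi)$ independent of $Z$, and $W>0$ almost surely (our standing no–atom assumption, valid for $N^{-}(\lambda,\chi,\psi)$). For a fixed $x\neq o$ and a fixed realisation $W=\omega>0$, the only random term left in $v^{T}x$ is $Z^{T}(A^{T}x)$, which is Gaussian with mean $0$ and variance $\|A^{T}x\|^{2}=x^{T}\Sigma x>0$. Hence, conditionally on $W=\omega$,
\begin{equation*}
v^{T}x\ \overset{d}{=}\ (\mu+\omega\gamma)^{T}x+\sqrt{\omega}\,\sqrt{x^{T}\Sigma x}\;\eta,\qquad \eta\sim\mathcal{N}(0,1),
\end{equation*}
so that $\mathbb{P}\!\left(v^{T}x\le D\mid W=\omega\right)=\mathbb{P}\!\left(\eta\le a(\omega)\right)$, where $a(\omega):=(D-(\mu+\omega\gamma)^{T}x)/(\sqrt{\omega}\,\sqrt{x^{T}\Sigma x})$.

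Next I would carry out the one–dimensional radial decomposition — the scalar instance of the representation built on the ray function $\rho_{M}$ introduced just before the lemma. Write $\eta=\mathcal{R}\,\theta$ with radial part $\mathcal{R}:=|\eta|$ and an independent symmetric sign $\theta\in\{-1,1\}$, and let $F_{\mathcal{R}}$ be the associated radial distribution function, extended to all of $\mathbb{R}$ by $F_{\mathcal{R}}(t):=2\,\mathbb{P}(\eta\le t)-1$ (for $t\ge 0$ this is the genuine law of $|\eta|$, and the extension is odd). Conditioning on $\theta$ and using that $\mathcal{R}\ge 0$,
\begin{equation*}
\mathbb{P}(\eta\le a)=\tfrac12\,\mathbb{P}(\mathcal{R}\le a)+\tfrac12\,\mathbb{P}(\mathcal{R}\ge -a)=\tfrac12\,F_{\mathcal{R}}(a)+\tfrac12\qquad\text{for every }a\in\mathbb{R}.
\end{equation*}
This step is where the care is needed: for $a<0$ the genuine half–normal distribution function vanishes, so one must either keep the sign $\theta$ separate or, equivalently, use the odd extension of $F_{\mathcal{R}}$ above; with this bookkeeping the identity is immediate, and it is the only non–routine point in the whole proof.

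Finally I would undo the conditioning. Since $\mu_{W}$ is a probability measure on $(0,+\infty)$, the tower property gives
\begin{equation*}
\Phi(x)=\mathbb{P}(v^{T}x\le D)=\int_{0}^{+\infty}\mathbb{P}\!\left(v^{T}x\le D\mid W=\omega\right)d\mu_{W}(\omega)=\int_{0}^{+\infty}\Bigl[\tfrac12\,F_{\mathcal{R}}\!\bigl(a(\omega)\bigr)+\tfrac12\Bigr]d\mu_{W}(\omega),
\end{equation*}
and pulling the constant $\tfrac12$ out of the integral yields exactly
\begin{equation*}
\Phi(x)=\int_{0}^{+\infty}\tfrac12\,F_{\mathcal{R}}\!\left(\frac{D-(\mu+\omega\gamma)^{T}x}{\sqrt{\omega}\,\sqrt{x^{T}\Sigma x}}\right)d\mu_{W}(\omega)+\tfrac12.
\end{equation*}
Here $x\neq o$ and $\Sigma\succ 0$ are used only to make $a(\omega)$ finite, the no–atom hypothesis on $W$ to divide by $\sqrt{\omega}$, and the hypothesis $x^{T}\gamma\ge 0$ only fixes the sign of the drift $\omega\,x^{T}\gamma/\sqrt{x^{T}\Sigma x}$, consistently with the orientation chosen when defining $\rho_{M}$; no convexity or concavity of the data enters, so the main (mild) obstacle remains the sign handling in the radial step.
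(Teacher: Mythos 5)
Your proof is correct and follows essentially the same route as the paper: condition on the mixing variable $W$, decompose the resulting one-dimensional Gaussian into its radial part times an independent symmetric sign (the scalar instance of the ray-function decomposition the paper imports from van Ackooij--Malick), and integrate against $\mu_{W}$. Your explicit handling of the case $a(\omega)<0$ via the odd extension of $F_{\mathcal{R}}$ is in fact more careful than the paper's own argument, which states the final identity without spelling out that convention.
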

The proof of Lemma \ref{lemma of general case with gamma times x greater than 0} is presented in Appendix \ref{Appendix H}. In Lemma \ref{lemma of general case with gamma times x greater than 0}, we \ZH{reformulate} the probability function into a \ZH{composition} form \ZHnew{of} a radial distribution and a mixture random variable $W$. In this way, we only need to consider the \ZHnew{finite range of the integral} of the mixture random variable. Thus, the chance constraint with unbounded mixture random variable $W$ can be transformed into the bounded case.

\begin{theorem}
Assume $X \subseteq \mathbb{R}^{N}$ is a convex closed bounded set and $o \notin X$. Suppose random vectors $\left\lbrace v_{i} \right\rbrace^{K}_{i = 1}$ following generalized hyperbolic distributions defined by Definition \ref{definition of Generalized hyperbolic distributions} i.e. $v_{i} \sim GH_{N}(\lambda_{i}, \chi_{i}, \psi_{i}, \mu_{i}, \Sigma_{i}, \gamma_{i})$, $i=1, \cdots, K$. \ZH{Assume Assumptions \ref{Assumption 2}, \ref{Assumption 3}, and \ref{Assumption 4} hold and $S(p) \neq \emptyset$.} If $\gamma^{T}_{i} x \geq 0$ for any $x \in X$ and any $i=1, \cdots, K$, then the \ZH{feasible set} $S(p)$ is \ZH{eventual convex}.
\end{theorem}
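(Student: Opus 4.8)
We sketch the strategy. The plan is to reduce this strongly-skewed situation to the elliptical machinery of Section~\ref{Eventual convexity with elliptical distributions} and the integration technique of Theorem~\ref{eventual convexity of normal mean-variance mixture with bounded m(W)}, using the radial decomposition of Lemma~\ref{lemma of general case with gamma times x greater than 0} to eliminate the decision-dependence of the one-dimensional distribution of $\xi_i(x)$. Since $\gamma_i^Tx\ge0$ on $X$, Lemma~\ref{lemma of general case with gamma times x greater than 0} applies to every row and gives, for $x\in X\setminus\{o\}$,
\begin{equation*}
H_i(x):=\mathbb P(v_i^Tx\le D_i)=\int_0^{+\infty}\tfrac12\,F_{\mathcal R}\bigl(\zeta_i(x,\omega)\bigr)\,d\mu_{W_i}(\omega)+\tfrac12,\qquad\zeta_i(x,\omega):=\frac{D_i-(\mu_i+\omega\gamma_i)^Tx}{\sqrt\omega\,\sqrt{x^T\Sigma_ix}},
\end{equation*}
where $W_i\sim N^-(\lambda_i,\chi_i,\psi_i)$ and $F_{\mathcal R}$ is the (decision-independent) radial distribution function of a one-dimensional standard normal. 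Exactly as in Theorem~\ref{Theorem of convex feasible set}, once each $H_i$ is concave on $\operatorname{conv}S(p)$ we are done, because the right-hand side of \eqref{Fea-7} does not involve $\xi$, so the joint convexity of $U$ is still furnished by Lemma~\ref{Lemma end 1} under Assumptions~\ref{Assumption 3}--\ref{Assumption 4}, and the assembly of Theorem~\ref{Theorem of convex feasible set} (here with $o\notin X$) carries over. Since $0\le\tfrac12 F_{\mathcal R}+\tfrac12\le1$ and $\mu_{W_i}$ is a probability measure, and a positive mixture of concave functions is concave, it suffices to prove that for $\mu_{W_i}$-almost every $\omega>0$ the map $x\mapsto F_{\mathcal R}(\zeta_i(x,\omega))$ is concave on $\operatorname{conv}S(p)$.

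Fix $\omega$. The integrand is $\Phi_{\mathrm{std}}(\zeta_i(\cdot,\omega))$ with $\Phi_{\mathrm{std}}$ the standard normal distribution function, and $\zeta_i(\cdot,\omega)$ has the ratio form $(\widetilde b-\widetilde\mu_\omega^Tx)/\sqrt{x^T\widetilde\Sigma_\omega x}$ with $\widetilde b=D_i$, $\widetilde\mu_\omega=\mu_i+\omega\gamma_i$, $\widetilde\Sigma_\omega=\omega\Sigma_i$. A Gaussian density being $\alpha$-decreasing for every $\alpha$ (cf.\ \cite[Proposition 4.1]{henrion2008convexity}), Theorem~\ref{Corollary of Hi-1}, Corollary~\ref{Corollary of Hi-1-1} and the argument of Lemma~\ref{Proposition about concave Hi} yield, for a suitable $r_i(\omega)$ and threshold $\theta_i^{*}(\omega)$, that $\Phi_{\mathrm{std}}\circ\zeta_i(\cdot,\omega)$ is concave on $G_i(\theta_i^{*}(\omega))=\{x:\zeta_i(x,\omega)\ge\sqrt{\theta_i^{*}(\omega)}\}$. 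Because $X$ is compact with $o\notin X$, on $X$ the quantities $\sqrt{x^T\Sigma_ix}$ (bounded away from $0$ and $\infty$), $\mu_i^Tx$ and $\gamma_i^Tx\in[0,\infty)$ are bounded, so over any compact range $\omega\in[\underline\omega,\bar\omega]$ the triple $(\widetilde b,\widetilde\mu_\omega,\widetilde\Sigma_\omega)$ stays in a compact set with $\widetilde\Sigma_\omega$ nondegenerate and $\sup_{\omega\in[\underline\omega,\bar\omega]}\theta_i^{*}(\omega)<\infty$; Proposition~\ref{Frechet-Hoeffding upper bound} applied to $H_i(x)\ge p$ for $x\in S(p)$ then forces $\operatorname{conv}S(p)\subseteq G_i(\theta_i^{*}(\omega))$ for all such $\omega$ provided $p$ is close enough to $1$, i.e.\ under Assumption~\ref{Assumption 2}.

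The main obstacle is the behaviour of the integrand for the extreme values $\omega\to0^+$ and $\omega\to+\infty$, where $\theta_i^{*}(\omega)$ degenerates --- $\widetilde\Sigma_\omega$ collapses as $\omega\to0^+$, and $\theta_i^{*}(\omega)$ grows like $\omega$ as $\omega\to+\infty$ since $\widetilde\mu_\omega^T\widetilde\Sigma_\omega^{-1}\widetilde\mu_\omega\sim\omega\,\gamma_i^T\Sigma_i^{-1}\gamma_i$ --- so the uniform-threshold argument no longer puts $\operatorname{conv}S(p)$ inside $G_i(\theta_i^{*}(\omega))$ there. For $\omega\to0^+$ one uses that $\mu_{W_i}$ carries no atom at the origin, so $\mu_{W_i}((0,\underline\omega))\to0$ as $\underline\omega\to0^+$; for $\omega\to+\infty$ one exploits $\gamma_i^Tx\ge0$ on the compact $X$ to bound $\zeta_i(x,\omega)\le(D_i-\mu_i^Tx)/(\sqrt\omega\,\sqrt{x^T\Sigma_ix})\to0$ uniformly in $x$, so $\Phi_{\mathrm{std}}(\zeta_i(\cdot,\omega))$ is uniformly near-constant --- exponentially close when $\gamma_i^Tx$ is bounded below on $X$ --- while $\mu_{W_i}((\bar\omega,\infty))$ decays at least polynomially (exponentially when $\psi_i>0$). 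The remaining delicate step, which I expect to be the technical heart of the proof, is to show that these two tail contributions are still concave on $\operatorname{conv}S(p)$: here one rewrites $\Phi_{\mathrm{std}}(\zeta_i)=1-\Phi_{\mathrm{std}}(-\zeta_i)$ and applies the best-threshold analysis of Corollary~\ref{Corollary of Hi-1-1} and the bilinear reformulation of Lemma~\ref{reformualtion of the necessary and sufficient conditions again} to the complementary ratio $-\zeta_i$, together with the log-concavity of the Gaussian survival function and the uniform estimates above. Once this is established, each $H_i$ is concave on $\operatorname{conv}S(p)$ and the eventual convexity of $S(p)$ follows verbatim from Theorem~\ref{Theorem of convex feasible set}.
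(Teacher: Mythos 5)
Your overall route is the paper's: apply the radial decomposition of Lemma~\ref{lemma of general case with gamma times x greater than 0}, show the integrand is concave in $x$ for each fixed $\omega$, use closure of concavity under positive mixtures, and assemble via Lemma~\ref{Lemma end 1} and Theorem~\ref{Theorem of convex feasible set}. But there is a genuine gap precisely at the point you flag as ``the technical heart'', namely the large-$\omega$ tail, and the paper's proof contains exactly one idea you are missing: that tail is not small, it is \emph{identically zero}. Since the radial variable $\mathcal R_i$ is nonnegative, $F_{\mathcal R_i}(t)=0$ for $t\le 0$, and on the part of $X$ where $\gamma_i^Tx>0$ the argument $\zeta_i(x,\omega)$ is negative as soon as $\omega>(D_i-\mu_i^Tx)/(\gamma_i^Tx)$. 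Hence the integral truncates, as in \eqref{integral under radial decomposition}, to
\begin{equation*}
\Phi_i(x)=\int_0^{(D_i-\mu_i^Tx)/(\gamma_i^Tx)}\tfrac12\,F_{\mathcal R_i}\bigl(\zeta_i(x,\omega)\bigr)\,d\mu_{W_i}(\omega)+\tfrac12,
\end{equation*}
and since $X$ is compact with $o\notin X$ the upper limit is bounded uniformly in $x$. This reduces the statement to Theorem~\ref{eventual convexity of normal mean-variance mixture with bounded m(W)} with the mixing variable $W_i$ effectively supported on a bounded interval, where $m_i(\omega)=\mu_i+\omega\gamma_i$ is bounded and the uniform-threshold argument you already give for compact $\omega$-ranges carries through; the case $\gamma_i^Tx=0$ is dispatched separately by Theorem~\ref{eventual convextiy of GH case 1}. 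No tail analysis at $\omega\to+\infty$ is needed, and the $\omega\to 0^+$ end is harmless because the ratio $\zeta_i(x,\omega)$ only increases there, pushing $x$ further inside the good region $G_i(\theta_i^{*}(\omega))$.

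Your proposed substitute for the truncation --- controlling the tail by the decay of $\mu_{W_i}((\bar\omega,\infty))$ together with the near-constancy of $\Phi_{\mathrm{std}}(\zeta_i(\cdot,\omega))$ for large $\omega$ --- cannot close the argument as stated: a concave function plus a uniformly small but non-concave perturbation need not be concave, so smallness of the tail integral says nothing about the concavity of $H_i$. The sketched repair via $\Phi_{\mathrm{std}}(\zeta_i)=1-\Phi_{\mathrm{std}}(-\zeta_i)$ also does not help, because in the regime $\zeta_i\approx 0^-$ the complementary ratio $-\zeta_i$ lies near the origin, where none of the threshold results of Theorem~\ref{Corollary of Hi-1} or Corollary~\ref{Corollary of Hi-1-1} give convexity or concavity of the composition (indeed for $b>0$ and $r\ge -1$ Theorem~\ref{Corollary of Hi-1} asserts no threshold exists). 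Replace that entire discussion by the observation that $F_{\mathcal R_i}$ vanishes on $(-\infty,0]$ and your proof collapses onto the paper's.
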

\begin{proof}
Define the probability function by
\begin{equation*}
\Phi_{i}(x) := \mathbb{P} \left( v^{T}_{i} x \leq D_{i} \right), \ \ i=1, \cdots, K.
\end{equation*}
Using the reformulation \eqref{Fea-2} and \eqref{Fea-7}, we only need to consider the concavity of $\Phi_{i}$, regardless of whether there is a copula relationship between random vector rows. If $\gamma^{T}_{i} x = 0$, then the eventual convexity of $S(p)$ can be obtained by using Theorem \ref{eventual convextiy of GH case 1}. It remains to consider the case with $\gamma^{T}_{i} x > 0$. Using \ZH{Lemma} \ref{lemma of general case with gamma times x greater than 0}, we \ZHnew{have}
\begin{equation}\label{integral under radial decomposition}
\Phi_{i}(x) = \int^{\frac{D_{i} - \mu_{i}^{T}x}{\gamma_{i}^{T}x}}_{0}\frac{1}{2} F_{\mathcal{R}_{i}} \left( \frac{D_{i} - \left( \mu_{i} + \omega \cdot \gamma_{i} \right)^{T}x}{\sqrt{\omega} \cdot \sqrt{x^{T} \Sigma_{i} x}} \right) d\mu_{W_{i}}(\omega) + \frac{1}{2},
\end{equation}
where we use the fact the radial distribution $F_{\mathcal{R}_{i}}$ is non-zero only on $(0, +\infty)$ to induce the upper limit of integral. Since \ZHnew{the} domain $X \subset \mathbb{R}^{N} \setminus \lbrace 0 \rbrace$ is closed bounded. We can see \ZHnew{that} the upper limit of integral of \eqref{integral under radial decomposition} is bounded, which allows to repeat the result of Theorem \ref{eventual convexity of normal mean-variance mixture with bounded m(W)} with bounded $W_{i}$ to obtain the eventual convexity of $S(p)$.  
\end{proof}

\section{Simulations about eventual convexity with elliptical distributions}\label{Convexity of the feasible set $S(p)$}
\ZH{In this section, we first present a simulation about the eventual convexity under elliptical distributions. At the end, we provide some examples of the threshold $\sqrt{\theta^{*}}$ obtained in Theorem \ref{Corollary of Hi-1}.}

\subsection{Numerical simulation of eventual convexity.}\label{Examples}
This section is devoted to providing a precise $\kappa$ satisfying \ZH{Assumption} \ref{Assumption 3} and \ref{Assumption 4}. Let $(x, y_{i}) \in X \times (0, 1)$. \ZHnew{More precisely}, we take $y_{i} := 1/K$ \ZHnew{in} the following examples. Suppose $d \cdot \kappa(x) H_{x}\kappa(x) - \nabla_{x} \kappa(x) (\nabla_{x}\kappa(x))^{T}$ and $H_{x}\kappa(x)$ are positive semi-definite matrices. Applying \ZH{Lemma} \ref{Lemma end 2} \ZHnew{leads to}
\begin{equation}\label{example 1}
1/\omega(x, y_{i}) \cdot H_{x}\kappa(x) -  \nabla_{x} \kappa(x) (\nabla_{x}\kappa(x))^{T} \succeq d \cdot \kappa(x) H_{x}\kappa(x) - \nabla_{x} \kappa(x) (\nabla_{x}\kappa(x))^{T} \succeq 0.
\end{equation}
It follows that $H_{x}\kappa(x) - \omega(x, y_{i}) \nabla_{x} \kappa(x) (\nabla_{x}\kappa(x))^{T}$ is a positive semi-definite matrix. Using Lemma \ref{Lemma end 1},  we prove \ZHnew{that} $S(p)$ is a convex set. Thus, we provide an approach to define certain $\kappa$ such that $d \cdot \kappa(x) H_{x}\kappa(x) - \nabla_{x} \kappa(x) (\nabla_{x}\kappa(x))^{T}$ and $H_{x}\kappa(x)$ are positive semi-definite on $X$.

Suppose $d \geq 1$. Let $f$ be a convex function defined on $X$ satisfying $0 \geq f(x) > -\infty$. Define $\kappa(x) := e^{f(x)}$. It follows that
\begin{equation*}
H_{x}\left( ln \left( \kappa(x) \right) \right) = \frac{\kappa(x) H_{x}(\kappa(x)) - \nabla_{x} \kappa(x) \left( \nabla_{x} \kappa(x) \right)^{T}}{\kappa^{2}(x)} \succeq 0.
\end{equation*}
\ZHnew{In this case, we have} a function $\kappa(x)$ \ZHnew{that satisfies} \eqref{example 1}. However, when $d \in (0, 1)$, the above form of $\kappa$ is invalid. Therefore, we provide a new approach to define the function $\kappa$ in the following lemma. The proof is shown in Appendix \ref{Appendix HH}. In the following, we define $\triangle$ as follows
\begin{equation*}
\triangle := 
\begin{pmatrix}
H_{x} \kappa(x) & \nabla_{x}\kappa(x)\\
\left( \nabla_{x} \kappa(x) \right)^{T} & d \cdot \kappa(x)
\end{pmatrix}
\end{equation*}
and denote by $| \triangle |$ the determinant of the matrix $\triangle$.

\begin{lemma}\label{the construction of copula}
Suppose $X \subset \mathbb{R}^{K}$ is bounded. Define the minimum value of the vectors' components in $X$ as
\begin{equation*}
X_{min} := \underset{ i = 1, ... , K }{\inf} \left\lbrace x_{i} \left|  \forall x = [x_{1}, x_{2}, ..., x_{K}]^{T} \in X \right\rbrace\right.
\end{equation*}
Then, there exists a $\kappa(x) := \Sigma^{K}_{i = 1} f(x_{i})$ satisfying Assumption \ref{Assumption 3} and \ref{Assumption 4}, where $f$ is defined by
\begin{equation}\label{solution of differential equaiton 2}
f(x) :=
\left\{
        \begin{array}{ll}
        \left( \left( \frac{2}{d} - 1 \right)\cdot \left( C_{2} + C_{1} x \right) \right)^{\frac{d}{d - 2}}, & \text{if} \ \ d \in (0, 1),\\         
        \left( C_{2} + C_{1}x \right)^{-1}, & \text{if} \ \ d \in [1, +\infty),
        \end{array}
\right.
\end{equation}
and $C_{1}$, $C_{2}$ are \ZH{given constant coefficients.}
\end{lemma}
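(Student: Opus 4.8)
The plan is to turn the matrix condition in Assumption~\ref{Assumption 4} into a one-dimensional differential inequality on $f$, exploiting that the proposed $\kappa$ is separable. Writing $\kappa(x)=\sum_{i=1}^{K}f(x_{i})$ gives $\nabla_{x}\kappa(x)=(f'(x_{1}),\dots,f'(x_{K}))^{T}$ and $H_{x}\kappa(x)=\operatorname{diag}(f''(x_{1}),\dots,f''(x_{K}))$, so $\triangle$ is the bordered diagonal matrix with diagonal $(f''(x_{1}),\dots,f''(x_{K}),d\,\kappa(x))$ and last row/column $(f'(x_{1}),\dots,f'(x_{K}))$. Applying Proposition~\ref{Property of Schur complement} to $\triangle$ with $C=d\,\kappa(x)>0$ shows that Assumption~\ref{Assumption 4} is equivalent to $\triangle\succeq 0$; once we have ensured $f''>0$ on the set of coordinates occurring in points of $X$ (so that the diagonal block $H_{x}\kappa(x)$ is positive definite), taking the Schur complement of $H_{x}\kappa(x)$ in $\triangle$ reduces $\triangle\succeq 0$ to the scalar inequality
\begin{equation*}
\sum_{i=1}^{K}\frac{(f'(x_{i}))^{2}}{f''(x_{i})}\;\le\; d\sum_{i=1}^{K}f(x_{i}),\qquad \forall x\in X .
\end{equation*}
Hence it suffices to exhibit $f$ that is positive and strictly convex on a bounded interval containing every coordinate of every $x\in X$ and that satisfies the pointwise bound $(f'(t))^{2}\le d\,f(t)f''(t)$; summing termwise yields the displayed inequality, and a \emph{strict} pointwise bound even gives $\triangle\succ 0$, hence the strict-convexity clause of Theorem~\ref{Theorem of convex feasible set}.

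I would produce such an $f$ by integrating the boundary ODE $d\,f f''=2(f')^{2}$. Viewing $q=f'$ as a function of $f$ turns this into $d\,f\,\frac{dq}{df}=2q$, whence $q=A f^{2/d}$ and then $f^{1-2/d}=\bigl(1-\tfrac{2}{d}\bigr)(Ax+B)$; since $1-2/d<0$ for $d\in(0,1)$ the right-hand side must be negative, and rewriting $-(Ax+B)=C_{2}+C_{1}x>0$ gives precisely $f(x)=\bigl((\tfrac{2}{d}-1)(C_{2}+C_{1}x)\bigr)^{d/(d-2)}$. For a power ansatz $f=(a+bx)^{p}$ with $a+bx>0$ and $b\neq0$ one computes $f>0$, $f''=p(p-1)b^{2}(a+bx)^{p-2}$ and $(f')^{2}/(f f'')=p/(p-1)$; for $p=d/(d-2)\in(-1,0)$ this gives $p(p-1)>0$ (so $f''>0$) and $p/(p-1)=d/2$, hence $\sum_{i}(f'(x_{i}))^{2}/f''(x_{i})=\tfrac{d}{2}\kappa(x)<d\,\kappa(x)$. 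When $d\ge1$ the exponent $d/(d-2)$ degenerates near $d=2$, so instead I take $p=-1$, i.e.\ $f(x)=(C_{2}+C_{1}x)^{-1}$; then $p/(p-1)=\tfrac{1}{2}$ and $\sum_{i}(f'(x_{i}))^{2}/f''(x_{i})=\tfrac{1}{2}\kappa(x)\le d\,\kappa(x)$ because $d\ge1>\tfrac{1}{2}$, again with $f''>0$. In both regimes the scalar inequality holds (strictly), so $\triangle\succ 0$ and Assumption~\ref{Assumption 4} is verified.

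It remains to normalise so that Assumption~\ref{Assumption 3} holds; part~(1) is a standing hypothesis on the probability $p$ and is unaffected by $\kappa$, so only $0<\kappa(x)\le1$ on $X$ must be arranged. Since $X$ is bounded, every coordinate of every point of $X$ lies in some $[X_{\min},X_{\max}]$. Choosing $C_{1}>0$ and then $C_{2}$ large enough that $C_{2}+C_{1}X_{\min}>0$ keeps the base of $f$ strictly positive on that interval, so $f>0$ there and $\kappa>0$; moreover, because the relevant exponent is negative, $f\to0$ uniformly on $[X_{\min},X_{\max}]$ as $C_{2}\to+\infty$, so a further enlargement of $C_{2}$ makes $\kappa(x)=\sum_{i}f(x_{i})\le1$ while preserving positivity, strict convexity and the scalar inequality; thus $X_{\min}$ is exactly the binding quantity for the positivity choice. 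I expect the main obstacle to be the bookkeeping in this last step together with the case split: one must simultaneously keep the base of a negative, non-integer power strictly positive, keep $f''>0$, and drive $\kappa$ below $1$, and one must check that the admissible exponent genuinely forces the switch of ansatz between $d\in(0,1)$ and $d\ge1$ (the choice $p=-1$ requires $d>\tfrac12$, whereas $p=d/(d-2)$ works for all $d\in(0,1)$), so that the resulting $C_{1},C_{2}$ and the function $\kappa$ are well defined in each range.
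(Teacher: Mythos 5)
Your proposal is correct and follows essentially the same route as the paper: a separable $\kappa(x)=\sum_i f(x_i)$, reduction of Assumption \ref{Assumption 4} via the bordered matrix $\triangle$ and the Schur complement to the scalar inequality $\sum_i (f'(x_i))^2/f''(x_i)\le d\sum_i f(x_i)$, and the power-function solutions of the ODE $\tfrac{d}{a}ff''=(f')^2$ with $a=2$ for $d\in(0,1)$ and the $p=-1$ ansatz (the paper's $a=2d$) for $d\ge 1$. Your direct computation $(f')^2/(ff'')=p/(p-1)$ and the normalization $\kappa\le 1$ by letting $C_2\to+\infty$ are slightly cleaner verifications of the same steps the paper carries out via the determinant expansion of $\triangle$.
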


Notice that it is convenient to choose suitable $C_{1}$, $C_{2}$, \ZHnew{and} $X$. Suppose $d \in (0, 1)$. Let $C_{1} = 1$, $C_{2} = 10$, $K = 2$, $r = 9 - d/(4-2d) $. Define $X := B(0, r)$ as a ball with origin center and radius $r$. Then, $X_{min} = -r$ and for any $x=[x_{1},...,x_{K}]^{T} \in X$ we have
\begin{equation*}
f(x_{i}) = \left( \left( \frac{2}{d} - 1 \right) \cdot \left( x_{i} + 10 \right) \right)^{\frac{d}{d - 2}} > 0, 
\end{equation*}
\begin{equation*}
f''(x_{i}) = \frac{2d}{(d - 2)^{2}} \cdot \left( \frac{2}{d} - 1 \right)^{2} \cdot \left( \left( \frac{2}{d} - 1 \right) \cdot \left( x_{i} + 10 \right) \right)^{\frac{d}{d - 2} - 2} > 0,
\end{equation*}
\begin{equation*}
0 < \kappa(x) = \sum\limits^{K}_{i = 1} f(x_{i}) < 1,
\end{equation*}
where $X_{min} = -r > -9$ is used to prove the right inequality in the last equation. Suppose $d \in [1, +\infty)$. Take $r = 7$, $X = B(0, r)$. Substitute $C_{1} = 1$, $C_{2} = 10$ into \eqref{solution of differential equaiton 2}. Then, we \ZH{have}
\begin{equation*}
f(x_{i})=\frac{1}{x_{i} + 10} > 0, \ \ f'(x_{i})=-\frac{1}{(x_{i} + 10)^{2}} < 0, \ \ f''(x_{i}) = \frac{2}{(x_{i} + 10)^{3}} > 0,
\end{equation*}
\begin{equation*}
0 < \kappa(x) := \sum\limits^{2}_{i = 1}\frac{1}{x_{i} + 10} \leq \frac{1}{3} + \frac{1}{3} < 1,
\end{equation*}
\begin{equation*}
\begin{vmatrix}
\triangle
\end{vmatrix}
= \left( \prod\limits^{2}_{i = 1} \frac{2}{(x_{i} + 10)^{3}} \right) \cdot \left( \sum\limits^{2}_{i = 1} \frac{1}{x_{i} + 10} \right) \cdot (d - \frac{1}{2}) >0,
\end{equation*}
which satisfy \ZH{Assumption} \ref{Assumption 3} and \ref{Assumption 4}.

Finally, we give an example. We take $N = 2$, $K = 3$, 
\begin{equation*}
D :=
\begin{bmatrix}
22 \\
27 \\
2 
\end{bmatrix}, \ \ 
\mu :=
\begin{bmatrix}
3 & 0 & 2\\
-4 & 1 & -1
\end{bmatrix}, \ \ 
r :=
\begin{bmatrix}
-2 \\
-2 \\
-2
\end{bmatrix}, \ \ 
\end{equation*}
and
\begin{equation*}
\Sigma_{1} :=
\begin{bmatrix}
96 & -11 \\
-11 & 98
\end{bmatrix}, \ \ 
\Sigma_{2} :=
\begin{bmatrix}
44 & 21 \\
21 & 92
\end{bmatrix}, \ \ 
\Sigma_{3} :=
\begin{bmatrix}
90 & -2 \\
-2 & 24
\end{bmatrix}. \ \ 
\end{equation*}
We can compute the $\theta^{*} : = (\theta^{*}_{1}, \theta^{*}_{2}, \theta^{*}_{3})$ defined in Theorem \ref{Corollary of Hi-1} \ZHnew{which} is equal to $(2.4343, 0.1965, 1.4433)$, which corresponds to $\text{max}_{i \in \lbrace  1,2,3 \rbrace} F_{i} ( \sqrt{ \theta^{*}_{i} } ) = 0.9031$. We consider the Student's $t$ with 4-degree, so we have $F_{i}(t^{*}(-r_{i} + 1)) = 0.9648$. Thus, based on \ZH{Assumption} \ref{Assumption 2} and \ref{Assumption 3} we \ZH{get} $p^{*} = 0.9648$. Here, we take $p = 0.97$. The Figure \ref{The surface of a probability function with Student distribution} and \ref{The contour line of a probability function with Student distribution} show the surface and contour lines of the probability function defined by the left-hand side of \eqref{Fea-4}. It can be seen that the origin is contained in $S(p)$. Figure \ref{The contour line of a probability function with Student distribution} shows that $p^{*}$ is a conservative estimate because the convexity of $S(p)$ begins around $p = 0.5$.
\begin{figure}[ht]
\begin{center}
\includegraphics[width=2.0 in]{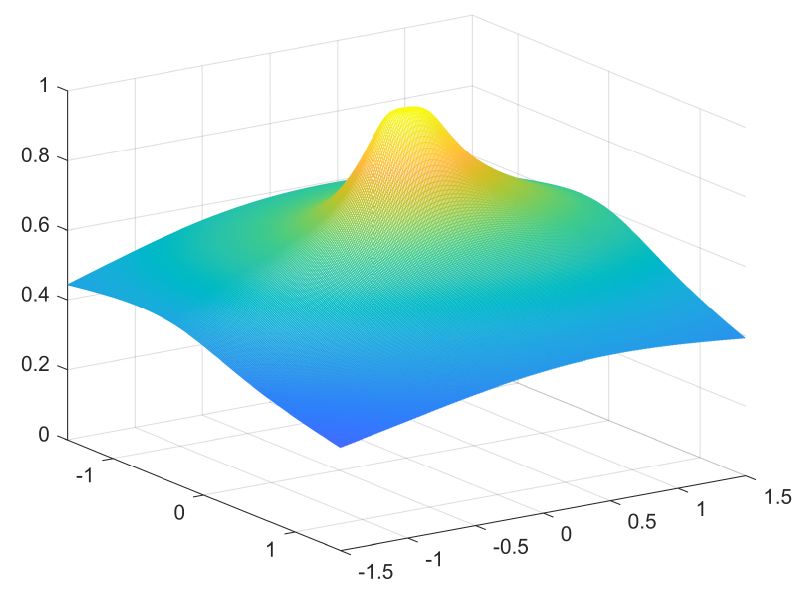}
\end{center}
\caption{The surface of a probability function with Student distribution.}
\label{The surface of a probability function with Student distribution}
\end{figure}
\begin{figure}[ht]
\begin{center}
\includegraphics[width=2.0 in]{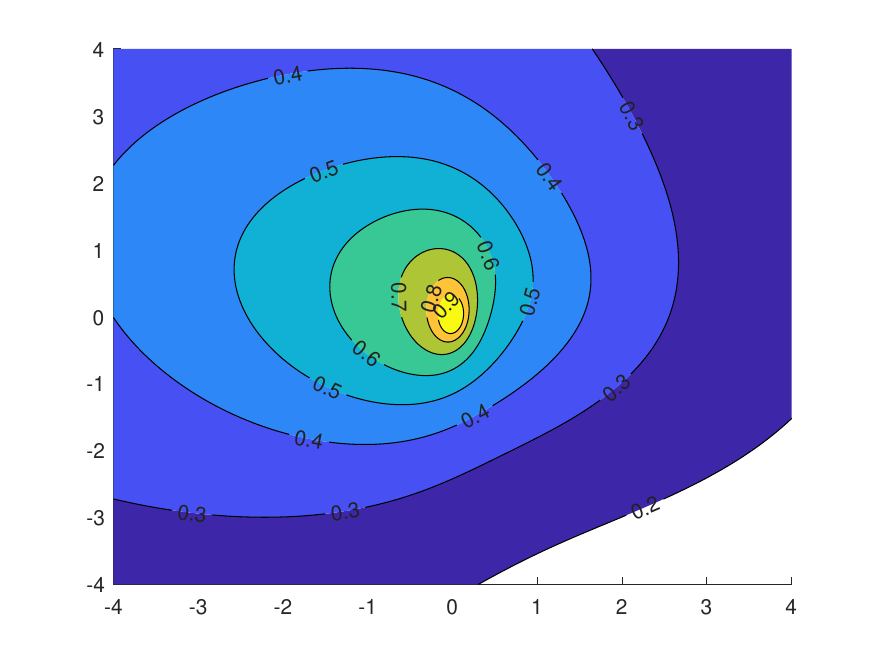}
\end{center}
\caption{The contour line of a probability function with Student distribution.}
\label{The contour line of a probability function with Student distribution}
\end{figure}

\subsection{Examples about the threhold $\sqrt{\theta^{*}}$.}\label{Examples2}
In this section, we \ZHnew{perform} numerical experiments to discuss the threshold $\sqrt{\theta^{*}}$ defined in Theorem \ref{Corollary of Hi-1}. First, we present examples of non-existence of $\sqrt{\theta^{*}}$ with $b > 0$ and $r \geq -1$ shown in Figures \ref{Fig_r=-1} and \ref{Fig_r<-1}, in which we take
\begin{equation}\label{choice of the mean and variance for numerical}
b := 4, \ \ 
\mu :=
\begin{bmatrix}
0 \\
28 \\
-1 
\end{bmatrix}, \ \ 
\Sigma :=
\begin{bmatrix}
32 & 20 & 3\\
20 & 26 & 23\\
3 & 23 & 38
\end{bmatrix}.
\end{equation}
\begin{figure}[ht]
\begin{center}
\subfigure[The contour of h and g with $b = 4$, $r = -1$ and $c_{0} = 20$.]{
\includegraphics[width=1.5 in]{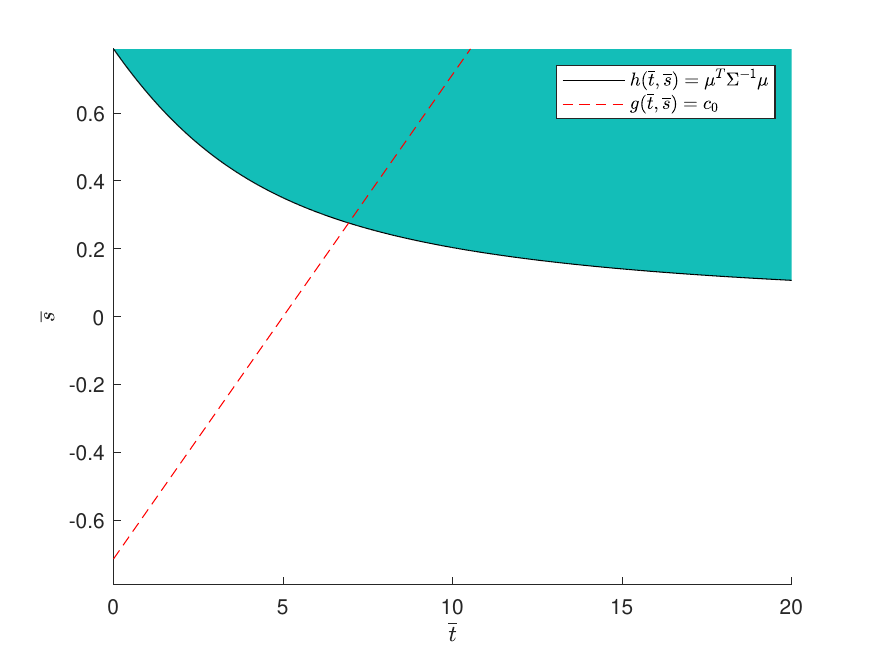} \label{Fig_r=-1}
}
\quad
\subfigure[The contour of h and g with $b = 4$, $r = 1$  and $c_{0} = 20$.]{
\includegraphics[width=1.5 in]{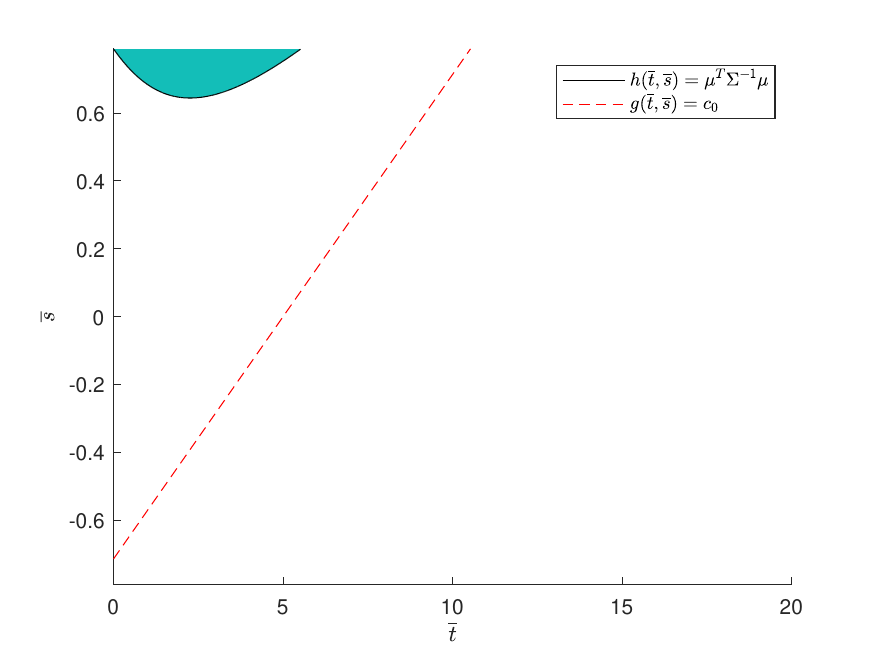} \label{Fig_r<-1}
}
\end{center}
\caption{The graphs about non-existence of $\sqrt{\theta^{*}}$.}
\end{figure}
In the Figures \ref{Fig_r=-1} and \ref{Fig_r<-1}, the shadow areas correspond to the set $Q$ defined by \eqref{tangent_Q set}, i.e.
\begin{equation*}
Q := \left\lbrace (\overline{t}, \overline{s}) \in (0, +\infty) \times [-1/\sqrt{\lambda_{\mu, \text{min}}}, 1 / \sqrt{\lambda_{\mu, \text{min}}}]: h(\overline{t}, \overline{s}) \geq \mu^{T} \Sigma^{-1} \mu \right\rbrace,
\end{equation*}
and the dotted line means \ZH{the line} $g(\overline{t}, \overline{s}) = c_{0}$ with $c_{0}=20$, and the right hand side of the dotted line corresponds to the set $G(c_{0})$ defined by \eqref{tangent_G set}, i.e.
\begin{equation*}
G(c_{0}) := \left\lbrace (\overline{t}, \overline{s}) \in (0, +\infty) \times [-1/\sqrt{\lambda_{\mu, \text{min}}}, 1 / \sqrt{\lambda_{\mu, \text{min}}}]: g(\overline{t}, \overline{s}) \geq c_{0} \right\rbrace.
\end{equation*}
Due to Theorem \ref{Corollary of Hi-1}, the threshod $\sqrt{\theta^{*}}$ can be induced by
\begin{equation*}
\sqrt{\theta^{*}} = \inf \left\lbrace c_{0} \geq 0: G(c_{0}) \subset Q \right\rbrace,
\end{equation*}
which means that there is no $\sqrt{\theta^{*}}$ when the right hand side of the dotted line can not be completely contained at the shadow ares.
\begin{figure}[ht]
\begin{center}
\includegraphics[width=2.0 in]{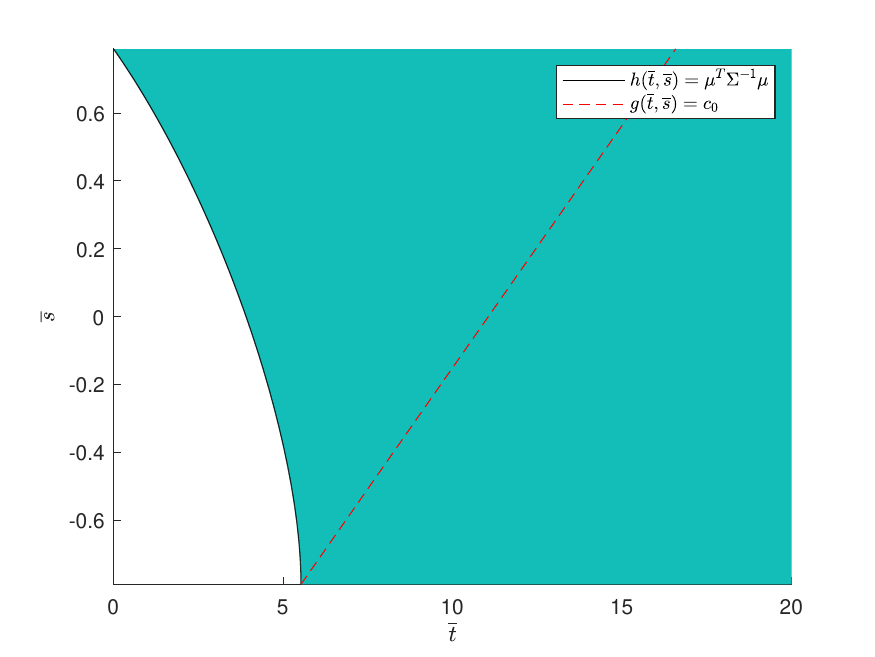}
\end{center}
\caption{The graphs about existence of $\sqrt{\theta^{*}}$.}
\label{Fig_r_less_-1_non-tangent_2D}
\end{figure}
An example corresponding to the Case (2) in Lemma \ref{tangent relationship between s1 and s2} is shown in Figure \ref{Fig_r_less_-1_non-tangent_2D}, where we take $b = 4, r = -3$. In Figure \ref{Fig_r_less_-1_non-tangent_2D}, we see \ZHnew{that} the right hand side of the dotted line is contained into the shadow area so that there exists a threshold $\sqrt{\theta^{*}}$. Finally, we provide a graph about the threshod $\sqrt{\theta^{*}}$ in Theorem \ref{Corollary of Hi-1}, where $b, \mu, \Sigma$ are defined by \eqref{choice of the mean and variance for numerical}. The graph of $\sqrt{\theta^{*}}$ is shown in Figure \ref{The graph of sqrt_theta with r_less_-1}. \ZHnew{We see} that $\sqrt{\theta^{*}}$ \ZHnew{goes} to infinity as $r$ \ZH{is} close to $-1$.
\begin{figure}[ht]
\begin{center}
\includegraphics[width=2.0 in]{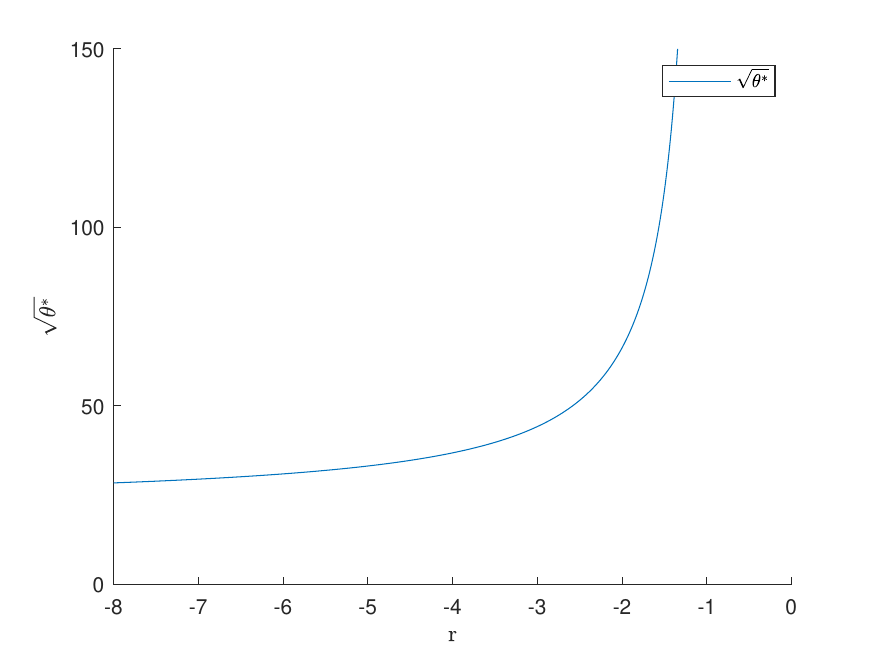}
\end{center}
\caption{The graph of $\sqrt{\theta^{*}}$ with $b > 0$}
\label{The graph of sqrt_theta with r_less_-1}
\end{figure}

\section{Conclusion}
In this paper, \ZH{we proved} the convexity of chance constraints with elliptical symmetric and skewed distributions. We first consider a \ZHnew{row} dependent elliptical random matrix via a Gumbel-Hougaard copula, where the copula is dependent on a decision vector. The eventual convexity of chance constraints is separated as the concavity and convexity of two auxiliary functions: the first one is a probability function, and a copula's generator defines the second one. For the probability function, we obtain several necessary and sufficient conditions of $r$-concavity and use a geometric method to compute \ZHnew{the} best thresholds of the $r$-concavity. Further, we obtain \ZHnew{a better} probability threshold of eventual convexity when the distribution is Gaussian. Considering the function defined by a copula's generator, we extend some assumptions about the copula and \ZHnew{$\Omega$ with the origin}. We overcome the difficulty that arose from the singularity of Gumbel-Hougaard copulas around the origin. As extensions, we \ZHnew{give} the eventual convexity with some skewed distributions. We first consider the situation when the skewness parameter of GH distributions is orthogonal to the domain, as an extension of our previous results in elliptical cases. To this end, we prove the GH density is $\alpha$-decreasing by using a series of approximations about the modified Bessel functions. Next, we consider the eventual conexity under more general skewed distributions, i.e., NMVM distributions, with a bounded mean mixture vector. At the end, we consider a situation under a GH distribution with stronger influence from the skewness parameter. Based on a radial decomposition method, we solve this problem by means of the former two skewed eventual convexity results. About numerical experiments, we provide an eventual convexity simulation under elliptical distributions by means of a special copula function. And we present some examples to show the existence and non-existence of the threshold.

\section*{Acknowledgements}

This research was supported by French government under the France 2030 program, reference ANR-11-IDEX-0003 within the OI H-Code.





\bibliography{reference}


\begin{thebibliography}{49}
\ifx \bisbn   \undefined \def \bisbn  #1{ISBN #1}\fi
\ifx \binits  \undefined \def \binits#1{#1}\fi
\ifx \bauthor  \undefined \def \bauthor#1{#1}\fi
\ifx \batitle  \undefined \def \batitle#1{#1}\fi
\ifx \bjtitle  \undefined \def \bjtitle#1{#1}\fi
\ifx \bvolume  \undefined \def \bvolume#1{\textbf{#1}}\fi
\ifx \byear  \undefined \def \byear#1{#1}\fi
\ifx \bissue  \undefined \def \bissue#1{#1}\fi
\ifx \bfpage  \undefined \def \bfpage#1{#1}\fi
\ifx \blpage  \undefined \def \blpage #1{#1}\fi
\ifx \burl  \undefined \def \burl#1{\textsf{#1}}\fi
\ifx \doiurl  \undefined \def \doiurl#1{\url{https://doi.org/#1}}\fi
\ifx \betal  \undefined \def \betal{\textit{et al.}}\fi
\ifx \binstitute  \undefined \def \binstitute#1{#1}\fi
\ifx \binstitutionaled  \undefined \def \binstitutionaled#1{#1}\fi
\ifx \bctitle  \undefined \def \bctitle#1{#1}\fi
\ifx \beditor  \undefined \def \beditor#1{#1}\fi
\ifx \bpublisher  \undefined \def \bpublisher#1{#1}\fi
\ifx \bbtitle  \undefined \def \bbtitle#1{#1}\fi
\ifx \bedition  \undefined \def \bedition#1{#1}\fi
\ifx \bseriesno  \undefined \def \bseriesno#1{#1}\fi
\ifx \blocation  \undefined \def \blocation#1{#1}\fi
\ifx \bsertitle  \undefined \def \bsertitle#1{#1}\fi
\ifx \bsnm \undefined \def \bsnm#1{#1}\fi
\ifx \bsuffix \undefined \def \bsuffix#1{#1}\fi
\ifx \bparticle \undefined \def \bparticle#1{#1}\fi
\ifx \barticle \undefined \def \barticle#1{#1}\fi
\bibcommenthead
\ifx \bconfdate \undefined \def \bconfdate #1{#1}\fi
\ifx \botherref \undefined \def \botherref #1{#1}\fi
\ifx \url \undefined \def \url#1{\textsf{#1}}\fi
\ifx \bchapter \undefined \def \bchapter#1{#1}\fi
\ifx \bbook \undefined \def \bbook#1{#1}\fi
\ifx \bcomment \undefined \def \bcomment#1{#1}\fi
\ifx \oauthor \undefined \def \oauthor#1{#1}\fi
\ifx \citeauthoryear \undefined \def \citeauthoryear#1{#1}\fi
\ifx \endbibitem  \undefined \def \endbibitem {}\fi
\ifx \bconflocation  \undefined \def \bconflocation#1{#1}\fi
\ifx \arxivurl  \undefined \def \arxivurl#1{\textsf{#1}}\fi
\csname PreBibitemsHook\endcsname

\bibitem[\protect\citeauthoryear{Charnes and Cooper}{1963}]{charnes1963deterministic}
\begin{barticle}
\bauthor{\bsnm{Charnes}, \binits{A.}},
\bauthor{\bsnm{Cooper}, \binits{W.W.}}:
\batitle{Deterministic equivalents for optimizing and satisficing under chance constraints}.
\bjtitle{Operations research}
\bvolume{11}(\bissue{1}),
\bfpage{18}--\blpage{39}
(\byear{1963})
\end{barticle}
\endbibitem

\bibitem[\protect\citeauthoryear{Charnes et~al.}{1958}]{charnes1958cost}
\begin{barticle}
\bauthor{\bsnm{Charnes}, \binits{A.}},
\bauthor{\bsnm{Cooper}, \binits{W.W.}},
\bauthor{\bsnm{Symonds}, \binits{G.H.}}:
\batitle{Cost horizons and certainty equivalents: an approach to stochastic programming of heating oil}.
\bjtitle{Management science}
\bvolume{4}(\bissue{3}),
\bfpage{235}--\blpage{263}
(\byear{1958})
\end{barticle}
\endbibitem

\bibitem[\protect\citeauthoryear{Pr{\'e}kopa}{1995}]{prekopa1995stochastic}
\begin{botherref}
\oauthor{\bsnm{Pr{\'e}kopa}, \binits{A.}}:
Stochastic programming.
Kluwer, Dordrecht
(1995)
\end{botherref}
\endbibitem

\bibitem[\protect\citeauthoryear{Pr{\'e}kopa}{2003}]{prekopa2003probabilistic}
\begin{barticle}
\bauthor{\bsnm{Pr{\'e}kopa}, \binits{A.}}:
\batitle{Probabilistic programming}.
\bjtitle{Handbooks in operations research and management science}
\bvolume{10},
\bfpage{267}--\blpage{351}
(\byear{2003})
\end{barticle}
\endbibitem

\bibitem[\protect\citeauthoryear{Kataoka}{1963}]{kataoka1963stochastic}
\begin{botherref}
\oauthor{\bsnm{Kataoka}, \binits{S.}}:
A stochastic programming model.
Econometrica: Journal of the Econometric Society,
181--196
(1963)
\end{botherref}
\endbibitem

\bibitem[\protect\citeauthoryear{Henrion and Strugarek}{2011}]{henrion2011convexity}
\begin{bchapter}
\bauthor{\bsnm{Henrion}, \binits{R.}},
\bauthor{\bsnm{Strugarek}, \binits{C.}}:
\bctitle{Convexity of chance constraints with dependent random variables: the use of copulae}.
In: \bbtitle{Stochastic Optimization Methods in Finance and Energy: New Financial Products and Energy Market Strategies},
pp. \bfpage{427}--\blpage{439}
(\byear{2011})
\end{bchapter}
\endbibitem

\bibitem[\protect\citeauthoryear{Henrion and Strugarek}{2008}]{henrion2008convexity}
\begin{barticle}
\bauthor{\bsnm{Henrion}, \binits{R.}},
\bauthor{\bsnm{Strugarek}, \binits{C.}}:
\batitle{Convexity of chance constraints with independent random variables}.
\bjtitle{Computational Optimization and Applications}
\bvolume{41}(\bissue{2}),
\bfpage{263}--\blpage{276}
(\byear{2008})
\end{barticle}
\endbibitem

\bibitem[\protect\citeauthoryear{Van~Ackooij and Malick}{2019}]{van2019eventual}
\begin{barticle}
\bauthor{\bsnm{Van~Ackooij}, \binits{W.}},
\bauthor{\bsnm{Malick}, \binits{J.}}:
\batitle{Eventual convexity of probability constraints with elliptical distributions}.
\bjtitle{Mathematical Programming}
\bvolume{175}(\bissue{1-2}),
\bfpage{1}--\blpage{27}
(\byear{2019})
\end{barticle}
\endbibitem

\bibitem[\protect\citeauthoryear{Cherubini et~al.}{2004}]{cherubini2004copula}
\begin{bbook}
\bauthor{\bsnm{Cherubini}, \binits{U.}},
\bauthor{\bsnm{Luciano}, \binits{E.}},
\bauthor{\bsnm{Vecchiato}, \binits{W.}}:
\bbtitle{Copula Methods in Finance}.
\bpublisher{John Wiley $\&$ Sons},
\blocation{Chichester}
(\byear{2004})
\end{bbook}
\endbibitem

\bibitem[\protect\citeauthoryear{Dewick and Liu}{2022}]{dewick2022copula}
\begin{barticle}
\bauthor{\bsnm{Dewick}, \binits{P.R.}},
\bauthor{\bsnm{Liu}, \binits{S.}}:
\batitle{Copula modelling to analyse financial data}.
\bjtitle{Journal of Risk and Financial Management}
\bvolume{15}(\bissue{3}),
\bfpage{104}
(\byear{2022})
\end{barticle}
\endbibitem

\bibitem[\protect\citeauthoryear{Genest and Favre}{2007}]{genest2007everything}
\begin{barticle}
\bauthor{\bsnm{Genest}, \binits{C.}},
\bauthor{\bsnm{Favre}, \binits{A.C.}}:
\batitle{Everything you always wanted to know about copula modeling but were afraid to ask}.
\bjtitle{Journal of Hydrologic Engineering}
\bvolume{12}(\bissue{4}),
\bfpage{347}--\blpage{368}
(\byear{2007})
\end{barticle}
\endbibitem

\bibitem[\protect\citeauthoryear{Salvadori and De~Michele}{2007}]{salvadori2007use}
\begin{barticle}
\bauthor{\bsnm{Salvadori}, \binits{G.}},
\bauthor{\bsnm{De~Michele}, \binits{C.}}:
\batitle{On the use of copulas in hydrology: theory and practice}.
\bjtitle{Journal of Hydrologic Engineering}
\bvolume{12}(\bissue{4}),
\bfpage{369}--\blpage{380}
(\byear{2007})
\end{barticle}
\endbibitem

\bibitem[\protect\citeauthoryear{Nelsen}{2006}]{nelsen2006introduction}
\begin{bbook}
\bauthor{\bsnm{Nelsen}, \binits{R.B.}}:
\bbtitle{An Introduction to Copulas}.
\bpublisher{Springer},
\blocation{New York}
(\byear{2006})
\end{bbook}
\endbibitem

\bibitem[\protect\citeauthoryear{Van~Ackooij and de~Oliveira}{2016}]{van2016convexity}
\begin{barticle}
\bauthor{\bsnm{Van~Ackooij}, \binits{W.}},
\bauthor{\bsnm{Oliveira}, \binits{W.}}:
\batitle{Convexity and optimization with copul{\ae} structured probabilistic constraints}.
\bjtitle{Optimization}
\bvolume{65}(\bissue{7}),
\bfpage{1349}--\blpage{1376}
(\byear{2016})
\end{barticle}
\endbibitem

\bibitem[\protect\citeauthoryear{Van~Ackooij}{2015}]{van2015eventual}
\begin{barticle}
\bauthor{\bsnm{Van~Ackooij}, \binits{W.}}:
\batitle{Eventual convexity of chance constrained feasible sets}.
\bjtitle{Optimization}
\bvolume{64}(\bissue{5}),
\bfpage{1263}--\blpage{1284}
(\byear{2015})
\end{barticle}
\endbibitem

\bibitem[\protect\citeauthoryear{Laguel et~al.}{2022}]{laguel2022convexity}
\begin{barticle}
\bauthor{\bsnm{Laguel}, \binits{Y.}},
\bauthor{\bsnm{Van~Ackooij}, \binits{W.}},
\bauthor{\bsnm{Malick}, \binits{J.}},
\bauthor{\bsnm{Ramalho}, \binits{G.}}:
\batitle{On the convexity of level-sets of probability functions}.
\bjtitle{Journal of Convex Analysis}
\bvolume{29}(\bissue{2}),
\bfpage{411}--\blpage{442}
(\byear{2022})
\end{barticle}
\endbibitem

\bibitem[\protect\citeauthoryear{Nguyen et~al.}{2023}]{nguyen2023convexity}
\begin{barticle}
\bauthor{\bsnm{Nguyen}, \binits{H.N.}},
\bauthor{\bsnm{Lisser}, \binits{A.}},
\bauthor{\bsnm{Liu}, \binits{J.}}:
\batitle{Convexity of linear joint chance constrained optimization with elliptically distributed dependent rows}.
\bjtitle{Results in Control and Optimization}
\bvolume{12},
\bfpage{100285}
(\byear{2023})
\end{barticle}
\endbibitem

\bibitem[\protect\citeauthoryear{Minoux and Zorgati}{2016}]{minoux2016convexity}
\begin{barticle}
\bauthor{\bsnm{Minoux}, \binits{M.}},
\bauthor{\bsnm{Zorgati}, \binits{R.}}:
\batitle{Convexity of gaussian chance constraints and of related probability maximization problems}.
\bjtitle{Computational Statistics}
\bvolume{31}(\bissue{1}),
\bfpage{387}--\blpage{408}
(\byear{2016})
\end{barticle}
\endbibitem

\bibitem[\protect\citeauthoryear{Minoux and Zorgati}{2017}]{minoux2017global}
\begin{barticle}
\bauthor{\bsnm{Minoux}, \binits{M.}},
\bauthor{\bsnm{Zorgati}, \binits{R.}}:
\batitle{Global probability maximization for a gaussian bilateral inequality in polynomial time}.
\bjtitle{Journal of Global Optimization}
\bvolume{68}(\bissue{4}),
\bfpage{879}--\blpage{898}
(\byear{2017})
\end{barticle}
\endbibitem

\bibitem[\protect\citeauthoryear{Cheng et~al.}{2014}]{cheng2014second}
\begin{botherref}
\oauthor{\bsnm{Cheng}, \binits{J.}},
\oauthor{\bsnm{Houda}, \binits{M.}},
\oauthor{\bsnm{Lisser}, \binits{A.}}:
Second-order cone programming approach for elliptically distributed joint probabilistic constraints with dependent rows.
Optimization,
1--30
(2014)
\end{botherref}
\endbibitem

\bibitem[\protect\citeauthoryear{Lane}{2000}]{lane2000pricing}
\begin{barticle}
\bauthor{\bsnm{Lane}, \binits{M.N.}}:
\batitle{Pricing risk transfer transactions}.
\bjtitle{ASTIN Bulletin: The Journal of the IAA}
\bvolume{30}(\bissue{2}),
\bfpage{259}--\blpage{293}
(\byear{2000})
\end{barticle}
\endbibitem

\bibitem[\protect\citeauthoryear{Davies et~al.}{2009}]{davies2009fund}
\begin{barticle}
\bauthor{\bsnm{Davies}, \binits{R.J.}},
\bauthor{\bsnm{Kat}, \binits{H.M.}},
\bauthor{\bsnm{Lu}, \binits{S.}}:
\batitle{Fund of hedge funds portfolio selection: A multiple-objective approach}.
\bjtitle{Journal of Derivatives $\&$ Hedge Funds}
\bvolume{15},
\bfpage{91}--\blpage{115}
(\byear{2009})
\end{barticle}
\endbibitem

\bibitem[\protect\citeauthoryear{Birge and Chavez-Bedoya}{2021}]{birge2021portfolio}
\begin{barticle}
\bauthor{\bsnm{Birge}, \binits{J.R.}},
\bauthor{\bsnm{Chavez-Bedoya}, \binits{L.}}:
\batitle{Portfolio optimization under the generalized hyperbolic distribution: optimal allocation, performance and tail behavior}.
\bjtitle{Quantitative Finance}
\bvolume{21}(\bissue{2}),
\bfpage{199}--\blpage{219}
(\byear{2021})
\end{barticle}
\endbibitem

\bibitem[\protect\citeauthoryear{Wang et~al.}{2022}]{wang2022portfolio}
\begin{barticle}
\bauthor{\bsnm{Wang}, \binits{C.W.}},
\bauthor{\bsnm{Liu}, \binits{K.}},
\bauthor{\bsnm{Li}, \binits{B.}},
\bauthor{\bsnm{Tan}, \binits{K.S.}}:
\batitle{Portfolio optimization under multivariate affine generalized hyperbolic distributions}.
\bjtitle{International Review of Economics $\&$ Finance}
\bvolume{80},
\bfpage{49}--\blpage{66}
(\byear{2022})
\end{barticle}
\endbibitem

\bibitem[\protect\citeauthoryear{Peel and MacLahlan}{2000}]{peel2000finite}
\begin{botherref}
\oauthor{\bsnm{Peel}, \binits{D.}},
\oauthor{\bsnm{MacLahlan}, \binits{G.}}:
Finite mixture models.
John $\&$ Sons
(2000)
\end{botherref}
\endbibitem

\bibitem[\protect\citeauthoryear{da~Silva~Ferreira et~al.}{2011}]{da2011skew}
\begin{barticle}
\bauthor{\bsnm{Silva~Ferreira}, \binits{C.}},
\bauthor{\bsnm{Bolfarine}, \binits{H.}},
\bauthor{\bsnm{Lachos}, \binits{V.H.}}:
\batitle{Skew scale mixtures of normal distributions: Properties and estimation}.
\bjtitle{Statistical Methodology}
\bvolume{8}(\bissue{2}),
\bfpage{154}--\blpage{171}
(\byear{2011})
\end{barticle}
\endbibitem

\bibitem[\protect\citeauthoryear{Vernic}{2006}]{vernic2006multivariate}
\begin{barticle}
\bauthor{\bsnm{Vernic}, \binits{R.}}:
\batitle{Multivariate skew-normal distributions with applications in insurance}.
\bjtitle{Insurance: Mathematics and economics}
\bvolume{38}(\bissue{2}),
\bfpage{413}--\blpage{426}
(\byear{2006})
\end{barticle}
\endbibitem

\bibitem[\protect\citeauthoryear{Carmichael and Co{\"e}n}{2013}]{carmichael2013asset}
\begin{barticle}
\bauthor{\bsnm{Carmichael}, \binits{B.}},
\bauthor{\bsnm{Co{\"e}n}, \binits{A.}}:
\batitle{Asset pricing with skewed-normal return}.
\bjtitle{Finance Research Letters}
\bvolume{10}(\bissue{2}),
\bfpage{50}--\blpage{57}
(\byear{2013})
\end{barticle}
\endbibitem

\bibitem[\protect\citeauthoryear{Adcock}{2014}]{adcock2014mean}
\begin{barticle}
\bauthor{\bsnm{Adcock}, \binits{C.J.}}:
\batitle{Mean--variance--skewness efficient surfaces, stein’s lemma and the multivariate extended skew-student distribution}.
\bjtitle{European Journal of Operational Research}
\bvolume{234}(\bissue{2}),
\bfpage{392}--\blpage{401}
(\byear{2014})
\end{barticle}
\endbibitem

\bibitem[\protect\citeauthoryear{Azzalini}{2013}]{azzalini2013skew}
\begin{bbook}
\bauthor{\bsnm{Azzalini}, \binits{A.}}:
\bbtitle{The Skew-normal and Related Families}
vol. \bseriesno{3}.
\bpublisher{Cambridge University Press},
\blocation{Cambridge}
(\byear{2013})
\end{bbook}
\endbibitem

\bibitem[\protect\citeauthoryear{D{\'a}vila et~al.}{2018}]{davila2018finite}
\begin{bbook}
\bauthor{\bsnm{D{\'a}vila}, \binits{V.H.L.}},
\bauthor{\bsnm{Cabral}, \binits{C.R.B.}},
\bauthor{\bsnm{Zeller}, \binits{C.B.}}:
\bbtitle{Finite Mixture of Skewed Distributions}.
\bpublisher{Springer},
\blocation{New York}
(\byear{2018})
\end{bbook}
\endbibitem

\bibitem[\protect\citeauthoryear{Ke et~al.}{2015}]{ke2015novel}
\begin{barticle}
\bauthor{\bsnm{Ke}, \binits{D.}},
\bauthor{\bsnm{Chung}, \binits{C.Y.}},
\bauthor{\bsnm{Sun}, \binits{Y.}}:
\batitle{A novel probabilistic optimal power flow model with uncertain wind power generation described by customized gaussian mixture model}.
\bjtitle{IEEE Transactions on Sustainable Energy}
\bvolume{7}(\bissue{1}),
\bfpage{200}--\blpage{212}
(\byear{2015})
\end{barticle}
\endbibitem

\bibitem[\protect\citeauthoryear{Yang et~al.}{2019}]{yang2019analytical}
\begin{barticle}
\bauthor{\bsnm{Yang}, \binits{Y.}},
\bauthor{\bsnm{Wu}, \binits{W.}},
\bauthor{\bsnm{Wang}, \binits{B.}},
\bauthor{\bsnm{Li}, \binits{M.}}:
\batitle{Analytical reformulation for stochastic unit commitment considering wind power uncertainty with gaussian mixture model}.
\bjtitle{IEEE Transactions on Power Systems}
\bvolume{35}(\bissue{4}),
\bfpage{2769}--\blpage{2782}
(\byear{2019})
\end{barticle}
\endbibitem

\bibitem[\protect\citeauthoryear{Shi et~al.}{2022}]{shi2022day}
\begin{botherref}
\oauthor{\bsnm{Shi}, \binits{X.}},
\oauthor{\bsnm{Xu}, \binits{Y.}},
\oauthor{\bsnm{Guo}, \binits{Q.}},
\oauthor{\bsnm{Sun}, \binits{H.}},
\oauthor{\bsnm{Zhang}, \binits{X.}}:
Day-ahead distributionally robust optimization-based scheduling for distribution systems with electric vehicles.
IEEE Transactions on Smart Grid
(2022)
\end{botherref}
\endbibitem

\bibitem[\protect\citeauthoryear{Peng et~al.}{2021}]{peng2021chance}
\begin{barticle}
\bauthor{\bsnm{Peng}, \binits{S.}},
\bauthor{\bsnm{Yadav}, \binits{N.}},
\bauthor{\bsnm{Lisser}, \binits{A.}},
\bauthor{\bsnm{Singh}, \binits{V.V.}}:
\batitle{Chance-constrained games with mixture distributions}.
\bjtitle{Mathematical Methods of Operations Research}
\bvolume{94},
\bfpage{71}--\blpage{97}
(\byear{2021})
\end{barticle}
\endbibitem

\bibitem[\protect\citeauthoryear{Nguyen et~al.}{2024}]{nguyen2024random}
\begin{barticle}
\bauthor{\bsnm{Nguyen}, \binits{H.N.}},
\bauthor{\bsnm{Lisser}, \binits{A.}},
\bauthor{\bsnm{Singh}, \binits{V.V.}}:
\batitle{Random games under normal mean--variance mixture distributed independent linear joint chance constraints}.
\bjtitle{Statistics $\&$ Probability Letters}
\bvolume{208},
\bfpage{110036}
(\byear{2024})
\end{barticle}
\endbibitem

\bibitem[\protect\citeauthoryear{Lasserre and Weisser}{2021}]{lasserre2021distributionally}
\begin{barticle}
\bauthor{\bsnm{Lasserre}, \binits{J.B.}},
\bauthor{\bsnm{Weisser}, \binits{T.}}:
\batitle{Distributionally robust polynomial chance-constraints under mixture ambiguity sets}.
\bjtitle{Mathematical Programming}
\bvolume{185},
\bfpage{409}--\blpage{453}
(\byear{2021})
\end{barticle}
\endbibitem

\bibitem[\protect\citeauthoryear{Tong et~al.}{2022}]{tong2022optimization}
\begin{barticle}
\bauthor{\bsnm{Tong}, \binits{S.}},
\bauthor{\bsnm{Subramanyam}, \binits{A.}},
\bauthor{\bsnm{Rao}, \binits{V.}}:
\batitle{Optimization under rare chance constraints}.
\bjtitle{SIAM Journal on Optimization}
\bvolume{32}(\bissue{2}),
\bfpage{930}--\blpage{958}
(\byear{2022})
\end{barticle}
\endbibitem

\bibitem[\protect\citeauthoryear{Liu et~al.}{2024}]{liu2022chance}
\begin{botherref}
\oauthor{\bsnm{Liu}, \binits{J.}},
\oauthor{\bsnm{Nguyen}, \binits{H.N.}},
\oauthor{\bsnm{Peng}, \binits{S.}},
\oauthor{\bsnm{Lisser}, \binits{A.}}:
Chance constrained nonlinear optimization with skewed distributions and dependent rows, https://optimization-online.org/?p=18589
(2024)
\end{botherref}
\endbibitem

\bibitem[\protect\citeauthoryear{Fang et~al.}{1990}]{fang1990symmetric}
\begin{botherref}
\oauthor{\bsnm{Fang}, \binits{K.T.}},
\oauthor{\bsnm{Kotz}, \binits{S.}},
\oauthor{\bsnm{Ng}, \binits{K.W.}}:
Symmetric multivariate and related distributions.
Monographs on statistics and applied probability
(1990)
\end{botherref}
\endbibitem

\bibitem[\protect\citeauthoryear{Alexander et~al.}{2009}]{shapiro2009lectures}
\begin{bbook}
\bauthor{\bsnm{Alexander}, \binits{S.}},
\bauthor{\bsnm{Darinka}, \binits{D.}},
\bauthor{\bsnm{Andrzej}, \binits{R.}}:
\bbtitle{Lectures on Stochastic Programming: Modeling and Theory}.
\bpublisher{SIAM},
\blocation{Philadelphia}
(\byear{2009})
\end{bbook}
\endbibitem

\bibitem[\protect\citeauthoryear{McNeil et~al.}{2005}]{mcneil2005quantitative}
\begin{bbook}
\bauthor{\bsnm{McNeil}, \binits{A.J.}},
\bauthor{\bsnm{Frey}, \binits{R.}},
\bauthor{\bsnm{Embrechts}, \binits{P.}}:
\bbtitle{Quantitative Risk Management: Concepts, Techniques and Tools-revised Edition}.
\bpublisher{Princeton university press},
\blocation{Princeton}
(\byear{2005})
\end{bbook}
\endbibitem

\bibitem[\protect\citeauthoryear{Spanier and Oldham}{1987}]{spanier1987atlas}
\begin{bbook}
\bauthor{\bsnm{Spanier}, \binits{J.}},
\bauthor{\bsnm{Oldham}, \binits{K.B.}}:
\bbtitle{An Atlas of Functions}.
\bpublisher{Hemisphere publishing corporation},
\blocation{New York}
(\byear{1987})
\end{bbook}
\endbibitem

\bibitem[\protect\citeauthoryear{Gil et~al.}{2002}]{gil2002evaluation}
\begin{barticle}
\bauthor{\bsnm{Gil}, \binits{A.}},
\bauthor{\bsnm{Segura}, \binits{J.}},
\bauthor{\bsnm{Temme}, \binits{N.M.}}:
\batitle{Evaluation of the modified bessel function of the third kind of imaginary orders}.
\bjtitle{Journal of Computational physics}
\bvolume{175}(\bissue{2}),
\bfpage{398}--\blpage{411}
(\byear{2002})
\end{barticle}
\endbibitem

\bibitem[\protect\citeauthoryear{Arfken et~al.}{2013}]{arfken2013mathematical}
\begin{bbook}
\bauthor{\bsnm{Arfken}, \binits{G.B.}},
\bauthor{\bsnm{Weber}, \binits{H.J.}},
\bauthor{\bsnm{Harris}, \binits{F.E.}}:
\bbtitle{Mathematical Methods for Physicists: a Comprehensive Guide}.
\bpublisher{Academic press},
\blocation{San Diego}
(\byear{2013})
\end{bbook}
\endbibitem

\bibitem[\protect\citeauthoryear{Segura}{2011}]{segura2011bounds}
\begin{barticle}
\bauthor{\bsnm{Segura}, \binits{J.}}:
\batitle{Bounds for ratios of modified bessel functions and associated tur{\'a}n-type inequalities}.
\bjtitle{Journal of Mathematical Analysis and Applications}
\bvolume{374}(\bissue{2}),
\bfpage{516}--\blpage{528}
(\byear{2011})
\end{barticle}
\endbibitem

\bibitem[\protect\citeauthoryear{Ruiz-Antol{\'\i}n and Segura}{2016}]{ruiz2016new}
\begin{barticle}
\bauthor{\bsnm{Ruiz-Antol{\'\i}n}, \binits{D.}},
\bauthor{\bsnm{Segura}, \binits{J.}}:
\batitle{A new type of sharp bounds for ratios of modified bessel functions}.
\bjtitle{Journal of Mathematical Analysis and Applications}
\bvolume{443}(\bissue{2}),
\bfpage{1232}--\blpage{1246}
(\byear{2016})
\end{barticle}
\endbibitem

\bibitem[\protect\citeauthoryear{Jiang et~al.}{2023}]{jiang2023review}
\begin{botherref}
\oauthor{\bsnm{Jiang}, \binits{X.}},
\oauthor{\bsnm{Nadarajah}, \binits{S.}},
\oauthor{\bsnm{Hitchen}, \binits{T.}}:
A review of generalized hyperbolic distributions.
Computational Economics,
1--30
(2023)
\end{botherref}
\endbibitem

\bibitem[\protect\citeauthoryear{Fischer et~al.}{2023}]{fischer2023variance}
\begin{botherref}
\oauthor{\bsnm{Fischer}, \binits{A.}},
\oauthor{\bsnm{Gaunt}, \binits{R.E.}},
\oauthor{\bsnm{Sarantsev}, \binits{A.}}:
The variance-gamma distribution: A review.
arXiv preprint arXiv:2303.05615
(2023)
\end{botherref}
\endbibitem

\end{thebibliography}

\begin{appendices}

\section{Archimedean copulas}\label{list of Archimedean copulas}
\ 
\begin{table*}[!h]
			\centering
			\caption{Generators of a set of copulas.}
\def\arraystretch{1.3}      \begin{tabular}{rc|cc}
   \cmidrule{2-4}
   & Copula family & Parameter $\theta$ & Generator $\Phi_{\theta}$  \\ \cmidrule{2-4}
   & Independent & - & -$\ln t$ \\
   & Gumbel-Hougaard & $\theta \geq 1$ & $\left( -\ln t \right)^{\theta}$  \\
   & Clayton & $\theta > 0$ & $\theta^{-1}\left( t^{-\theta} - 1 \right)$  \\
   & Frank & $\theta > 0$ & $-\ln \left( \frac{e^{-\theta t} - 1}{e^{-\theta} - 1} \right)$ \\
   & Joe & $\theta \geq 1$ & $-\ln \left( 1 - \left( 1 - t \right)^{\theta} \right)$ \\ \cmidrule{2-4}
\end{tabular}
\label{Generators of a set of copulas}
\end{table*}

\section{Generalized hyperbolic distributions}\label{list of GH distributions}
{
\ 
\begin{table*}[!h]
			\centering
			\caption{Special cases of Generalized hyperbolic (GH) distributions \cite{mcneil2005quantitative, jiang2023review, fischer2023variance}.}
\def\arraystretch{1.3}			
\begin{tabular} {rc|cccccc}
   \cmidrule{2-8}
   & Distribution family &  $\lambda$ & $\chi$ & $\psi$ & $\mu$ & $\Sigma$ & $\gamma$  \\ \cmidrule{2-8}
   & Hyperbolic & $\left( N + 1 \right) / 2$ & $\chi$ & $\psi$ & $\mu$ & $\Sigma$ & $\gamma$ \\
   & Normal inverse Gaussian & $-1/2$ & $\chi$ & $\psi$ & $\mu$ & $\Sigma$ & $\gamma$  \\
   & Laplace & $\lambda$ & $0$ & $\psi$ & $\mu$ & $\Sigma$ & $0$  \\
   & Variance Gamma & $\lambda$ & $0$ & $\psi$ & $\mu$ & $\Sigma$ & $\gamma$  \\
   & Student's t & $-\nu/2$ & $\nu$ & $0$ & $\mu$ & $\Sigma$ & $0$  \\
   & Skewed Student's t & $-\nu/2$ & $\nu$ & $0$ & $\mu$ & $\Sigma$ & $\gamma$  \\
   & GH Skewed Student's t & $-\nu/2$ & $\chi$ & $0$ & $\mu$ & $\Sigma$ & $\gamma$  \\
   & Gaussian & $+\infty$ & $+\infty$ & $0$ & $\mu$ & $\Sigma$ & $0$  \\ \cmidrule{2-8}
\end{tabular}
\label{special cases of GH distributions}
\end{table*}

}

\section{Generalized inverse Gaussian (GIG) distributions}\label{definition of Generalized inverse Gaussian distributions}
\begin{definition}
The random variable $Y$ has a \textbf{generalized inverse Gaussian (GIG)} \ZH{distribution $Y \sim N^{-}(\lambda, \chi, \psi)$ if its density is}
\begin{equation*}
f_{Y}(t) = \frac{\chi^{-\lambda}\left( \sqrt{ \chi \psi} \right)^{\lambda}}{2 K_{\lambda} \left( \sqrt{\chi \psi} \right)} t^{\lambda - 1} exp \left( -\frac{1}{2}\left( \chi t^{-1} + \psi t \right) \right), \ \ t > 0,
\end{equation*}
where $K_{\lambda}$ denotes a modified Bessel function of the third kind with index $\lambda$ and the parameters satisfy
\begin{equation*}
\left\{
        \begin{array}{ll}
        \chi > 0, \psi \geq 0, & \text{if} \ \ \lambda < 0,\\         
        \chi > 0, \psi > 0, & \text{if} \ \ \lambda = 0,\\
        \chi \geq 0, \psi > 0, & \text{if} \ \ \lambda > 0.
        \end{array}
\right.
\end{equation*}
In particular, when $\psi = 0$, GIG density contains the \textbf{inverse gamma (IG)} density as a special limiting case. The random variable $Y$ has \ZH{an} inverse gamma distribution, written by $Y \sim Ig(\alpha, \beta)$, if its density is
\begin{equation*}
f_{Y}(t) = \frac{\beta^{\alpha}}{\Gamma(\alpha)} t^{-\left( \alpha + 1 \right)} \cdot e^{-\frac{\beta}{t}}, \ \ t>0, \ \alpha > 0, \ \beta > 0.
\end{equation*}
Assume $\lambda < 0$ and $\psi = 0$. Then, the random variable $Y \sim N^{-}(\lambda, \chi, 0)$ is equivalent to $Y \sim Ig(-\lambda, \frac{1}{2} \chi)$, which means the density of $Y$ can be written \ZH{as}
\begin{equation*}
f_{Y}(t) = \frac{\left( \frac{1}{2}\chi \right)^{-\lambda}}{\Gamma \left( - \lambda \right)} \cdot t^{\lambda - 1} \cdot e^{-\frac{1}{2} \chi \cdot t^{-1}}.
\end{equation*}
\end{definition}

\section{Properties of the modified Bessel functions of the third kind}\label{Properties of the modified Bessel functions of the third kind}
\begin{lemma}\cite{spanier1987atlas, gil2002evaluation}\label{The properties of modified Bessel functions of the third kind}
The properties of the modified Bessel functions of the third kind, denoted it by $K_{\nu}$:

(1) $K_{\nu}(x) > 0$, for any $x>0$, $\nu \in \mathbb{C}$.

(2) Given $\nu \in \mathbb{C}$, the $K_{\nu}(x)$ is decreasing w.r.t. $x$ on $(0, +\infty)$.

(3) Given $\nu \in \mathbb{C}$, the $K_{\nu}(x)$ satisfies
\begin{equation*}
\lim_{x \to +\infty} K_{\nu}(x) = 0.
\end{equation*}

(4) Take $x > 0$. $K_{\nu}(x) = K_{-\nu}(x)$ for any $\nu \in \mathbb{C}$.

(5) Take $x > 0$. The $K_{\nu}(x)$ increases as $| \nu |$ increasing.

(6) For large $x$, the approximation of $K_{\nu}(x)$ with given $\nu$ can be written as
\begin{equation*}
K_{\nu}(x) \simeq \sqrt{\frac{\pi}{2x}} \cdot exp(-x) \cdot \left( 1 + \frac{1}{x} \right)^{\mu}, \ \ x \gg \mu := \frac{\nu^{2}}{2} - \frac{1}{8}.
\end{equation*}

(7) Given $\nu \in \mathbb{R}$ and $x > 0$, then $K_{\nu}$ and $K_{\nu + 1}$ have the following relationship
\begin{equation*}
K^{'}_{\nu}(x) = \frac{\nu}{x} \cdot K_{\nu}(x) - K_{\nu + 1}(x).
\end{equation*}

(8) For small $x \to 0^{+}$, the approximation of $K_{\nu}(x)$ can be written as
\begin{equation*}
\left\{
        \begin{array}{ll}
        K_{\nu}(x) \simeq \Gamma(\nu) \cdot 2^{\nu - 1} \cdot x^{-\nu}, & \text{if} \ \ \nu > 0,\\         
        K_{\nu}(x) \simeq -\ln (x) + \ln 2 - \hat{\gamma}, & \text{if} \ \ \nu = 0,\\         
        K_{\nu}(x) \simeq \Gamma(-\nu) \cdot 2^{-\nu - 1} \cdot x^{\nu}, & \text{if} \ \ \nu < 0,
        \end{array}
\right.
\end{equation*}
where \ZH{$\hat{\gamma} \approx 0.5772$ is the Euler's constant.}
\end{lemma}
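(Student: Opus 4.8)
The plan is to derive every item from a single integral representation, Macdonald's formula
\[
K_{\nu}(x) = \int_{0}^{\infty} e^{-x\cosh t}\cosh(\nu t)\,dt, \qquad x>0,
\]
which is even in $\nu$ and, for each fixed $x>0$, entire in $\nu$. Since in all applications of this paper $\nu$ is real (it equals $\lambda-N/2$, $\lambda-1/2$, etc.), I would state the properties for real $\nu$, where the integrand is genuinely positive, and extend to $\nu\in\mathbb{C}$ by analytic continuation where that is meaningful. Properties (1), (4) and the two monotonicity statements then come for free: the integrand $e^{-x\cosh t}\cosh(\nu t)$ is strictly positive on $(0,\infty)$, giving (1); the evenness of $t\mapsto\cosh(\nu t)$ in $\nu$ gives $K_{\nu}=K_{-\nu}$, which is (4); differentiating under the integral sign (justified by the uniform exponential domination of $\cosh t\,e^{-x\cosh t}$ on compact subsets of $(0,\infty)$) yields $K_{\nu}'(x)=-\int_{0}^{\infty}\cosh t\,e^{-x\cosh t}\cosh(\nu t)\,dt<0$, which is the strict decrease (2); dominated convergence (dominating by the $x=1$ integrand) gives $\lim_{x\to\infty}K_{\nu}(x)=0$, which is (3); and since for fixed $t>0$ the map $|\nu|\mapsto\cosh(\nu t)$ is increasing, differentiating in $\nu$ shows $|\nu|\mapsto K_{\nu}(x)$ is increasing, which is (5).

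For the recurrence (7) I would not return to the integral but instead use the two standard three-term relations for the Macdonald function,
\[
K_{\nu+1}(x)-K_{\nu-1}(x)=\frac{2\nu}{x}K_{\nu}(x), \qquad K_{\nu}'(x)=-\tfrac12\bigl(K_{\nu-1}(x)+K_{\nu+1}(x)\bigr),
\]
which follow in turn from the contiguity relations of $I_{\pm\nu}$ and the connection formula invoked below. Eliminating $K_{\nu-1}$ between the two displayed identities produces exactly $K_{\nu}'(x)=\frac{\nu}{x}K_{\nu}(x)-K_{\nu+1}(x)$. A self-contained alternative is to integrate Macdonald's representation by parts using $\cosh((\nu+1)t)=\cosh(\nu t)\cosh t+\sinh(\nu t)\sinh t$, but the recurrence route is shorter and cleaner.

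The asymptotic items are handled separately. For the small-$x$ law (8) I would use the connection formula $K_{\nu}(x)=\frac{\pi}{2}\,\frac{I_{-\nu}(x)-I_{\nu}(x)}{\sin(\nu\pi)}$ together with the convergent series $I_{\nu}(x)=\sum_{k\ge0}\frac{(x/2)^{2k+\nu}}{k!\,\Gamma(k+\nu+1)}$. As $x\to0^{+}$ the lowest power of $x$ dominates: for $\nu>0$ the term $I_{-\nu}$ wins and gives $\Gamma(\nu)2^{\nu-1}x^{-\nu}$; for $\nu<0$ the symmetry $K_{\nu}=K_{-\nu}$ reduces to the previous case and yields $\Gamma(-\nu)2^{-\nu-1}x^{\nu}$; the degenerate case $\nu=0$ makes the connection formula $0/0$ and is resolved by differentiating $I_{\pm\nu}$ with respect to the order $\nu$ (an order-derivative limit), which produces the logarithmic law $-\ln x+\ln 2-\hat{\gamma}$ with $\hat{\gamma}$ the Euler constant. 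For the large-$x$ law (6) I would apply Laplace's method to Macdonald's integral: $\cosh t$ attains its minimum $1$ at $t=0$, and the local quadratic expansion $\cosh t\approx1+t^{2}/2$ gives the leading term $\sqrt{\pi/(2x)}\,e^{-x}$; carrying one further order produces the first correction factor $1+\frac{4\nu^{2}-1}{8x}$. Recognising that $\frac{4\nu^{2}-1}{8}=\frac{\nu^{2}}{2}-\frac18=\mu$ and that $(1+1/x)^{\mu}=1+\mu/x+O(x^{-2})$ repackages this two-term expansion into the stated form $\sqrt{\pi/(2x)}\,e^{-x}(1+1/x)^{\mu}$.

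The main obstacle is the asymptotic pair (6) and (8), which go beyond the elementary arguments used for (1)--(5). In (8) the genuinely separate $\nu=0$ case cannot be reached by dominant balance and requires the order-differentiation limit; in (6) one must be explicit that the compact closed form $(1+1/x)^{\mu}$ is legitimate only as a repackaging of the first two terms of the Poincar\'e asymptotic series, which is why the statement uses ``$\simeq$'' rather than equality. The algebraic and monotonicity properties (1)--(5) and the recurrence (7) are straightforward consequences of the integral representation, differentiation under the integral sign, and the contiguity relations. As the lemma is quoted from \cite{spanier1987atlas, gil2002evaluation}, these facts are used in the paper as standard; the route above records how each would be re-derived if one wished to make the section self-contained.
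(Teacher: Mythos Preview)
The paper does not prove this lemma at all: it is stated in Appendix~\ref{Properties of the modified Bessel functions of the third kind} with citations to \cite{spanier1987atlas, gil2002evaluation} and used as a black box, so there is no ``paper's own proof'' to compare against. Your proposal therefore goes well beyond what the paper does, supplying a complete and correct self-contained derivation via Macdonald's integral representation for (1)--(5), the contiguity relations for (7), the connection formula $K_{\nu}=\frac{\pi}{2}\frac{I_{-\nu}-I_{\nu}}{\sin(\nu\pi)}$ for the small-$x$ behaviour (8), and Laplace's method for the large-$x$ law (6). Your caveat that (1), (2), (5) only make literal sense for real $\nu$ (since positivity and monotonicity are meaningless for complex-valued $K_{\nu}$) is well taken and in fact sharper than the lemma's own wording; since every order that actually appears in the paper is real, nothing is lost. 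The repackaging of the two-term Poincar\'e expansion into $(1+1/x)^{\mu}$ is handled correctly, and the $\nu=0$ logarithmic case via the order-derivative limit is the standard resolution.
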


\section{Proof of Section \ref{Main results about eventual convexity with elliptical distributions} \ZHnew{results}}\label{proof of Lemma about Hi-0}
\begin{lemma}\label{Lemma about Hi-0}
Let $Y, Z \in \mathbb{R}^{N}$. Then the matrices composed of $Y$ and $Z$ have the following eigenvalues :
\begin{itemize}
\item[(1)] The matrix $Z \cdot Z^{T} + Y \cdot Y^{T}$ has at most two positive eigenvalues $\lambda_{1}, \lambda_{2} (\lambda_{1} \geq \lambda_{2})$ defined by
\begin{equation}\label{eigenvalue 1-1}
\lambda_{1,2} = \frac{\| Y \|^{2} + \| Z \|^{2} \pm \sqrt{\left( \| Y \|^{2} - \| Z \|^{2} \right)^{2} + 4\left( Y^{T}Z \right)^{2} }}{2}.
\end{equation}
\item[(2)] The matrix $Z \cdot Z^{T}$ has at most one positive eigenvalue
\begin{equation}\label{eigenvalue 2-1}
\lambda_{1} = \| Z \|^{2}.
\end{equation}
\item[(3)] The matrix $Z \cdot Z^{T} - Y \cdot Y^{T}$ only has two non-zero eigenvalue defined by
\begin{equation}\label{eigenvalue 3-1}
\lambda_{1} = \frac{\| Z \|^{2} - \| Y \|^{2} + \sqrt{\left( \| Z \|^{2} + \| Y \|^{2}\right)^{2} - 4\left( Y^{T}Z \right)^{2} }}{2} > 0
\end{equation}
and
\begin{equation}\label{eigenvalue 3-2}
\lambda_{2} = \frac{\| Z \|^{2} - \| Y \|^{2} - \sqrt{\left( \| Z \|^{2} + \| Y \|^{2}\right)^{2} - 4\left( Y^{T}Z \right)^{2} }}{2} < 0,
\end{equation}
or only has zero eigenvalues.
\end{itemize}
\end{lemma}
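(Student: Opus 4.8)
The plan is to use that each of the three matrices is a combination of at most two rank-one matrices $ZZ^{T}$, $YY^{T}$, hence has rank at most two, and is symmetric, hence orthogonally diagonalizable with at most two nonzero eigenvalues and all remaining eigenvalues equal to $0$. I would then pin down the nonzero eigenvalues through the elementary identity that, for compatible sizes, $PQ$ and $QP$ have the same nonzero eigenvalues counted with multiplicity, reducing every computation to a $2\times2$ trace/determinant calculation together with the Cauchy--Schwarz inequality $(Y^{T}Z)^{2}\le\|Y\|^{2}\|Z\|^{2}$.

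For assertion (1) I would write $ZZ^{T}+YY^{T}=UU^{T}$, where $U\in\mathbb{R}^{N\times2}$ has columns $Z$ and $Y$. The nonzero eigenvalues of $UU^{T}$ then coincide with those of the Gram matrix
\[
U^{T}U=\begin{pmatrix}\|Z\|^{2}&Z^{T}Y\\ Z^{T}Y&\|Y\|^{2}\end{pmatrix},
\]
whose trace is $\|Y\|^{2}+\|Z\|^{2}\ge0$ and whose determinant $\|Y\|^{2}\|Z\|^{2}-(Y^{T}Z)^{2}$ is $\ge0$ by Cauchy--Schwarz. Solving the characteristic quadratic and using
\[
(\|Y\|^{2}+\|Z\|^{2})^{2}-4\big(\|Y\|^{2}\|Z\|^{2}-(Y^{T}Z)^{2}\big)=(\|Y\|^{2}-\|Z\|^{2})^{2}+4(Y^{T}Z)^{2}
\]
yields exactly \eqref{eigenvalue 1-1}, with both roots nonnegative. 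Assertion (2) is the one-column specialization $U:=Z\in\mathbb{R}^{N\times1}$, where $U^{T}U=\|Z\|^{2}$ is a scalar, giving the unique nonzero eigenvalue \eqref{eigenvalue 2-1}.

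For assertion (3) I would write $ZZ^{T}-YY^{T}=PQ$, where $P\in\mathbb{R}^{N\times2}$ has columns $Z$ and $Y$ and $Q\in\mathbb{R}^{2\times N}$ has rows $Z^{T}$ and $-Y^{T}$, so its nonzero eigenvalues are those of
\[
QP=\begin{pmatrix}\|Z\|^{2}&Z^{T}Y\\ -Z^{T}Y&-\|Y\|^{2}\end{pmatrix}.
\]
Here $\operatorname{tr}(QP)=\|Z\|^{2}-\|Y\|^{2}$ while $\det(QP)=(Y^{T}Z)^{2}-\|Y\|^{2}\|Z\|^{2}\le0$ by Cauchy--Schwarz, so the discriminant $\big(\operatorname{tr}(QP)\big)^{2}-4\det(QP)\ge\big(\operatorname{tr}(QP)\big)^{2}\ge0$; hence both eigenvalues are real (as they must be, $ZZ^{T}-YY^{T}$ being symmetric) and their product is $\le0$, forcing one $\ge0$ and one $\le0$. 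Simplifying the discriminant via $(\|Z\|^{2}-\|Y\|^{2})^{2}+4\|Y\|^{2}\|Z\|^{2}=(\|Z\|^{2}+\|Y\|^{2})^{2}$ delivers \eqref{eigenvalue 3-1}--\eqref{eigenvalue 3-2}. When $Y$ and $Z$ are linearly dependent, $\det(QP)=0$ and one of the two values collapses to $0$ (both do, with $ZZ^{T}-YY^{T}=0$, precisely when $Y=\pm Z$); this is the ``only zero eigenvalues'' alternative, and there $\lambda_{1}\ge0\ge\lambda_{2}$ holds non-strictly.

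The argument is routine linear algebra, so I do not expect a genuine obstacle. The only points requiring care are that the $PQ$/$QP$ identity controls only the nonzero part of the spectrum, so the vanishing of the remaining $N-2$ eigenvalues has to be read off separately from the rank bound (symmetry of all three matrices makes this immediate), and that in (3) the sign bookkeeping and the behaviour at the degenerate configurations $Y\parallel Z$ and $Y=\pm Z$ be stated carefully — this is where the dichotomy ``two nonzero eigenvalues, or only zero eigenvalues'' should be read as $\lambda_{1}\ge0\ge\lambda_{2}$, with strict inequalities exactly when $Y$ and $Z$ are linearly independent.
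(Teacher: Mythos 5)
Your proposal is correct, and it reaches the stated formulas by a genuinely different route from the paper. The paper works directly with the full $N\times N$ matrix: it observes that the rank is at most $2$, writes the characteristic polynomial in the form $\lambda^{N}-\lambda^{N-1}(\cdot)+\lambda^{N-2}(\cdot)$, identifies the $\lambda^{N-2}$ coefficient as $\sum_{i}\sum_{j>i}(y_{i}z_{j}-y_{j}z_{i})^{2}$ (the sum of the principal $2\times2$ minors), and then simplifies the discriminant of the resulting quadratic via the Lagrange-type identity
\begin{equation*}
\bigl(\|Y\|^{2}+\|Z\|^{2}\bigr)^{2}-4\sum_{i}\sum_{j>i}\bigl(y_{i}z_{j}-y_{j}z_{i}\bigr)^{2}=\bigl(\|Y\|^{2}-\|Z\|^{2}\bigr)^{2}+4\bigl(Y^{T}Z\bigr)^{2},
\end{equation*}
with the sign flipped appropriately for $ZZ^{T}-YY^{T}$. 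You instead factor each matrix as $PQ$ with $P\in\mathbb{R}^{N\times2}$, $Q\in\mathbb{R}^{2\times N}$ and invoke the identity that $PQ$ and $QP$ share their nonzero spectrum, reducing everything to the trace and determinant of a $2\times2$ matrix plus Cauchy--Schwarz. The two computations are equivalent (your $\det(QP)$ is exactly the paper's sum of $2\times2$ minors, by the Cauchy--Binet/Lagrange identity), but your version avoids the explicit coordinate expansion of the characteristic polynomial and makes the sign analysis in assertion (3) more transparent: the product of the two candidate eigenvalues is $\det(QP)=(Y^{T}Z)^{2}-\|Y\|^{2}\|Z\|^{2}\le0$, immediately forcing one nonnegative and one nonpositive root. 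Your handling of the degenerate configurations in (3) is in fact slightly more careful than the paper's, which asserts that $\lambda_{1}=\lambda_{2}$ forces both to vanish without separating the case of exactly one nonzero eigenvalue (e.g.\ $Y=cZ$ with $c^{2}\ne1$); reading the dichotomy as $\lambda_{1}\ge0\ge\lambda_{2}$, strict precisely when $Y$ and $Z$ are linearly independent, is the right resolution and is all that the later applications of the lemma use.
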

\begin{proof}
First, we consider the matrix $Z \cdot Z^{T} +Y \cdot Y^{T}$. Observe that $Z \cdot Z^{T}$ and $Y \cdot Y^{T}$ both are positive semi-definite matrices with rank 1. Then, we can see $Z \cdot Z^{T} +Y \cdot Y^{T}$ is a real symmetric matrix and $\text{rank}(Z \cdot Z^{T} +Y \cdot Y^{T}) \leq 2$. It follows that there exists an orthogonal matrix $T$ such that $T^{-1} (Z \cdot Z^{T} + Y \cdot Y^{T}) T$ is a diagonal matrix, which implies the number of non-zero eigenvalues equal to the rank of $Z \cdot Z^{T} + Y \cdot Y^{T}$. Then, there are at most two non-zero eigenvalues of $Z \cdot Z^{T} +Y \cdot Y^{T}$. Set $Z = (z_{1}, \cdots ,z_{N})^{T}$, $Y = (y_{1}, \cdots , y_{N})^{T}$, then we have
\begin{equation*}
Z \cdot Z^{T} + Y \cdot Y^{T} = 
\begin{pmatrix}
y^{2}_{1} + z^{2}_{1} & y_{1}y_{2} + z_{1}z_{2} & \cdots & y_{1}y_{N} + z_{1}z_{N}\\
y_{1}y_{2} + z_{1}z_{2} & y^{2}_{2} + z^{2}_{2} & \cdots & y_{2}y_{N} + z_{2}z_{N}\\
\vdots & \vdots & \ddots & \vdots\\
y_{1}y_{N} + z_{1}z_{N} & y_{2}y_{N} + z_{2}z_{N} & \cdots & y^{2}_{N} + z^{2}_{N}
\end{pmatrix}.
\end{equation*}
Since the matrix $Z \cdot Z^{T} +Y \cdot Y^{T}$ has at most two non-zero eigenvalues, the characteristic polynomial of $Z \cdot Z^{T} +Y \cdot Y^{T}$ can be written as
\begin{equation}\label{characteristic polynomial of ZZ + YY}
\begin{aligned}
&\left| \lambda \cdot I - \left( Z \cdot Z^{T} + Y \cdot Y^{T} \right) \right| \\
& =  \lambda^{N} -  \lambda^{N-1} \left[ \left( y^{2}_{1} + \cdots + y^{2}_{N} \right) + \left( z^{2}_{1} + \cdots + z^{2}_{N} \right) \right]\\
& + \lambda^{N-2} \left[ \underset{i}{\Sigma} \underset{j > i}{\Sigma} \left( y_{i} z_{j} - y_{j} z_{i} \right)^{2} \right],
\end{aligned}
\end{equation}
where $I$ is the identity matrix. Set
\begin{equation*}
\begin{aligned}
&a := 1,\\
&b := -\left( \| Y \|^{2} + \| Z \|^{2} \right),\\
&c := \underset{i}{\Sigma} \underset{j > i}{\Sigma} \left( y_{i} z_{j} - y_{j} z_{i} \right)^{2},\\
&\Delta := b^{2} - 4ac.
\end{aligned}
\end{equation*}
Since the eigenvalues of symmetric matrices are all real numbers, we have $\Delta \geq 0$. If $\Delta = 0$, then $\lambda_{1} := -b/(2a)= (\| Y \|^{2} + \| Z \|^{2}) /2$ is the only one root of \eqref{characteristic polynomial of ZZ + YY}. If $\Delta > 0$, then there are two characteristic roots of \eqref{characteristic polynomial of ZZ + YY}, $\lambda_{1}, \lambda_{2} (\lambda_{1} \geq \lambda_{2})$, which can be written as
\begin{equation*}
\begin{aligned}
\lambda_{1,2} & = \frac{\| Y \|^{2} + \| Z \|^{2} \pm \sqrt{\left( \| Y \|^{2} + \| Z \|^{2} \right)^{2} - 4 \left( \underset{i}{\Sigma} \underset{j > i}{\Sigma} \left( y_{i} z_{j} - y_{j} z_{i} \right)^{2} \right)}}{2}\\
& = \frac{\| Y \|^{2} + \| Z \|^{2} \pm \sqrt{\left( \| Y \|^{2} - \| Z \|^{2} \right)^{2} + 4 \left( Y^{T} Z \right)^{2}}}{2}.
\end{aligned}
\end{equation*}
The proof of the first result is complete. A similar proof works for the second and third results. We only highlight the inequalities of \eqref{eigenvalue 3-1} and \eqref{eigenvalue 3-2}. Similar to the first result, the characteristic polynomial of $Z \cdot Z^{T} - Y \cdot Y^{T}$ can be written as
\begin{equation}\label{characteristic polynomial of ZZ - YY}
\begin{aligned}
&\left| \lambda \cdot I - \left( Z \cdot Z^{T} - Y \cdot Y^{T} \right) \right| \\
& =  \lambda^{N} -  \lambda^{N-1} \left[ -\left( y^{2}_{1} + \cdots + y^{2}_{N} \right) + \left( z^{2}_{1} + \cdots + z^{2}_{N} \right) \right]\\
& + \lambda^{N-2} \left[ - \underset{i}{\Sigma} \underset{j > i}{\Sigma} \left( y_{i} z_{j} - y_{j} z_{i} \right)^{2} \right],
\end{aligned}
\end{equation}
Set
\begin{equation*}
\begin{aligned}
&a := 1,\\
&b := -\left( \| Z \|^{2} - \| Y \|^{2} \right),\\
&c := - \underset{i}{\Sigma} \underset{j > i}{\Sigma} \left( z_{i} y_{j} - z_{j} y_{i} \right)^{2},\\
&\Delta := b^{2} - 4ac.
\end{aligned}
\end{equation*}
Since $c \leq 0$, we have
\begin{equation}\label{Delta of ZZ - YY}
\Delta = \left( \| Z \|^{2} - \| Y \|^{2} \right)^{2} - 4 \left( - \underset{i}{\Sigma} \underset{j > i}{\Sigma} \left( z_{i} y_{j} - z_{j} y_{i} \right)^{2} \right) \geq \left( \| Z \|^{2} - \| Y \|^{2} \right)^{2} \geq 0.
\end{equation}
Thus, \eqref{characteristic polynomial of ZZ - YY} has at most two characteristic roots $\lambda_{1}, \lambda_{2} (\lambda_{1} \geq \lambda_{2})$ defined by
\begin{equation*}
\begin{aligned}
\lambda_{1,2} &= \frac{\| Z \|^{2} - \| Y \|^{2} \pm \sqrt{\left( \| Z \|^{2} - \| Y \|^{2}\right)^{2} - 4\left( - \underset{i}{\Sigma} \underset{j > i}{\Sigma} \left( z_{i} y_{j} - z_{j} y_{i} \right)^{2} \right) }}{2}\\
&= \frac{\| Z \|^{2} - \| Y \|^{2} \pm \sqrt{\left( \| Z \|^{2} + \| Y \|^{2}\right)^{2} - 4\left( Y^{T}Z \right)^{2} }}{2}.
\end{aligned}
\end{equation*} 
By using \eqref{Delta of ZZ - YY} again, we can see $\lambda_{1} \geq 0$, $\lambda_{2} \leq 0$, and $\lambda_{1} = \lambda_{2}$ if and only if $\| Z \| = \| Y \|$, which means $\lambda_{1} = 0 = \lambda_{2}$.
\end{proof}

\begin{proof}[Proof of Lemma \ref{Lemma about Hi-1}]
It is obvious that $f$ is locally concave on $E$ with $r = 0$. Assume $r \neq 0$, then we have
\begin{equation}\label{gradient of g}
\nabla_{x}\left( \frac{b - \mu^{T}x}{\sqrt{x^{T} \Sigma x}} \right) = \frac{-1}{\sqrt{x^{T} \Sigma x}}\left( \mu + \frac{\left( b - \mu^{T}x \right)}{x^{T} \Sigma x} \cdot \Sigma x \right)
\end{equation}
and
\begin{equation}\label{Hessian of g}
\begin{aligned}
\nabla^{2}_{x}\left( \frac{b - \mu^{T}x}{\sqrt{x^{T} \Sigma x}} \right) &= \frac{\left( \Sigma x \right) \cdot \mu^{T} + \mu \cdot \left( \Sigma x \right)^{T}}{\left( x^{T} \Sigma x \right)^{\frac{3}{2}}} + \frac{3\left( b - \mu^{T}x \right)}{\left( x^{T} \Sigma x \right)^{\frac{5}{2}}} \cdot \left( \Sigma x \right) \cdot \left( \Sigma x \right)^{T} -\frac{b - \mu^{T}x}{\left( x^{T} \Sigma x \right)^{\frac{3}{2}}} \cdot \Sigma.
\end{aligned}
\end{equation}
Thus, we have
\begin{equation*}
\nabla_{x}f(x) = r \cdot \left( \frac{b - \mu^{T}x}{\sqrt{x^{T} \Sigma x}} \right)^{r-1} \cdot \nabla_{x}\left( \frac{b - \mu^{T}x}{\sqrt{x^{T} \Sigma x}} \right)
\end{equation*}
and
\begin{equation*}
\nabla^{2}_{x} f(x) = r \cdot \frac{\left( b - \mu^{T}x \right)^{r}}{\left( \sqrt{x^{T} \Sigma x} \right)^{r + 2}} \cdot A,
\end{equation*}
where $A$ is defined as follows
\begin{equation}\label{A matrix}
\begin{aligned}
A &:= \frac{1}{b - \mu^{T}x} \cdot \left[ \frac{\left( b - \mu^{T}x \right)}{x^{T} \Sigma x} \cdot \left( \Sigma x \right) \cdot \left( \Sigma x \right)^{T} \cdot \left( r + 2 \right) \right. \\
&+ r \cdot \left( \mu \cdot \left( \Sigma x \right)^{T} + \left( \Sigma x \right)\cdot \mu^{T} \right) - \left( b - \mu^{T}x \right) \cdot \Sigma \\
&\left. + \left( r - 1 \right) \cdot \frac{\left( x^{T} \Sigma x \right)}{\left( b - \mu^{T}x \right)} \cdot \mu \cdot \mu^{T} \right].
\end{aligned}
\end{equation}
Since $\Sigma^{-\frac{1}{2}}$ is a symmetric and \ZHnew{positive definite} matrix, the definiteness of $\text{sign}(r) \cdot \nabla^{2}_{x}f$ is equivalent to the definiteness of $H := \Sigma^{-\frac{1}{2}} \cdot A \cdot \Sigma^{-\frac{1}{2}}$. Set $V := \Sigma^{\frac{1}{2}} \cdot x$ and $W := \Sigma^{-\frac{1}{2}}\cdot \mu$. Observe that $\theta := \left( b - \mu^{T}x \right)^{2} / \left( x^{T} \Sigma x \right)$, then we can see
\begin{equation*}
x^{T} \Sigma x = V^{T} \cdot V = \| V \|^{2}, \ \ b - \mu^{T} x = \sqrt{\theta} \cdot \sqrt{x^{T} \Sigma x} = \sqrt{\theta} \cdot \| V \|.
\end{equation*}
Thus, the matrix $H$ can be rewritten as
\begin{equation}\label{equivalent matrix to the Hessian of f}
\begin{aligned}
H &:= \frac{1}{b - \mu^{T}x} \cdot \left[ \left( r + 2 \right) \frac{b - \mu^{T}x}{x^{T} \Sigma x} \cdot \left( \Sigma^{\frac{1}{2}}x \right) \cdot \left( \Sigma^{\frac{1}{2}} x \right)^{T} \right.\\
& + r \cdot \left( \left( \Sigma^{-\frac{1}{2} }\mu \right) \cdot \left( \Sigma^{\frac{1}{2}}x \right)^{T} + \left( \Sigma^{\frac{1}{2}}x \right) \cdot \left( \Sigma^{-\frac{1}{2}}\mu \right)^{T} \right) \\
&\left. -\left( b - \mu^{T}x \right) \cdot I + \left( r -1 \right) \frac{x^{T} \Sigma x}{b - \mu^{T}x} \cdot \left( \Sigma^{-\frac{1}{2}} \mu \right) \cdot \left( \Sigma^{-\frac{1}{2}}\mu \right)^{T} \right]\\
&= \left( r + 2 \right) \cdot \frac{1}{\| V \|^{2}} \cdot V \cdot V^{T} + \frac{r}{\| V \| \cdot \sqrt{\theta}} \left[ W \cdot V^{T} + V \cdot W^{T} \right] - I\\
&+ \left( r - 1 \right) \cdot W \cdot W^{T} \cdot \frac{1}{\theta}.
\end{aligned}
\end{equation}
The assertion follows from Lemma \ref{proof of part of Lemma about Hi-1} in Appendix \ref{proof of Lemma about Hi-0}.

Finally, we consider the concavity of $\ln ((b - \mu^{T}x) / \sqrt{x^{T} \Sigma x})$. Set $g(x) := (b - \mu^{T}x) / \sqrt{x^{T} \Sigma x}$, then we have
\begin{equation}\label{gradient and Hessian of ln g}
\begin{aligned}
&\nabla_{x} \ln g(x) = \frac{1}{g(x)} \cdot \nabla_{x} g(x),\\
&\nabla^{2}_{x} \ln g(x) = -\frac{1}{g^{2}(x)} \cdot \nabla_{x}g(x) \cdot \left( \nabla_{x} g(x) \right)^{T} + \frac{1}{g(x)} \cdot \nabla^{2}_{x} g(x).
\end{aligned}
\end{equation}
Taking \eqref{gradient of g} and \eqref{Hessian of g} into \eqref{gradient and Hessian of ln g}, we can yield
\begin{equation*}
\nabla^{2}_{x} \ln g(x) = \frac{1}{x^{T} \Sigma x} \cdot A,
\end{equation*}
where the matrix $A$ is simply defined by \eqref{A matrix}. Consequently, in order to get \eqref{sufficient and necessary condition ln}, it suffices to take $r = 0$ and repeat the process of the case with $r \in (-2, 2) \setminus \lbrace 0 \rbrace$ literally.
\end{proof}

\begin{lemma}\label{proof of part of Lemma about Hi-1}
Let $r \in \mathbb{R}\setminus \lbrace 0 \rbrace$, $b \in \mathbb{R}$, $\mu \in \mathbb{R}^{N}$. Assume $\Sigma$ is a positive definite covariance matrix. Set $E := \lbrace x \in \mathbb{R}^{N} \setminus \lbrace o \rbrace : b - \mu^{T}x > 0 \rbrace$. Define $\theta := \left( b - \mu^{T}x \right)^{2} / \left( {x^{T} \Sigma x} \right)$. Let $V := \Sigma^{\frac{1}{2}} \cdot x$ and $W := \Sigma^{-\frac{1}{2}}\cdot \mu$. Define a matrix $H$ as follows
\begin{equation}\label{equivalent matrix to the Hessian of f Appendix}
\begin{aligned}
H &:=  \frac{\left( r + 2 \right)}{\| V \|^{2}} \cdot V \cdot V^{T} + \frac{r}{\| V \| \cdot \sqrt{\theta}} \left[ W \cdot V^{T} + V \cdot W^{T} \right] - I + \frac{\left( r - 1 \right)}{\theta} \cdot W \cdot W^{T} .
\end{aligned}
\end{equation}
Then, the matrix $H$ is negative semi-definite on $E$ if and only if :
\begin{equation}\label{sufficient and necessary condition r<2 and r neq 0 Appendix}
\mu^{T}\Sigma^{-1}\mu \leq (2-r)\frac{(\mu^{T}x)^{2}}{x^{T}\Sigma x} - 2r\sqrt{\theta} \frac{\mu^{T}x}{\sqrt{x^{T}\Sigma x}} - (r + 1)\theta.
\end{equation}
\end{lemma}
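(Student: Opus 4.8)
The plan is to exploit that the matrix $H$ of \eqref{equivalent matrix to the Hessian of f Appendix} equals $-I$ plus a symmetric matrix of rank at most two supported on the plane $\Pi:=\mathrm{span}\{V,W\}$, which reduces the negative semi-definiteness of $H$ on $\mathbb{R}^{N}$ to a $2\times 2$ problem on $\Pi$. Concretely, writing $z=z^{\|}+z^{\perp}$ with $z^{\|}\in\Pi$ and $z^{\perp}\perp\Pi$, each rank-one term $VV^{T}$, $WV^{T}+VW^{T}$, $WW^{T}$ annihilates $z^{\perp}$, so $Hz^{\perp}=-z^{\perp}$, $Hz^{\|}\in\Pi$, the cross terms vanish and $z^{T}Hz=(z^{\|})^{T}Hz^{\|}-\|z^{\perp}\|^{2}$. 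Hence $H\preceq 0$ iff $\widehat H:=(e_{i}^{T}He_{j})_{1\le i,j\le 2}\preceq 0$ for an orthonormal basis $\{e_{1},e_{2}\}$ of $\Pi$; I would take $e_{1}=V/\|V\|$ and $e_{2}=W^{\perp}/\|W^{\perp}\|$ with $W^{\perp}:=W-\tfrac{\mu^{T}x}{x^{T}\Sigma x}V$, so that $\|W^{\perp}\|^{2}=\mu^{T}\Sigma^{-1}\mu-\tfrac{(\mu^{T}x)^{2}}{x^{T}\Sigma x}=:\ell^{2}$. The case $\mu\parallel\Sigma x$ (in particular $\mu=0$) makes $\Pi$ one-dimensional, $\ell=0$, and is the boundary case of what follows.

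Next I would compute $\widehat H$. Using $V=\|V\|e_{1}$, $W=\tfrac{\mu^{T}x}{\|V\|}e_{1}+\ell\,e_{2}$, $\|V\|^{2}=x^{T}\Sigma x$, $\|V\|\sqrt\theta=b-\mu^{T}x$ and the abbreviation $u:=\dfrac{\mu^{T}x}{\,b-\mu^{T}x\,}$, a direct substitution gives
\begin{equation*}
\widehat H=\begin{pmatrix} \phi & \dfrac{\ell}{\sqrt\theta}\bigl(r+(r-1)u\bigr)\\ \dfrac{\ell}{\sqrt\theta}\bigl(r+(r-1)u\bigr) & \dfrac{(r-1)\ell^{2}}{\theta}-1 \end{pmatrix},\qquad \phi:=(r+1)+2ru+(r-1)u^{2}.
\end{equation*}
Then $\widehat H\preceq 0$ is equivalent to $\widehat H_{11}\le 0$, $\widehat H_{22}\le 0$, $\det\widehat H\ge 0$. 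The step that collapses the computation is the identity $\bigl(r+(r-1)u\bigr)^{2}=(r-1)\phi+1$, which telescopes the determinant to $\det\widehat H=-\phi-\ell^{2}/\theta$. Therefore $\det\widehat H\ge 0$ already forces $\widehat H_{11}=\phi\le-\ell^{2}/\theta\le 0$, and (for $\ell>0$, with $\phi<0$ strictly) $\widehat H_{11}\widehat H_{22}\ge\widehat H_{12}^{2}\ge 0$ then yields $\widehat H_{22}\le 0$ automatically; so $H\preceq 0\iff\phi+\ell^{2}/\theta\le 0$.

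It remains to translate $\phi+\ell^{2}/\theta\le 0$ into the data. Since $(x^{T}\Sigma x)\theta=(b-\mu^{T}x)^{2}$ one has $\ell^{2}/\theta=\mu^{T}\Sigma^{-1}\mu/\theta-u^{2}$, hence $\phi+\ell^{2}/\theta=(r+1)+2ru+(r-2)u^{2}+\mu^{T}\Sigma^{-1}\mu/\theta$; multiplying by $\theta>0$ and using $u^{2}\theta=\tfrac{(\mu^{T}x)^{2}}{x^{T}\Sigma x}$, $u\theta=\sqrt\theta\,\tfrac{\mu^{T}x}{\sqrt{x^{T}\Sigma x}}$ turns it into exactly \eqref{sufficient and necessary condition r<2 and r neq 0 Appendix}. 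The main obstacle is purely computational — organizing the entries of $\widehat H$ and spotting the identity $(r+(r-1)u)^{2}=(r-1)\phi+1$ that trivializes $\det\widehat H$; after that the equivalence is immediate. Alternatively one could diagonalize the $2\times 2$ coefficient matrix of the rank-one part of $H$, write $H+I$ as a signed combination of two rank-one matrices $ZZ^{T}$ and $YY^{T}$ built from $V,W$, read its eigenvalues from Lemma~\ref{Lemma about Hi-0}, and impose that the largest be $\le 1$.
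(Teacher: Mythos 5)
Your argument is correct, and it takes a genuinely different route from the paper. The paper's proof writes $H+I$ as a signed combination $\pm Z\cdot Z^{T}\pm Y\cdot Y^{T}$ for explicitly chosen $Z,Y$ built from $V,W$ with coefficients like $r/\sqrt{r+2}$ and $\sqrt{r-2}/\sqrt{r+2}$, which forces a five-way case split on the sign of $r\pm 2$ (the cases $r>2$, $r=2$, $-2<r<2$, $r=-2$, $r<-2$ are treated separately), and in each case it invokes the eigenvalue formulas of Lemma~\ref{Lemma about Hi-0} and imposes that the top eigenvalue of the rank-two part be at most $1$. You instead observe that $H+I$ is supported on $\Pi=\mathrm{span}\{V,W\}$, so $H\preceq 0$ reduces uniformly to the negative semi-definiteness of the $2\times 2$ compression $\widehat H$ in the orthonormal basis $\{V/\|V\|,\,W^{\perp}/\|W^{\perp}\|\}$; your entries $\widehat H_{11}=\phi$, $\widehat H_{22}=(r-1)\ell^{2}/\theta-1$, $\widehat H_{12}=\tfrac{\ell}{\sqrt\theta}(r+(r-1)u)$ check out, the identity $(r+(r-1)u)^{2}=(r-1)\phi+1$ does collapse the determinant to $-\phi-\ell^{2}/\theta$, and the subsequent argument that $\det\widehat H\ge 0$ already forces $\widehat H_{11}\le 0$ and $\widehat H_{22}\le 0$ (including the degenerate case $\ell=0$, where $\Pi$ is a line and $\widehat H_{22}=-1$) is sound, as is the final translation into \eqref{sufficient and necessary condition r<2 and r neq 0 Appendix}. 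What your approach buys is a single case-free computation valid for all $r\in\mathbb{R}\setminus\{0\}$ that makes Lemma~\ref{Lemma about Hi-0} unnecessary for this statement; what the paper's approach buys is explicit closed forms for the two nonzero eigenvalues of $H+I$, which carry slightly more information than the semi-definiteness criterion alone.
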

\begin{proof}
Suppose $r > 2$ and define
\begin{equation}\label{transformation 1 of Lemma about Hi-1}
Z := \frac{r}{\sqrt{r + 2}} \cdot \frac{1}{\sqrt{\theta}} \cdot W + \frac{\sqrt{r + 2}}{\| V \| } \cdot V, \ \ Y := \frac{1}{\sqrt{\theta}} \cdot \frac{\sqrt{r - 2}}{\sqrt{r + 2}} \cdot W.
\end{equation}
Then, we can reformulate \eqref{equivalent matrix to the Hessian of f Appendix} as follows
\begin{equation*}
H = Z \cdot Z^{T} + Y \cdot Y^{T} - I.
\end{equation*}
Using Lemma \ref{Lemma about Hi-0}, we know that the matrix $Z \cdot Z^{T} + Y \cdot Y^{T}$ has at most two positive eigenvalues $\lambda_{1}$, $\lambda_{2}(\lambda_{1} \geq \lambda_{2})$ defined by \eqref{eigenvalue 1-1}. As a result, the matrix $H$ is negative semi-definite if and only if $\lambda_{1} \leq 1$, which is equivalent to
\begin{equation*}
\| Y \|^{2} + \| Z \|^{2} + \sqrt{\left( \| Y \|^{2} - \| Z \|^{2} \right)^{2} + 4\left( Y^{T} Z \right)^{2}} \leq 2,
\end{equation*}
that is
\begin{equation}\label{equivalence with negative semi-definite H 1}
\| Y \|^{2} + \| Z \|^{2} - \| Y \|^{2} \cdot \| Z \|^{2} + \left( Y^{T} Z \right)^{2} \leq 1.
\end{equation}
According to \eqref{transformation 1 of Lemma about Hi-1}, we have
\begin{equation}\label{|Z| of the transformation 1 of Lemma about Hi-1}
\begin{aligned}
\| Z \|^{2} = Z^{T} \cdot Z & = \frac{r^{2}}{r + 2}\cdot \frac{1}{\theta} \cdot W^{T} \cdot W + (r + 2) + \frac{r}{\sqrt{\theta}} \cdot \frac{1}{\| V \|} \cdot \left[ W^{T} \cdot V + V^{T} \cdot W \right]\\
& = \frac{r^{2}}{r + 2} \cdot \frac{1}{\theta} \cdot \mu^{T} \Sigma^{-1} \mu + (r + 2) + \frac{r}{\sqrt{\theta}} \cdot \frac{1}{\sqrt{x^{T} \Sigma x}} \cdot 2\mu^{T}x,
\end{aligned}
\end{equation}
\begin{equation}\label{|Y| of the transformation 1 of Lemma about Hi-1}
\| Y \|^{2} = Y^{T} \cdot Y = \frac{1}{\theta} \cdot \frac{r - 2}{r + 2} \cdot W^{T} \cdot W = \frac{1}{\theta} \cdot \frac{r - 2}{r + 2}\cdot \mu^{T}\Sigma^{-1}\mu,
\end{equation}
\begin{equation}\label{YZ of the transformation 1 of Lemma about Hi-1}
\begin{aligned}
Y^{T}Z &= \frac{1}{\theta} \cdot \frac{r\sqrt{r - 2}}{r + 2} \cdot W^{T} \cdot W + \frac{\sqrt{r -2}}{\sqrt{\theta}} \cdot \frac{1}{\| V \|} \cdot W^{T} \cdot V\\
&= \frac{1}{\theta} \cdot \frac{r\sqrt{r - 2}}{r + 2} \cdot \mu^{T}\Sigma^{-1}\mu + \frac{\sqrt{r - 2}}{\sqrt{\theta}} \cdot \frac{1}{\sqrt{x^{T} \Sigma x}} \cdot \mu^{T}x.
\end{aligned}
\end{equation}
By taking \eqref{|Z| of the transformation 1 of Lemma about Hi-1}-\eqref{YZ of the transformation 1 of Lemma about Hi-1} into \eqref{equivalence with negative semi-definite H 1}, we get
\begin{equation*}
\mu^{T} \Sigma^{-1} \mu \leq (2 - r) \cdot \frac{\left( \mu^{T}x \right)^{2}}{x^{T} \Sigma x} - 2r \cdot \sqrt{\theta} \frac{\mu^{T}x}{\sqrt{x^{T}\Sigma x}} -(r + 1)\cdot \theta.
\end{equation*}
If $r = 2$, then we can use \eqref{transformation 1 of Lemma about Hi-1} to rewrite \eqref{equivalent matrix to the Hessian of f Appendix} as follows
\begin{equation*}
H = Z \cdot Z^{T} - I.
\end{equation*}
The second result of Lemma \ref{Lemma about Hi-0} tell us that $Z \cdot Z^{T}$ has at most one positive eigenvalue $\lambda_{1} := \| Z \|^{2}$. Thus, the matrix $H$ is negative semi-definite if and only if $\| Z \|^{2} \leq 1$, which means
\begin{equation*}
\frac{1}{\theta} \cdot \mu^{T} \Sigma^{-1} \mu + 4 + \frac{4}{\sqrt{\theta}} \cdot \frac{1}{\sqrt{x^{T} \Sigma x}} \cdot \mu^{T}x \leq 1,
\end{equation*}
it is simply \eqref{sufficient and necessary condition r<2 and r neq 0 Appendix} with $r = 2$. In conclusion, we have proved that $\text{sign}(-r) \cdot f$ is locally concave on $E$ with $r \geq 2$.

Next, we consider the case with $r < 2$ and $r \neq 0$. If $-2 < r$, then \eqref{equivalent matrix to the Hessian of f Appendix} can be reformulated as follows
\begin{equation*}
H = Z \cdot Z^{T} - Y \cdot Y^{T} - I,
\end{equation*}
where
\begin{equation*}
Z := \frac{r}{\sqrt{r + 2}} \cdot \frac{1}{\sqrt{\theta}}W + \frac{\sqrt{r + 2}}{\| V \|} \cdot V, \ \ Y := \frac{1}{\sqrt{\theta}} \cdot \frac{\sqrt{2 - r}}{\sqrt{r + 2}}W.
\end{equation*}
Using the third result of Lemma \ref{Lemma about Hi-0}, we know that $Z \cdot Z^{T} - Y \cdot Y^{T}$ has at most one positive eigenvalue $\lambda_{1}$ defined by \eqref{eigenvalue 3-1}. As a result, the negative semi-definiteness of the $H$ is equivalent to $\lambda_{1} \leq 1$, which can be simplified as follows
\begin{equation}\label{equivalence with negative semi-definite H 3}
\| Z \|^{2} - \| Y \|^{2} + \| Z \|^{2} \cdot \| Y \|^{2} - \left( Y^{T} Z \right)^{2} \leq 1.
\end{equation}
Since we have
\begin{equation*}
\begin{aligned}
\| Z \|^{2} = Z^{T} \cdot Z &= \frac{r^{2}}{r + 2} \cdot \frac{1}{\theta} \cdot W^{T}W + (r + 2) + \frac{r}{\sqrt{\theta}} \cdot \frac{1}{\| V \|} \cdot \left[ W^{T}V + V^{T}W \right]\\
& = \frac{r^{2}}{r + 2} \cdot \frac{1}{\theta} \cdot \mu^{T} \Sigma^{-1} \mu + (r + 2) + \frac{r}{\sqrt{\theta}} \cdot \frac{1}{\sqrt{x^{T} \Sigma x}} \cdot 2\mu^{T}x,
\end{aligned}
\end{equation*}
\begin{equation*}
\| Y \|^{2} = \frac{1}{\theta} \cdot \frac{2 - r}{r + 2} \cdot W^{T}W = \frac{1}{\theta} \cdot \frac{2 - r}{r + 2} \cdot \mu^{T}\Sigma^{-1}\mu,
\end{equation*}
\begin{equation*}
\begin{aligned}
Y^{T}Z &= \frac{1}{\theta} \cdot \frac{r\sqrt{2 - r}}{r + 2} \cdot W^{T}W + \frac{\sqrt{2 - r}}{\sqrt{\theta}} \cdot \frac{1}{\| V \|} \cdot W^{T}V\\
&= \frac{1}{\theta} \cdot \frac{r\sqrt{2 - r}}{r + 2} \cdot  \mu^{T} \Sigma^{-1} \mu + \frac{\sqrt{2 - r}}{\sqrt{\theta}} \cdot \frac{1}{\sqrt{x^{T} \Sigma x}} \cdot \mu^{T}x,
\end{aligned}
\end{equation*}
then \eqref{equivalence with negative semi-definite H 3} can be rewritten as
\begin{equation*}
\mu^{T} \Sigma^{-1} \mu \leq \left( 2 - r \right)\cdot \frac{\left( \mu^{T}x \right)^{2} }{x^{T} \Sigma x} - 2r \cdot \sqrt{\theta} \cdot \frac{\mu^{T}x}{\sqrt{x^{T} \Sigma x}} - \left( r + 1 \right) \cdot \theta.
\end{equation*}
Similarly, if $r < -2$, then \eqref{equivalent matrix to the Hessian of f Appendix} can be reformulated as follows
\begin{equation*}
H = Y \cdot Y^{T} - Z \cdot Z^{T} - I,
\end{equation*}
where
\begin{equation*}
Z := \frac{\sqrt{-r - 2}}{\| V \|} \cdot V - \frac{r}{\sqrt{-r - 2}} \cdot \frac{1}{\sqrt{\theta}} \cdot W,\ \ Y := \frac{\sqrt{2 - r}}{\sqrt{-r - 2}} \cdot \frac{1}{\sqrt{\theta}} \cdot W.
\end{equation*}
Using the third result of Lemma \ref{Lemma about Hi-0} again, we can also see that $H$ is a negative semi-definite matrix if and only if \eqref{sufficient and necessary condition r<2 and r neq 0 Appendix} is satisfied. If $r = -2$, then we can rewrite \eqref{equivalent matrix to the Hessian of f Appendix} as follows
\begin{equation*}
H = Y \cdot Y^{T} - Z \cdot Z^{T} - I,
\end{equation*}
where
\begin{equation*}
Z := \frac{\sqrt{3}}{\sqrt{\theta}} \cdot W + \frac{2}{\sqrt{3}} \cdot \frac{1}{\| V \|} \cdot V, \ \ Y := \frac{2}{\sqrt{3}} \cdot \frac{1}{\| V \|} \cdot V.
\end{equation*}
Since
\begin{equation*}
\| Z \|^{2} = Z^{T} Z = \frac{3}{\theta} \cdot \mu^{T} \Sigma^{-1} \mu + \frac{4}{3} + \frac{4}{\sqrt{\theta}} \cdot \frac{\mu^{T}x}{\sqrt{x^{T} \Sigma x}},
\end{equation*}
\begin{equation*}
\| Y \|^{2} = Y^{T} Y = \frac{4}{3},
\end{equation*}
\begin{equation*}
Y^{T}Z = \frac{2}{\sqrt{\theta}} \cdot \frac{\mu^{T}x}{\sqrt{x^{T} \Sigma x}} + \frac{4}{3}.
\end{equation*}
Then, applying \eqref{eigenvalue 3-1}, we can see that $H$ is negative semi-definite equivalent to
\begin{equation*}
\mu^{T} \Sigma^{-1} \mu \leq 4 \cdot \frac{\left( \mu^{T}x \right)^{2}}{x^{T} \Sigma x} + 4\sqrt{\theta} \cdot \frac{\mu^{T}x}{\sqrt{x^{T} \Sigma} x} + \theta = \left( 2\cdot \frac{\mu^{T}x}{\sqrt{x^{T} \Sigma x}} + \sqrt{\theta} \right)^{2},
\end{equation*}
which is \eqref{sufficient and necessary condition r<2 and r neq 0 Appendix} with $r = -2$.
\end{proof}

\begin{proof}[Proof of Lemma \ref{reformualtion of the necessary and sufficient conditions again}]
Observe that
\begin{equation*}
\frac{b}{\sqrt{x^{T} \Sigma x}} - \sqrt{\theta} = \frac{\mu^{T} x}{\sqrt{x^{T} \Sigma x}}.
\end{equation*}
Then, we can reformulate \eqref{sufficient and necessary condition r<2 and r neq 0} as follows
\begin{equation}\label{1 transformation of key sufficient and necessary conndition inequality Appendix}
\mu^{T} \Sigma^{-1} \mu \leq (2 - r)\cdot \frac{b^{2}}{x^{T} \Sigma x} - 4\sqrt{\theta} \cdot \frac{b}{\sqrt{x^{T} \Sigma x}} + \theta.
\end{equation}
Let $x \in E\cap X \setminus \lbrace o \rbrace$ and $\beta$ be the angle between vectors $\mu$ and $x$. Then, we have $\mu^{T}x = \| \mu \| \cdot \| x \| \cdot \text{cos}\beta$. Set $\lambda_{\text{min}}$, $\lambda_{\text{max}}$ as the minimum and maximum eigenvalues of positive definite matrix $\Sigma$, then we can yield
\begin{equation*}
0 < \lambda_{\text{min}} \| x \|^{2} \leq x^{T} \Sigma x \leq \lambda_{\text{max}} \| x \|^{2}.
\end{equation*}
Applying the Mean Value Theorem, we can know that \ZH{there exists} a $\lambda(x) \in [\lambda_{\text{min}}, \lambda_{\text{max}}]$ such that $x^{T} \Sigma x = \lambda(x) \cdot \| x \|^{2}$. Thus, \eqref{1 transformation of key sufficient and necessary conndition inequality Appendix} can be rewritten as follows
\begin{equation*}
\begin{aligned}
\mu^{T} \Sigma^{-1} \mu &\leq (2 - r) \cdot \frac{b^{2}}{x^{T} \Sigma x} - 4b \cdot \frac{b - \mu^{T}x}{x^{T} \Sigma x} + \frac{\left( b - \mu^{T}x \right)^{2}}{x^{T} \Sigma x}\\
&= \frac{\left( \mu^{T}x \right)^{2}}{x^{T} \Sigma x} + \frac{2b\mu^{T}x}{x^{T} \Sigma x} + \frac{\left( -1 - r \right)b^{2}}{x^{T} \Sigma x}\\
&= \frac{\| \mu \|^{2} \cdot \| x \|^{2} \cdot \text{cos}^{2} \beta}{\lambda(x) \cdot \| x \|^{2}} + \frac{2b \cdot \| \mu \| \cdot \| x \| \cdot \text{cos}\beta}{\lambda(x) \cdot \| x \|^{2}} + \frac{\left( -1 - r \right)b^{2}}{\lambda(x) \cdot \| x \|^{2}},
\end{aligned}
\end{equation*}
that is
\begin{equation}\label{2 transformation of key sufficient and necessary conndition inequality Appendix}
\mu^{T} \Sigma^{-1} \mu \leq \frac{\| \mu \|^{2} \cdot \text{cos}^{2} \beta}{\lambda(x)} + \frac{2b \cdot \| \mu \| \cdot \text{cos}\beta}{\lambda(x)} \cdot \frac{1}{\| x \|} + \frac{\left( -1 - r \right)b^{2}}{\lambda(x)} \cdot \frac{1}{\| x \|^{2}}.
\end{equation}

Set $t := 1/ \| x \|$ and $s := \text{cos}\beta$. It follows that the right hand side of \eqref{2 transformation of key sufficient and necessary conndition inequality Appendix} can be considered as a function $h(t, s) : (0, +\infty) \times [-1 , 1] \rightarrow \mathbb{R}$ defined by
\begin{equation}\label{h function's transformation with t and s Appendix}
h(t, s) := \frac{\| \mu \|^{2}}{\lambda} \cdot s^{2} + 2 \cdot \frac{b \cdot \| \mu \|}{\lambda} \cdot t \cdot s + \frac{(-1 - r) \cdot b^{2}}{\lambda} \cdot t^{2},
\end{equation}
where $\lambda \in [\lambda_{\text{min}}, \lambda_{\text{max}}]$ depending on $s$ and $t$; $\lambda_{\text{min}}$, $\lambda_{\text{max}}$ are the minimum and maximum eigenvalues of the matrix $\Sigma$ respectively. In fact, the $\lambda$ is defined by $\lambda(x) := x^{T} \Sigma x / \| x \|^{2}$. We reformulate the $g$ function as follows
\begin{equation*}
g(\frac{1}{\| x \|}, \text{cos} \beta) = \frac{b}{\sqrt{\lambda} \cdot \| x \|} - \frac{\| \mu \|}{\sqrt{\lambda}} \cdot \text{cos} \beta,
\end{equation*} 
where $\lambda$ depends on $(1/\| x \|, \text{cos}\beta)$. And then $g$ can be redefined as follows
\begin{equation}\label{g function's transformation with t and s Appendix}
g(t, s) = \frac{b}{\sqrt{\lambda}} \cdot t - \frac{\| \mu \|}{\sqrt{\lambda}} \cdot s.
\end{equation}
It can be seen that a pair $(t, s) \in (0, +\infty) \times [-1, 1]$ corresponds to vectors $x(t, s) \in \mathbb{R}^{N}$. We will use $h$ and $g$ to get the best threshold $\theta^{*}$. To this end, we next reformulate $h$ and $g$ into more concise forms. Define $\rho: \mathbb{R}^{N}\setminus \lbrace 0 \rbrace \to \mathbb{R}^{N}$ as follows
\begin{equation}\label{the definition of rho}
\rho(x) := \frac{x}{\| x \|} \cdot \frac{\|x \|}{\sqrt{x^{T} \Sigma x}} = \frac{x}{\| x \|} \cdot \frac{1}{\sqrt{\lambda(x)}}.
\end{equation}
It can be noticed that $\rho$ defined on $\mathbb{R}^{N} \setminus \lbrace 0 \rbrace$ is positively homogeneous of degree zero. Thus, it suffices to consider $\rho$ on $\mathbb{S}^{N-1}$. Further, we define $M$ by
\begin{equation*}
M := \left\lbrace \frac{x}{\sqrt{\lambda(x)}} : x \in \mathbb{S}^{N-1} \right\rbrace,
\end{equation*}
then, the map $\rho : \mathbb{S}^{N-1} \to M$ is the reverse Gauss map of $M$ and $M$ is a star-shape w.r.t. the origin center. Given $\mu \neq 0$, we define $\phi_{\mu} : \mathbb{R}^{N}\setminus \lbrace 0 \rbrace \to \mathbb{R}$ by
\begin{equation*}
\phi_{\mu}(x) := \rho(x) \cdot \mu / \| \mu \|.
\end{equation*} 
The function $\phi_{\mu}$ is an odd function on $\mathbb{R}^{N}\setminus \lbrace 0 \rbrace$ and is also positively homogeneous of degree zero. Thus, we will consider $\phi_{\mu}$ on $\mathbb{S}^{N-1}$. From the definition of $\phi_{\mu}$, we can consider $\phi_{\mu}$ as the projection of $\rho$ onto the given unit vector $\mu/ \| \mu \|$. Since $\phi_{\mu}$ is continuous on $\mathbb{S}^{N-1}$, and $\mathbb{S}^{N-1}$ is a bounded and closed set. Thus, \ZH{there exists} $x_{1} \in \mathbb{S}^{N-1}$ such that
\begin{equation*}
\phi_{\mu}(x_{1}) = \max_{\mathbb{S}^{N-1}} \phi_{\mu}(x) =: \frac{1}{\sqrt{\lambda_{\mu, \text{min}}}}
\end{equation*}
Because $\phi_{\mu}$ is odd on $\mathbb{S}^{N-1}$. Then, we can yield
\begin{equation*}
\min_{\mathbb{S}^{N-1}} \phi_{\mu}(x) = -\max_{\mathbb{S}^{N-1}} \phi_{\mu}(x) = -\frac{1}{\sqrt{\lambda_{\mu, \text{min}}}} = \phi_{\mu}(-x_{1}).
\end{equation*}
Thus, the range of $\phi_{\mu}$ is $\left[ -1/\sqrt{\lambda_{\mu, \text{min}}}, 1/\sqrt{\lambda_{\mu, \text{min}}} \right]$. Let $x_{\text{min}} \in \mathbb{S}^{N-1}$ be the eigenvector of $\Sigma$ corresponding to the minimum eigenvalue $\lambda_{\text{min}}$. Using \eqref{the definition of rho}, we can see
\begin{equation*}
\| \rho(x) \| = \frac{1}{\sqrt{\lambda(x)}} \leq \frac{1}{\sqrt{\lambda_{\text{min}}}}, \forall \  x\in \mathbb{S}^{N-1}.
\end{equation*}
Because $\phi_{\mu}$ is the projection of $\rho$ onto $\mu / \| \mu \|$. Thus, we can yield
\begin{equation*}
\frac{1}{\sqrt{\lambda_{\mu, \text{min}}}} = \max_{\mathbb{S}^{N-1}} \phi_{\mu}(x) = \phi_{\mu}(x_{\text{min}}) = \frac{1}{\sqrt{\lambda_{\text{min}}}}, \ \text{if} \ \mu / \| \mu \| = \pm x_{\text{min}},
\end{equation*}
\begin{equation*}
\frac{1}{\sqrt{\lambda_{\mu, \text{min}}}} < \frac{1}{\sqrt{\lambda_{\text{min}}}}, \ \text{if} \ \mu / \| \mu \| \neq \pm x_{\text{min}},
\end{equation*}
\begin{equation*}
\frac{1}{\sqrt{\lambda_{\text{max}}}} \leq \min_{\mu \in \mathbb{R}^{N} \setminus \lbrace 0 \rbrace} \max_{x \in \mathbb{S}^{N-1}} \frac{\mu \cdot x}{\| \mu \| \cdot \| x \|} \cdot \frac{\| x \|}{\sqrt{x^{T} \Sigma x}} = \min_{\mu \in \mathbb{R}^{N} \setminus \lbrace 0 \rbrace} \frac{1}{\sqrt{\lambda_{\mu, \text{min}}}},
\end{equation*}
where the first inequality in the above last formula is yielded by observing $\max_{x} \phi_{\mu}(x)$ is the distance between the origin and the support hyperplane of $M$ with $\mu/ \| \mu \|$ as its normal.

As a result, we have proved
\begin{equation*}
\lambda_{\text{min}} \leq \lambda_{\mu, \text{min}} \leq \lambda_{\text{max}}.
\end{equation*}
On the one hand, we claim that for any $c_{1} \in (0, +\infty)$ and $c_{2} \in [-1/ \sqrt{\lambda_{\mu, \text{min}}}, 1/\sqrt{\lambda_{\mu, \text{min}}}]$, there exist $t_{0} \in (0, +\infty)$, $s_{0} \in [-1, 1]$, $\lambda_{0} := \lambda(x(t_{0}, s_{0})) \in [\lambda_{\text{min}}, \lambda_{\text{max}}]$ such that
\begin{equation}\label{transform of t and s Appendix}
\frac{t_{0}}{\sqrt{\lambda_{0}}} = c_{1}, \ \ \frac{s_{0}}{\sqrt{\lambda_{0}}} = c_{2}.
\end{equation} 
Since $\phi_{\mu}$ is continuous on $\mathbb{S}^{N-1}$ and the range of $\phi_{\mu}$ is $\left[ -1/\sqrt{\lambda_{\mu, \text{min}}}, 1/\sqrt{\lambda_{\mu, \text{min}}} \right]$. Thus, for each $c_{2} \in \left[ -1/\sqrt{\lambda_{\mu, \text{min}}}, 1/\sqrt{\lambda_{\mu, \text{min}}} \right]$, there exists $x_{0} \in \mathbb{S}^{N-1}$ such that $\phi_{\mu}(x_{0}) = c_{2}$, i.e.
\begin{equation}\label{homogeneous 1}
\frac{\mu \cdot x_{0}}{\| \mu \| \cdot \| x_{0} \|} \cdot \frac{\| x_{0} \|}{\sqrt{x^{T}_{0} \Sigma x_{0}}} = c_{2}.
\end{equation}
For each $c_{1} \in (0, +\infty)$, we define $\hat{x_{0}}$ by
\begin{equation}\label{homogeneous 2}
\hat{x_{0}} := \frac{x_{0}}{\| x_{0} \|} \cdot \frac{1}{c_{1} \cdot \sqrt{\lambda(x_{0})}}.
\end{equation}
Since $\phi_{\mu}$ is positively homogeneous of degree zero. Combining \eqref{homogeneous 1} and \eqref{homogeneous 2}, we can see
\begin{equation*}
\phi_{\mu}(\hat{x_{0}}) = \phi_{\mu}(x_{0}) = c_{2}.
\end{equation*}
We take
\begin{equation*}
(t_{0}, s_{0}, \lambda_{0}) := (\frac{1}{\| \hat{x_{0}} \|}, \frac{\mu \cdot \hat{x_{0}}}{\| \mu \| \cdot \| \hat{x_{0}}\|}, \frac{\hat{x_{0}}^{T} \Sigma \hat{x_{0}}}{\| \hat{x_{0}} \|^{2}}).
\end{equation*}
Since $\lambda(x(t, s)) = x^{T} \Sigma x / \| x \|^{2}$, we can see that $(t_{0}, s_{0}, \lambda_{0})$ satisfies \eqref{transform of t and s Appendix}. On the other hand, the definitions of $t$, $s$, $\lambda$ imply $t / \sqrt{\lambda} \in (0, +\infty)$ and $s / \sqrt{\lambda} \in [-1/ \sqrt{\lambda_{\mu, \text{min}}}, 1/ \sqrt{\lambda_{\mu, \text{min}}}]$, where we have used
\begin{equation*}
\frac{s}{\sqrt{\lambda(x(t,s))}} = \frac{\mu \cdot x}{\| \mu \| \cdot \| x \|} \cdot \frac{\| x \|}{\sqrt{x^{T}\Sigma x}} = \phi_{\mu}(x) \in \left[ -\frac{1}{\sqrt{\lambda_{\mu, \text{min}}}}, \frac{1}{\sqrt{\lambda_{\mu, \text{min}}}} \right].
\end{equation*}
As a result, by taking $\overline{t} := t / \sqrt{\lambda}$ and $\overline{s} := s / \sqrt{\lambda}$, we can reformulate \eqref{h function's transformation with t and s Appendix} and \eqref{g function's transformation with t and s Appendix} as follows
\begin{equation*}
h(\overline{t}, \overline{s}) = \| \mu \|^{2} \cdot \overline{s}^{2} + 2b \cdot \| \mu \| \cdot \overline{t} \cdot \overline{s} + (-1 - r) \cdot b^{2} \cdot \overline{t}^{2},
\end{equation*}
\begin{equation*}
g(\overline{t}, \overline{s}) = b \cdot \overline{t} - \| \mu \| \cdot \overline{s},
\end{equation*}
where $\overline{t} \in (0, +\infty)$ and $\overline{s} \in [-1/ \sqrt{\lambda_{\mu, \text{min}}}, 1 / \sqrt{\lambda_{\mu, \text{min}}}]$. 
\end{proof}

\begin{proof}[Proof of Lemma \ref{tangent relationship between s1 and s2}]
Because $h(\overline{t}, \overline{s})$ is an elliptic paraboloid, at least one of the Case (1), Case (2), and Case (3) can be satisfied. Next, we compute $\sqrt{c^{*}}$ in the three cases. For Case (2) and Case (3), taking $\overline{s} = -1/ \sqrt{\lambda_{\mu, \text{min}}}$ into $h(\overline{t}, \overline{s}) = \mu^{T} \Sigma^{-1} \mu$, where $h(\overline{t}, \overline{s})$ is defined by \eqref{h function's transformation with overline t and overline s} and $\lambda_{\mu, \text{min}}$ is defined by \eqref{the definition of lamda_mu_min}, we can yield
\begin{equation}\label{case 2 and case 3 equation 1 Appendix}
\mu^{T} \Sigma^{-1} \mu = \| \mu \|^{2} \cdot \frac{1}{\lambda_{\mu, \text{min}}} - 2b \cdot \| \mu \| \cdot \overline{t} \cdot \frac{1}{\sqrt{\lambda_{\mu, \text{min}}}} + \left( -1 -r \right) \cdot b^{2} \cdot \overline{t}^{2}.
\end{equation}
Set $\Delta$ as follows:
\begin{equation}\label{case 2 and case 3 equation 2 Appendix}
\Delta := \left( 2 + r \right) \| \mu \|^{2} \frac{1}{\lambda_{\mu, \text{min}}} + \left( -1 - r \right)\mu^{T} \Sigma^{-1} \mu.
\end{equation}
If $\Delta < 0$, then there does not exist solutions for \eqref{case 2 and case 3 equation 1 Appendix}, which corresponds to the Case (3). Since $r < -1$ and $\| \mu \| > 0$, based on \eqref{case 2 and case 3 equation 2 Appendix} we can see that $\Delta < 0$ is equivalent to
\begin{equation}\label{case 2 and case 3 equation 2 Appendix-1}
\frac{1}{1+r} > \frac{\mu^{T} \Sigma^{-1} \mu}{\| \mu \|^{2}} \cdot \lambda_{\mu, \text{min}} - 1,
\end{equation}
which implies $r^{o}(\mu, \Sigma) := \| \mu \|^{2} / \left( \mu^{T} \Sigma^{-1} \mu \cdot \lambda_{\mu, \text{min}} \right) > 1$. Thus, when $r^{o}(\mu, \Sigma) \leq 1$, the Case(3) can not happen. Further, we can rewrite \eqref{case 2 and case 3 equation 2 Appendix-1} as follows
\begin{equation}\label{case 2 and case 3 equation 3  Appendix}
r < \frac{1}{ \frac{1}{r^{o}(\mu, \Sigma)} - 1 } - 1 =: r^{**}(\mu, \Sigma) \ \ \text{and} \ \ r^{**}(\mu, \Sigma) < + \infty.
\end{equation}
As a result, once \eqref{case 2 and case 3 equation 3 Appendix} is satisfied, we can take $\sqrt{\theta^{*}} = \|\mu \| / \sqrt{\lambda_{\mu, \text{min}}}$. It can be noticed that if $\Sigma = a \cdot I$ with $a > 0$, then \eqref{case 2 and case 3 equation 3 Appendix} will not be satisfied. If $\Delta \geq 0$, then there exist solutions for \eqref{case 2 and case 3 equation 1 Appendix} and the greater one is defined by
\begin{equation*}
\overline{t}^{*} := \frac{\| \mu \| \cdot \frac{1}{\sqrt{\lambda_{\mu, \text{min}}}} + \sqrt{\left( 2 + r \right) \| \mu \|^{2} \cdot \frac{1}{\lambda_{\mu, \text{min}}} + \left( -1 - r \right) \cdot \mu^{T} \Sigma^{-1} \mu }}{\left( -1 - r \right) \cdot b}.
\end{equation*}
By substituting $(\overline{t}, \overline{s}) := (\overline{t}^{*}, -1/\sqrt{\lambda_{\mu, \text{min}}})$ into $g(\overline{t}, \overline{s}) = c$, we can obtain
\begin{equation}\label{case 2 and case 3 equation 4 Appendix}
\begin{aligned}
c^{*} &:= g(\overline{t}^{*}, -1/\sqrt{\lambda_{\mu,\text{min}}})\\
&= \frac{\left( -r \right) \frac{\| \mu \|}{\sqrt{\lambda_{\mu, \text{min}}}} + \sqrt{\left( 2 + r \right) \frac{\| \mu \|^{2}}{\lambda_{\mu, \text{min}}} + \left( -1 - r \right) \mu^{T} \Sigma^{-1} \mu}}{-1 - r}.
\end{aligned}
\end{equation}
As a result, when the Case (2) takes place, we can take $\sqrt{\theta^{*}} = c^{*}$, where $c^{*}$ is defined by \eqref{case 2 and case 3 equation 4 Appendix}. It remains to consider the Case (1). Since the slope of $\overline{s}_{1}$ is $b / \| \mu \| > 0$, we can apply the Implicit Function theorem and use \eqref{h function's transformation with overline t and overline s} to define $\overline{s}^{'}_{2}(\overline{t})$ near to the tangent point as follows:
\begin{equation*}
\overline{s}^{'}_{2}(\overline{t}) = - \frac{h^{'}_{\overline{t}}\left( \overline{t}, \overline{s} \right)}{h^{'}_{\overline{s}}\left( \overline{t}, \overline{s} \right)} = -\frac{b \| \mu \| \cdot \overline{s} + \left( -1 - r \right) b^{2} \cdot  \overline{t}}{\| \mu \|^{2} \cdot \overline{s} + b \| \mu \| \cdot \overline{t}}.
\end{equation*}
Set the tangent point is $(\overline{t}^{*}, \overline{s}^{*})$, then we have $\overline{s}^{'}_{1}(\overline{t}^{*}) = \overline{s}^{'}_{2}(\overline{t}^{*})$, which means
\begin{equation*}
 -\frac{b \| \mu \| \cdot \overline{s} + \left( -1 - r \right) b^{2} \cdot  \overline{t}}{\| \mu \|^{2} \cdot \overline{s} + b \| \mu \| \cdot \overline{t}} = \frac{b}{\| \mu \|},
\end{equation*}
that is
\begin{equation}\label{case 2 and case 3 equation 5 Appendix}
\overline{s}^{*} = \frac{r}{2} \cdot \frac{b}{\| \mu \|} \cdot \overline{t}^{*},
\end{equation}
where we have used the fact $h^{'}_{\overline{s}}\left( \overline{t}^{*}, \overline{s}^{*} \right) \neq 0$ since $\overline{s}_{1}$ is not vertical. Substituting  \eqref{case 2 and case 3 equation 5 Appendix} into $h(\overline{t}, \overline{s}) = \mu^{T} \Sigma^{-1} \mu$ we get
\begin{equation}\label{case 2 and case 3 equation 6 Appendix}
\left( \frac{1}{4} r^{2} - 1 \right) \cdot b^{2} \cdot \overline{t}^{2} = \mu^{T} \Sigma^{-1} \mu.
\end{equation}
It can be seen from \eqref{case 2 and case 3 equation 6 Appendix} that when $r \in [-2, -1)$, there does not exist the tangent point so that the Case (1) degenerates into the Case (2) or Case (3). Thus, when $r < -2$, using \eqref{case 2 and case 3 equation 6 Appendix} we can yield $\overline{t}^{*}$ defined as follows:
\begin{equation}\label{case 2 and case 3 equation 7 Appendix}
\overline{t}^{*} := \frac{1}{b} \cdot \sqrt{\mu^{T} \Sigma^{-1} \mu} \cdot \frac{1}{\sqrt{\frac{1}{4} r^{2} - 1}}.
\end{equation}
Substituting \eqref{case 2 and case 3 equation 7 Appendix} into \eqref{case 2 and case 3 equation 5 Appendix} we yield
\begin{equation}\label{case 2 and case 3 equation 8 Appendix}
\overline{s}^{*} = \frac{r}{2} \cdot \frac{1}{\| \mu \|} \cdot \sqrt{\mu^{T} \Sigma^{-1} \mu} \cdot \frac{1}{\sqrt{\frac{1}{4} r^{2} - 1}}.
\end{equation}
Thus, by using \eqref{case 2 and case 3 equation 7 Appendix} and \eqref{case 2 and case 3 equation 8 Appendix} we can obtain
\begin{equation}\label{case 2 and case 3 equation 9 Appendix}
c^{*} = g\left( \overline{t}^{*} , \overline{s}^{*} \right) = b \cdot \overline{t}^{*} - \| \mu \| \cdot \overline{s}^{*} = \sqrt{\frac{-r + 2}{-r - 2}} \cdot \sqrt{\mu^{T} \Sigma^{-1} \mu}.
\end{equation}
Next, we check whether $\overline{s}^{*}$ defined by \eqref{case 2 and case 3 equation 8 Appendix} is contained in $[-1/\sqrt{\lambda_{\mu, \text{min}}}, 1/ \sqrt{\lambda_{\mu, \text{min}}}]$. 
Since $r < -2$ and \eqref{case 2 and case 3 equation 8 Appendix} is satisfied, then we can see that $\overline{s}^{*} < 0 < 1/ \sqrt{\lambda_{\mu, \text{min}}}$, and $\overline{s}^{*} \geq -1/ \sqrt{\lambda_{\mu, \text{min}}}$ is equivalent to
\begin{equation}\label{case 2 and case 3 equation 10 Appendix}
\sqrt{1 + \frac{1}{\frac{1}{4}r^{2} - 1}} \cdot \frac{\sqrt{\mu^{T} \Sigma^{-1} \mu}}{\| \mu \|} \leq \frac{1}{\sqrt{\lambda_{\mu, \text{min}}}}.
\end{equation}
Since $r < -2$, we can see \eqref{case 2 and case 3 equation 10 Appendix} implies $r^{o}(\mu, \Sigma) := \| \mu \|^{2} / \left( \mu^{T} \Sigma^{-1} \mu \cdot \lambda_{\mu, \text{min}} \right) > 1$. As a result, we have proved that given $r<-1$, if $r^{o}(\mu, \Sigma) \leq 1$, then only the Case (2) happens. From Lemma \ref{reformualtion of the necessary and sufficient conditions again}, we know $\lambda_{\text{min}} \leq \lambda_{\mu, \text{min}} \leq \lambda_{\text{max}}$. When $\Sigma = a \cdot I$ with $a > 0$, we can see $r^{o}(\mu, \Sigma) = 1$, which corresponds to the Case (2). In fact, suppose $\Sigma = a \cdot I$ with $a > 0$ and $\overline{s}_{1}$ is tangent with $\overline{s}_{2}$, then the point of tangency must locate under the horizontal line $\overline{s} = -1/\lambda_{\mu, \text{min}}$. Thus, we can get better $\sqrt{\theta^{*}}$ following the Case (2).
Further, we can reformulate \eqref{case 2 and case 3 equation 10 Appendix} as follows
\begin{equation}\label{case 2 and case 3 equation 11 Appendix}
r \leq -2 \cdot \sqrt{\frac{1}{r^{o}(\mu, \Sigma) - 1 } + 1} =: r^{*}(\mu, \Sigma).
\end{equation}
Due to \eqref{case 2 and case 3 equation 10 Appendix}, we know $r^{o}(\mu, \Sigma) > 1$. Thus, we have $r^{*}(\mu, \Sigma) < -2$. As a result, when $r < r^{*}(\mu, \Sigma)$ and $r^{o}(\mu, \Sigma) > 1$, we can take $\sqrt{\theta^{*}} = c^{*}$, where $c^{*}$ is defined by \eqref{case 2 and case 3 equation 9 Appendix}. Assume $\| \mu \| \neq 0$ and $r^{o}(\mu, \Sigma) > 1$. Then, by combining \eqref{case 2 and case 3 equation 3 Appendix} and \eqref{case 2 and case 3 equation 11 Appendix}, we can see $r^{**} < r^{*}$, which means the Case (3) can not take place. In fact, since for any $z > 1$, we have $(2z - 1)^{2} > 4z \cdot (z - 1)$, which implies
\begin{equation}\label{case 2 and case 3 equation 12 Appendix}
-2 \sqrt{\frac{1}{z - 1} + 1} > \frac{1}{\frac{1}{z} - 1} - 1.
\end{equation}
Since $r^{o}(\mu, \Sigma) > 1$. Then, we can see $r^{**} < r^{*}$ by substituting $z = r^{o}(\mu, \Sigma)$ into \eqref{case 2 and case 3 equation 12 Appendix}. 
\end{proof}

\begin{lemma}\label{proof of part of Th. 3.4}
Assume $o \notin X$. Under the assumptions of Lemma \ref{Lemma about Hi-1}, define $g(x) := (b - \mu^{T}x)/\sqrt{x^{T}\Sigma x}$. Define $G(\theta^{**})$ by \eqref{"locally convex" set}. Assume $\| \mu \| \neq 0$ . If $b = 0$, then there exists an \textbf{best} threshold $\theta^{*}=\mu^{T} \Sigma^{-1} \mu$ for any $r \neq 0$, which means $\text{sign(-r)}\cdot g^{r}$ is locally convex on $G(\theta^{**})$ if and only if $\theta^{**} \geq \theta^{*} := \mu^{T} \Sigma^{-1} \mu$.
If $b < 0$ and $r \leq -2$, then we can take
\begin{equation*}
\theta^{*} = \frac{1}{\left( -1 - r \right)} \cdot \left[ \mu^{T} \Sigma^{-1} \mu - \left( 2 + r \right) \cdot \frac{\| \mu \|^{2}}{\lambda_{\text{min}}} \right]
\end{equation*}
or a greater one $\theta^{*} = \| \mu \|^{2} \cdot \lambda^{-1}_{\text{min}}$. If $b < 0$ and $-2 < r < -1$, then we can take $\theta^{*} = \mu^{T} \Sigma^{-1} \mu / (-1 - r)$ or simply $\theta^{*} = \| \mu \|^{2}\cdot \lambda^{-1}_{\text{min}} / (-1-r) $. If $b < 0$ and $r \geq -1$ and $r \neq 0$, then we can set
\begin{equation*}
\theta^{*} = \mu^{T} \Sigma^{-1} \mu + (2 + r) \cdot \frac{\| \mu \|^{2}}{\lambda_{\text{min}}}
\end{equation*}
or simply $\theta^{*} = (3 + r) \cdot \| \mu \|^{2} \cdot \lambda^{-1}_{\text{min}}$.
\end{lemma}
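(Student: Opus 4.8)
The plan is to reduce to the reformulated inequality \eqref{sufficient and necessary condition r<2 and r neq 0} exactly as in the proof of Theorem~\ref{Corollary of Hi-1}: by Lemma~\ref{Lemma about Hi-1} the local convexity of $\text{sign}(-r)\cdot g^{r}$ at a point $x\in E$ is equivalent to \eqref{sufficient and necessary condition r<2 and r neq 0}, and by Lemma~\ref{reformualtion of the necessary and sufficient conditions again} this is $\mu^{T}\Sigma^{-1}\mu\leq h(\overline{t},\overline{s})$ with $h,g$ as in \eqref{h function's transformation with overline t and overline s}--\eqref{g function's transformation with overline t and overline s}, so that $G(\theta^{**})$ corresponds to $\{\,g(\overline{t},\overline{s})\geq\sqrt{\theta^{**}}\,\}$. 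The key common fact for every $b\leq0$ is that $b-\mu^{T}x>0$ forces $\mu^{T}x<0$ on $E$, whence $g(x)=(b-\mu^{T}x)/\sqrt{x^{T}\Sigma x}\leq -\mu^{T}x/\sqrt{x^{T}\Sigma x}\leq\sqrt{\mu^{T}\Sigma^{-1}\mu}$ by Cauchy--Schwarz in the $\Sigma$-inner product; equivalently $0<\theta(x)\leq\mu^{T}\Sigma^{-1}\mu$ on $E$, the value $\mu^{T}\Sigma^{-1}\mu$ being attained (by $x=-t\Sigma^{-1}\mu$, $t>0$) exactly when $b=0$.

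For $b=0$ I would substitute $b=0$ into \eqref{sufficient and necessary condition r<2 and r neq 0} and use $\mu^{T}x/\sqrt{x^{T}\Sigma x}=-\sqrt{\theta}$ (valid since $\mu^{T}x<0$): the right-hand side collapses to $(2-r)\theta+2r\theta-(r+1)\theta=\theta$, the $r$-dependent terms cancelling, so \eqref{sufficient and necessary condition r<2 and r neq 0} is equivalent to $\theta\geq\mu^{T}\Sigma^{-1}\mu$, independently of $r$. Since $\theta$ takes all values in $(0,\mu^{T}\Sigma^{-1}\mu]$ as $x$ ranges over $E$, it follows that $\text{sign}(-r)\cdot g^{r}$ is locally convex on $G(\theta^{**})$ if and only if $\theta^{**}\geq\mu^{T}\Sigma^{-1}\mu$, which is the announced \textbf{best} threshold $\theta^{*}=\mu^{T}\Sigma^{-1}\mu$ for every $r\neq0$. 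Taking $r=0$ in the same computation yields \eqref{sufficient and necessary condition ln} and the identical conclusion for $\ln g$.

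For $b<0$ the set $G(\theta^{**})$ is empty as soon as $\theta^{**}\geq\mu^{T}\Sigma^{-1}\mu$, because $\sup_{E}\theta=\mu^{T}\Sigma^{-1}\mu$ is not attained; hence it suffices to verify that each displayed value of $\theta^{*}$ is $\geq\mu^{T}\Sigma^{-1}\mu$, which follows from the elementary bound $\mu^{T}\Sigma^{-1}\mu\leq\|\mu\|^{2}/\lambda_{\text{min}}$ (for instance, for $r\leq-2$ one has $\mu^{T}\Sigma^{-1}\mu-(2+r)\|\mu\|^{2}/\lambda_{\text{min}}\geq(-1-r)\mu^{T}\Sigma^{-1}\mu$) together with $-1-r>0$ on the ranges $r<-1$; the "greater'' / "simply'' variants are then immediate from the same bound. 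If instead one wants these thresholds through a direct estimate rather than the vacuous-region argument, one splits on $r\leq-2$, $-2<r<-1$, $r\geq-1$ (the signs of $-1-r$ and $2+r$ controlling how the quadratic $h$ degenerates) and bounds the right-hand side $(2-r)p^{2}+2rqp-(r+1)q^{2}$ of \eqref{sufficient and necessary condition r<2 and r neq 0} from below, writing $p:=-\mu^{T}x/\sqrt{x^{T}\Sigma x}$, $q:=\sqrt{\theta}$ and using $0<q<p$, $p^{2}=\|\mu\|^{2}\cos^{2}\beta/\lambda(x)\leq\|\mu\|^{2}/\lambda_{\text{min}}$, $q\geq\sqrt{\theta^{**}}$: replacing $2rqp$ by $2rp^{2}$ leaves $(2+r)p^{2}+(-1-r)q^{2}$, and bounding $p^{2}$ above when $2+r\leq0$ or dropping that term when $2+r\geq0$, together with $q^{2}\geq\theta^{**}$, reproduces the stated formulas.

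The only genuinely delicate points are (i) checking that in the $b=0$ case the right-hand side of \eqref{sufficient and necessary condition r<2 and r neq 0} really loses all $r$-dependence, so that one single $\theta^{*}$ is simultaneously best for every $r\neq0$, and (ii) justifying both directions of the "if and only if'' there, i.e. that for $\theta^{**}<\mu^{T}\Sigma^{-1}\mu$ some $x\in G(\theta^{**})$ violates \eqref{sufficient and necessary condition r<2 and r neq 0} — which reduces to the surjectivity of $\theta$ onto $(0,\mu^{T}\Sigma^{-1}\mu]$ used above. Everything concerning $b<0$ is then routine: once $\sup_{E}\theta=\mu^{T}\Sigma^{-1}\mu$ is established, the listed thresholds are valid simply because $G(\theta^{**})$ is empty for all $\theta^{**}\geq\theta^{*}\geq\mu^{T}\Sigma^{-1}\mu$.
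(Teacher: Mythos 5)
Your proof is correct, and for $b=0$ it is essentially the paper's own computation (the paper works in the polar variables of Lemma \ref{reformualtion of the necessary and sufficient conditions again} and takes a square root of \eqref{2 transformation of key sufficient and necessary conndition inequality}; you substitute $\mu^{T}x/\sqrt{x^{T}\Sigma x}=-\sqrt{\theta}$ directly into \eqref{sufficient and necessary condition r<2 and r neq 0} and watch the $r$-dependence cancel — same content). For $b<0$ you take a genuinely different route: the paper verifies the convexity criterion pointwise, rewriting it as $(-1-r)\theta\geq\overline{\theta}(x)$ (resp.\ $\theta\geq\overline{\theta}(x)$ when $r\geq -1$) and bounding $\overline{\theta}$ uniformly via $\cos\beta<0$ and $1/\|x\|\leq\|\mu\|/(-b)$, whereas you note that $\sup_{x\in E}\theta(x)=\mu^{T}\Sigma^{-1}\mu$ is not attained when $b<0$ and check that every listed $\theta^{*}$ is $\geq\mu^{T}\Sigma^{-1}\mu$, so that $G(\theta^{**})=\emptyset$ and the claim is vacuous. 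Both your Cauchy--Schwarz bound and the case-by-case verification $\theta^{*}\geq\mu^{T}\Sigma^{-1}\mu$ are sound, so this works and is shorter; it also makes explicit that the $b<0$ clauses carry no content, which the paper's argument hides. What it does not deliver is the quantitative fact the paper's route establishes — that the inequality \eqref{sufficient and necessary condition r<2 and r neq 0} itself holds wherever $\theta(x)\geq\theta^{*}$ — which is the form actually reused later (e.g.\ in the $r_i=0$ limit argument of Lemma \ref{Proposition about concave Hi}).

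One caution on your backup ``direct estimate'': replacing $2rqp$ by $2rp^{2}$ is a valid lower bound only when $r<0$ (for $r>0$ the inequality $qp<p^{2}$ reverses sign after multiplication by $2r$), and for $-1\leq r$ the step ``drop $(2+r)p^{2}$'' leaves the lower bound $(-1-r)q^{2}\leq 0$, which can never dominate $\mu^{T}\Sigma^{-1}\mu>0$. So that sketch does not reproduce the $r\geq -1$ formula as written; the paper instead bounds $\theta=(u+p)^{2}$ below by $\mu^{T}\Sigma^{-1}\mu+(2+r)u^{2}+4up$ with $u:=b/(\sqrt{\lambda}\,\|x\|)<0$ and uses $u^{2}\leq\|\mu\|^{2}/\lambda_{\text{min}}$. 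Since your vacuity argument already covers all $b<0$ cases, this does not affect the validity of your proof, but the alternative derivation should not be presented in its current form.
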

\begin{proof}
First, we consider the case with $b = 0$. Since $x \in E$, we know that
\begin{equation*}
0 = b > \mu^{T}x = \| \mu \| \cdot \| x \| \cdot \text{cos}\beta,
\end{equation*}
which implies $\text{cos} \beta < 0$. Thus, we can rewrite \eqref{2 transformation of key sufficient and necessary conndition inequality} as follows 
\begin{equation*}
\sqrt{\mu^{T} \Sigma^{-1} \mu} \leq \frac{- \text{cos} \beta \cdot \| \mu \|}{\sqrt{\lambda(x)}} = \frac{b - \| \mu \| \cdot \| x \| \cdot \text{cos}\beta}{\sqrt{\lambda(x)} \cdot \| x \|} = \frac{b - \mu^{T} x}{\sqrt{x^{T} \Sigma x}},
\end{equation*}
which means $\sqrt{\theta^{*}} = \sqrt{\mu^{T} \Sigma^{-1} \mu}$ is best for $\text{sign}(-r) \cdot g^{r}$ being locally convex on the set defined by \eqref{"locally convex" set}.

If $b < 0$, then we can yield
\begin{equation*}
0 > b > \mu^{T} x = \| \mu \| \cdot \| x \| \cdot \text{cos}\beta \geq -\| \mu \| \cdot \| x \|, \ \ \forall x \in E,
\end{equation*}
which implies
\begin{equation}\label{constraints about variables with b<0}
\text{cos}\beta < 0, \ \ 0 < \frac{1}{\| x \|} \leq \frac{\| \mu \|}{-b}.
\end{equation}
When $r \leq -2$, we can rewrite \eqref{2 transformation of key sufficient and necessary conndition inequality} as follows
\begin{equation}\label{3.1 transformation of key sufficient and necessary conndition inequality}
\begin{aligned}
&(-1 -r)\cdot \theta = \left( -1 - r \right)\cdot \left( \frac{b}{\sqrt{\lambda}} \cdot \frac{1}{\| x \|} - \frac{\| \mu \| \cdot \text{cos}\beta}{\sqrt{\lambda}} \right)^{2}\\
\geq & \mu^{T} \Sigma^{-1} \mu - \left( 2 + r \right) \cdot \frac{\| \mu \|^{2} \cdot \text{cos}^{2}\beta}{\lambda} - \left( -2r \right) \cdot \frac{b \cdot \| \mu \| \cdot \text{cos}\beta}{\lambda \cdot \| x \|} =: \overline{\theta}.
\end{aligned}
\end{equation}
Thus, if we take
\begin{equation*}
\sqrt{\theta^{*}} := \frac{1}{\sqrt{-1 - r}} \cdot \sqrt{\mu^{T} \Sigma^{-1} \mu - (2 + r) \cdot \frac{\| \mu \|^{2}}{\lambda_{\text{min}}}},
\end{equation*}
then $\theta^{*}$ is the required threshold since $\theta^{*} \geq \overline{\theta}$. Furthermore, we can see that
\begin{equation*}
\begin{aligned}
&\frac{1}{\sqrt{-1 - r}} \cdot \sqrt{\mu^{T} \Sigma^{-1} \mu - (2 + r) \cdot \frac{\| \mu \|^{2}}{\lambda_{\text{min}}}}\\
& \leq \frac{1}{\sqrt{-1 - r}} \cdot \sqrt{\| \mu \|^{2} \cdot  \lambda^{-1}_{\text{min}} - (2 + r) \cdot \frac{\| \mu \|^{2}}{\lambda_{\text{min}}}} = \| \mu \| \cdot \lambda^{-\frac{1}{2}}_{\text{min}}.
\end{aligned}
\end{equation*}
Then, we can take a greater threshold with a simpler form $\sqrt{\theta^{*}} := \| \mu \| \lambda^{-\frac{1}{2}}_{\text{min}}$. When $-2 < r < -1$, we have
\begin{equation*}
\overline{\theta} \leq \mu^{T} \Sigma^{-1} \mu \leq \| \mu \|^{2} \cdot \lambda^{-1}_{\text{min}},
\end{equation*}
where $\overline{\theta}$ is defined by \eqref{3.1 transformation of key sufficient and necessary conndition inequality}. As a result, we can take $\theta^{*} = \mu^{T} \Sigma^{-1} \mu / (-1-r)$ or a greater value $\theta^{*} = \| \mu \| \cdot \lambda^{-1}_{\text{min}}/ (-1-r)$. When $r \geq -1$, \eqref{2 transformation of key sufficient and necessary conndition inequality} can be reformulated as follows
\begin{equation*}
\begin{aligned}
&\theta = \left( \frac{b}{\sqrt{\lambda} \cdot \| x \|} - \frac{\| \mu \| \cdot \text{cos} \beta}{\sqrt{\lambda} \cdot \| x \|} \right)^{2}\\
&\geq \mu^{T} \Sigma^{-1} \mu + \left( 2 + r \right) \cdot \frac{b^{2}}{\lambda \cdot \| x \|^{2}} - 4\cdot \frac{b \cdot \| \mu \| \cdot  \text{cos}\beta}{\lambda \cdot \| x \|} =: \overline{\theta}.
\end{aligned}
\end{equation*}
And then, by using \eqref{constraints about variables with b<0} we yield
\begin{equation*}
\overline{\theta} \leq \mu^{T} \Sigma^{-1} \mu + (2 + r) \cdot \frac{b^{2}}{\lambda} \cdot \frac{\| \mu \|^{2}}{b^{2}} \leq \| \mu \|^{2} \cdot \lambda^{-1}_{\text{min}} \cdot (3 + r).
\end{equation*}
Thus, we can take $\theta^{*} = \mu^{T} \Sigma^{-1} \mu + (2 + r)\lambda^{-1}_{\text{min}} \| \mu \|^{2}$ or a greater one $\theta^{*} = (3 + r)\| \mu \|^{2} \lambda^{-1}_{\text{min}}$.
\end{proof}

\begin{proof}[Proof of Corollary \ref{Corollary of Hi-1-1}]
Applying \ZH{Theorem} 1 in \cite{minoux2016convexity}, we can yield a similar necessary and sufficient condition to \eqref{sufficient and necessary condition r<2 and r neq 0} as follows
\begin{equation}\label{Gaussian case 1}
\mu^{T} \Sigma^{-1} \mu \leq \theta^{2} - 2 \theta + 2 \sqrt{\theta}(\theta - 1) \frac{\mu^{T} x}{\sqrt{x^{T} \Sigma x}} + (\theta + 1) \frac{\left( \mu^{T} x \right)^{2}}{x^{T} \Sigma x}.
\end{equation}
Set $\lambda(x) := x^{T} \Sigma x / \| x \|^{2}$ and $\beta$ being the angle between vectors $x$ and $\mu$. By setting $\overline{t} := t / (\sqrt{\lambda} \cdot \| x \| )$ and $\overline{s} := cos \beta / \sqrt{\lambda}$, we can rewrite \eqref{Gaussian case 1} as follows
\begin{equation}\label{Gaussian case 2}
\begin{aligned}
\mu^{T} \Sigma^{-1} \mu &\leq \| \mu \|^{2} \cdot \left( \overline{s} \right)^{2} + 2b \cdot \| \mu \| \cdot \overline{t} \cdot \overline{s} - 2b^{2} \cdot \left( \overline{t} \right)^{2}\\
&+ b^{4} \cdot \left( \overline{t} \right)^{4} - 2b^{3} \cdot \| \mu \| \cdot \overline{s} \cdot \left( \overline{t} \right)^{3} + b^{2} \cdot \| \mu \|^{2} \cdot \left( \overline{s} \right)^{2} \cdot \left( \overline{t} \right)^{2}.
\end{aligned}
\end{equation}
The definitions of $\overline{t}$ and $\overline{s}$ imply $\overline{t} \in (0, +\infty)$, $\overline{s} \in [-1/ \sqrt{\lambda_{\mu, \text{min}}}, 1/ \sqrt{\lambda_{\mu, \text{min}}}]$. Suppose $b > 0$. Observe that if $\overline{t}$ is large enough, then for any $\overline{s} \in [-1/ \sqrt{\lambda_{\mu, \text{min}}}, 1/ \sqrt{\lambda_{\mu, \text{min}}}]$ the right hand side of \eqref{Gaussian case 2} will be uniformly greater than the given $\mu^{T} \Sigma^{-1} \mu$. As a result, following the discussion about the case with $b > 0$ in Theorem \ref{Corollary of Hi-1}, we can prove the existence of best probability threshold $p^{*}:= F(\sqrt{\theta^{*}})$. The best threshold $\theta^{*}$ can be yielded by computing the maximum and minimum values of $g(\overline{t}, \overline{s}) := b \cdot \overline{t} - \| \mu \| \cdot \overline{s}$ on the points $(\overline{t}, \overline{s})$ which do not satisfy \eqref{Gaussian case 2}. Suppose $b = 0$. Then, we can see it is the same case with $b = 0$ in Theorem \ref{Corollary of Hi-1}, which implies that the best probability threshold is $p^{*}:= F(\sqrt{\mu^{T} \Sigma^{-1} \mu})$. Consider the case with $b < 0$. The non-existence of threshold $\theta^{**}$ is the result from the decreasing property of \eqref{g function's transformation with overline t and overline s} and the increasing property of the right-hand side of \eqref{Gaussian case 2} concerning $\overline{t}$.
\end{proof}

\begin{proof}[Proof of Lemma \ref{Lemma about Hi-2}]
The proof of the first part refers to \ZH{Lemma} 3.1 in \cite{henrion2008convexity}. It suffices to prove the second part. Let $r > 0$ and set $h(z) := F(z^{\frac{1}{r}})$. Then $h(z)$ can be rewritten as
\begin{equation}\label{Lemma about Hi-2 equation 1}
h(z) = F(0) + \int^{z^{\frac{1}{r}}}_{0} f(t) dt.
\end{equation}
Taking $t = u^{\frac{1}{r}}$ into \eqref{Lemma about Hi-2 equation 1} we have
\begin{equation*}
h(z) = F(0) - r^{-1}\int^{+\infty}_{z} u^{\frac{1}{r} - 1} f(u^{\frac{1}{r}}) du,
\end{equation*}
thus
\begin{equation*}
h'(z) = r^{-1} \cdot z^{\frac{1}{r} -1} \cdot f(z^{\frac{1}{r}}).
\end{equation*}
The definition of $(-r + 1)$-decreasing function tell us that the mapping $t \rightarrow t^{-r + 1}f(t)$ is strictly decreasing on $(t^{*}, +\infty)$. It follows that the mapping $z \rightarrow z^{\frac{1}{r} - 1} f(z^{\frac{1}{r}})$ is strictly decreasing on $((t^{*})^{r}, +\infty)$, so does $h'(z)$. Therefore, we have proved that $h(z)$ is concave on $((t^{*})^{r}, +\infty)$ with $r > 0$.
\end{proof}

\begin{proof}[Proof of Proposition \ref{Lemma end 2}]
First, we prove $\varphi_{1} > 0$. Since $y_{i} \in (0, 1)$, we have
\begin{equation}\label{Lemma end 2_equation 1}
0 < -\ln y_{i}, \ \ i = 1, ... , K.
\end{equation}
Due to the (2) of Assumption \ref{Assumption 3}, we know
\begin{equation}\label{Lemma end 2_equation 2}
0 < \kappa(x) \leq 1, \ \ \forall x \in X.
\end{equation}
It follows that
\begin{equation}\label{Lemma end 2_equation 3}
0 < y_{i} \leq y^{\kappa(x)}_{i} \leq 1.
\end{equation}
Combining the (1) of Assumption \ref{Assumption 3} and $0<p<1$, we can get
\begin{equation}\label{Lemma end 2_equation 4}
0 < -\ln p \leq 1.
\end{equation}
Then, by combining \eqref{Lemma end 2_equation 3} and \eqref{Lemma end 2_equation 4} we yield
\begin{equation}\label{Lemma end 2_equation 5}
0 < -\ln p \cdot y_{i} \leq -\ln p \cdot y^{\kappa(x)}_{i} \leq 1
\end{equation}
Next, using \eqref{Lemma end 2_equation 2} and \eqref{Lemma end 2_equation 5}, for any $(x, y_{i})\in X \times (0, 1)$ we have
\begin{equation}\label{Lemma end 2_equation 6}
\begin{array}{ll}
& 0 < -\ln p \cdot y_{i} \leq -\ln p \cdot y^{\kappa(x)}_{i} = 1 - (1 + \ln p \cdot y^{\kappa(x)}_{i})\\
& \leq 1 - \kappa(x) \cdot (1 + \ln p \cdot y^{\kappa(x)}_{i}) = 1 - \kappa(x) - \kappa(x) \cdot \ln p \cdot y^{\kappa(x)}_{i}.
\end{array}
\end{equation}
As a result, by multiplying \eqref{Lemma end 2_equation 1}, \eqref{Lemma end 2_equation 2}, and \eqref{Lemma end 2_equation 6}, we can yield
\begin{equation*}
\begin{array}{ll}
\varphi_{1} &:= \kappa(x)\cdot\ln y_{i} \cdot [ \kappa(x) - 1 + \kappa(x) \cdot  \ln p \cdot y^{\kappa(x)}_{i} ]\\
&\geq \kappa(x) \cdot \ln y_{i} \cdot \ln p \cdot y_{i} > 0,
\end{array}
\end{equation*}
which completes the proof.

Next, we prove $\varphi_{2} > 0$. Using \eqref{Lemma end 2_equation 3} and \eqref{Lemma end 2_equation 4}, we have
\begin{equation}\label{Lemma end 2_equation 8}
0 < 1 + \ln p \cdot y^{\kappa(x)}_{i} < 1,
\end{equation}
then we combine \eqref{Lemma end 2_equation 8} and \eqref{Lemma end 2_equation 2} to get
\begin{equation}\label{Lemma end 2_equation 9}
0 < \kappa(x)\cdot \left( 1 + \ln p \cdot y^{\kappa(x)}_{i} \right) < 1.
\end{equation}
Thus, combining \eqref{Lemma end 2_equation 2}, \eqref{Lemma end 2_equation 8} and \eqref{Lemma end 2_equation 9} we can yield
\begin{equation*}
\begin{array}{ll}
\varphi_{2} &:= \kappa(x) \cdot \left( \ln y_{i} \right)^{2} \cdot \left( 1 + \ln p \cdot y^{\kappa(x)}_{i} \right) \cdot [ 1 - \kappa(x) - \kappa(x) \cdot  \ln p \cdot y^{\kappa(x)}_{i} ]\\
        & + \left( 1 + \kappa(x)\cdot \ln y_{i} + \ln p \cdot \ln y_{i} \cdot y^{\kappa(x)}_{i} \cdot \kappa(x) \right)^{2}\\
        & \geq \kappa(x) \cdot \left( \ln y_{i} \right)^{2} \cdot \left( 1 + \ln p \cdot y^{\kappa(x)}_{i} \right) \cdot [ 1 - \kappa(x) - \kappa(x) \cdot  \ln p \cdot y^{\kappa(x)}_{i} ]>0.
\end{array}
\end{equation*}

It remains to prove that there exists $d > 0$ such that $1/\omega(x, y_{i}) \geq d \cdot \kappa(x)$ for any $(x, y_{i}) \in X \times (0, 1)$. Before proving the result, we rewrite $\varphi_{2}$ as follows
\begin{equation*}
\begin{array}{ll}
\varphi_{2} &= \kappa(x) \cdot \left( \ln y_{i} \right)^{2} \cdot \left( 1 + \ln p \cdot y^{\kappa(x)}_{i} \right) \cdot [ 1 - \kappa(x) - \kappa(x) \cdot  \ln p \cdot y^{\kappa(x)}_{i} ]\\
            &+ \left( \kappa(x) \right)^{2} \cdot \left( \ln y_{i} \right)^{2} \cdot \left( 1 + \ln p \cdot y^{\kappa(x)}_{i} \right)^{2}  + 2\kappa(x) \cdot \ln y_{i} \cdot \left[ 1 + \ln p \cdot y^{\kappa(x)}_{i} \right] + 1.
\end{array}
\end{equation*}
We set
\begin{equation*}
\begin{array}{ll}
\textcircled{1} &:= \frac{\kappa(x) \cdot \left( \ln y_{i} \right)^{2} \cdot \left( 1 + \ln p \cdot y^{\kappa(x)}_{i} \right) \cdot [ 1 - \kappa(x) - \kappa(x) \cdot  \ln p \cdot y^{\kappa(x)}_{i} ]}{\kappa(x)\cdot\ln y_{i} \cdot [ \kappa(x) - 1 + \kappa(x) \cdot  \ln p \cdot y^{\kappa(x)}_{i} ]}\\
& = -\ln y_{i} \cdot \left( 1 + \ln p \cdot y^{\kappa(x)}_{i} \right),
\end{array}
\end{equation*}
and
\begin{equation*}
\begin{array}{ll}
\textcircled{2} &:= \frac{ \kappa(x) \cdot \left( -\ln y_{i} \right) \cdot \left( 1 + \ln p \cdot y^{\kappa(x)}_{i} \right)^{2} }{\left[ 1 - \kappa(x) \cdot \left( 1 + \ln p \cdot y^{\kappa(x)}_{i} \right) \right]} + \frac{-2\cdot \left[ 1 + \ln p \cdot y^{\kappa(x)}_{i} \right]}{\left[ 1 - \kappa(x) \cdot \left( 1 + \ln p \cdot y^{\kappa(x)}_{i} \right) \right]}\\
& + \frac{\left( \kappa(x) \cdot \left( -\ln y_{i} \right) \right)^{-1}}{\left[ 1 - \kappa(x) \cdot \left( 1 + \ln p \cdot y^{\kappa(x)}_{i} \right) \right]},
\end{array}
\end{equation*}
then $\omega := \varphi_{2}/ \varphi_{1} = \textcircled{2} + \textcircled{1}$.
Then, by using \eqref{Lemma end 2_equation 2} and \eqref{Lemma end 2_equation 5} we can yield
\begin{equation}\label{Lemma end 2_equation 10}
\textcircled{1} = -\ln y_{i} \cdot \left( 1 + \ln p \cdot y^{\kappa(x)}_{i} \right) \leq -\ln y_{i} \cdot \left( 1 + \ln p \cdot y_{i} \right)
\end{equation}
and
\begin{equation*}
1 - \kappa(x) \cdot \left( 1 + \ln p \cdot y^{\kappa(x)}_{i} \right) \geq 1 - 1 - \ln p \cdot y_{i} = -\ln p \cdot y_{i} > 0, 
\end{equation*}
which implies $\textcircled{2}$ is well-defined. Due to Assumption \ref{Assumption 3} and $0 < y_{i} < 1$, we have
\begin{equation*}
\kappa(x) \cdot \left( -\ln y_{i} \right) \cdot \left( 1 + \ln p \cdot y^{\kappa(x)}_{i} \right)^{2} > 0,
\end{equation*}
\begin{equation*}
-2\cdot \left[ 1 + \ln p \cdot y^{\kappa(x)}_{i} \right] < 0, \ \ \left( \kappa(x) \cdot \left( -\ln y_{i} \right) \right)^{-1} > 0.
\end{equation*}
It follows that
\begin{equation}\label{Lemma end 2_equation 12}
\begin{array}{ll}
\textcircled{2} &\leq \frac{ \kappa(x) \cdot \left( -\ln y_{i} \right) \cdot \left( 1 + \ln p \cdot y^{\kappa(x)}_{i} \right)^{2} }{\left[ 1 - \kappa(x) \cdot \left( 1 + \ln p \cdot y^{\kappa(x)}_{i} \right) \right]} + \frac{\left( \kappa(x) \cdot \left( -\ln y_{i} \right) \right)^{-1}}{\left[ 1 - \kappa(x) \cdot \left( 1 + \ln p \cdot y^{\kappa(x)}_{i} \right) \right]}\\
& \ \\
& \leq \kappa(x) \cdot \left[ \frac{\left( -\ln y_{i} \right) \cdot \left( 1 + \ln p \cdot y^{\kappa(x)}_{i} \right)^{2}}{-\ln p \cdot y_{i}} \right] + \frac{1}{\kappa(x)} \cdot \left[ \frac{\left( -\ln y_{i} \right)^{-1}}{-\ln p \cdot y_{i}} \right]\\
& \ \\
& \leq \kappa(x) \cdot \left[ \frac{\left( -\ln y_{i} \right) \cdot \left( 1 + \ln p \cdot y_{i} \right)^{2}}{-\ln p \cdot y_{i}} \right] + \frac{1}{\kappa(x)} \cdot \left[ \frac{\left( -\ln y_{i} \right)^{-1}}{-\ln p \cdot y_{i}} \right]
\end{array}
\end{equation}
Combing \eqref{Lemma end 2_equation 10} and \eqref{Lemma end 2_equation 12}, we yield
\begin{equation}\label{Lemma end 2_equation 13}
\begin{array}{ll}
\omega = \textcircled{1} + \textcircled{2} &\leq -\ln y_{i} \cdot \left( 1 + \ln p \cdot y_{i} \right)\\
& + \kappa(x) \cdot \left[ \frac{\left( -\ln y_{i} \right) \cdot \left( 1 + \ln p \cdot y_{i} \right)^{2}}{-\ln p \cdot y_{i}} \right] + \frac{1}{\kappa(x)} \cdot \left[ \frac{\left( -\ln y_{i} \right)^{-1}}{-\ln p \cdot y_{i}} \right]\\
& =: a_{2} + a_{1} \cdot \kappa(x) + a_{3} \cdot \frac{1}{\kappa(x)},
\end{array}
\end{equation}
where
\begin{equation*}
a_{1} := \frac{\left( -\ln y_{i} \right) \cdot \left( 1 + \ln p \cdot y_{i} \right)^{2}}{-\ln p \cdot y_{i}} , \ \ a_{2} := -\ln y_{i} \cdot \left( 1 + \ln p \cdot y_{i} \right), \ \ a_{3} := \frac{\left( -\ln y_{i} \right)^{-1}}{-\ln p \cdot y_{i}}.
\end{equation*}
It can be seen that $a_{1}$, $a_{2}$, and $a_{3}$ are positive. Applying the (2) of Assumption \ref{Assumption 3}, we can see
\begin{equation}\label{Lemma end 2_equation 14}
0 < \kappa(x) \leq 1 \leq \frac{1}{\kappa(x)}.
\end{equation}
Thus, combing \eqref{Lemma end 2_equation 13} and \eqref{Lemma end 2_equation 14} we can get
\begin{equation*}
\omega(x, y_{i}) \leq a_{1} \kappa(x) + a_{2} + a_{3} \cdot \frac{1}{\kappa(x)} \leq \left( a_{1} + a_{2} + a_{3} \right) \cdot \frac{1}{\kappa(x)},
\end{equation*}
that is
\begin{equation*}
\frac{1}{\omega(x, y_{i})} \geq \kappa(x) \cdot \frac{1}{\left( a_{1} + a_{2} + a_{3} \right)} =: \kappa(x) \cdot d.
\end{equation*}
The proof is now complete.
\end{proof}

\begin{proof}[Proof of Proposition \ref{The detail of the second part of N-0--1}]
Define
\begin{equation}\label{Definition of matrix N}
N(x, y_{i}) := \frac{\partial^{2}}{\partial y^{2}_{i}} U(x, y_{i})\cdot H_{x}U(x, y_{i}) - \left( \nabla_{x} \frac{\partial}{\partial y_{i}} U(x, y_{i}) \right) \cdot \left( \nabla_{x} \frac{\partial}{\partial y_{i}} U(x, y_{i}) \right)^{T}.
\end{equation}
First, we claim that the matrix $H_{(x, y_{i})} U$ is positive semi-definite if and only if the matrix $N$ is positive semi-definite for any $(x, y_{i}) \in X \times (0, 1)$. In fact, we can see that if $\frac{\partial^{2}}{\partial y^{2}_{i}} U(x, y_{i}) \neq 0$ then $N(x, y_{i}) / \frac{\partial^{2}}{\partial y^{2}_{i}} U(x, y_{i})$ is the Schur complement of $\frac{\partial^{2}}{\partial y^{2}_{i}} U(x, y_{i})$ in the matrix $H_{(x, y_{i})} U(x, y_{i})$. As a result, by applying the (2) of Proposition \ref{Property of Schur complement}, it suffices to show $\frac{\partial^{2}}{\partial y^{2}_{i}} U(x, y_{i}) > 0$ on $X \times (0, 1)$. Since $U(x, y_{i}) := \psi^{(-1)}_{x} (y_{i} \psi_{x}(p))$ and $\psi_{x}(t) := \left( -\ln t \right)^{\frac{1}{\kappa(x)}}$, then we can yield
\begin{equation*}
\frac{\partial^{2}}{\partial y^{2}_{i}} U(x, y_{i}) = \left( \psi_{x}(p) \right)^{2} \cdot \left( \psi^{(-1)}_{x} \right)'' \cdot \left( y_{i} \psi_{x}(p) \right),
\end{equation*}
\begin{equation*}
\psi^{(-1)}_{x}(t) = e^{- t^{\kappa(x)}} > 0,
\end{equation*}
\begin{equation*}
\left( \psi^{(-1)}_{x} \right)''(t) = e^{-t^{\kappa(x)}} \cdot t^{\kappa(x) - 2} \cdot \kappa(x) \cdot \left( \kappa(x)\cdot t^{\kappa(x)} - \kappa(x) + 1 \right) > 0.
\end{equation*}
It follows that $\frac{\partial^{2}}{\partial y^{2}_{i}} U(x, y_{i}) > 0$ for any $(x, y_{i}) \in X \times (0, 1)$.

Next, we show that the positive semi-definiteness of $N(x, y_{i})$ is equivalent to the positive semi-definiteness of the matrix $M(x, y_{i})$ defined by
\begin{equation}\label{Definition of matrix M}
M(x, y_{i}) := \varphi_{1}(x, y_{i}) \cdot H_{x}\kappa(x) - \varphi_{2}(x, y_{i}) \cdot \left( \nabla_{x} \frac{\partial}{\partial y_{i}} U(x, y_{i}) \right) \cdot \left( \nabla_{x} \frac{\partial}{\partial y_{i}} U(x, y_{i}) \right)^{T},
\end{equation}
where $H_{x}\kappa$ is the Hessian matrix of $\kappa$.
Taking $\psi_{x}(t) := \left( -\ln t \right)^{\frac{1}{\kappa(x)}}$ and $\psi^{(-1)}_{x}(t) := e^{- t^{\kappa(x)}}$ into $U(x, y_{i}) := \psi^{(-1)}_{x} (y_{i} \psi_{x}(p))$, we can have
\begin{equation}\label{The detail of the first part of N}
\begin{array}{ll}\
&\frac{\partial^{2}}{\partial y^{2}_{i}}U(x, y_{i}) \cdot H_{x} U(x, y_{i})\\
=& \left[ \left( \ln p \right)^{2} y^{2\kappa(x) - 2}_{i} p^{2y^{\kappa(x)}_{i}} \right] \cdot \left[ \kappa(x) \left( \ln y_{i} \right) \right] \cdot \left[ \kappa(x) - 1 + \kappa(x) \left( \ln p \right) y^{\kappa(x)}_{i} \right]\\
& \cdot \left[ H_{x}\kappa(x) + \left( \nabla_{x} \kappa(x) \right) \cdot \left( \nabla_{x} \kappa(x) \right)^{T} \cdot \left( \ln y_{i} + \left(\ln y_{i} \right)\left( \ln p \right) y^{\kappa(x)}_{i} \right) \right]\\
\end{array},
\end{equation}
\begin{equation}\label{The detail of the second part of N}
\begin{array}{ll}
&\left( \nabla_{x} \frac{\partial}{\partial y_{i}} U(x, y_{i}) \right) \cdot \left( \nabla_{x} \frac{\partial}{\partial y_{i}} U(x, y_{i}) \right)^{T}\\
=& \left[ \left( \ln p \right)^{2} y^{2\kappa(x) - 2}_{i} p^{2y^{\kappa(x)}_{i}} \right] \cdot \left[ \left( \nabla_{x} \kappa(x) \right) \cdot \left( \nabla_{x} \kappa(x) \right)^{T} \right]\\
& \cdot \left[ 1 + \kappa(x) \left( \ln y_{i} \right) + \left( \ln p \right) \left( \ln y_{i} \right) y^{\kappa(x)}_{i} \kappa(x) \right]^{2}
\end{array}.
\end{equation}
Thus, combing \eqref{Definition of matrix N}-\eqref{The detail of the second part of N}, we can get
\begin{equation*}
N(x, y_{i}) = \left[ \left( \ln p \right)^{2} y^{2\kappa(x) - 2}_{i} p^{2y^{\kappa(x)}_{i}} \right] \cdot M(x, y_{i}).*
\end{equation*}
It can be noticed that $\left( \ln p \right)^{2} y^{2\kappa(x) - 2}_{i} p^{2y^{\kappa(x)}_{i}} > 0$. Therefore, the positive semi-definiteness of $N(x, y_{i})$ is equivalent to the positive semi-definiteness of the matrix $M(x, y_{i})$, which completes the proof.
\end{proof}

\begin{proof}[Proof of Lemma \ref{Lemma end 0}]
Since $o \in S(p)$ and $p \in (0, 1]$, by observing \eqref{Fea-1} we can have $D \geq 0$ (componentwise). Take arbitrary $\lambda 
\in [0, 1]$, $x \in S(p)$. We need to prove that $\lambda x \in S(p)$. If $\lambda = 0$, then
\begin{equation*}
\lambda x = o \in S(p).
\end{equation*}
If $\lambda \in (0, 1]$, then
\begin{equation*}
\mathbb{P}\lbrace V \cdot \left( \lambda x \right) \leq D \rbrace = \mathbb{P}\lbrace V \cdot x \leq \lambda^{-1} \cdot
 D \rbrace \geq \mathbb{P}\lbrace V \cdot x \leq D \rbrace \geq p,
\end{equation*}
which implies $\lambda x \in S(p)$. Thus, the \ZH{feasible set} $S(p)$ is star-shaped with respect to o.
\end{proof}

\section{Proof of Section \ref{Eventual convexity with skewed distributions} \ZHnew{results}}\label{Appendix H}

\

\begin{proof}[Proof of Corollary \ref{density of special GH_1}]
Since a random vector $Z \sim N^{-}(\lambda, \chi, 0)$ is equivalent to $Z \sim Ig(-\lambda, \frac{1}{2} \chi)$. Thus, by observing the GIG density defined in Appendix \ref{definition of Generalized inverse Gaussian distributions}, we can yield
\begin{equation}\label{limitation of coefficient}
\lim_{s \to 0^{+}} \frac{ s^{\lambda}}{K_{\lambda} \left( s \right)} = \frac{2^{\lambda + 1}}{\Gamma \left( - \lambda \right)}.
\end{equation}
Suppose $\xi(x) \sim GH_{1} \left( \lambda, \chi, \psi, 0, 1, \frac{x^{T}\gamma}{\sqrt{x^{T} \Sigma x}} \right)$. Follow the notations defined by \eqref{auxiliary values 1} and \eqref{auxiliary values 2}. We can see $\Lambda = 0$ is equivalent to $\psi = 0$ and $\phi = 0$. Thus, by using \eqref{1-dimensional non-integral form _ density of GH_d random vector with W density form} and \eqref{limitation of coefficient} we can yield
\begin{equation*}
\begin{aligned}
& \lim_{\Lambda \to 0^{+}} f_{\xi(x)}(t) = \lim_{\Lambda \to 0^{+}} \frac{\left( \sqrt{\chi \cdot \psi} \right)^{\lambda}}{K_{\lambda} \left( \sqrt{ \chi \cdot \psi} \right)} \cdot \frac{\left( \sqrt{ \chi \cdot \psi } \right)^{-2\lambda} \cdot \psi^{\lambda} \cdot \Lambda^{1-2\lambda}}{\sqrt{ 2 \pi}} \cdot \frac{K_{\lambda-\left( 1/2 \right)}\left( \Lambda \cdot \eta(t) \right)}{\left( \Lambda \cdot \eta(t) \right)^{\lambda - \left( 1/2 \right)}} \cdot \frac{\sigma(t)}{\left( \Lambda \cdot \eta(t) \right)^{1 - 2\lambda}}\\
& = \frac{2^{\lambda + 1}}{\Gamma \left( -\lambda \right)} \cdot \frac{\Gamma \left( - \lambda + \frac{1}{2} \right)}{2^{\lambda + \frac{1}{2}}} \cdot \lim_{\Lambda \to 0^{+}} \frac{\left( \sqrt{ \chi \cdot \psi } \right)^{-2\lambda} \cdot \psi^{\lambda} \cdot \Lambda^{1 - 2\lambda}}{\sqrt{2 \pi}} \cdot \frac{\sigma(t)}{\left( \Lambda \cdot \eta(t) \right)^{1 - 2\lambda}}\\
& = \frac{\Gamma \left( -\lambda + \frac{1}{2} \right)}{\Gamma \left( - \lambda \right)} \cdot \frac{1}{\sqrt{ \pi }} \cdot \lim_{\Lambda \to 0^{+}} \frac{\chi^{-\lambda} \cdot \sigma(t)}{\left( \eta(t) \right)^{1 - 2\lambda}} = \frac{\Gamma \left( -\lambda + \frac{1}{2} \right)}{\Gamma \left( - \lambda \right)} \cdot \frac{1}{\sqrt{ \pi }} \cdot \chi^{-\lambda} \cdot \left( \chi + t^{2} \right)^{\lambda - \left( 1/2 \right)},
\end{aligned}
\end{equation*}
which completed the proof.
\end{proof}

Suppose $x \in \mathbb{R}^{N} \setminus \left\lbrace 0 \right\rbrace$ and a random vector $\xi(x) \sim GH_{1}(\lambda, \chi, \psi, 0, 1, \frac{x^{T}\gamma}{\sqrt{x^{T}\Sigma x}})$. For simplicity, we make the following definitions throughout this paper: 
\begin{equation}\label{auxiliary values 1}
\eta(t) := \sqrt{ \chi + t^{2}}, \ \ \phi := \frac{x^{T}\gamma}{\sqrt{x^{T} \Sigma x}}, \ \ \sigma(t) := e^{t \cdot \phi}, \ \ \Lambda := \sqrt{ \psi + \phi^{2}}.
\end{equation}
A simple computation gives
\begin{equation}\label{auxiliary values 2}
\eta^{'}(t) = \frac{t}{\sqrt{\chi + t^{2}}}, \ \ \sigma^{'}(t) = \frac{x^{T}\gamma}{\sqrt{x^{T} \Sigma x}} \sigma(t).
\end{equation}

\begin{lemma}\label{preparation result of alpha-decreasing 1}
Assume $\xi(x) \sim GH_{1}(\lambda, \chi, \psi, 0, 1, \frac{x^{T}\gamma}{\sqrt{x^{T}\Sigma x}})$. Follow the notations defined by \eqref{auxiliary values 1} and \eqref{auxiliary values 2}. Let $J$ be defined on $(0, +\infty)$ by
\begin{equation}\label{the division about Bessel functions}
J(t) := \frac{K_{\lambda - \left( 1/2 \right)} \left( \Lambda \cdot \eta \left( t \right) \right)}{K_{\lambda + \left( 1/2 \right)}\left( \Lambda \cdot \eta \left( t \right) \right)}.
\end{equation}
Then, there exists $t_{0} > 0$ such that
\begin{equation}\label{boundary of J}
0 < J(t) \leq \sup_{t > t_{0}} J(t) =: c_{0}(t_{0}) < +\infty, \ \ \forall t > t_{0},
\end{equation}
Further, for any $t>0$, the function $J(t)$ satisfies
\begin{equation}\label{the boundary of c_0}
\left\{
        \begin{array}{ll}
        0 < J(t) < 1, & \text{if} \ \ \lambda > 0,\\            
        J(t) \equiv 1, & \text{if} \ \ \lambda = 0,\\
        J(t) > 1, & \text{if} \ \ \lambda < 0,
        \end{array}
\right.
\end{equation}
and
\begin{equation}\label{limitation of J}
J(t) \simeq \left( \frac{1}{1 + \frac{1}{\Lambda \cdot \eta(t) }} \right)^{\lambda} \to 1 \ \ \text{as} \ \ t \to +\infty.
\end{equation}
\end{lemma}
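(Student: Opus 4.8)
The plan is to establish the four assertions of the lemma in order, using only the properties of the modified Bessel functions $K_\nu$ gathered in Lemma \ref{The properties of modified Bessel functions of the third kind}. Throughout one works under the standing hypothesis $\Lambda > 0$, equivalently $\psi > 0$ or $\phi \neq 0$, which guarantees $\Lambda \cdot \eta(t) = \Lambda\sqrt{\chi + t^2} > 0$ for every $t > 0$, so that $J$ in \eqref{the division about Bessel functions} is well defined; the degenerate case $\Lambda = 0$ (i.e.\ $\psi = 0$ and $\phi = 0$) is excluded here and handled separately through Corollary \ref{density of special GH_1}. Positivity of $J$ on $(0,+\infty)$ is then immediate from assertion (1) of Lemma \ref{The properties of modified Bessel functions of the third kind}, since both numerator and denominator of $J$ are strictly positive; and the inequality $J(t) \le \sup_{t > t_0} J(t)$ in \eqref{boundary of J} is merely the definition of the supremum, the only substantive point being its finiteness, which I postpone to the last step.

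For the comparison \eqref{the boundary of c_0}, I would first use $K_\nu = K_{-\nu}$ (assertion (4)) to replace the two indices by $|\lambda - \frac{1}{2}|$ and $|\lambda + \frac{1}{2}|$. An elementary sign analysis gives $|\lambda - \frac{1}{2}| < |\lambda + \frac{1}{2}|$ when $\lambda > 0$, $|\lambda - \frac{1}{2}| = |\lambda + \frac{1}{2}| = \frac{1}{2}$ when $\lambda = 0$, and $|\lambda - \frac{1}{2}| > |\lambda + \frac{1}{2}|$ when $\lambda < 0$. Since, for a fixed argument $x > 0$, the value $K_{|\nu|}(x)$ is increasing in $|\nu|$ (assertion (5)), it follows that $K_{\lambda - 1/2}(\Lambda \eta(t))$ is less than, equal to, or greater than $K_{\lambda + 1/2}(\Lambda \eta(t))$ according as $\lambda > 0$, $\lambda = 0$, or $\lambda < 0$, which is exactly \eqref{the boundary of c_0}; in particular $J \equiv 1$ when $\lambda = 0$.

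Next, for the asymptotics \eqref{limitation of J}, I would apply the large-argument expansion of assertion (6), namely $K_\nu(x) \simeq \sqrt{\pi/(2x)}\, e^{-x} (1 + 1/x)^{\mu}$ with $\mu = \nu^2/2 - 1/8$, to both $K_{\lambda - 1/2}$ and $K_{\lambda + 1/2}$ at $x = \Lambda \eta(t)$. The common prefactor $\sqrt{\pi/(2x)}\, e^{-x}$ cancels in the quotient, leaving $J(t) \simeq (1 + 1/x)^{\mu_1 - \mu_2}$, where $\mu_1$ and $\mu_2$ are the exponents attached to $\nu = \lambda - \frac{1}{2}$ and $\nu = \lambda + \frac{1}{2}$; a direct computation yields $\mu_1 - \mu_2 = \frac{1}{2}\left[(\lambda - \frac{1}{2})^2 - (\lambda + \frac{1}{2})^2\right] = -\lambda$. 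Hence $J(t) \simeq \big(1 + 1/(\Lambda \eta(t))\big)^{-\lambda}$, which is the displayed expression in \eqref{limitation of J}, and since $\eta(t) = \sqrt{\chi + t^2} \to +\infty$ the argument $\Lambda \eta(t) \to +\infty$, so $J(t) \to 1$ as $t \to +\infty$.

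Finally, the finiteness of $c_0(t_0)$ follows by combining this limit with continuity: $\eta$ is continuous on $(0,+\infty)$, each $K_\nu$ is continuous on $(0,+\infty)$, and $K_{\lambda + 1/2}$ is nonvanishing there, so $J$ is continuous on $(0,+\infty)$. Fixing any $t_0 > 0$, the limit $J(t) \to 1$ produces $T > t_0$ with $J(t) < 2$ for all $t > T$, while on the compact interval $[t_0, T]$ the continuous function $J$ attains a finite maximum; hence $c_0(t_0) = \sup_{t > t_0} J(t) < +\infty$, which completes \eqref{boundary of J}. The step I expect to be the main obstacle is passing rigorously from the heuristic equivalence ``$\simeq$'' in assertion (6) to an honest asymptotic statement for the ratio $J(t)$: one has to control the remainder terms in the two expansions uniformly enough to divide them and conclude $J(t) \to 1$; by contrast, the comparison and boundedness arguments are routine bookkeeping with the listed properties of $K_\nu$.
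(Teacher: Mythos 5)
Your proposal is correct and follows essentially the same route as the paper: positivity from property (1), the three-way comparison in \eqref{the boundary of c_0} from properties (4) and (5) via the sign analysis of $|\lambda \mp \tfrac{1}{2}|$, the asymptotic \eqref{limitation of J} by cancelling the common prefactor in property (6) and computing the exponent difference $\tfrac{1}{2}[(\lambda-\tfrac{1}{2})^2-(\lambda+\tfrac{1}{2})^2]=-\lambda$, and finiteness of the supremum from the limit $J(t)\to 1$ together with the monotone divergence of $\Lambda\cdot\eta(t)$. Your added continuity-plus-compactness argument for $c_0(t_0)<+\infty$ is a slightly more explicit version of what the paper leaves implicit, and your caveat about the informal ``$\simeq$'' applies equally to the paper's own proof.
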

\begin{proof}
It is obvious to yield \eqref{auxiliary values 2} by simple computation. From the Definition \ref{definition of Generalized hyperbolic distributions}, we know that \eqref{auxiliary values 1} and \eqref{auxiliary values 2} are well-defined for $t > 0$. Applying the property (1) and (6) of Lemma \ref{The properties of modified Bessel functions of the third kind}, we can see
\begin{equation}\label{the property of the division about Bessel functions}
0 < \frac{K_{\lambda - \left( 1/2 \right)}(t)}{K_{\lambda + \left( 1/2 \right)}(t)}\simeq \left( 1 + \frac{1}{t} \right)^{ \frac{1}{2} \left[ \left( \lambda - \left( 1/2 \right) \right)^{2} - \left( \lambda + \left( 1/2 \right) \right)^{2} \right] } = \left( \frac{1}{1 + \frac{1}{t}} \right)^{\lambda} \to 1 \ \ \text{as} \ \ t \to +\infty.
\end{equation}
From \eqref{auxiliary values 1}, we can see $\Lambda\cdot \eta(t)$ is increasing w.r.t. $t>0$ and $\lim_{t \to +\infty} \Lambda \cdot \eta(t) = +\infty$. Thus, we can yield \eqref{boundary of J} and \eqref{limitation of J} by using \eqref{the property of the division about Bessel functions}. Using the property (4) and (5) of Lemma \ref{The properties of modified Bessel functions of the third kind}, we know that $K_{\lambda - \left( 1/2 \right)} < K_{\lambda + \left( 1/2 \right)}$ if and only if $\left| \lambda - \left( 1/2 \right) \right| < \left| \lambda + \left( 1/2 \right) \right|$. Thus, we can obtain
\begin{equation*} 
\left\{
        \begin{array}{ll}
        0 < J(t) < 1, & \text{if} \ \ \left|\lambda - \frac{1}{2} \right| < \left| \lambda + \frac{1}{2} \right|,\\      
        \ &\ \\        
        J(t) \equiv 1, & \text{if} \ \ \left|\lambda - \frac{1}{2} \right| = \left| \lambda + \frac{1}{2} \right|,\\
        \ &\ \\
        J(t) > 1, & \text{if} \ \ \left|\lambda - \frac{1}{2} \right| > \left| \lambda + \frac{1}{2} \right|,
        \end{array}
\right.
\end{equation*}
i.e.
\begin{equation*}
\left\{
        \begin{array}{ll}
        0 < J(t) < 1, & \text{if} \ \ \lambda > 0,\\            
        J(t) \equiv 1, & \text{if} \ \ \lambda = 0,\\
        J(t) > 1, & \text{if} \ \ \lambda < 0.
        \end{array}
\right.
\end{equation*}
\end{proof}

\begin{lemma}\label{preparation result of alpha-decreasing 2}
Under the assumptions of Lemma \ref{preparation result of alpha-decreasing 1}, assume $\Lambda > 0$. Then, the function $J$ defined by  \eqref{the division about Bessel functions} satisfies
\begin{equation*}
\lim_{t \to +\infty} t \cdot \left( 1 - J^{2}(t) \right) = \frac{2\lambda}{\Lambda}.
\end{equation*}
\end{lemma}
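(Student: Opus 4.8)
The plan is to reduce the statement to a one-term sharpening of the Bessel-function asymptotics already recorded in Lemma~\ref{preparation result of alpha-decreasing 1}. First I would substitute $u:=\Lambda\cdot\eta(t)=\Lambda\sqrt{\chi+t^{2}}$, so that $J(t)=K_{\lambda-(1/2)}(u)/K_{\lambda+(1/2)}(u)$ with $u\to+\infty$ as $t\to+\infty$, and record the elementary limit $t/u = t/(\Lambda\sqrt{\chi+t^{2}})\to 1/\Lambda$. Then I would factor $1-J^{2}(t) = (1-J(t))(1+J(t))$ and use \eqref{limitation of J} (together with \eqref{the boundary of c_0}) to get $1+J(t)\to 2$. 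Consequently
\[
t\left(1-J^{2}(t)\right) = \frac{t}{u}\cdot\left(1+J(t)\right)\cdot u\left(1-J(t)\right),
\]
and everything comes down to showing $u(1-J(t))\to\lambda$.

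For that I would push \eqref{the property of the division about Bessel functions} one order further. Using the classical two-term asymptotic expansion $K_{\nu}(u)=\sqrt{\pi/(2u)}\,e^{-u}\left(1+(4\nu^{2}-1)/(8u)+O(u^{-2})\right)$ (the precise form behind Lemma~\ref{The properties of modified Bessel functions of the third kind}(6); see also the references cited there), dividing the expansions for $\nu=\lambda-\tfrac12$ and $\nu=\lambda+\tfrac12$ and noting $\left(4(\lambda+\tfrac12)^{2}-1\right)-\left(4(\lambda-\tfrac12)^{2}-1\right)=8\lambda$ gives $J(t)=1-\lambda/u+O(u^{-2})$. Equivalently, one may use the recurrence $K'_{\lambda-1/2}(u)=\tfrac{\lambda-1/2}{u}K_{\lambda-1/2}(u)-K_{\lambda+1/2}(u)$ from Lemma~\ref{The properties of modified Bessel functions of the third kind}(7) to write $J(t)=\left(\tfrac{\lambda-1/2}{u}-K'_{\lambda-1/2}(u)/K_{\lambda-1/2}(u)\right)^{-1}$ and then insert the logarithmic-derivative asymptotics $K'_{\nu}(u)/K_{\nu}(u)=-1-\tfrac{1}{2u}+O(u^{-2})$. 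Either way $u(1-J(t))=\lambda+O(u^{-1})\to\lambda$, and substituting back into the displayed identity yields $t(1-J^{2}(t))\to(1/\Lambda)\cdot 2\cdot\lambda = 2\lambda/\Lambda$.

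The only delicate point is the rigour of the Bessel estimates: Lemma~\ref{The properties of modified Bessel functions of the third kind}(6) is stated with the heuristic symbol $\simeq$, whereas the conclusion hinges on capturing the $1/u$ coefficient of $J$ \emph{exactly}, which requires a remainder bound of order $O(u^{-2})$ on the relative asymptotics of $K_{\lambda\pm1/2}$ (equivalently, the legitimacy of differentiating the asymptotic series when one uses the recurrence route). I would therefore state at the outset that we use the standard asymptotic expansion of $K_{\nu}$ with quantified error term, after which each step above is a routine Taylor expansion.
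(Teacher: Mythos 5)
Your proof is correct and follows essentially the same route as the paper's: both arguments rest on the refined asymptotics of Lemma \ref{The properties of modified Bessel functions of the third kind}(6), which after dividing the expansions for the orders $\lambda\pm\tfrac12$ give $J(t)\approx\bigl(1+1/(\Lambda\eta(t))\bigr)^{-\lambda}$ --- exactly the approximation \eqref{limitation of J} that the paper substitutes into $t(1-J^{2}(t))$ before finishing with L'Hospital's rule --- and your factorization $1-J^{2}=(1-J)(1+J)$ followed by a Taylor expansion is only a cosmetic variant of that final computation. The one substantive difference is in your favour: you insist on a quantified $O(u^{-2})$ remainder in the expansion of $K_{\nu}$, whereas the paper replaces $J$ by its approximant inside $t(1-J^{2}(t))$ using only the unquantified multiplicative relation ``$\simeq$'' of property (6). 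Since $1-(1+1/u)^{-2\lambda}$ is itself of order $1/u$, a bare $1+o(1)$ factor on $J$ is not a priori enough to preserve the limit after multiplication by $t$, so the error control you demand is genuinely needed to make the argument (the paper's as well as yours) rigorous.
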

\begin{proof}
If $\lambda = 0$, the result is trivial since $J(t) \equiv 1$ is obtained from the property (4) of Lemma \ref{The properties of modified Bessel functions of the third kind}. Suppose $\lambda \neq 0$. Using \eqref{limitation of J} and letting $t$ be large enough, we can see
\begin{equation*}
t \cdot \left( 1 - \left( J(t) \right)^{2} \right) \simeq t \cdot \left[ 1 - \left( \frac{1}{1 + \frac{1}{\Lambda \cdot \eta(t) }} \right)^{2\lambda} \right] = t \cdot \left[ 1 - \left( 1 + \frac{1}{\Lambda \cdot \eta(t)} \right)^{-2\lambda} \right] =: A(t).
\end{equation*}
Applying L'Hospital's Rule and using \eqref{auxiliary values 2}, we can yield
\begin{equation*}
\begin{aligned}
\lim_{t \to +\infty} A(t) &= \lim_{t \to +\infty} \frac{1 - \left( 1 + \frac{1}{\Lambda \cdot \eta(t)} \right)^{-2\lambda}}{t^{-1}} = \lim_{t \to +\infty} \frac{2\lambda \cdot \left( 1 + \frac{1}{\Lambda \cdot \eta(t)} \right)^{-2\lambda - 1} \cdot \frac{\eta^{'}(t)}{\Lambda \cdot \eta^{2}(t)}}{t^{-2}}\\
&= \lim_{t \to +\infty} \frac{2\lambda}{\Lambda} \cdot \left( 1 + \frac{1}{\Lambda \cdot \eta(t)} \right)^{-2\lambda - 1} \cdot \frac{t^{3}}{\left( \chi + t^{2} \right)^{\frac{3}{2}}} = \frac{2\lambda}{\Lambda},
\end{aligned}
\end{equation*}
which finishes the proof of this Lemma.
\end{proof}

\begin{proof}[Proof of Lemma \ref{alpha-decreasing of density function}]
In order to prove the $\alpha$-decreasing property of $f_{\xi}$, we only need to prove there exists $t^{*}(\alpha) > 0$ such that for any $t > t^{*}(\alpha)$ the following inequality is satisfied
\begin{equation}\label{alpha_decreasing proof}
t f^{'}_{\xi(x)} (t) + \alpha f_{\xi(x)}(t) < 0.
\end{equation}
At the beginning, we consider the case with $\Lambda := \sqrt{\psi + \phi^{2}} > 0$. Substituting \eqref{auxiliary values 1} and \eqref{auxiliary values 2} into \eqref{1-dimensional non-integral form _ density of GH_d random vector with W density form}, we have
\begin{equation*}
f_{\xi(x)}(t) = c \frac{K_{\lambda - (1/2)} \left( \Lambda \cdot \eta(t) \right) \cdot \sigma(t)}{\left( \Lambda \cdot \eta(t) \right)^{\left( \frac{1}{2} - \lambda \right)}},
\end{equation*}
\begin{equation*}
\begin{aligned}
f^{'}_{\xi(x)}(t) = c \frac{1}{\left( \Lambda \cdot \eta(t) \right)^{1 - 2 \lambda}}& \left\lbrace \left[ K^{'}_{\lambda - \left( 1/2 \right)} \cdot \Lambda \cdot \eta^{'} \cdot \sigma + K_{\lambda - \left( 1/2 \right)} \cdot \sigma^{'} \right] \cdot \left( \Lambda \cdot \eta \right)^{\left( \frac{1}{2} - \lambda \right)} \right.\\
& \left. - \left[ \left( \frac{1}{2} - \lambda \right) \cdot \left( \Lambda \cdot \eta(t) \right)^{\left( -\frac{1}{2} - \lambda \right)} \cdot \Lambda \cdot \eta^{'}(t) \cdot K_{\lambda- \left( 1/2 \right)} \cdot \sigma \right] \right\rbrace.
\end{aligned}
\end{equation*}
Thus, we can reformulate the left hand side of \eqref{alpha_decreasing proof} as follows
\begin{equation}\label{alpha_decreasing proof-simplification-1}
\begin{aligned}
t f^{'}_{\xi(x)} (t) + \alpha f_{\xi(x)}(t) = & c \left( \Lambda \cdot \eta \right)^{\lambda - \left(1/2\right)} \cdot \left\lbrace \left[ - t \cdot \left( \Lambda \cdot \eta \right)^{-1} \cdot \left( \frac{1}{2} - \lambda \right) \cdot \Lambda \cdot \eta^{'} \cdot K_{\lambda-\left(1/2\right)} \cdot \sigma \right] \right.\\
& + \left. \left[ t \cdot K^{'}_{\lambda - \left(1/2\right)} \cdot \Lambda \cdot \eta^{'} \cdot \sigma + t \cdot K_{\lambda - \left(1/2\right)} \cdot \sigma^{'} \right] + \left[ \alpha \cdot K_{\lambda - \left(1/2\right)} \cdot \sigma \right] \right\rbrace\\
= & c \cdot \sigma \cdot \left( \Lambda \cdot \eta \right)^{\lambda - \left(1/2\right)} \cdot \left\lbrace \left[ -\frac{t^{2}}{\chi + t^{2}} \cdot \left( 
\frac{1}{2} - \lambda \right) \cdot K_{\lambda - \left(1/2\right)} \right] \right.\\
& + \left. \left[ K^{'}_{\lambda - \left(1/2\right)} \cdot \Lambda \cdot \frac{t^{2}}{\sqrt{\chi + t^{2}}} + t \cdot K_{\lambda - \left(1/2\right)} \cdot \frac{x^{T} \gamma}{\sqrt{ x^{T} \Sigma x}} \right] + \left[ \alpha \cdot K_{\lambda - \left(1/2\right)} \right] \right\rbrace.
\end{aligned}
\end{equation}
Applying the property (7) of Lemma \ref{The properties of modified Bessel functions of the third kind}, we can yield
\begin{equation}\label{the link between Bessel function and its differentional function for proof}
K^{'}_{\lambda - \left(1/2 \right)}\left( \Lambda \cdot \eta(t) \right) = \frac{\lambda - \left( 1/2 \right)}{\Lambda \cdot \eta(t)} \cdot K_{\lambda - \left(1/2 \right)}\left( \Lambda \cdot \eta(t) \right) - K_{\lambda + \left(1/2 \right)}\left( \Lambda \cdot \eta(t) \right).
\end{equation}
As a result, by substituting \eqref{the link between Bessel function and its differentional function for proof} into \eqref{alpha_decreasing proof-simplification-1}, we can obtain
\begin{equation*}
\begin{aligned}
t f^{'}_{\xi(x)} (t) + \alpha f_{\xi(x)}(t) = & c \cdot \sigma \cdot \left( \Lambda \cdot \eta \right)^{\lambda - \left(1/2\right)} \cdot \left\lbrace \left[ -\frac{t^{2}}{\chi + t^{2}} \cdot \left( 
\frac{1}{2} - \lambda \right) \cdot K_{\lambda - \left(1/2\right)} \right] + \left[ \alpha \cdot K_{\lambda - \left(1/2\right)} \right] \right.\\
& \left. + \left[ t \cdot K_{\lambda - \left(1/2\right)} \cdot \frac{x^{T} \gamma}{\sqrt{ x^{T} \Sigma x}} \right] +\frac{ \Lambda \cdot t^{2}}{\sqrt{\chi + t^{2}}} \cdot \left[ \frac{\lambda - \left( 1/2 \right)}{\Lambda \cdot \sqrt{\chi + t^{2}}} K_{\lambda - \left( 1/2 \right)} - K_{\lambda + \left( 1/2 \right)} \right]   \right\rbrace\\
= & c \cdot \sigma \cdot \left( \Lambda \cdot \eta \right)^{\lambda - \left(1/2\right)} \cdot \left\lbrace - \frac{ \Lambda \cdot t^{2}}{\sqrt{\chi + t^{2}}} \cdot K_{\lambda + \left( 1/2 \right)} \right.\\
& \left. + \left[ \frac{t^{2}}{\chi + t^{2}} \cdot \left( 2 \lambda -1 \right) + t \cdot \frac{x^{T}\gamma}{\sqrt{x^{T}\Sigma x}} + \alpha    \right] \cdot K_{\lambda - \left(1/2\right)} \right\rbrace =: A(t) \cdot B,
\end{aligned}
\end{equation*}
where $A(t)$ and $B$ are defined by
\begin{equation*}
A(t) := - \frac{ \Lambda \cdot t^{2}}{\sqrt{\chi + t^{2}}} \cdot K_{\lambda + \left( 1/2 \right)} + \left[ \frac{t^{2}}{\chi + t^{2}} \cdot \left( 2 \lambda -1 \right) + t \cdot \frac{x^{T}\gamma}{\sqrt{x^{T}\Sigma x}} + \alpha    \right] \cdot K_{\lambda - \left(1/2\right)},
\end{equation*}
\begin{equation*}
B := c \cdot \sigma \cdot \left( \Lambda \cdot \eta \right)^{\lambda - \left(1/2\right)}.
\end{equation*}
Combining the Definition \ref{definition of Generalized hyperbolic distributions}, \eqref{auxiliary values 1}, \eqref{auxiliary values 2} and $\Lambda > 0$,  we can see $B > 0$. Thus, the inequality \eqref{alpha_decreasing proof} is equivalent to $A(t) < 0$, which can be reformulated as follows
\begin{equation}\label{alpha_decreasing proof-simplification-2}
\left[ \left( 2 \lambda -1 \right) - \frac{\chi \cdot \left( 2 \lambda - 1 \right)}{\chi + t^{2}} + t \cdot \frac{x^{T}\gamma}{\sqrt{x^{T}\Sigma x}} + \alpha    \right] \cdot K_{\lambda - \left(1/2\right)} < \left[  \Lambda \cdot \sqrt{\chi + t^{2}} - \frac{\Lambda \cdot \chi}{\sqrt{\chi + t^{2}}} \right] \cdot K_{\lambda + \left( 1/2 \right)}.
\end{equation}
Using the property (1) of Lemma \ref{The properties of modified Bessel functions of the third kind}, we can rewrite \eqref{alpha_decreasing proof-simplification-2} as follows
\begin{equation}\label{alpha_decreasing proof-simplification-3}
\left[ \left( 2 \lambda -1 \right) - \frac{\chi \cdot \left( 2 \lambda - 1 \right)}{\chi + t^{2}} + t \cdot \frac{x^{T}\gamma}{\sqrt{x^{T}\Sigma x}} + \alpha    \right] \cdot J(t) < \left[  \Lambda \cdot \sqrt{\chi + t^{2}} - \frac{\Lambda \cdot \chi}{\sqrt{\chi + t^{2}}} \right],
\end{equation}
where $J$ is defined by \eqref{the division about Bessel functions}. Using \eqref{auxiliary values 1} and \eqref{auxiliary values 2}, we can reformulate \eqref{alpha_decreasing proof-simplification-3} as follows
\begin{equation*}
\left[ \left( 2 \lambda -1 \right) - \frac{\chi \cdot \left( 2 \lambda - 1 \right)}{\chi + t^{2}} + t \cdot \phi + \alpha  \right] \cdot J(t) < \sqrt{\psi + \phi^{2}} \cdot \left[  \sqrt{\chi + t^{2}} - \frac{\chi}{\sqrt{\chi + t^{2}}} \right],
\end{equation*}
and the equivalent form
\begin{equation}\label{alpha_decreasing proof-simplification-5}
\begin{aligned}
& J(t) \cdot \left( 2 \lambda - 1 \right) + J(t) \cdot \alpha - J(t)\cdot \frac{\chi \cdot \left( 2 \lambda -1 \right)}{\chi + t^{2}} + \sqrt{\psi + \phi^{2}} \cdot \frac{\chi}{\sqrt{ \chi + t^{2}}}\\
& < \sqrt{\psi + \phi^{2}} \cdot \sqrt{\chi + t^{2}} - J(t) \cdot t \cdot \phi\\
& = \sqrt{\psi + \phi^{2}} \cdot \left( \sqrt{\chi + t^{2}} - t \right) + t \left( \sqrt{\psi + \phi^{2}} - J(t) \cdot \phi \right).
\end{aligned}
\end{equation}
Using \eqref{limitation of J}, we can see the left-hand side of \eqref{alpha_decreasing proof-simplification-5} tends to $2 \lambda - 1 + \alpha$ as $t \to +\infty$. Suppose $\psi > 0$. We claim that the right-hand side of \eqref{alpha_decreasing proof-simplification-5} approaches positive infinity. In fact, by applying Lemma \ref{preparation result of alpha-decreasing 2} and \eqref{limitation of J}, we can see
\begin{equation*}
\lim_{t \to +\infty} \sqrt{\psi + \phi^{2}} - J(t) \cdot \phi \geq \frac{\psi}{\sqrt{\psi + \phi^{2}} + \left| \phi \right| }> 0,
\end{equation*}
and then we can have
\begin{equation*}
\begin{aligned}
&\lim_{t \to +\infty} \sqrt{\psi + \phi^{2}} \cdot \left( \sqrt{\chi + t^{2}} - t \right) + t \left( \sqrt{\psi + \phi^{2}} - J(t) \cdot \phi \right)\\
&=\lim_{t \to +\infty} \sqrt{\psi + \phi^{2}} \cdot \frac{\chi}{\sqrt{ \chi + t^{2}} + t} + t \cdot \frac{\psi + \left( 1 - J^{2}(t) \right)\cdot \phi^{2}}{\sqrt{\psi + \phi^{2}} + J(t) \cdot \phi} = +\infty.
\end{aligned}
\end{equation*}
Thus, \ZH{there exists} $t_{0} > 0$ such that \eqref{alpha_decreasing proof-simplification-5} is satisfied for any $t > t_{0}$. As a result, we have proved that, given any $\alpha \in \mathbb{R}$, the density $f_{\xi(x)}$ is $\alpha$-decreasing with some $t^{*}(\alpha) > 0$. Our next goal is to consider the case with $\psi = 0$. Due to $\Lambda > 0$, we only need to consider the cases with $\phi \neq 0$ under the assumption $\psi = 0$. We can reformulate \eqref{alpha_decreasing proof-simplification-5} as follows
\begin{equation}\label{alpha_decreasing proof-simplification-6}
\begin{aligned}
&J(t) \cdot \left( 2\lambda - 1 \right) + J(t) \cdot \alpha - J(t) \cdot \frac{\chi \cdot \left( 2\lambda - 1 \right)}{\chi + t^{2}} + \left| \phi \right| \cdot \frac{\chi}{\sqrt{\chi + t^{2}}}\\
&< \left| \phi \right| \cdot \left( \sqrt{\chi + t^{2}} - t \right) + t\cdot \left( \left| \phi \right| - J(t) \cdot \phi \right) =: Q(t).
\end{aligned}
\end{equation}
Suppose $\phi > 0$. Then, by using Lemma \ref{preparation result of alpha-decreasing 2} and \eqref{limitation of J}, we can see
\begin{equation*}
\lim_{t \to +\infty} Q(t) = \lim_{t \to +\infty} \left| \phi \right| \cdot \frac{\chi}{\sqrt{\chi + t^{2}} + t} + t \cdot \frac{\left( 1 - J^{2}(t) \right) \cdot \phi^{2}}{\left| \phi \right| + J(t) \cdot \phi} = \frac{2\lambda}{\left| \phi \right|} \cdot \frac{\phi^{2}}{2 \left| \phi \right|} = \lambda.
\end{equation*}
Notice that the left-hand side of \eqref{alpha_decreasing proof-simplification-6} approaches to $2\lambda - 1 + \alpha$. Thus, by using \eqref{alpha_decreasing proof-simplification-6} we have proved that given $\phi > 0$, for any $\alpha < 1$, the density function $f_{\xi(x)}$ is $\alpha$-decreasing with some $t^{*}(\alpha) > 0$. Suppose $\phi < 0$. Assume $\lambda \neq 0$. We can reformulate the function $Q$ as follows
\begin{equation*}
Q(t) = \left| \phi \right| \cdot \frac{\chi}{\sqrt{\chi + t^{2}} + t} + t \cdot \frac{\left( 1 - J^{2}(t) \right) \cdot \phi^{2}}{\left| \phi \right| - J(t) \cdot \left| \phi \right|}.
\end{equation*}
Using \eqref{the boundary of c_0} and \eqref{limitation of J}, we can see
\begin{equation*}
\lim_{t \to +\infty} \frac{\lambda}{1 - J(t)} = +\infty.
\end{equation*}
Then, by using Lemma \ref{preparation result of alpha-decreasing 2} again, we can yield
\begin{equation*}
\lim_{t \to +\infty} Q(t) = +\infty,
\end{equation*}
which implies that there exists $t_{0}>0$ such that \eqref{alpha_decreasing proof-simplification-6} is satisfied for any $t > t_{0}$. Assume $\lambda = 0$. Then, the function $Q$ defined by \eqref{alpha_decreasing proof-simplification-6} can be rewritten as
\begin{equation*}
Q(t) = \left| \phi \right| \cdot \frac{\chi}{\sqrt{ \chi + t^{2}} + t} + 2t \cdot \left| \phi \right|,
\end{equation*}
which implies $Q(t)$ tends to positive infinity as $t$ being large enough. As a result, we have proved that given $\phi < 0$, the density function $f_{\xi(x)}$ is $\alpha$-decreasing with some $t^{*}(\alpha)>0$ for any $\alpha \in \mathbb{R}$.

It remains to consider the case with $\Lambda = 0$, in which $\Lambda = 0$ means $\xi(x) \sim GH_{1}\left( \lambda, \chi, 0, 0, 1, 0 \right)$. Using Corollary \ref{density of special GH_1}, we can see the density of $\xi(x)$ can be written as follows
\begin{equation*}
f_{\xi(x)}(t) = \hat{c} \cdot \left( \chi + t^{2} \right)^{\lambda - \frac{1}{2}},
\end{equation*}
where $\hat{c}$ is defined by
\begin{equation*}
\hat{c}: = \frac{\Gamma \left( -\lambda + \frac{1}{2} \right)}{\Gamma \left( -\lambda \right)} \frac{1}{\sqrt{\pi}} \cdot \chi^{-\lambda}.
\end{equation*}
Then, we can yield
\begin{equation*}
\begin{aligned}
&t \cdot f^{'}_{\xi(x)} \left( t \right) + \alpha \cdot f_{\xi(x)} \left( t \right)\\
&= \hat{c} \cdot \left( \lambda - \frac{1}{2} \right) \cdot 2t^{2} \cdot \left( \chi + t^{2} \right)^{\lambda - \frac{3}{2}} + \alpha \cdot \hat{c} \cdot \left( \chi + t^{2} \right)^{\lambda - \frac{1}{2}}\\
&= \hat{c} \cdot \left( \chi + t^{2} \right)^{\lambda - \frac{3}{2}} \cdot \left[ \left( \lambda - \frac{1}{2} \right) \cdot 2t^{2} + \alpha \cdot \left( \chi + t^{2} \right) \right]\\
&= \hat{c} \cdot \left( \chi + t^{2} \right)^{\lambda - \frac{3}{2}} \cdot \left[ \left( 2\lambda - 1 + \alpha \right) \cdot t^{2} + \alpha \cdot \chi \right].
\end{aligned}
\end{equation*}
As a result, the inequality \eqref{alpha_decreasing proof} is equivalent to
\begin{equation}\label{alpha_decreasing proof equivalent form}
\left( 2\lambda - 1 + \alpha \right) \cdot t^{2} + \alpha \cdot \chi < 0.
\end{equation}
Since $\psi = 0$. Observing the Definition \ref{definition of Generalized hyperbolic distributions}, we know that we need to take $\chi > 0$ and $\lambda < 0$. As a result, we can see $2\lambda - 1 + \alpha = 0$ does not satisfy \eqref{alpha_decreasing proof equivalent form}. Since the left-hand side of \eqref{alpha_decreasing proof equivalent form} is a quadratic function. Thus, the density $f_{\xi(x)}$ is $\alpha$-decreasing with some $t^{*}\left( \alpha \right)$ if and only if $2\lambda - 1 + \alpha < 0$ i.e. $\alpha < 1 - 2\lambda$.
\end{proof}

\begin{proof}[Proof of Lemma \ref{lemma of general case with gamma times x greater than 0}]
Define
\begin{equation*}
\xi(x) = \frac{v^{T} x - \mu^{T} x}{\sqrt{x^{T} \Sigma x}}, \ \ g(x) = \frac{D - \mu^{T} x}{\sqrt{x^{T} \Sigma x}}.
\end{equation*}
From the Definition \ref{definition of Generalized hyperbolic distributions}, we know that
\begin{equation*}
v \overset{d}{=} \mu + W \cdot \gamma + \sqrt{W} A Z,
\end{equation*}
where $W \sim N^{-}\left( \lambda, \chi, \psi \right)$, $Z \sim N_{N}\left( 0, I_{N} \right)$, $A \in \mathbb{R}^{N \times N}$. Thus, by using Proposition \ref{1-dimensional generalized hyperbolic distributions}, we can yield
\begin{equation*}
\xi(x) \overset{d}{=} W \cdot \frac{x^{T} \gamma}{\sqrt{x^{T} \Sigma x}} + \sqrt{W} Y,
\end{equation*}
where $Y \sim N_{1}\left( 0, 1 \right)$. Since the normal distribution is a special elliptical distribution. By using \cite[Proposition 3.28.]{mcneil2005quantitative}, we can see
\begin{equation*}
Y \overset{d}{=} R \cdot S,
\end{equation*}
where $R$ is a radial random variable with the law $\mu_{R}$, and $S$ is uniformly distributed on the unit sphere $\left\lbrace s\in \mathbb{R}: \|s \| = 1 \right\rbrace$ with the law $\mu_{S}$. As a result, we can reformulate $\xi(x)$ as follows
\begin{equation*}
\xi(x) \overset{d}{=} W \cdot \frac{x^{T} \gamma}{\sqrt{x^{T} \Sigma x}} + \sqrt{W} R S,
\end{equation*}
where $\mathbb{P}\left( S = 1 \right) = \mathbb{P}\left( S = -1 \right) = 1/2$. Define $M(x)$ by
\begin{equation*}
M(x) := \left\lbrace \zeta \in \mathbb{R}: \zeta \leq g(x) \right\rbrace.
\end{equation*}
We can see that $M(x)$ is a convex set. The ray function of $M(x)$ can be written \ZH{as}
\begin{equation*}
\rho_{M(x)}(x, u) =
\left\{
        \begin{array}{l}
        \sup\limits_{t \geq 0} \ t\\            
        s.t. \ \ W \cdot \frac{x^{T} \gamma}{\sqrt{x^{T} \Sigma x}} + \sqrt{W} t u \leq g(x),
        \end{array}
\right.
\end{equation*}
which can be reformulated as follows
\begin{equation*}
\rho_{M(x)}(x, u) =
\left\{
        \begin{array}{ll}
        \frac{1}{\sqrt{W}} \cdot \frac{D - \left( \mu + W \gamma \right)^{T} x}{\sqrt{x^{T} \Sigma x}} & \text{if} \ \ u = 1 \ \ \text{and} \ \ \sqrt{W} > 0\\            
        +\infty, & \text{otherwise}.
        \end{array}
\right.
\end{equation*}
As a result, by using \cite[Lemma 2.1]{van2019eventual} and the convexity of $M(x)$, we can yield
\begin{equation*}
\begin{aligned}
\mathbb{P}\left( \xi(x) \in M(x) \right) &= E_{W}\left[ \int_{u\in S^{0}} \mu_{R} \left( \left\lbrace r \geq 0: W \cdot \frac{x^{T}\gamma}{\sqrt{x^{T}\Sigma x}} + \sqrt{W}\cdot r \cdot u \cap M \neq \emptyset \right\rbrace \right) d \mu_{S}\left( u \right)  \right]\\
&= E_{W}\left[ \frac{1}{2} \mu_{R} \left\lbrace r \geq  0: W \cdot \frac{x^{T} \gamma}{\sqrt{x^{T}\Sigma x}} + r \cdot \sqrt{W}\cap M \neq \emptyset \right\rbrace \right.\\
&+ \left. \frac{1}{2} \mu_{R} \left\lbrace r \geq  0: W \cdot \frac{x^{T} \gamma}{\sqrt{x^{T}\Sigma x}} - r \cdot \sqrt{W}\cap M \neq \emptyset \right\rbrace \right]\\
&= E_{W} \left[ \frac{1}{2} F_{R}\left( \rho\left( x, 1 \right) \right) \right] + \frac{1}{2} = \int^{+\infty}_{0} \frac{1}{2} F_{R}\left( \rho\left( x, 1 \right) \right) d\mu_{W}\left( \omega \right) + \frac{1}{2}.
\end{aligned}
\end{equation*}
\end{proof}

\section{Proof of Section \ref{Convexity of the feasible set $S(p)$} \ZHnew{results}}\label{Appendix HH}

\

\begin{proof}[Proof of Lemma \ref{the construction of copula}]
To satisfy the constraints of \eqref{Fea-6}, we restrict ourselves here to $y_{i} := 1/K$ for the sake of simplicity. Considering the value of $d$ in Lemma \ref{Lemma end 2}, we divide it into the following cases.\\
\textbf{Case 1:} $d \in (0, 1)$. To prove $d \cdot \kappa(x) H_{x}\kappa(x) - \nabla_{x} \kappa(x) (\nabla_{x}\kappa(x))^{T}$ is a positive definite matrix, due to the Schur's complement, it suffices to prove $\triangle$ is a positive definite matrix. Before giving the formula of $\kappa$, we consider the characteristics of the matrix $\triangle$. To prove $H_{x}\kappa$ is semi-positive definite, we consider variable separable functions, which means that $H_{x}\kappa$ enables to be positive diagonal matrices. To this end, we set $\kappa(x) := \Sigma^{K}_{i = 1} f(x_{i})$, where $f$ is a strictly convex, positive, and second differentiable function. We can get
\begin{equation*}
\nabla_{x}\kappa(x) = \left\lbrace f'(x_{1}), f'(x_{2}), ... , f'(x_{K}) \right\rbrace^{T},
\end{equation*}
\begin{equation*}
H_{x}\kappa(x) = 
\begin{pmatrix}
f''(x_{1}) & 0 & 0 & \cdots & 0\\
0 & f''(x_{2}) & 0 & \cdots & 0\\
0 & 0 & f''(x_{3}) & \cdots & 0\\
\vdots & \vdots & \vdots & \ddots & \vdots\\
0 & 0 & 0 & \cdots & f''(x_{K})
\end{pmatrix},
\end{equation*}
\begin{equation*}
\triangle = 
\begin{pmatrix}
f''(x_{1}) & 0 & \cdots & 0 & f'(x_{1})\\
0 & f''(x_{2}) & \cdots & 0 & f'(x_{2})\\
\vdots & \vdots & \ddots & \vdots & \vdots\\
0 & 0 & \cdots & f''(x_{K}) & f'(x_{K})\\
f'(x_{1}) & f'(x_{2}) & \cdots & f'(x_{K}) & d\cdot \left( \sum\limits^{K}_{i = 1} f(x_{i}) \right)
\end{pmatrix}.
\end{equation*}
Thus, we have
\begin{equation}\label{the det of complex matrix 1}
\begin{array}{ll}
\begin{vmatrix}
\triangle
\end{vmatrix}
&= 
\begin{vmatrix}
f''(x_{1}) & 0 & \cdots & 0 & f'(x_{1})\\
0 & f''(x_{2}) & \cdots & 0 & f'(x_{2})\\
\vdots & \vdots & \ddots & \vdots & \vdots\\
0 & 0 & \cdots & f''(x_{K}) & f'(x_{K})\\
f'(x_{1}) & f'(x_{2}) & \cdots & f'(x_{K}) & d\cdot \left( \sum\limits^{K}_{i = 1} f(x_{i}) \right)
\end{vmatrix} 
\\
& \ \\ 
&=  
\begin{vmatrix}
f''(x_{1}) & 0 & \cdots & 0 & f'(x_{1})\\
0 & f''(x_{2}) & \cdots & 0 & f'(x_{2})\\
\vdots & \vdots & \ddots & \vdots & \vdots\\
0 & 0 & \cdots & f''(x_{K}) & f'(x_{K})\\
0 & 0 & \cdots & 0 & \sum\limits^{K}_{i = 1} \left( d \cdot f(x_{i}) - \frac{f'(x_{i})^{2}}{f''(x_{i})} \right)
\end{vmatrix}
\\
& \ \\ 
&= \left[ \prod\limits^{K}_{i = 1} f''(x_{i}) \right] \cdot \left[ \sum\limits^{K}_{i = 1} \left( d \cdot f(x_{i}) - \frac{f'(x_{i})^{2}}{f''(x_{i})} \right) \right].
\end{array}
\end{equation}
Since $f$ is assumed to be strictly convex, it suffices to prove the second term of the last line in \eqref{the det of complex matrix 1} is positive. 
To this end, consider $a>1$ and a positive function $f$ satisfying $f''>0$ and
\begin{equation}\label{differential equation}
\frac{d}{a} \cdot f \cdot f'' = (f')^{2}.
\end{equation}
In fact, if $f$ satisfies \eqref{differential equation}, then we can yield
\begin{equation}\label{related to differential equation 1}
\begin{array}{ll}
\begin{vmatrix}
\triangle
\end{vmatrix}
&= \left[ \prod\limits^{K}_{i = 1} f''(x_{i}) \right] \cdot \left[ \sum\limits^{K}_{i = 1} \left( d \cdot f(x_{i}) - \frac{f'(x_{i})^{2}}{f''(x_{i})} \right) \right]\\
& \ \\
&> \left[ \prod\limits^{K}_{i = 1} f''(x_{i}) \right] \cdot \left[ \sum\limits^{K}_{i = 1} \left( \frac{d}{a} \cdot f(x_{i}) - \frac{f'(x_{i})^{2}}{f''(x_{i})} \right) \right] = 0,
\end{array}
\end{equation}
which completes the proof. Here, we can take $a = 2$ and obtain a non-trivial solution of \eqref{differential equation} as follows
\begin{equation}\label{solution of differential equaiton}
f(x) := \left( \left( \frac{2}{d} - 1 \right)\cdot \left( C_{2} + C_{1} x \right) \right)^{\frac{d}{d - 2}},
\end{equation}
where $C_{1}$, $C_{2}$ are constant coefficients to be given. Define the minimum value of the vectors' components in $X$ as
\begin{equation*}
X_{min} := \underset{ i = 1, ... , K }{\inf} \left\lbrace x_{i} \left|  \forall x = [x_{1}, x_{2}, ..., x_{K}]^{T} \in X \right\rbrace\right.
\end{equation*}
Take arbitrary $d\in(0, 1)$. Choose $C_{1}, C_{2}>0$ such that $X_{min} > d/((2-d)\cdot C_{1} \cdot K) -C_{2} / C_{1} $. Substitute \eqref{solution of differential equaiton} into \eqref{the det of complex matrix 1} and \eqref{related to differential equation 1}. Then, for any $x \in X$ we can get
$\begin{vmatrix}
\triangle
\end{vmatrix} > 0$ 
, $f > 0$, and $f''>0$. It follows that $\triangle$ and $H_{x}\kappa(x)$ are both positive definite matrices. 
\\
\textbf{Case 2:} $d \in [1, +\infty)$. Similar to the process of Case 1, by taking $a = 2d$ into \eqref{differential equation}, we can yield
\begin{equation*}
\frac{1}{2} \cdot f \cdot f'' = (f')^{2},
\end{equation*}
and its solution defined by
\begin{equation*}
f(x) := \frac{1}{C_{2} + C_{1}x}.
\end{equation*}
\end{proof}

\end{appendices}	

\end{document}